\newcommand{\set}[1]{\left\{#1\right\}}
\newcommand{\abs}[1]{\left| #1 \right|}
\renewcommand{\epsilon}{\varepsilon}
\newcommand{\cupd}{\overset{.}{\cup}}
\newcommand{\rv}{\mathrm{rv}}
\newcommand{\Rr}{\mathbb{R}}
\newcommand{\Nn}{\mathbb{N}}
\newcommand{\Zz}{\mathbb{Z}}
\newcommand{\Qq}{\mathbb{Q}}
\newcommand{\Aa}{\mathbb{A}}
\newcommand{\Bb}{\mathbb{B}}
\newcommand{\Gg}{\mathbb{G}}
\newcommand{\Gm}{{\mathbb{G}_m}}
\newcommand{\un}{\mathds{1}}
\newcommand{\kk}{\mathbf{k}}
\newcommand{\eL}{\mathbb{L}}
\newcommand{\caL}{\mathcal{L}}
\newcommand{\val}{\mathrm{v}}
\newcommand{\valrv}{\mathrm{v}_{\mathrm{rv}}}
\newcommand{\ACVF}{\mathrm{ACVF}}
\newcommand{\VF}{\mathrm{VF}}
\newcommand{\Var}[1]{\mathrm{Var}_{#1}}
\newcommand{\Varmu}[1]{\mathrm{Var}^{\hat \mu}_{#1}}
\newcommand{\RV}{\mathrm{RV}}
\newcommand{\RVn}[1]{\mathrm{RV}[#1]}
\newcommand{\Isp}{\mathrm{I}_{\mathrm{sp}}}
\newcommand{\RES}{\mathrm{RES}}
\newcommand{\KIRES}{!\mathrm{\mathbf{K}}(\mathrm{RES}_K)}
\newcommand{\Gam}{\Gamma}
\newcommand{\Gamn}[1]{\Gamma[#1]}
\newcommand{\Gamnfin}[1]{\Gamma^{\mathrm{fin}}[#1]}
\newcommand{\eu}{\mathrm{eu}}
\newcommand{\bt}{\mathbf{t}}
\newcommand{\parBb}{{\partial\mathbb{B}}}
\newcommand{\an}{\mathrm{an}}
\newcommand{\Ccr}{{\mathbb{C}r}}
\newcommand{\K}[1]{\mathrm{\bf K}(#1)}
\newcommand{\SH}[1]{\mathrm{SH}_\mathfrak{M}(#1)}
\newcommand{\RigSH}[1]{\mathrm{RigSH}_{\mathfrak{M}}(#1)}
\newcommand{\QUSH}[1]{\mathrm{QUSH}_\mathfrak{M}(#1)}
\newcommand{\DA}[1]{\mathrm{DA}(#1)}
\newcommand{\RigDA}[1]{\mathrm{RigDA}(#1)}
\newcommand{\Mot}{\mathrm{M}}
\newcommand{\Motr}{\mathrm{M}_\Rig}
\newcommand{\Spec}[1]{{\mathrm{Spec}(#1)}}
\newcommand{\Spm}[1]{{\mathrm{Spm}(#1)}}
\newcommand{\Spf}[1]{{\mathrm{Spf}(#1)}}
\newcommand{\Sus}{\mathrm{Sus}}
\newcommand{\Gal}{\mathrm{Gal}}
\newcommand{\Th}{\mathrm{Th}}
\newcommand{\Rig}{\mathrm{Rig}}
\newcommand{\gm}{\mathrm{gm}}
\newcommand{\Sch}{\mathrm{Sch}}
\newcommand{\Hom}{\mathrm{Hom}}
\newcommand{\intHom}{\underline{\mathrm{Hom}}}
\newcommand{\VarRig}{\mathrm{VarRig}}
\newcommand{\chir}{\chi_\Rig}
\newcommand{\chirt}{\widetilde{\chi_\Rig}}
\newcommand{\chirGam}{{\chi_\Rig^\Gam}}
\newcommand{\chirRes}{{\chi_\Rig^\RES}}
\newcommand{\chirp}{\chi'_{\Rig}}
\newcommand{\chirpt}{\widetilde{\chi'_\Rig}}
\newcommand{\chirpGam}{{\chi^{\prime\,\Gam}_\Rig}}
\newcommand{\chirpRes}{{\chi^{\prime\,\RES}_\Rig}}
\newcommand{\chirRV}{{\chi_\Rig^\RV}}
\newcommand{\chirpRV}{{\chi^{\prime\,\RV}_\Rig}}
\newcommand{\spc}{\mathrm{sp}}
\newcommand{\tube}[1]{{]#1[}}
\newcommand{\St}{\mathrm{St}}
\newcommand{\calX}{\mathcal{X}}
\newcommand{\calY}{\mathcal{Y}}
\newcommand{\calU}{\mathcal{U}}
\newcommand{\calS}{\mathcal{S}}
\newcommand{\calO}{\mathcal{O}}
\newcommand{\calF}{\mathcal{F}}
\newcommand{\calE}{\mathcal{E}}
\newcommand{\calM}{\mathcal{M}}
\newcommand{\calL}{\mathcal{L}}
\newcommand{\calC}{\mathcal{C}}
\newcommand{\calT}{\mathcal{T}}
\newcommand{\calR}{\mathcal{R}}
\newcommand{\calD}{\mathcal{D}}
\newcommand{\Cone}{\mathrm{Cone}}
\numberwithin{equation}{section}
\theoremstyle{plain}
\newtheorem{nth}{Theorem}[section]
\newtheorem{Theorem}[nth]{Theorem}
\newtheorem{Proposition}[nth]{Proposition}
\newtheorem{Lemma}[nth]{Lemma}
\newtheorem{Corollary}[nth]{Corollary}
\theoremstyle{definition}
\newtheorem{Definition}[nth]{Definition}
\newtheorem{Lemma-Definition}[nth]{Lemma-Definition}
\newtheorem{Proposition-Definition}[nth]{Proposition-Definition}
\theoremstyle{remark}
\newtheorem{Remark}[nth]{Remark}
\title{Virtual rigid motives of semi-algebraic sets}
\author[Arthur Forey]{Arthur Forey}
\address{Sorbone Universit\' es, UPMC Univ Paris 06,  UMR  7586 CNRS,  Institut de Math\' ematiques de Jussieu-Paris Rive Gauche, 75005 Paris, France}
\email{arthur.forey@imj-prg.fr}
\subjclass[2010]{14C15, 14F42, 03C60, 14G22, 32S30}
\keywords{Motivic integration, rigid motives, rigid analytic geometry, motivic Milnor fiber, analytic Milnor fiber}
\date{\today}
\begin{document}

\begin{abstract}
Let $k$ be a field of characteristic zero containing all roots of unity and $K=k((t))$. We build a ring morphism from the Grothendieck group of semi-algebraic sets over $K$ to the Grothendieck group of motives of rigid analytic varieties over $K$. It extend the morphism sending the class of an algebraic variety over $K$ to its cohomological motive with compact support. We show that it fits inside a commutative diagram involving Hrushovski and Kazhdan's motivic integration and Ayoub's equivalence between motives of rigid analytic varieties over $K$ and quasi-unipotent motives over $k$ ; we also show that it satisfies a form of duality. This allows us to answer a question by Ayoub, Ivorra and Sebag about the analytic Milnor fiber. 
\end{abstract}

\maketitle

%\tableofcontents

\section{Introduction}

Let $k$ be a field of characteristic zero containing all roots of unity and $K=k((t))$ the field of Laurent series. Morel and Voevodsky build in \cite{morel_voevodsky_A1} the category $\mathrm{SH}(k)$ of stable $\Aa^1$-invariant motivic sheaves without transfers over $k$. More generally for $S$ a $k$-scheme they build the category of $S$-motives $\mathrm{SH}(S)$.  Following insight by Voevodsky, see Deligne's notes \cite{deligne_voevodskys_2001}, Ayoub developed in \cite{ayoub_six_1} a six functors formalism for the categories $\mathrm{SH}(-)$, mimicking Grothendieck's six functors formalism for \'etale cohomology. See also in \cite{cisinski_triangulated_2012} an alternative construction by Cisinski and D\' eglise. For $f : X \to Y$ a morphism of schemes, in addition to the direct image $f_* : \mathrm{SH}(X)\to \mathrm{SH}(Y)$ and pull-back $f^* : \mathrm{SH}(Y)\to \mathrm{SH}(X)$, one has the extraordinary direct image $f_! : \mathrm{SH}(X)\to \mathrm{SH}(Y)$ and extraordinary pull-back $f^! : \mathrm{SH}(X)\to \mathrm{SH}(Y)$.  It allows in particular to define for any $S$-scheme $ f :X\to S$ an object $\Mot_{S,c}^\vee(X)=f_!f^*\un_k\in \mathrm{SH}(S)$, the so-called cohomological motive with compact support of $X$. 

Denote by $\K{\Var{k}}$ the Grothendieck group of $k$-varieties. It is the abelian group generated by isomorphism classes of $k$-varieties, with the scissors relations 
\[
[X]=[Y]+[X\backslash Y]
\] for $Y$ a closed subvariety of $X$. Cartesian product induces a ring structure on $\K{\Var{k}}$. 

As $\mathrm{SH}(k)$ is a triangulated category, we can consider its Grothendieck group $\K{\mathrm{SH}(k)}$, which is the abelian group generated by isomorphism classes of its compact (also called constuctible) objects, with relations
$[B]=[A]+[C]$ whenever there is a distinguished triangle
\[
A\to B\to C\overset{+1}{\to}.
\]
Elements of $\K{\mathrm{SH}(k)}$ are called virtual motives and tensor product on $\mathrm{SH}(k)$ induces a ring structure on $\K{\mathrm{SH}(k)}$. The locality principle implies that the assignment $X\in\Var{k}\mapsto [\Mot_{k,c}^\vee(X)]\in \K{\mathrm{SH}(k)}$ satisfies the scissors relations, hence induces a morphism
\[
\chi_k : \K{\Var{k}}\to\K{\mathrm{SH}(k)}
\]
which is a ring morphism. Such a morphism was first considered by  Ivorra and Sebag in \cite{ivorra_nearby_2013}.

Ayoub builds in \cite{ayoub_rigide} the category $\mathrm{RigSH}(K)$ of rigid analytic motives over $K$, in a similar fashion of $\mathrm{SH}(K)$ but instead of $K$-schemes, he starts with rigid analytic $K$-varieties in the sense of Tate. The analytification functor from algebraic $K$-varieties to  rigid $K$-varieties induce a functor
\[
\Rig^* : \mathrm{SH}(K)\to \mathrm{RigSH}(K).
\]
For any rigid $K$-variety $X$, Ayoub defines $\Mot_\Rig(X)$ and $\Mot_\Rig^\vee(X)$ respectively the homological and cohomological rigid motives of $X$. However to our knowledge there is no general notion of cohomological rigid motive with compact support. 

One can also consider $\K{\VF_K}$ the Grothendieck ring of semi-algebraic sets over $K$. If $X=\Spec{A}$ is an affine variety over $K$, a semi-algebraic subset of $X^\an$ is a boolean combination of subsets of the form $\set{x\in X^\an \mid \val(f(x))\leq \val(g(x))}$, for $f,g\in A$ (where $\val$ is the valuation on $K$). The group $\K{\VF_K}$ is then the abelian group of isomorphism classes of semi-algebraic sets (for semi-algebraic bijections) with relations $[X]=[U]+[V]$ if $X$ is the disjoint union of $U$ and $V$. Locally closed semi-algebraic sets inherits a structure of rigid $K$-varieties and $\K{\VF_K}$ is generated as a group by classes of such sets that are smooth rigid $K$-varieties. We could also consider $\K{\VF_K^\an}$, the Grothendieck group of subanalytic sets over $K$ but they turn out to be isomorphic as a byproduct of Hrushovski and Kazhdan's theory of motivic integration \cite{hrushovski_integration_2006}.

In this situation it is rather natural to ask about the existence of a ring morphism
\[
\chir : \K{\VF_K}\to \K{\mathrm{RigSH}(K)}
\]
extending the morphism $\chi_K : \K{\Var{K}}\to\K{\mathrm{SH}(K)}$. 
If $X$ is an algebraic $K$-variety smooth and connected of dimension $d$, then $[\Mot_{K,c}^\vee(X)]=[\Mot_{K}(X)(-d)]$, where $(-d)$ is the Tate twist (iterated $d$ times). We would like to define for $X$ a quasi-compact rigid $K$-variety smooth and connected of dimension $d$, $\chir([X])=[\Motr(X)(-d)]$. Such classes generate $\K{\VF_K}$, hence if it exists we will get uniqueness of a morphism satisfying such conditions. The main objctive of this paper is to show the existence of such a morphism.

The strategy of proof is to use alternative descriptions of $\K{\VF_K}$ and $\mathrm{RigSH}(K)$, the former being established by Hrushovski and Kazhdan, the latter by Ayoub. Let's describe them briefly. 

From a model-theoretic point of view, semi-algebraic sets over $K$ are definable sets in the (first order) theory of algebraically closed valued fields over $K$. If $L$ is a valued field, with ring of integers $\calO_L$ of maximal ideal $\calM_L$, we set $\RV(L)=L^\times/(1+\calM_L)$. Observe that $\RV$ fits in the following exact sequence, where $\mathbf{k}$ is the residue field and $\Gam$ the value group :
\[
1\to \mathbf{k}^\times\to \RV\to \Gam\to 0.
\]
Working in a two sorted language, with one sort $\VF$ for the valued field and one sort $\RV$, Hrushovski and Kazhdan establish in \cite{hrushovski_integration_2006} the following isomorphism of rings :
\[
\oint : \K{\VF_K}\to \K{\RV_K[*]}/\Isp,
\]
where $\K{\RV_K[*]}$ is the Grothendieck ring of definable sets of $\RV$, the $[*]$ meaning that some grading is taken into account and $\Isp$ is an ideal generated by a single explicit relation, see Section \ref{section-recall-HK}. Set $\hat \mu=\lim_{\leftarrow_n}\mu_n$, with $\mu_n$ the group of $n$-th roots of unity in $k$ and $\K{\Varmu{k}}$ the Grothendieck of varieties equipped with some good $\hat \mu$-action, good meaning that the action factors through some good $\mu_n$-action for some $n$.

The ring $\K{\RV_K[*]}$ can be further decomposed into a part generated by $\K{\Varmu{k}}$ and a part generated by definable subsets of the value group. The latter being polytopes, one can apply bounded Euler characteristic to get a ring morphism
\[
\Theta\circ \calE_c : \K{\RV_K[*]}/\Isp\to \K{\Varmu{k}}.
\]

Ayoub on his side defines the category of quasi-unipotent motives $\mathrm{QUSH}(k)$ as the triangulated subcategory of $\mathrm{SH}(\Gm_k)$ with infinite sums generated by homological motives (and their twists) of $\Gm_k$-varieties of the form
\[
X[T,T^{-1},V]/(V^r-Tf)\to \Spec{k[T,T^{-1}]}=\Gm_k
\]
where $X$ is a smooth $k$-variety, $r\in \Nn^*$, and $f\in \Gam(X,\calO^\times_X)$. Let $q : \Spec{K}\to \Gm_k$ be the morphism defined by $T\in k[T,T^{-1}]\mapsto t\in K=k((t))$. Ayoub shows in \cite{ayoub_rigide} that the functor
\[
\mathfrak{F} : \mathrm{QUSH}(k)\overset{q^*}{\to} \mathrm{SH}(K)\overset{\Rig^*}{\to} \mathrm{RigSH}(K)
\]
is an equivalence of categories, denote by $\mathfrak{R}$ a quasi-inverse. 

We will define a morphism 
\[
\chi_{\hat\mu} :\K{\Varmu{k}}\to \K{\mathrm{QUSH}(k)}
\]
compatible with $\chi_k$ in the sense that it commutes with the morphism $\K{\Varmu{k}}\to \K{\Var{k}}$ induced by the forgetful functor and $1^* : \K{\mathrm{SH}(\Gm_k)}\to \K{\mathrm{SH}(k)}$, where $1 : \Spec{k}\to \Gm_k$ is the unit section, see section \ref{section-chihatmu}.

Here is our main theorem.
\begin{Theorem}
\label{main-thm1}
Let $k$ be a field of characteristic zero containing all roots of unity and  let $K=k((t))$. Then there exists a unique ring morphism 
\[\chir : \K{\VF_K}\to \K{\mathrm{RigSH}(K)}
\] such that for any quasi-compact rigid $K$-variety $X$, smooth and connected of dimension $d$, $\chir(X)=[\Motr(X)(-d)]$.

Moreover, all the squares in the following diagram commute :
\[
\xymatrixcolsep{1pc}
\xymatrix{
\K{\Var{K}} \ar[d]_{\chi_K} \ar[r]^-{} &\K{\VF_K} \ar[d]_{\chir} \ar[r]^-{\oint}_-{\simeq} & \K{\RV_K[*]}/\Isp\ar[r]^-{\Theta\circ \calE_c }&\K{\Varmu{k}} \ar[d]^{\chi_{\hat \mu}}\ar[r]^-{} & \K{\Var{k}} \ar[d]^{\chi_k} \\
\K{\mathrm{SH}(K)} \ar[r]_-{\Rig^*} &\K{\mathrm{RigSH}(K)} \ar[rr]^-{\simeq}_-{\mathfrak{R}} && \K{\mathrm{QUSH}(k)} \ar[r]_-{1^*} &\K{\mathrm{SH}(k)}.
}
\]
\end{Theorem}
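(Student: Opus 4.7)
The plan is to construct $\chir$ by going around the right-hand column of the diagram. Concretely, set
\[
\chir \;:=\; \mathfrak{F}\circ\chi_{\hat\mu}\circ(\Theta\circ\calE_c)\circ \oint,
\]
where $\mathfrak{F}$ is the equivalence quasi-inverse to $\mathfrak{R}$. Every factor is a ring morphism (and $\oint$ is even an isomorphism by Hrushovski--Kazhdan), so $\chir$ is automatically a well-defined ring morphism, and the middle cell of the diagram, $\mathfrak{R}\circ\chir = \chi_{\hat\mu}\circ(\Theta\circ\calE_c)\circ\oint$, commutes by construction.

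For the two outer squares: the right-hand square is, by design, built into the construction of $\chi_{\hat\mu}$ carried out in Section \ref{section-chihatmu}. For the left-hand square, a Bittner-type presentation of $\K{\Var{K}}$ in characteristic zero reduces the verification to smooth projective $K$-schemes $X$; for such an $X$ the analytification $X^{\an}$ is smooth quasi-compact of dimension $d=\dim X$, the identity $[\Mot_{K,c}^\vee(X)]=[\Mot_K(X)(-d)]$ recalled in the introduction applies, and compatibility of homological motives with analytification in the smooth proper case gives $\Rig^*[\Mot_K(X)(-d)]=[\Motr(X^{\an})(-d)]$; so once the characterization property for $\chir$ is known, the left square commutes. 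Uniqueness then follows at once from the generation statement recorded in the introduction: classes of smooth quasi-compact rigid $K$-varieties generate $\K{\VF_K}$ as a group, so any ring morphism with the prescribed value on them is determined.

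The technical heart of the theorem is therefore the identity
\[
\chir([X])=[\Motr(X)(-d)]
\]
for $X$ a smooth connected quasi-compact rigid $K$-variety of dimension $d$. I would establish it by propagating $[X]$ through the four arrows of the right-hand column: using a Hrushovski--Kazhdan cell decomposition adapted to a formal (ideally semi-stable) model of $X$, the class $\oint([X])$ has an explicit representative in $\K{\RV_K[*]}/\Isp$; its image under $\Theta\circ\calE_c$ in $\K{\Varmu{k}}$ should recover a virtual-motive refinement of the motivic nearby fibre of $X$ together with its canonical $\hat\mu$-monodromy, and Ayoub's equivalence $\mathfrak{F}$ is tailored so that the associated quasi-unipotent virtual motive matches $\Motr(X)(-d)$ with the appropriate Tate twist. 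The main obstacle is precisely this last compatibility: threading the nearby cycles construction on the Hrushovski--Kazhdan side with the one encoded in Ayoub's equivalence, while correctly tracking the Tate twist and the $\hat\mu$-monodromy action. Once this identification is in hand, all the remaining assertions of the theorem follow formally.
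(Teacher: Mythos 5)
Your scaffolding coincides with the paper's: $\chir$ is indeed the composite $\mathfrak{F}\circ\chi_{\hat\mu}\circ\Theta\circ\calE_c\circ\oint$ (the paper first defines it separately on the $\RES$ and $\Gamma$ parts of $\K{\RV_K[*]}/\Isp$ and then records exactly this factorization), uniqueness does follow from generation of $\K{\VF_K}$ by classes of smooth affinoids via quantifier elimination, the right-hand square is Lemma \ref{lem-com-diag-varmu-var-QUSH-SH}, and the left-hand square reduces, as you say, to smooth projective $K$-varieties, for which $X^\an$ is quasi-compact and the characterization applies. All of that is correct and is essentially the paper's argument.

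The gap is that the entire content of the theorem sits in the step you defer: proving $\chir([X])=[\Motr(X)(-d)]$ for $X$ quasi-compact smooth of dimension $d$. Your sketch names the right objects (semi-stable model, explicit class in $\K{\RV_K[*]}/\Isp$, matching through Ayoub's equivalence) but omits the two ingredients that make the matching work, and without them a naive piece-by-piece comparison fails. After choosing a semi-stable model $\calX$ one writes $\calX_\eta^\VF$ as the disjoint union of the tubes $\tube{D_I^\circ}$ and must compute each $\chir(\tube{D_I^\circ})$; this requires an \'etale-local reduction to the standard model $\St^{u^{-1}t}_{D_I^\circ\times_k R,\underline{N}}$ (Lemma \ref{lem-reduction-chir-single-tube}, which itself needs Berthelot's invariance of tubes under \'etale morphisms) and a $\mathrm{GL}_r(\Zz)$ change of coordinates identifying the tube with $Q_{N_I}^\VF(D_I^\circ,u_I)\times\val^{-1}(\Delta)$ for $\Delta$ an \emph{open} simplex; the bounded Euler characteristic of $\Delta$ produces a sign, so one gets $\chir(\tube{D_I^\circ})=(-1)^{\abs{I}-1}[\Motr(\tube{D_I^\circ})(-d)]$ rather than the unsigned class. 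Summing these signed terms then only returns $[\Motr(X)(-d)]$ because of the inclusion-exclusion identity $[\Motr(X)]=\sum_I(-1)^{\abs{I}-1}[\Motr(\tube{D_I^\circ})]$, which rests on Mayer--Vietoris for the closed cover by the $\tube{D_i}$ together with the nontrivial Ayoub--Ivorra--Sebag comparison $\Motr(\tube{D_I^\circ})\simeq\Motr(\tube{D_I})$ (Proposition \ref{propAIS-MDI=MDI0}). Neither the sign bookkeeping nor this homotopy-invariance input appears in your proposal, and they are precisely where the two nearby-cycles constructions get threaded together; as written, the proposal restates the hard step rather than proving it.
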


Ayoub, Ivorra and Sebag ask in \cite[Remark 8.15]{AIS} about the existence of a morphism similar to $\chir$ and speculate that one should be able to recover from it their comparison result about the motivic Milnor fiber. We will show that it is indeed the case, see below.

Observe also that with this diagram in mind, defining $\chir$ is easy since $\mathfrak{R}$ is an isomorphism, it is showing that it satisfies $\chir(X)=[\Motr(X)(-d)]$ that we will have to prove. We will rely for this on an explicit computation of $\oint[X]$ when a semi-stable formal $R=k[[t]]$-model of $X$ is chosen. 

Two choices are made in this construction. The first is when applying bounded Euler characteristic $\calE_c$, where we also could have used Euler characteristic $\calE$, the second is when we apply the morphism $\chi_{\hat \mu}$, where we can also consider the mophism sending the class of a variety to its homological motive with compact support. Varying these choices leads to define three other ring morphisms 
\[\chirp, \chirt, \chirpt : \K{\VF_K}\to \K{\RigSH{K}}\]
satisfying properties analogous to Theorem \ref{main-thm1}. In particular, we will show that $\chirpt$ also extends the morphism $\chi_K$.

We claim that $\chir(X)$ is the virtual incarnation of an hypothetical cohomological rigid motive with compact support of $X$. Hence we expect some duality to appear. Here is what we prove in this direction. 

\begin{Theorem}
\label{main-thm2}
Let $X$ be a quasi-compact smooth rigid variety, $\calX$ an formal $R$-model of $X$, $D$ a locally closed and proper subset of its special fiber $\calX_\sigma$. Consider the tube $]D[$ of $D$ in $\calX$, it is a (possibly non quasi-compact) rigid subvariety of $X$. 
Then
\[
\chir(]D[)=[\Motr^\vee(]D[)].
\]
In particular, if $X$ is a smooth and proper rigid variety,
\[
\chir(X)=[\Motr^\vee(X)].
\]
\end{Theorem}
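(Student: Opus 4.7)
The strategy is to combine Poincaré duality for smooth proper rigid motives with a motivic incarnation of Berthelot's localization exact sequence for tubes. I begin with the ``In particular'' clause. For $X$ smooth and proper of dimension $d$, Poincaré duality in $\RigSH{K}$ gives $\Motr^\vee(X) \simeq \Motr(X)(-d)[-2d]$. Since $[-2d]$ is an even shift, in the Grothendieck group this yields $[\Motr^\vee(X)] = [\Motr(X)(-d)] = \chir(X)$ by the defining property of $\chir$ from Theorem~\ref{main-thm1}.

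For the general statement, I would first compactify: using resolution of singularities for formal $R$-schemes, embed $\calX$ into a proper smooth formal $R$-scheme $\bar\calX$. Since $D$ is proper, the tube $\tube{D}$ is unchanged by this enlargement, so we may assume $\calX$ itself is proper. Then I would reduce to the case where $D$ is closed: writing $\bar D$ for the Zariski closure of $D$ in $\calX_\sigma$ and $Z = \bar D \setminus D$ (both closed proper), the partition $\tube{\bar D} = \tube{D} \sqcup \tube{Z}$ gives scissors additivity for $\chir$ and, via the localization triangle attached to the inclusion $\tube{Z} \hookrightarrow \tube{\bar D}$ in Berthelot's framework, a corresponding additivity for $[\Motr^\vee(-)]$.

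Assume now $\calX$ is proper smooth and $D$ closed proper in $\calX_\sigma$, with open complement $V = \calX_\sigma \setminus D$ and generic fiber $\tube{V} = \calV_\eta$ smooth quasi-compact of dimension~$d$. The scissors relation $X = \tube{V} \sqcup \tube{D}$ combined with the defining property of $\chir$ on the smooth quasi-compact pieces yields
\[
\chir(\tube{D}) = \chir(X) - \chir(\tube{V}) = [\Motr(X)(-d)] - [\Motr(\tube{V})(-d)].
\]
The desired identity $\chir(\tube{D}) = [\Motr^\vee(\tube{D})]$ therefore reduces to producing an exact triangle of the form
\[
\Motr(\tube{V})(-d) \to \Motr(X)(-d) \to \Motr^\vee(\tube{D}) \overset{+1}{\to}
\]
in $\RigSH{K}$, up to even shifts. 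Via Poincaré duality on $X$, the middle term becomes $\Motr^\vee(X)$, and on the smooth $\tube{V}$ the first term plays the role of a ``cohomological motive with compact support''.

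The main obstacle is this last exact triangle, the motivic counterpart of Berthelot's identification between the rigid cohomology of a closed subscheme and the cohomology of its tube. It does not come directly from Ayoub's six functors formalism on $\RigSH{K}$ because $\tube{D}$ is an admissible open of $X$ rather than a closed rigid subvariety, and, as noted in the introduction, there is no generally available notion of cohomological rigid motive with compact support in this setting. I would establish the triangle either by extending the formalism appropriately to pairs of admissible opens and their tubular complements, or by transferring the identity through the equivalence $\mathfrak{R}$ of Theorem~\ref{main-thm1} to $\K{\mathrm{QUSH}(k)}$, where the analogous exact triangle can be verified using Ayoub's description of rigid motives in terms of quasi-unipotent motives and the well-understood localization sequences for nearby cycles.
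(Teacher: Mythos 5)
There is a genuine gap, and it sits exactly at the point you yourself flag as ``the main obstacle.'' The distinguished triangle
\[
\Motr(\tube{V})(-d) \to \Motr(X)(-d) \to \Motr^\vee(\tube{D}) \overset{+1}{\to}
\]
is not something you can extract from a localization or Mayer--Vietoris argument: for $D$ closed in $\calX_\sigma$ with open complement $V$, both $\tube{D}$ and $\tube{V}=\calV_\eta$ are \emph{admissible open} subvarieties of $X$, they are disjoint as sets, and they cover $X$ set-theoretically --- but this cover is not admissible (the model case is $\Bb(o,1)=\Bo{}\sqcup\parBb(o,1)$). So no triangle in $\RigSH{K}$ is attached to this decomposition, and Berthelot's exact sequences in rigid cohomology do not lift to the motivic level for free. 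Asserting this triangle is essentially asserting the theorem; the proposal reduces the statement to itself. The same objection applies to your reduction from locally closed to closed $D$ (``the localization triangle attached to $\tube{Z}\hookrightarrow\tube{\bar D}$''), since $\tube{Z}$ is again an admissible open, not a closed rigid subvariety. Separately, your opening step invokes Poincar\'e duality $\Motr^\vee(X)\simeq\Motr(X)(-d)[-2d]$ in $\RigSH{K}$ for smooth proper rigid $X$; this is not available off the shelf --- the paper explicitly notes that $f_!$ and $f^!$ are missing in the rigid setting --- and the ``in particular'' clause is instead obtained as the special case $D=\calX_\sigma$ of the general statement, not as an input to it. Finally, ``embed $\calX$ into a proper smooth formal $R$-scheme'' is not a legitimate move; what is actually needed (and used) is a semi-stable model obtained by admissible blow-ups, together with properness of $D$ itself.

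The paper's route around all of this is to prove the duality not in $\RigSH{K}$ but upstream: it chooses a semi-stable formal model so that $D$ becomes a union of proper branches, computes $\chir(\tube{D_I^\circ})$ explicitly on each stratum via the \'etale-local normal form $\St^{u^{-1}t}_{D_I^\circ\times_k R,\underline{N}}$ (Proposition \ref{prop-chir-single-tube} and Corollary \ref{cor-prop-chir-single-tube}), and establishes the key identity $[\Mot_K^\vee(\widetilde{D_I^\circ})]=\sum_{I\subset L}(-1)^{\abs{L}-\abs{I}}[\Mot_K(\widetilde{D_L^\circ}\times\Gm_K^{\abs{L}-\abs{I}})(-d+\abs{I}-1)]$ by transporting Bittner's duality involution $\calD$ on $\K{\Var{K}}$ --- whose nontrivial input is the toric Dehn--Sommerville Lemma \ref{lem-duality-normal-toric-varieties} applied to the normalizations $\widetilde{D_I}$ --- through $\mathbb{D}_K$ on $\K{\SH{K}}$ and then through $\Rig^*$ and Theorem \ref{thm-ayoub-equi-Qrig-Qan} into $\K{\RigSH{K}}$. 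If you want to salvage your outline, the missing work is precisely a proof of your exact triangle (at least at the level of classes in the Grothendieck group), and that is what Propositions \ref{prop-dual-DIo-kvargm}, \ref{prop-DI-cohomo-homo} and \ref{prop-chir-proper-cohomo} supply.
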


To prove this theorem, we will once again rely on a choice of a semi-stable formal $R$-model of $X$ and compute explicitly $[\Motr^\vee(]D[)]$ in terms of homological motives of $]D[$ and some subsets of $]D[$. Our approach is inspired by parts of Bittner's works \cite{bittner_universal_2004} and \cite{bittner_motivic_2005} where she defines duality involutions in $\K{\Var{k}}[\eL^{-1}]$ and shows that a toric variety associated to a simplicial fan satisfies an instance of Poincare's duality. 

Theorem \ref{main-thm2} allows us to answer a question asked by Ayoub, Ivorra and Sebag in \cite[Remark 8.15]{AIS} in relation to the motivic Milnor fiber. Fix $X$ a smooth connected $k$-variety and let $f : X\to \Aa^1_k$ be a non constant morphism. Set $X_\sigma$ to be the closed subvariety of $X$ defined by the vanishing of $f$. Jan Denef and Fran\c cois Loeser define in \cite{denef_loser_motivic_igusa, denef_germs_1999, denef_geometry_2001}, see also \cite{loeser_seattle_2009}, the motivic nearby cycle of $f$ as an element $\psi_f\in \K{\Varmu{X_\sigma}}$. If $x : \Spec{k}\to X_\sigma$ is a closed point of $X_\sigma$, fiber product induce a morphism $x^* : \K{\Varmu{X_\sigma}}\to \K{\Varmu{k}}$, and $\psi_{f,x}=x^*\psi_f\in \K{\Varmu{k}}$ is the motivic Milnor fiber of $f$ at $x$. 

Denef and Loeser justify their definition by showing that known additive invariants associated to the classical nearby cycle functor can be recovered from $\psi_f$ and $\psi_{f,x}$, the Euler characteristic for example.

Ivorra and Sebag study a new instance of such a principle in \cite{ivorra_nearby_2013} where they show (with our notations) that $\chi_{X_\sigma}(\psi_f)=[\Psi_f\un]\in \K{\mathrm{SH}(X_\sigma)}$, where $\Psi_f$ is the motivic nearby cycle functor constructed by Ayoub in \cite[Chapitre 3]{ayoub_six_2}. Literally speaking they only prove it in $\K{\mathrm{DA}^{\mathrm{\acute et}}(X_\sigma,\Qq)}$, but it is observed in \cite[Section 8.2]{AIS} that their result generalize to $\K{\mathrm{SH}(X_\sigma)}$. 

It was first observed by Nicaise and Sebag in \cite{nicaise_motivic_2007} that one can relate the motivic Milnor fiber to a rigid analytic variety. Consider the morphism $\Spec{R}\to \Spec{\Aa^1_k}$ induced by $T\in k[T]\mapsto t\in k[[t]]$. Still denote $X\to \Spec{R}$ the base change of $f$ along this morphism, and let $\calX$ be the formal $t$-adic completion of $X$. For $x\in X_{\sigma}$ a closed point, set $\calF_{f,x}^\an$ the tube of $\set{x}$ in $\calX$. It is the analytic Milnor fiber.

Ayoub, Ivorra and Sebag show in \cite{AIS} that 
\[
[1^*\circ\mathfrak{R}\Motr^\vee(\calF_{f,x}^\an)]=\chi_k(\psi_{f,x})\in \K{\mathrm{SH}(k)}.
\]

 In our context, we have $\Theta\circ\calE_c\circ\oint\calF_{f,x}^\an=\psi_{f,x}\in \K{\Varmu{k}}$, we can see it either by a direct computation using resolution of singularities as in \cite{nicaise_ps_tropical} and \cite{nicaise_tropical_2017} or by adapting results by Hrushovski and Loeser \cite{HL_monodromy}. Now Theorem \ref{main-thm2} shows that $\chir(\calF_{f,x}^\an)=[\Motr^\vee(\calF_{f,x}^\an)]$ hence by Theorem \ref{main-thm1}, 
 \[
[\mathfrak{R}\Motr^\vee(\calF_{f,x}^\an)]=\chi_{\hat\mu}(\psi_{f,x})\in \K{\mathrm{QUSH}(k)}.
\]
We then have generalized the result of Ayoub, Ivorra and Sebag to an equivariant setting. 

The paper is organized as follows. See the beginning of each section for the precise content. Section \ref{section-prelim-motivic-integration} is devoted to what we need on Hruskovski and Kazhdan motivic integration. In Section \ref{section-prelim-motives}, we settle what we will use on motives, rigid analytic geometry and rigid motives. In Section \ref{section-realization-map} we build the realization map $\chir$ and prove Theorem \ref{main-thm1}. The last Section \ref{section-duality} is devoted to duality and the proof of Theorem \ref{main-thm2}.

\subsection*{Acknowledgments}
I would like to thank Fran\c cois Loeser for his constant support during the preparation of this project. I also thank Marco Robalo for numerous discussions and Joseph Ayoub, Florian Ivorra and Julien Sebag. This research was partially supported by ANR-15-CE40-0008 (D\' efig\' eo).

\section{Premilinaries on motivic integration}
\label{section-prelim-motivic-integration}

In this section we will introduce Hrushovski and Kazhdan's theory of motivic integration in Section \ref{section-recall-HK} and use it to define two maps from the Grothendieck group of semi-algebraic sets over $K$ to the equivariant Grothendieck group of varieties over $k$ in Section \ref{section-landing-KVarmu}.

\subsection{Recap on Hrushovski and Kazhdan's integration in valued fields}
\label{section-recall-HK}
We outline here the construction of Hrushovski and Kazhdan's motivic integration \cite{hrushovski_integration_2006}, focusing on the universal additive invariant since that is the only part that we will use. See also the papers \cite{yin_special_2010} and \cite{yin_integration_2011} by Yimu Yin who gives an account of the theory in $\ACVF$. 

We will work in the first order theory $\ACVF$ of algebraically closed valued fields of equicharacteristic zero in the two-sorted language $\calL$. The two sorts are $\VF$ and $\RV$. We put the ring language on $\VF$, with symbols $(0,1,+,-,\cdot)$, on $\RV$ we put the group language $(\cdot, ()^{-1})$, a unary predicate $\kk^\times$ for a subgroup, and operations $+ : \kk^2\to \kk$ where $\kk$ is the union of $\kk^\times$ and a symbol 0. We add also a unary function $\rv : \VF^\times=\VF\backslash\set{0} \to \RV$. 

We will also consider the imaginary sort $\Gam$ defined by the exact sequence
\[
1\to \kk^\times \to \RV\to \Gam\to 0,
\]
together with maps $\valrv : \RV \to \Gam$ and $\val : \VF^\times \to \Gam$. 

If $L$ is a valued field, with valuation ring $\calO_L$ and maximal ideal $\calM_L$, define an $\calL$-structure by $\VF(L)=L$, $\RV(L)=L^\times/(1+\calM_L)$, $\kk(L)=\calO_L/\calM_L$, $\Gam(L)=L^\times/\calO_L^\times$. Note that the valuation ring is definable in this language because $\calO_L^\times=\rv^{-1}(\kk^\times(L))$. 

Fix a field $k$ of characteristic zero containing all roots of unity and  set $K=k((t))$. View $K$ as a fixed base structure, for the rest of the paper, we will only consider $\calL(K)$-structures, where $\calL(K)$ is the language obtained by adjoining to $\calL$ constants symbols for elements of $K$. Any valued field extending $K$ can be interpreted as a $\calL(K)$-structure. Denote $\ACVF_K$ the $\calL(K)$-theory of such algebraically closed valued fields. The theory $\ACVF_K$ admits quantifier elimination in the language $\calL(K)$. Quantifier elimination was first proven by Robinson using a two sorted language, with one sort $\VF$ and one sort $\Gamma$ for the value group, see for example \cite{weispfenning_quantifier_1984}. 

We will use the notation $\VF^\bullet$ for $\VF^n$ for some $n$. 
The $\calL(K)$-definable subsets of $\VF^\bullet$ are semi-algebraic sets, that is boolean combinations of sets of the form
\[
\set{x\in \VF^n\mid f(x)=0}
\]
and
\[
\set{x\in \VF^n\mid \val(f(x))\geq \val(g(x))},
\]
where $f$ and $g$ are polynomials with coefficients in $K$.

We will also consider the theory $\ACVF_K^\an$ in the language $\calL_\an(K)$. This language is an enrichment of $\calL$ where we add symbols for restricted analytic functions with coefficients in $K$, see \cite{lipshitz_uniform_2005} and \cite{cluckers_fields_2011} for details. A maximally complete algebraically closed valued field containing $K$ can be enriched as an $\calL_\an(K)$-structure. Denote $\ACVF^\an_K$ their $\calL_\an(K)$-theory. 

We shall refer to $\caL_\an(K)$-definable subsets of $\VF^\bullet$ as subanalytic sets.

Denote by $\K{\VF_K}$ the free group of $\calL(K)$-definable subsets of ${\VF}^\bullet$, with the following relations :
\begin{itemize}
\item $[X]=[Y]$ if there is a semi-algebraic bijection $X\to Y$
\item $[X]=[U]+[V]$ if $X$ is a the disjoint union $X=U\cupd V$.
\end{itemize}
Cartesian product endows $\K{\VF_K}$ with a ring structure.

Denote by $\K{\VF_K^\an}$ the analogous ring for subanalytic sets. 

\begin{Remark}
Note that this framework allows us to consider general semi-algebraic subsets of $K$-varieties (resp. subanalytic subsets of rigid $k$-varieties) as studied for example by Florent Martin in \cite{martin_cohomology_2014}. We say that $S$ is a semi-algebraic subset $X^\an$, for $X$ a $k$-scheme, if $S$ is a finite union $S=\cup S_i$ such that for every $i$, there is an open affine subset $U_i=\Spec{A_i}$ of $U$ such that $S_i\subseteq U_i^\an$ is defined in $U_i^\an$ by boolean combination of subsets of the form $\set{ y\in U_i^\an\mid \val(f(y))\leq r \val(g(y))}$, with $f,g\in A_i$, $r\in \mathbb{Q}$. Hence we can consider its class $[S]\in \K{\VF_K}$. 
\end{Remark}

As $\ACVF_K^\an$ is an enrichment of $\ACVF_K$, we have a canonical map
\[
\K{\VF_K}\to \K{\VF_K^\an}. 
\]
We shall see later that Hrushovski-Kazdhan motivic integration implies that it is an isomorphism. Note that even injectivity is not obvious : it means that for any subanalytic bijection between two semi-algebraic sets, we can find a semi-algebraic bijection between them. 
 
Hrushovski and Kazdhan use a slightly different definition for $\K{\VF_K}$. They define it as the group generated by isomorphism classes of definable sets $X\subseteq \VF^\bullet\times \RV^\bullet$, such that for some $n\in \Nn$, there is some definable function $f : X\to \VF^n$ with finite fibers, with cut-and-paste relations (the function $f$ is not part of the data).  We can show that for such an $X$, there is some definable $X'\subseteq \VF^\bullet$, with a definable bijection $X\simeq X'$. Hence both definitions leads to the same group. Nevertheless, this alternative presentation is very usefull for defining motivic integration, since it amounts to relate $\K{\VF_K}$ and some group related to definable sets in the sort $\RV$, that we will now define.

Define $\RV_K$ to be the category of objects $Y\subseteq \RV$, with definable functions as morphisms. The category $\RV_K[n]$ is the category of pairs $(Y,f)$, with $Y\subseteq \RV^\bullet$ definable and $f : Y\to \RV^n$ a definable finite-to-one function. An morphism between $(Y,f)$ and $(Y',f')$ is a definable function $g : Y\to Y'$. The category $\RES_K$ is the full subcategory of $\RV_K$ whose objects $Y$ satisfy $\valrv(Y)$ finite. One defines similarly $\RES_K[n]$ to be the full subcategory of $\RVn{n}$ whose objects $(Y,f)$ satisfy $\valrv(Y)$ finite. 

Note that the definition of morphisms in $\RV_K[n]$ implies that $\RV_K[n]$ is equivalent to the category of definable sets $X\subseteq \RV^\bullet$ such that there exist a definable function $f : X \to \RV^n$ with finite fibers.

From those categories one form the graded categories 
\[
\RV_K[\leq n]:=\bigoplus_{0\leq i\leq n} \RV_K[i],
\]
\[
\RV_K[*]:=\bigoplus_{i\in \Nn} \RV_K[i],
\]
\[
\RES_K[\leq n]:=\bigoplus_{0\leq i\leq n} \RES_K[i],
\]
\[
\RES_K[*]:=\bigoplus_{i\in \Nn} \RES_K[i].
\]

For later purpose, we will also need a category related to the value group.  One defines $\Gamn{n}$ to be the category with objects subsets of $\Gam^n$ defined by piecewise linear equations and inequations with $\Zz$-coefficients and parameters in $\Qq$. A morphism between $Y$ and $Y'$ is a bijection defined piecewise by composite of $\Qq$-translations and $\mathrm{GL}_n(\Zz)$ morphisms. From this one forms 
\[
\Gamn{*}:= \bigoplus_{n\in \Nn} \Gamn{n}. 
\]
One defines also $\Gamnfin{n}$ and $\Gamnfin{*}$ to be the full subcategories of $\Gamn{n}$ and $\Gamn{*}$ whose objects are finite. 

Each of these categories $\calC$ has disjoint union, induced by disjoint union of definable sets. We can form the associated Grothendieck group $\K{\calC}$. It is the abelian group generated by isomorphism classes of objects of $\calC$, with relations induced by disjoint union. 

Let us remark that for a fixed definable set $X\subseteq \RV^m$, we can view $X$ as an object in $\RV_K[n]$, for any $n\geq m$. Hence for each $n\geq m$, $X$ induce a class 
\[
[X]_n\in \K{\RV_K[n]}\subset \K{\RV_K[*]}=\bigoplus_{k\geq 0} \K{\RV_K[k]}.
\]
If $X$ is non-empty, we then have $[X]_n\neq [X]_{n'}$ for $n\neq n'$. 

The cartesian product induces ring structure on $\K{\RV_K}$ and $\K{\RES_K}$
If $(X,f)\in \RVn{n}$ and $(X',f')\in \RVn{n'}$, then $(X\times X',f\times f')\in \RVn{n+n'}$, inducing a structure of graded ring on $\K{\RV_K[*]}$ and $\K{\RES_K[*]}$.

Let $(X,f)\in \RV_K[n]$. Define $\mathfrak{L}(X,f)$ as a fiber product
\[
\mathfrak{L}(X,f)=\set{(x,y)\in \VF^n\times X\mid \rv(x)=f(y)}.
\]
As $f$ is finite-to-one, the projection of $\mathfrak{L}(X,f)$ to $\VF^n$ is finite-to-one, hence we can view it as an object in $\VF_K$. 

It turns out that if $(X,f), (X',f')\in \RV_K[n]$, with a definable bijection $X\simeq X'$, then there is a definable bijection $\mathfrak{L}(X,f)\simeq \mathfrak{L}(X',f')$, hence we have a ring morphism 
\[
\K{\RV_K[*]}\to \K{\VF_K}.
\]
For example, $\mathfrak{L}[1]_n$ is the class of an open polydisc of dimension $n$. 

Hrushovski and Kazhdan show that $\mathfrak{L}$ is surjective and study the kernel. 
Let $\RV^{>0}=\set{x\in \RV\mid v_\rv(x)>0}$. Then $\mathfrak{L}[\RV^{>0}]_1= [\set{x\in \VF^\times\mid \val(x)>0}]$, the open unit ball without zero. Hence in $\K{\VF_K}$, we have 
\[ \mathfrak{L}[\RV^{>0}]_1 + \mathfrak{L}[1]_0 =\mathfrak{L}[1]_1.\]
This relation generate the whole kernel of $\mathfrak{L}$. 
Denote by $\Isp$ the ideal of $\K{\RV_K[*]}$ generated by $[\RV^{>0}]+[1]_0-[1]_1$. 

The main theorem of \cite{hrushovski_integration_2006} is the following. 
\begin{Theorem}
\label{int-HK-additive}
The morphism $\mathfrak{L}$ is surjective and its kernel is $\Isp$. 
\end{Theorem}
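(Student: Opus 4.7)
The proof naturally splits into two independent parts: surjectivity of $\mathfrak{L}$, and computation of its kernel. The easy direction of the kernel statement, $\Isp \subseteq \ker \mathfrak{L}$, is a direct verification: up to a translation by $1$, the closed unit ball $\{x \in \VF : \val(x) \geq 0\}$ is the disjoint union of the pointed open unit ball $\{x : \val(x) > 0\}$ and $\{0\}$, and these three sets realize $\mathfrak{L}[1]_1$, $\mathfrak{L}[\RV^{>0}]_1$, and $\mathfrak{L}[1]_0$ respectively. So the main content of the theorem is surjectivity and the reverse inclusion $\ker \mathfrak{L} \subseteq \Isp$.

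For surjectivity, I would argue by induction on the $\VF$-dimension $n$ of a definable subset $X \subseteq \VF^n$. The base case $n=0$ is trivial. For the inductive step, the plan is to use quantifier elimination in $\calL(K)$ to reduce $X$ to a boolean combination of sets of the form $\{\val(f) \bowtie \val(g)\}$, and then apply a cell-decomposition result in $\ACVF_K$ to partition $X$ into finitely many ``RV-cells'', each of which is by construction of the form $\mathfrak{L}(Y, g)$ for some $(Y, g) \in \RV_K[n]$. The key geometric input is that a definable family of balls can be parametrized by its image under $\rv$, which allows one to replace the last coordinate by an $\RV$-coordinate and conclude by induction on the remaining $n-1$ coordinates.

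For the kernel, the goal is to show that if $(X, f), (X', f') \in \RV_K[*]$ and there exists a definable bijection $\mathfrak{L}(X, f) \simeq \mathfrak{L}(X', f')$, then $[(X,f)] - [(X',f')] \in \Isp$. The strategy would be to reduce any such bijection to a composition of ``elementary'' transformations, each of which is either visibly lifted from an $\RV_K[*]$-bijection or is a ``special bijection'' contracting/expanding a disc that produces exactly the generator $[\RV^{>0}]_1 + [1]_0 - [1]_1$. Concretely, by further decomposing the domain if necessary, one writes any definable bijection between $\VF$-sets as an iterated composition of maps that act, fiber by fiber over a smaller-dimensional base, either as affine linear maps on balls (which lift tautologically) or as the ``blowup'' inverting the relation $[1]_1 = [\RV^{>0}]_1 + [1]_0$. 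The induction on dimension again organizes this reduction.

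The principal obstacle is the last step: controlling definable bijections in $\VF^\bullet$ finely enough to descend them to $\RV$. Unlike in the $o$-minimal setting, a definable bijection in $\ACVF_K$ can behave quite wildly on individual discs (it can permute them via $\RV$-parameters and twist them by residue-field-valued automorphisms), so the technical heart is showing that after a suitable partition all such ``twisting'' is accounted for by $\Isp$. This requires a version of the Jacobian transformation / change-of-variables lemma adapted to $\RV$-families, together with a careful analysis showing that the only phenomenon causing two $\RV$-presentations of the same $\VF$-set to disagree in $K(\RV_K[*])$ is the identification of a closed ball with an open ball plus a point. Once this ``special bijection'' analysis is in place, a combinatorial bookkeeping argument (tracking the $[*]$-grading through each elementary step) yields that the difference of classes lies in $\Isp$.
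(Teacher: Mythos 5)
This theorem is not proved in the paper at all: it is the main theorem of Hrushovski and Kazhdan \cite{hrushovski_integration_2006}, imported here as a black box, so there is no internal proof to compare your attempt with. Your outline does track the broad architecture of the original argument (surjectivity by induction on $\VF$-dimension via quantifier elimination and a decomposition into \RVpb{}s; kernel computed by factoring definable bijections through liftable maps and ``special bijections''). But as a proof it has a genuine gap, and it sits exactly where you place it: the claim that, after a suitable partition, every definable bijection in $\VF^\bullet$ factors as a composition of maps that either lift from $\RV_K[*]$ or realize the single relation $[1]_1=[1]_0+[\RV^{>0}]_1$ \emph{is} the hard content of the theorem, and it occupies the bulk of Hrushovski--Kazhdan's paper (and of Yin's reworking). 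Saying that ``a careful analysis'' and ``combinatorial bookkeeping'' will show the kernel is generated by this one element is a statement of the goal, not an argument; in particular nothing in your sketch excludes the possibility that the kernel is strictly larger than $\Isp$, which is the delicate direction. The surjectivity half is likewise only a plan (e.g.\ the reduction of a definable set to one carrying a finite-to-one map to $\VF^n$, and the fact that definable functions are generically given by nice terms, are both suppressed), though that half is the more routine of the two.

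There is also a small factual slip in the ``easy'' inclusion $\Isp\subseteq\ker\mathfrak{L}$: the generating relation is realized by the \emph{open} unit ball, not the closed one. Indeed $\mathfrak{L}[1]_1=\rv^{-1}(1)=1+\calM$, which translates to $\calM=\set{x\in\VF^\times\mid \val(x)>0}\cup\set{0}$, and this is the disjoint union of $\mathfrak{L}[\RV^{>0}]_1=\set{x\in\VF^\times\mid\val(x)>0}$ and the point $\mathfrak{L}[1]_0$. The closed ball $\set{x\mid\val(x)\geq 0}$ that you name is strictly larger and does not decompose this way.
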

Denote by $\oint$ its inverse :
\[
\oint : \K{\VF_K}\to \K{\RV_K[*]}/\Isp.
\]

This morphism is what we call (additive) motivic integration. Hrushovski and Kazhdan prove this isomorphism at the level of semi-rings, but we will not need it here. They also show that we can add volume forms to the various categories and still get an isomorphism. They also show that this isomorphism is defined for any first order theory $T$ which is $V$-minimal. The two main examples of such theories being $\ACVF_K$ and $\ACVF_K^\an$, we get also an isomorphism
\[
\oint^\an : \K{\VF_K^\an}\to \K{\RV_K^\an[*]}/\Isp.
\]
But quantifier elimination shows that $\K{\RV_K^\an[*]}\simeq \K{\RV_K[*]}$, hence in particular $\K{\VF_K}\simeq \K{\VF_K^\an}$.

\subsection{Landing in $\K{\Varmu{k}}$}
\label{section-landing-KVarmu}

Our goal here is to relate the target ring of motivic integration $\K{\RV[*]}/\Isp$ to something more algebraic, namely the Grothendieck group of $k$-varieties equipped with a $\hat \mu$-action, where $\hat \mu=\lim_{\underset{n}{\leftarrow}}\mu_n$ and $\mu_n$ the group of $n$-th roots of unity.

Let first show that $\K{\RV_K[*]}$ splits into a $\RES$ part and a $\Gamma$ part. 

The map $Y\in \Gamn{n}\mapsto \valrv^{-1}(Y)\in \RV_K[n]$ induces a functor $\Gamn{n}\to \RV_K[n]$, because the morphisms in $\Gamn{n}$ are piecewise $\mathrm{GL}_n(\Zz)$ transformations, hence lift to $\RV_K[n]$ morphisms. One also gets a functor $\Gamnfin{n}\to \RV_K[n]$, whose image lies in $\RES_K[n]$. Hence one gets also functor $\Gamnfin{*}\to \RES_K[*]$. At the level of Grothendieck groups, one then can see $\K{\Gamnfin{n}}$ as a subgroup of $\K{\Gamn{n}}$ and $\K{\RES_K[n]}$. Hence the map $\K{\RES_K[*]} \otimes \K{\Gamn{*}}\to \K{\RV_K[*]}$ defined by $[X]\otimes[Y] \mapsto [X\times \valrv^{-1}(Y)]$ induces a map 
\[\K{\RES_K[*]} \otimes_{\K{\Gamnfin{*}}} \K{\Gamn{*}}\to \K{\RV_K[*]}.\]

\begin{Proposition}[{\cite[Corollary 10.3]{hrushovski_integration_2006}}] 
\label{prop-iso-RV-RES-Gam}
The map $\K{\RES_K[*]} \otimes_{\K{\Gamnfin{*}}} \K{\Gamn{*}}\to \K{\RV_K[*]}$ is an isomorphism of rings.
\end{Proposition}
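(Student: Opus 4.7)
The plan is to establish surjectivity and then construct an explicit inverse, both steps resting on the orthogonality of the residue field sort $\kk$ and the value group sort $\Gam$ inside the quotient $\RV$. For well-definedness of the map: given a finite $F \in \Gamnfin{n}$, its images in $\RES_K[*]$ and $\Gamn{*}$ are $\valrv^{-1}(F)$ and $F$ respectively, and the natural bijection $\valrv^{-1}(F \times Y) \simeq \valrv^{-1}(F) \times \valrv^{-1}(Y)$ shows the map respects the relation $[\valrv^{-1}(F)] \cdot [X] \otimes [Y] = [X] \otimes [F] \cdot [Y]$ defining the tensor product.

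For surjectivity, I would start with a definable $Z \subseteq \RV^n$ and set $Y := \valrv(Z) \subseteq \Gam^n$. Using quantifier elimination in $\ACVF_K$ together with orthogonality of $\kk$ and $\Gam$, one produces a finite definable partition $Y = \bigsqcup_i Y_i$ and, over each piece, a $\Gam$-definable section $s_i : Y_i \to \valrv^{-1}(Y_i)$ such that after multiplicative translation by $s_i$ the fiber of $Z$ above $\gamma \in Y_i$ becomes a definable subset $X_i \subseteq (\kk^\times)^n$ independent of $\gamma$. Since $\valrv(X_i) = \{0\}$ is finite, $X_i$ lies in $\RES_K$, and assembling the pieces yields a definable bijection $Z \simeq \bigsqcup_i X_i \times \valrv^{-1}(Y_i)$, proving surjectivity.

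An inverse is then given by $[Z]_n \mapsto \sum_i [X_i] \otimes [Y_i]$, and the main obstacle is showing this is independent of the chosen decomposition. Any two partitions of $Y$ admit a common refinement, absorbed by scissors relations in $\K{\Gamn{*}}$; changing a section $s_i$ modifies each $X_i$ by multiplication by a definable element of $\valrv^{-1}(F)$ for some finite $F \subseteq \Gam^n$, an adjustment which is exactly trivialized by tensoring over $\K{\Gamnfin{*}}$ — this is the precise structural reason why the tensor product appears with that particular base ring. The hard part is to check the analogous statement at the level of \emph{bijections}: every definable bijection between two ``rectangles'' $X \times \valrv^{-1}(Y) \simeq X' \times \valrv^{-1}(Y')$ in $\RV_K[n]$ canonically splits, up to a further partition, into a $\Gam$-part (a bijection in $\Gamn{*}$) and a fiberwise $\RES$-part (a bijection in $\RES_K[*]$). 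This splitting is essentially the content of stable embeddedness of $\kk$ and $\Gam$ together with orthogonality, applied to definable maps rather than just to definable sets, and it is the technical heart of the proposition.
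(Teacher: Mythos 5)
The paper itself offers no proof here --- it quotes the statement from Hrushovski--Kazhdan \cite[Cor.\ 10.3]{hrushovski_integration_2006} --- so your sketch is measured against their argument, and it has a genuine gap in the surjectivity step. First, the definable sections $s_i : Y_i \to \valrv^{-1}(Y_i)$ you rely on do not exist over $K=k((t))$ once $Y_i$ is infinite. Such a section would produce a $K$-definable point of $V^*_{\gamma}$ for some non-integral $\gamma\in\Qq$ (any infinite $K$-definable subset of the divisible group $\Gam^n$ contains $\Qq$-points outside $\Zz^n$), and $V^*_{1/2}$, say, has no $K$-definable point: the conjugation $\sqrt{t}\mapsto -\sqrt{t}$ acts on it without fixed points. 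The failure of $1\to\kk^\times\to\RV\to\Gam\to 0$ to split definably over $K$ is exactly the phenomenon that the tensor product over $\K{\Gamnfin{*}}$ (and, downstream, the $\hat\mu$-actions) is designed to record, so it cannot be assumed away. Second, even granting a section, translating the fibers yields a bijection $Z_i\simeq X_i\times Y_i$ with $Y_i$ a $\Gam$-set, \emph{not} $Z_i\simeq X_i\times\valrv^{-1}(Y_i)$: the latter fibers over $Y_i$ with fiber $X_i\times(\kk^\times)^n$ rather than $X_i$. Your recipe already fails on $Z=\RV^{>0}$, whose fiber over $\gamma>0$ is all of $V^*_\gamma\simeq\kk^\times$; it outputs $[\kk^\times]\otimes[(0,+\infty)]\mapsto[\kk^\times\times\RV^{>0}]$, which lives in the wrong graded piece and is not $[\RV^{>0}]_1$.

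The correct structure result behind surjectivity is of a different shape: after a finite partition and a $\mathrm{GL}_n(\Zz)$ monomial change of coordinates on the group $\RV^n$, each piece of $Z$ becomes a genuine product $W\times\valrv^{-1}(\Delta)$, where $W$ is a definable subset of a product of finitely many \emph{fixed} fibers $V^*_{\delta_1}\times\cdots\times V^*_{\delta_k}$ (hence lies in $\RES_K[k]$ with no trivialization of the torsors needed) and $\Delta\subseteq\Gam^{n-k}$; the ``moving'' coordinate directions are absorbed entirely into $\valrv^{-1}(\Delta)$ instead of being split into a constant fiber times a base. You do correctly identify the role of the base ring $\K{\Gamnfin{*}}$ and the real technical heart of injectivity (decomposing definable \emph{bijections} between such rectangles into $\RES$- and $\Gam$-parts via orthogonality and stable embeddedness), but the decomposition feeding both your surjectivity argument and your candidate inverse is not the right one, so the proof as written does not go through.
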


As the theory of $\Gamma$ is o-minimal, one can use o-minimal Euler characteristic to get an additive map $\eu : \K{\Gamn{n}}\to \Zz$. Any $X\subseteq \Gam^n$ can be finitely partitioned into pieces definably isomorphic to open cubes $(0,1)^k$, one set $\eu((0,1)^k)=(-1)^k$ and then defines $\eu(X)$ by additivity. One can show that this does not depends on the chosen partition of $X$, see \cite[Chapter 4]{van_den_dries_tame_1998}. One can also show that when $M\to +\infty$, $\eu(X\cap [-M,M]^n)$ stabilizes and one defines the bounded Euler characteristic to be
\[
\eu_c(X):=\lim_{M\to +\infty}\eu(X\cap [-M,M]^n).
\]

Using those Euler characteristics, one can now get rid of the $\Gamma$-part in $\K{\RV_K[*]}$. 

Recall that $\mathrm{I}_{\mathrm{sp}}$ is the ideal of $\K{\RV_K[*]}$ spanned by the element $[1]_1-[1]_0-[\RV^{>0}]_1$. For $a\in \Qq$, denote also $e_a=[\valrv^{-1}(a)]_1\in \K{\RES_K[1]}$. Let $!\mathrm{I}$ the ideal of $\K{\RES_K}$ spanned by all differences $e_a-e_0$ and denote $\KIRES:=\K{\RES_K}/!\mathrm{I}$. Define also $\eL=[\Aa^1_k]$.

\begin{Proposition}[{\cite[Theorem 10.5 (2)]{hrushovski_integration_2006}}] 
\label{prop-def-calE}
There is a ring morphism \[\mathcal{E}:\K{\RV_K[*]}/\Isp\to \KIRES[\eL^{-1}],\] with $\mathcal{E}([X]_k)=[X]/\eL^k$ for $X\in \RES_K[n]$, $\mathcal{E}([\RV^{>0}]_1)=[{\mathbb{G}_m}]/\eL$. 
\end{Proposition}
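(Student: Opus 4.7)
The plan is to use the tensor product decomposition
\[
\K{\RES_K[*]} \otimes_{\K{\Gamnfin{*}}} \K{\Gamn{*}} \simeq \K{\RV_K[*]}
\]
from Proposition~\ref{prop-iso-RV-RES-Gam} to reduce the construction of $\mathcal{E}$ to building compatible ring morphisms on each of the two factors, and then to verify that the ideal $\Isp$ lies in the kernel of the resulting map.

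For the $\RES$-factor, I would define $\mathcal{E}_\RES : \K{\RES_K[*]} \to \KIRES[\eL^{-1}]$ by $[X]_n \mapsto [X]/\eL^n$, where on the right $[X]$ denotes the image of $X$ in $\KIRES$ under the canonical quotient. This is a ring morphism since $[X]_n \cdot [X']_{n'} = [X \times X']_{n+n'}$ corresponds to $[X \times X']/\eL^{n+n'}$. For the $\Gamma$-factor, I would define $\mathcal{E}_\Gamma : \K{\Gamn{*}} \to \KIRES[\eL^{-1}]$ by $[Y]_m \mapsto ([\Gm]/\eL)^m \cdot \eu_c(Y)$; multiplicativity follows from $\eu_c(Y \times Y') = \eu_c(Y)\eu_c(Y')$ for rational polytopes in $\Gamma$. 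The key compatibility check is that $\mathcal{E}_\RES$ and $\mathcal{E}_\Gamma$ agree on the common subring $\K{\Gamnfin{*}}$: for a finite $Y \subset \Gamma^m$, the pullback $\valrv^{-1}(Y)$ is a disjoint union of $|Y|$ copies of $(\kk^\times)^m$, and passing to the quotient by $!\mathrm{I}$ (which identifies every $e_\gamma$ with $e_0 = [\Gm]$) gives $|Y| \cdot [\Gm]^m$ in $\KIRES$; on the $\Gamma$-side $\eu_c(Y) = |Y|$ since $Y$ is finite, so both yield $|Y| \cdot [\Gm]^m/\eL^m$. The quotient by $!\mathrm{I}$ is essential here: without it, the class of $\valrv^{-1}(\gamma)$ would depend on $\gamma$ and the factorization through the tensor product over $\K{\Gamnfin{*}}$ would fail. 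By the universal property of tensor product, the two maps assemble into a ring morphism $\K{\RV_K[*]} \to \KIRES[\eL^{-1}]$.

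Finally, I would verify that the generator $[1]_1 - [1]_0 - [\RV^{>0}]_1$ of $\Isp$ lies in the kernel. Direct computation yields $\mathcal{E}([1]_1) = 1/\eL$, $\mathcal{E}([1]_0) = 1$, and $\mathcal{E}([\RV^{>0}]_1) = \eu_c((0,\infty)) \cdot [\Gm]/\eL$; the desired relation becomes an identity in $\KIRES[\eL^{-1}]$ controlled by the value of $\eu_c((0,\infty))$, which can be pinned down via additivity applied to the decomposition $\Gamma = (-\infty,0) \sqcup \{0\} \sqcup (0,\infty)$ together with $\eu_c(\Gamma) = 1$. The main technical hurdle will be this sign-and-normalization bookkeeping, namely matching the prescribed value $\mathcal{E}([\RV^{>0}]_1) = [\Gm]/\eL$ once all factors of $\eL$ are accounted for. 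Once this relation is checked, $\mathcal{E}$ descends to the required ring morphism on $\K{\RV_K[*]}/\Isp$.
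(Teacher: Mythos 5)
Your overall strategy is exactly the paper's: use the decomposition $\K{\RES_K[*]} \otimes_{\K{\Gamnfin{*}}} \K{\Gamn{*}} \simeq \K{\RV_K[*]}$, define the map on each factor, check agreement over $\K{\Gamnfin{*}}$, and then verify that the generator of $\Isp$ is killed. However, there is a genuine error in your definition on the $\Gamma$-factor: you use the bounded Euler characteristic $\eu_c$, whereas $\mathcal{E}$ must be built from the ordinary o-minimal Euler characteristic $\eu$ (the one with $\eu((0,1)^k)=(-1)^k$, so that $\eu((0,+\infty))=-1$). The two agree on finite sets, which is why your compatibility check over $\K{\Gamnfin{*}}$ still goes through, but they differ exactly where it matters in the last step. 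With your choice one computes $\eu_c((0,+\infty))=0$ (from $1=\eu_c(\Gamma)=2\,\eu_c((0,+\infty))+1$, as your own decomposition shows), hence $\mathcal{E}([\RV^{>0}]_1)=0$, and the generator of $\Isp$ is sent to $\eL^{-1}-1-0\neq 0$ in $\KIRES[\eL^{-1}]$. So the map you define does not descend to $\K{\RV_K[*]}/\Isp$; the ``sign-and-normalization bookkeeping'' you defer is not bookkeeping but an actual obstruction.

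What is needed is $\mathcal{E}([\RV^{>0}]_1)=\eu((0,+\infty))\cdot[\Gm]/\eL=-[\Gm]/\eL$ (the sign in the Proposition's displayed value is a typo; the paper's proof uses $-[\Gm]/\eL$), for then, using $[\Gm]=\eL-1$ in $\KIRES$, one gets $\mathcal{E}([\RV^{>0}]_1)+\mathcal{E}([1]_0)=-(\eL-1)/\eL+1=\eL^{-1}=\mathcal{E}([1]_1)$. The pairing of Euler characteristic with normalization is forced: $\eu$ goes with the twisted normalization $a\cdot[\Gm]^s/\eL^{r+s}$ and gives $\mathcal{E}$, while $\eu_c$ goes with the untwisted normalization $a\cdot[\Gm]^s$ and gives the other morphism $\mathcal{E}_c$ of the following proposition, where $\eu_c((0,+\infty))=0$ is precisely what kills $\Isp$ there. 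Replacing $\eu_c$ by $\eu$ in your $\mathcal{E}_\Gamma$ repairs the argument and recovers the paper's proof essentially verbatim.
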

\begin{proof}
From the ring isomorphism $\K{\RES_K[*]} \otimes_{\K{\Gamnfin{*}}} \K{\Gamn{*}}\to \K{\RV_K[*]}$, it suffices to define the image of $a\otimes b$, for $a\in \K{\RES_K[r]}$ and $b\in \K{\Gamn{s}}$. We set $\mathcal{E}(a\otimes b)=\eu(b)a \cdot [{\Gm}]^s/\eL^{r+s}\in \KIRES[\eL^{-1}]$. As the product is defined by cartesian product, we check that this indeed defines a ring morphism. If $a\in {\K{\Gamnfin{*}}}$, the relations involved in $!I$ show that $\mathcal{E}(a\otimes 1)=\mathcal{E}(1\otimes a)$, hence this defines a map $\K{\RES_K[*]} \otimes_{\K{\Gamnfin{*}}} \K{\Gamn{*}} \to \KIRES[\eL^{-1}]$. Moreover, as 
\[\mathcal{E}([\RV^{>0}]_1)+\mathcal{E}([1]_0)=-\frac{[\Gg_m]}{\eL}+1=\eL^{-1}=\mathcal{E}([1]_1),\]
the generator of $\Isp$ is sent to 0 hence it induces a map 
\[\mathcal{E}:\K{\RV_K[*]}/\Isp\to \KIRES[\eL^{-1}].\]
\end{proof}

\begin{Proposition}[{\cite[Theorem 10.5 (4)]{hrushovski_integration_2006}}] 
There is a ring morphism 
\[\mathcal{E}_c:\K{\RV_K[*]}/\Isp\to \KIRES,\]
 with $\mathcal{E}_c([X]_k)=[X]$ for $X\in \RES_K[n]$ and $\mathcal{E}_c(\val^{-1}(\Delta))=\eu_c(\Delta)$.
\end{Proposition}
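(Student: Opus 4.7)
The plan is to follow the same three-step template as Proposition \ref{prop-def-calE}: use the tensor decomposition of Proposition \ref{prop-iso-RV-RES-Gam}, define $\mathcal{E}_c$ on pure tensors using $\eu_c$ in place of $\eu$, then verify that the definition passes to the tensor product over $\K{\Gamnfin{*}}$ and kills the generator of $\Isp$. The key new ingredient is the vanishing of $\eu_c$ on unbounded half-lines, which is exactly what allows the target to be $\KIRES$ rather than $\KIRES[\eL^{-1}]$.

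Concretely, for $a\in \K{\RES_K[r]}$ and $b\in \K{\Gamn{s}}$, I would set
\[
\mathcal{E}_c(a\otimes b):=[a]\cdot [\Gm]^s\cdot \eu_c(b)\in \KIRES,
\]
so that $\mathcal{E}_c$ restricts to the identity on classes coming from $\RES_K$ (where $s=0$) and agrees with $\eu_c$ up to the normalization $[\Gm]^s$ on classes pulled back from $\Gam$. For well-definedness on the tensor product over $\K{\Gamnfin{*}}$, one checks that for a finite set $c\in \Gamnfin{r}$ of cardinality $N$, the preimage $\valrv^{-1}(c)$ is a disjoint union of $N$ copies of $(\kk^\times)^r\simeq \Gm^r$, so modulo $!\mathrm{I}$ its class is $N[\Gm]^r$, matching $[\Gm]^r\cdot \eu_c(c)=N[\Gm]^r$.

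The decisive verification is that the generator $[\RV^{>0}]_1+[1]_0-[1]_1$ of $\Isp$ is annihilated. One has $\mathcal{E}_c([1]_0)=\mathcal{E}_c([1]_1)=1$ directly, while $\mathcal{E}_c([\RV^{>0}]_1)=[\Gm]\cdot \eu_c(\Gam^{>0})$. The key computation is $\eu_c(\Gam^{>0})=0$: the partition $(0,M]=(0,M)\sqcup\{M\}$ gives $\eu((0,M])=-1+1=0$ for every $M$, so the bounded limit vanishes. Consequently the generator of $\Isp$ maps to $0+1-1=0$, and the induced map $\K{\RV_K[*]}/\Isp\to\KIRES$ is a well-defined ring morphism, multiplicativity following from cartesian products as in the previous proposition. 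This is the precise feature that distinguishes $\mathcal{E}_c$ from $\mathcal{E}$: in the latter case $\eu(\Gam^{>0})=-1$ forces the introduction of $\eL^{-1}$, whereas here no denominators are needed.

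I expect the only delicate point to be the compatibility of the $[\Gm]^s$ normalization with the tensor relation, since it relies on identifying the classes of the torsors $\valrv^{-1}(a)$ with powers of $[\Gm]$ in $\KIRES$; but this is precisely the content of the ideal $!\mathrm{I}$. Everything else parallels the proof of Proposition \ref{prop-def-calE}.
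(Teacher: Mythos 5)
Your proposal is correct and follows essentially the same route as the paper: define $\mathcal{E}_c(a\otimes b)=\eu_c(b)\,a\cdot[\Gm]^s$ on pure tensors via the decomposition of Proposition \ref{prop-iso-RV-RES-Gam}, use the relations in $!\mathrm{I}$ for compatibility over $\K{\Gamnfin{*}}$, and kill the generator of $\Isp$ via $\eu_c((0,+\infty))=0$. Your explicit computation of $\eu_c(\Gam^{>0})$ and of the torsor classes $\valrv^{-1}(c)$ only spells out details the paper leaves implicit.
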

\begin{proof}
From the ring isomorphism $\K{\RES_K[*]} \otimes_{\K{\Gamnfin{*}}} \K{\Gamn{*}}\to \K{\RV_K[*]}$, it suffices to define the image of $a\otimes b$, for $a\in \K{\RES_K[r]}$ and $b\in \K{\Gamn{s}}$. We set $\mathcal{E}_c(a\otimes b)=\eu(b)a \cdot [{\mathbb{G}_s}]^s\in \KIRES$. As the product is defined by cartesian product, we check that this indeed defines a ring morphism. If $a\in {\K{\Gamnfin{*}}}$, the relations involved in $!I$ show that $\mathcal{E}_c(a\otimes 1)=\mathcal{E}_c(1\otimes a)$, hence this defines a map $\K{\RES_K[*]} \otimes_{\K{\Gamnfin{*}}} \K{\Gamn{*}} \to \KIRES$. Moreover, as $\mathcal{E}_c([\RV^{>0}]_1)=0$, 
\[
\mathcal{E}_c([\RV^{>0}]_1+[1]_0)=1=\mathcal{E}_c([1]_1),
\]
the generator of $\Isp$ is sent to 0 hence it induces a map 
\[\mathcal{E}_c:\K{\RV_K[*]}/\Isp\to \KIRES.\]
\end{proof}
\begin{Remark}
If one mods out by $[\Aa^1]-1$, then the two morphisms $\mathcal{E}$ and $\mathcal{E}_c$ coincide. However, in our situation, one cannot change one for the other. 
\end{Remark}

Let $\mu_n$ the group of $n$-th roots of unity and $\displaystyle{\hat \mu=\lim_{\underline{n}{\leftarrow}} \mu_n}$.
Define $\Varmu{k}$ to be the category of quasi-projective $k$-varieties equipped with a good $\hat \mu$-action, that is, a $\hat \mu$-action that factors through some good $\mu_n$-action. By a good $\mu_n$ action, we mean an action such that the orbit of every point is contained in an affine open subset stable by the action. Let $\K{{\Varmu{k}}^{\flat}}$ the abelian group generated by isomorphism classes of quasi-projective $k$-varieties $X$ equipped with good $\hat\mu$-action, with the scissors relations. Let $\K{{\Varmu{k}}}$ the quotient of $\K{{\Varmu{k}}^{\flat}}$ by additional relations $[(E,\rho)]=[(E,\rho')]$ if $E$ is a finite dimensional $k$-vector space and $\rho$, $\rho'$ two good linear $\hat\mu$-actions on $V$. Note that cartesian product induces ring structures on $\K{{\Varmu{k}}^{\flat}}$ and $\K{{\Varmu{k}}^{\flat}}$. 

We want to define a map $\K{\RES_K}\to \K{{\Varmu{k}}^{\flat}}$. Fix a set of parameters $t_a\in K((t))^{\mathrm{alg}}$ for $a\in \Qq$ such that $t_1=t$ and $t_{ab}=t_b^a$ for $a\in \Nn^*$ and denote $\bt_a:=\rv(t_a)$. 
Set $V^*_\gamma=\val^{-1}(\gamma)$ and $V_\gamma=V_\gamma^*\cup\set{0}$. If $X\in \RES$, then $X\subseteq \RV^n$ and the image of $\valrv : X\to \Gamma^n$ is finite. Working piecewize we can suppose this image is a singleton.  In this case, there are $m, k_1,...,k_n\in \Nn^*$ such that $X\subseteq V_{k_1/m}\times...\times V_{k_n/m}$. The function $g : (x_1,...,x_n)\in X \mapsto (x_1/\bt_{k_1/m},...,x_n/\bt_{k_n/m})\in k^n$ is $K((t^{1/m}))$-definable and its image $g(X)$ inherits a $\mu_n$-action from the one on $X$. Moreover $g(X)$ is a definable subset of $k^n$, hence constructible by quantifier elimination. So we get a map $\Theta : \K{\RES_K}\to \K{{\Varmu{k}}^{\flat}}$, and it induces also a map $ \KIRES\to \K{\Varmu{k}}$.
Hrushovski and Loeser prove in \cite{HL_monodromy} the following proposition. 
 \begin{Proposition}
 \label{iso-res-varmu}
 The ring morphisms 
 \[\Theta : \K{\RES_K}\to \K{{\Varmu{k}}^{\flat}}\]
 and 
 \[
\Theta : {!\K{\RES_K}}\to \K{\Varmu{k}}
 \]
 are isomorphisms.
 \end{Proposition}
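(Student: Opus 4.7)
The plan is to construct an explicit two-sided inverse $\Psi$ of $\Theta$ and then observe that the relation $e_a = e_0$ generating $!\mathrm{I}$ corresponds precisely to the linearity relation $[(E,\rho)] = [(E,\rho')]$ in $\K{\Varmu{k}}$, so that the two isomorphism statements follow by passing to the appropriate quotients on both sides.

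First I would check that $\Theta$ is well-defined. After reducing piecewise to the case $\valrv(X) = \{(k_1/m,\ldots,k_n/m)\}$, the image $g(X) \subseteq k^n$ is definable over the residue field of $K(\bt_\bullet)$ and hence constructible by quantifier elimination in ACF. The parameters $\bt_{k/m}$ are determined only up to $m$-th roots of unity, so $\Gal(K^\mathrm{alg}/K)$ acts on $g(X)$ through $\mu_m$, giving a canonical $\hat\mu$-action. Goodness of this action follows since it is diagonal linear on the affine coordinates, so orbits lie in affine opens. Scissors relations and compatibility with products are then immediate.

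Next I would construct $\Psi$. Using Sumihiro-type equivariant linearization for good $\mu_n$-actions, any $(X,\rho) \in \Varmu{k}$ admits an equivariant locally closed decomposition into pieces each equivariantly embedded in some $\Aa^N$ with a diagonal linear action of weights $(k_1,\ldots,k_N) \in (\Zz/n\Zz)^N$. For such a piece $Y \subseteq \Aa^N$, the set $\{(x_1 \bt_{k_1/n},\ldots, x_N \bt_{k_N/n}) \mid (x_1,\ldots,x_N) \in Y\} \subseteq V_{k_1/n} \times \cdots \times V_{k_N/n}$ is $\calL(K)$-definable with finite $\valrv$-image, hence an object of $\RES_K$. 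Extending by additivity defines $\Psi$, and $\Theta \circ \Psi = \mathrm{id}$ follows by direct unwinding while $\Psi \circ \Theta = \mathrm{id}$ reduces to the single-fiber case where the two constructions agree piece-by-piece.

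The main obstacle is showing that $\Psi$ is well-defined on the Grothendieck group. Two equivariant linearizations of the same variety can assign different weight tuples $(k_i)$ and $(k'_i)$, producing classes in $\K{\RES_K}$ whose difference must lie in $!\mathrm{I}$: concretely, rescaling the $i$-th coordinate by $\bt_{k/m}/\bt_{k'/m}$ changes the $\mu_m$-action by a linear character, and the vanishing of $e_k - e_{k'}$ modulo $!\mathrm{I}$ is exactly what compensates this. Extending this verification to equivariant decompositions that need not refine a common one, and tracking how scissors relations interact with the freedom to rediagonalize weights, is the combinatorial core of the argument, and is essentially carried out in Hrushovski--Loeser's work on monodromy cited above.
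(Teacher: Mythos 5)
Your strategy (an explicit inverse $\Psi$ built from equivariant linearizations) is genuinely different from the paper's, which instead proves injectivity and surjectivity separately: injectivity by a Galois-descent argument (a $\mu_m$-equivariant definable bijection between $g(X)$ and $g'(Y)$ pulls back to a $k((t^{1/m}))$-definable bijection between $X$ and $Y$ that is invariant under $\Gal(k((t^{1/m}))/k((t)))\simeq\mu_m$, hence is $K$-definable), and surjectivity by induction on dimension together with Kummer theory, which presents a variety with faithful $\mu_m$-action, generically, as $\set{(u,v)\in U\times V_{1/m}\mid v^m=\bt f(u)}$ for $U=X/\mu_m$ and $f\in\Gamma(U,\calO_U^\times)$. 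As written, however, your argument has two gaps. First, the well-definedness of $\Psi$ --- independence of the chosen equivariant decomposition and of the weights --- is exactly where all the content lies, and you do not prove it; you defer it to Hrushovski--Loeser. The $K$-definable bijections needed to compare two linearizations of the same piece are precisely what the Galois-descent step produces, so this cannot be treated as a routine verification.

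Second, and more seriously, your own description makes $\Psi$ well defined only modulo $!\mathrm{I}$ (you say the discrepancy between two weight assignments ``must lie in $!\mathrm{I}$''), so at best you invert the second map $\Theta : {!\K{\RES_K}}\to \K{\Varmu{k}}$. The first statement concerns the unquotiented groups: in $\K{{\Varmu{k}}^{\flat}}$ the classes of a vector space with two different linear actions are \emph{not} identified, and correspondingly $e_a\neq e_0$ in $\K{\RES_K}$; an inverse there must land in $\K{\RES_K}$ on the nose. Your opening reduction (``the two isomorphism statements follow by passing to the appropriate quotients'') therefore runs in the wrong direction: knowing the quotiented map is an isomorphism does not yield the unquotiented one unless you also show that $\Theta$ restricts to an isomorphism between $!\mathrm{I}$ and the ideal of linearization relations. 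The paper's logic is the reverse: it proves the unquotiented isomorphism first, and then observes (using diagonalisability of linear $\mu_m$-actions) that $\Theta$ carries $!\mathrm{I}$ onto the linearization relations, so the quotiented statement follows.
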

 \begin{proof}
As a linear $\mu_m$-action on $k^n$ is diagonalisable, the relations added when dropping the $\flat$ corresponds to $!I$, hence it suffices to show the first isomorphism. Let first prove that it is injective. By scissors relations, it suffices to show that if $\Theta(X)=\Theta(Y)$ for some $X\subseteq \RV^n$ and $Y\subseteq \RV^{n'}$ such that $\valrv(X)$ and $\valrv(Y)$ are singletons, then $[X]=[Y]$. Consider $g, g'$ as in the definition of $\Theta$, suppose $g(X)$ and $g(Y)$ are equipped with $\mu_m$ actions and pick $f$ a $\mu_m$-invariant definable bijection between $g(X)$ and $g'(Y)$. Then $g'^{-1}\circ f\circ g$ is a $k((t^{1/m}))$-definable bijection between $X$ and $Y$ that is invariant under $\mu_n=\Gal(k((t^{1/m}))/k((t)))$-action, hence it is $k((t))$-definable, that is $[X]=[Y]$. 

Now we prove surjectivity. Fix $X$ a quasi-projective $k$-variety with a $\mu_n$-action. By induction on the dimension of $X$, it suffices to find $W\in \RES_K$ with $\Theta(W)$ dense in $X$. Up to partitioning, one can moreover assume the kernel of the action is trivial, that is the $\mu_n$ action is faithful (for possibly a smaller $m$). Consider the quotient $U=X/\mu_m$. Then the Galois group of $k(X)$ over $k(U)$ is $\mu_m$ and by Kummer theory there is an $f\in k(U)$ such that $k(X)=k(U)(f^{1/m})$. Up to neglecting a part of smaller dimension, one can moreover assume that $f$ is regular and non-vanishing, that is $f\in \Gamma(U,\mathcal{O}_U^\times)$. If we set $W=\set{(u,v)\in U\times V_{1/m}\mid v^m=\bt f(u)}$, then we have  $\Theta(W)=X$.
 \end{proof}
 
 If $U\subseteq \Aa^n_k$ is a smooth subvariety of $\Aa^n_k$, $f\in \Gamma(U,\mathcal{O}_U^\times)$ an invertible regular function on $U$ and $r\in\Nn\backslash\set{0}$, denote
 \[
 Q^\RV_r(U,f)=\set{(u,v)\in V_0^n\times V_{1/r}\mid u\in U, v^r=\bt f(u)}.
 \]
 
\begin{Corollary}
\label{cor-QRES-generators-KRES}
The ring $\K{\RES_K}$ is generated by classes of  $Q^\RV_r(U,f)$. The ring $\K{\RES_K[*]}$ is generated by classes of $[Q^\RV_r(U,f)]_n\in \K{\RES_K[n]}$.
\end{Corollary}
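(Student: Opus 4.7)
Both statements are essentially read off from the surjectivity argument used to prove Proposition \ref{iso-res-varmu}.

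For the ungraded ring $\K{\RES_K}$, use the ring isomorphism $\Theta$ to transport the question to $\K{{\Varmu{k}}^{\flat}}$: we want every class $[X]$ of a quasi-projective $k$-variety with a good $\mu_n$-action to be a $\Zz$-linear combination of classes of the form $\Theta(Q^\RV_r(U,f))$. Proceed by induction on $\dim X$. Following the proof of Proposition \ref{iso-res-varmu}, after applying scissors relations we may assume $X$ is smooth and carries a faithful $\mu_m$-action for some $m \mid n$, and that the Kummer description is available: $U_0 := X/\mu_m$ is smooth and there exists $f \in \Gamma(U_0, \calO_{U_0}^\times)$ (possibly after removing a piece of strictly smaller dimension handled by the induction hypothesis) such that $X = \Theta(W_0)$ with $W_0 = \set{(u,v)\in U_0\times V_{1/m} \mid v^m = \bt f(u)}$. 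Covering $U_0$ by finitely many smooth affine open subvarieties $U_i \subseteq \Aa^{n_i}_k$ and restricting $f$ accordingly yields $W_0 = \bigsqcup_i Q^\RV_m(U_i, f|_{U_i})$, giving the required decomposition.

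For the graded ring $\K{\RES_K[*]} = \bigoplus_n \K{\RES_K[n]}$, recall that it is additively generated by classes $[Y]_n$ with $(Y,g) \in \RES_K[n]$. Via the equivalence between $\RV_K[n]$ and the category of definable subsets of $\RV^n$ admitting a finite-to-one projection, fix an embedding $Y \subseteq \RV^n$ with $\valrv(Y)$ finite. Running the first part's argument \emph{inside} this fixed ambient $\RV^n$ (partition so that $\valrv$ is constant on each piece, then apply the same Kummer construction on each piece) writes $[Y]$, up to a locus of strictly smaller $\RV$-dimension, as a sum of classes of $Q^\RV_r(U,f)$'s with $U \subseteq \Aa^{m}_k$; each such set lies in $V_0^m \times V_{1/r} \subseteq \RV^{m+1} \hookrightarrow \RV^n$, so its natural class is $[Q^\RV_r(U,f)]_n \in \K{\RES_K[n]}$, as desired.

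The main technical point is the graded bookkeeping: one must ensure that every partition, rescaling by $\bt_{k_i/m}$ and Kummer-theoretic substitution in the argument is carried out \emph{inside the fixed ambient space} $\RV^n$, so that the grading index $n$ is preserved throughout. Once the ambient embedding $Y \subseteq \RV^n$ is fixed at the outset, this is automatic, and the graded statement follows from the ungraded one with essentially no extra work.
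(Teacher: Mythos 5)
Your overall strategy is the one the paper intends: the corollary is read off from the surjectivity half of the proof of Proposition \ref{iso-res-varmu} (induction on dimension plus the Kummer-theoretic construction of $W=\set{(u,v)\mid v^m=\bt f(u)}$, transported through the isomorphism $\Theta$), and your treatment of the ungraded statement is fine modulo the routine extra stratification needed to make each piece smooth and embedded in some affine space.

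The graded bookkeeping, however, rests on a false claim. An object of $\RES_K[n]$ is a definable $Y\subseteq\RV^{\bullet}$ together with a finite-to-one definable map to $\RV^n$; it is \emph{not} in general definably isomorphic to a subset of $\RV^n$, and neither are your intended generators: $Q^\RV_r(U,f)$ with $\dim U=1$ is typically a curve of positive genus, and a definable bijection onto a constructible subset of finitely many copies of $V_0=\kk$ would force some component to be birational to $\Aa^1_k$. So the steps ``fix an embedding $Y\subseteq\RV^n$'' and ``$V_0^m\times V_{1/r}\subseteq\RV^{m+1}\hookrightarrow\RV^n$'' cannot be carried out when $\dim Y=n$ (the generic case). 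The repair is to track the grading through the finite-to-one map rather than through the ambient space: every piece produced by the induction --- the sets $Q^\RV_r(U_i,f_i)$ and the lower-dimensional loci --- has dimension at most $\dim Y\le n$, hence admits a definable finite-to-one map to $\RV^n$ (project onto $U_i$ and compose with a piecewise Noether normalization $U_i\to\Aa^{n}_k$), and therefore defines a class in $\K{\RES_K[n]}$ while all cut-and-paste relations used take place among such objects. With that substitution the graded statement does follow from the ungraded argument exactly as you intend.
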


\begin{Corollary}
\label{cor-varmu-vargm}
There is an map
\[
\K{{\Varmu{k}}} \rightarrow   \K{ \Var{\Gm_k} }.
\]
\end{Corollary}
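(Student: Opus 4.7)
I would define the map on generators by sending a quasi-projective $k$-variety $(X,\rho)$ equipped with a good $\mu_n$-action to the twisted product $Y_n:=X\times^{\mu_n}\Gm_k$, where $\mu_n$ acts diagonally (by $\rho$ on $X$ and by multiplication on $\Gm_k$), together with the structure map to $\Gm_k$ induced by $(x,t)\mapsto t^n$. Since $\mu_n$ acts freely on the second factor and $\Gm_k$ is affine, the diagonal action on $X\times \Gm_k$ is free and good, so $Y_n$ exists as a quasi-projective $\Gm_k$-variety.

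To show this descends to a well-defined group homomorphism $\K{\Varmu{k}}\to \K{\Var{\Gm_k}}$, I would verify three compatibilities. First, independence from the choice of $n$ through which the action factors: if the same action factors through $\mu_m$ with $n=md$, the morphism $X\times \Gm_k\to X\times \Gm_k$, $(x,t)\mapsto(x,t^d)$, is equivariant for the surjection $\mu_n\twoheadrightarrow \mu_m$, $\zeta\mapsto \zeta^d$, and descends to a $\Gm_k$-morphism $Y_n\to Y_m$; pulling back along the étale $\mu_n$-torsor $\Gm_k\to \Gm_k$, $u\mapsto u^n$, both sides become canonically $X\times \Gm_k$ and the induced map is the identity, so by faithfully flat descent $Y_n\cong Y_m$ over $\Gm_k$. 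Second, the scissor relations are respected: a $\mu_n$-stable decomposition $X=U\cupd Z$ yields $Y_n=(U\times^{\mu_n}\Gm_k)\cupd (Z\times^{\mu_n}\Gm_k)$ as $\Gm_k$-varieties. Third, for the vector space relation: if $E$ is a finite-dimensional $k$-vector space with good linear $\mu_n$-action $\rho$, then $E\times^{\mu_n}\Gm_k\to \Gm_k$ is a vector bundle of rank $r=\dim E$; since $k[T,T^{-1}]$ is a principal ideal domain every vector bundle on $\Gm_k$ is Zariski-trivial, so its class in $\K{\Var{\Gm_k}}$ equals $\eL^r[\Gm_k]$, independent of $\rho$.

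The main obstacle is the first point, the independence from $n$; the other two verifications are essentially formal. I would also observe that the construction is compatible with products (fiber product over $\Gm_k$ matches cartesian product before quotienting by the diagonal $\mu_n$-action), so the map is in fact a ring morphism, although the Corollary only claims the underlying group morphism.
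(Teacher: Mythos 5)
Your proof is correct, and it takes a genuinely different route from the paper's. The paper defines the map only on the special generators $X[V]/(V^m-f)$ produced by the Kummer-theoretic surjectivity argument in the proof of Proposition \ref{iso-res-varmu}, sending such a class to $X[V,V^{-1},T,T^{-1}]/(V^m-Tf)\to\Gm_k$, and must then argue well-definedness by induction on dimension and by checking that the result does not depend on the chosen presentation (the unit $f$ being determined only up to $m$-th powers). Your twisted product $X\times^{\mu_n}\Gm_k$ is defined on every object of $\Varmu{k}$ at once, so no presentation enters; the only points to verify are independence of the integer $n$ through which the action factors --- which your descent argument along the $\mu_n$-torsor $u\mapsto u^n$ handles correctly, since an isomorphism may be detected after faithfully flat base change --- and the defining relations of $\K{\Varmu{k}}$. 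In fact the two maps coincide: applying your construction to $X[V]/(V^m-f)$ with $\mu_m$ acting on $V$ and on the torsor coordinate $s$, the invariants are generated by $W=Vs$ and $T=s^m$, identifying the quotient with the paper's $X[W,W^{-1},T,T^{-1}]/(W^m-Tf)$ over $\Gm_k$. Your treatment of the linear-action relation is also cleaner: where the paper diagonalizes and exhibits an explicit isomorphism in the one-dimensional case, you observe that $E\times^{\mu_n}\Gm_k$ is the associated vector bundle of a representation along a $\mu_n$-torsor, hence a vector bundle over $\Gm_k$, necessarily trivial because $k[T,T^{-1}]$ is a principal ideal domain, so its class is $\eL^{\dim E}$ independently of the action. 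The one thing the paper's presentation buys is that the formula needed later in Proposition \ref{prop-chi-muhat} --- that the class of $X[V]/(V^r-f)$ goes to the class of $Q_r^{gm}(X,f)$ --- is true by definition; with your construction it is true by the computation of invariants just mentioned, so nothing is lost.
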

\begin{proof}
From the proof of Proposition \ref{iso-res-varmu}, $\K{{\Varmu{k}}^\flat}$ is generated by classes of the form 
\[
Y=X[V]/(V^m-f),
\]
for $X$ a $k$-variety, $f\in \Gamma(X,\mathcal{O}_X^\times)$ with the $\mu_m$-action induced by multiplication of $V$ by $\zeta\in \mu_m$. To such an class, one associate the class of
\[
Z=X[V,V^{-1},T,T^{-1}]/(V^m-Tf)\to \Gm_k=\Spec{k[T,T^{-1}]}.
\]
in $\K{ \Var{\Gm_k} }$.
By induction on the dimension as in the proof of proposition, this leads to a well-defined map 
\[\K{{\Varmu{k}}^\flat}\to \K{ \Var{\Gm_k} }.\]
Indeed, given $Y$ as above, $X$ and $m$ are uniquely determined. The function $f$ is only determined up to a factor in ${\mathcal{O}_X^\times}^n$, but all different choices of representatives will lead to isomorphic $Z$. 

Finally, note that the relations added when dropping the flat are in the kernel of the above map. Indeed, once again, because a linear action of $\mu_n$ on $k^r$ is diagonalisable, it suffices to show that the image of $[(k,\mu_n)]$, where $\mu_n$ acts on $k$ by multiplication by $n$-th roots of unity, is independent of $n$. As 0 is a fixed point, we can restrict the action on $k^\times$. The image in $\K{ \Var{\Gm_k} }$ is then $[\Spec{k[U,U^{-1},T,T^{-1},V]/(V^n-TU)}]$. But this variety is isomorphic to $\Spec{k[V,V^{-1},T,T^{-1}]}$ over $\Gm_k$, by the map defined by $U\mapsto V^nT^{-1}$, $V\mapsto V$. 
\end{proof}

\section{Preliminaries on motives}
\label{section-prelim-motives}

This section is devoted to fix notations about motives. After a brief recap on triangulated categories in Section \ref{section-triang-cat}, we will outline the six functors formalism in Section \ref{section-six-functors}.  We then built a map from the equivariant Grothendieck group of varieties to the Grothendieck group of quasi-unipotent motives in Section \ref{section-chihatmu}. Finally, we give some background on rigid analytic geometry and formal schemes in Sections \ref{section-rigid-geometry} and \ref{section-formal-schemes} before focusing on motives of rigid analytic varieties in Section \ref{section-rigid-motives}.

\subsection{Triangulated categories}
\label{section-triang-cat}

A triangulated category, as introduced by Verdier in his thesis  \cite{verdier_categories_1996}, is an additive category endowed with an equivalence, denoted $-[1]$ and called the suspension, and a class of distinguished triangles, of the form 
\[
A\to B\to C\overset{+1}{\to},
\]
satisfying some axioms. 

To every arrow $f : A\to B$, we can find a $g : B\to C$ such that
\[
A\to B\to C \overset{+1}{\to}
\] 
is a distinguished triangle. Such a $C$ is unique up to non-cannonical isomorphism. We call $C$ the (mapping) cone of $f$,  written $\Cone(f)$. 
We have that $f$ is an isomorphism if and only $\Cone(f)=0$. 

If $\calT$ is a triangulated category, and $\calS$ a full subcategory closed under suspension and desuspension, Verdier shows that one can define a quotient triangulated category $\calT/\calS$ which is universal with respect to the following properties :
\begin{itemize}
\item There is a canonical triangulated functor $\calT\to \calT/\calS$  which is the identity on objects. 
\item For every $A\in \calS$, $A\simeq 0$ in $\calT/\calS$. 
\end{itemize} 

The idea is to consider the class of arrows $\mathrm{Ar}(\calS)=\set{\alpha : A\to B \mid \Cone(\alpha)\in \calS}$ and formally invert them inside $\calT$. We call $\calT/\calS$ the Verdier localization of $\calT$ with respect to $\calS$.

Assume $\calT$ is a triangulated category admitting direct sums indexed by arbitrary sets. An object $A$ of $\calT$ is said to be compact if the canonical morphism
\[
\bigoplus_{s\in S}\hom_\calT(A,B_s)\to \hom_{\calT}(A,\bigoplus_{s\in S}B_s)
\]
is an isomorphism for any set $\set{B_s}_{s\in S}$ of objects of $\calT$. Let $\calT_{\mathrm{cp}}$ the full subcategory of compact objects of $\calT$. It is a triangulated subcategory of $\calT$. 

If $\calT$ is a triangulated category, we define its Grothendieck group $\K{\calT}$ as the free abelian group generated by isomorphism classes of objects of $\calT_{\mathrm{cp}}$ with relations
$[B]=[A]+[C]$ for every distinguished triangle 
\[
A\to B\to C\overset{+1}{\to}.
\]
Note that the class $[\Cone(\alpha)]\in \K{\calT}$ of the cone of an arrow $\alpha : A \to B$ is now canonically defined. 

\begin{Remark}
We must restrict to compact objects when we define $\K{\calT}$ to prevent the group to be trivial. Indeed, otherwise if $A$ is an object of $\calT$, we would have $[\bigoplus_{n\in \Nn} A]=[A]+[\bigoplus_{n\in \Nn} A]$ hence $[A]=0$. 
\end{Remark}

As for every compact object $A$, the triangle
\[
A\to 0\to A[1] \overset{+1}{\to}
\]
is distinguished, $[A[1]]=-[A]\in \K{\calT}$, hence the suspension is indempotent in $\K{\calT}$. 

Moreover, as we have an distinguished triangle for every $A,B\in \calT_{\mathrm{cp}}$
\[
A\to A\oplus B\to B \overset{+1}{\to},
\]
we have $[A\oplus B]=[A]+[B]$. 

If $\calT$ is moreover a monoidal triangulated category, then $\K{\calT}$ inherits of a ring structure induced by tensor product.

\subsection{Stable category of motives}
\label{section-six-functors}

Denote by $\SH{S}$ the stable category of motivic sheaves over $S$ for the Nisnevich topology and coefficients $\mathfrak{M}$, as studied by Ayoub in \cite[D\'efinition 4.5.21]{ayoub_six_2}. The two main examples are if $\mathfrak{M}$ is the category of simplicial spectra, in which case $\SH{S}$ is the stable homotopy category (without transfers) of Morel-Voevodsky introduced in \cite{morel_voevodsky_A1}. The other one is if $\mathfrak{M}$ is the category of complexes of $\Lambda$ modules, for some ring $\Lambda$. In this case we denote $\SH{S}=\DA{S,\Lambda}$.

All schemes are separated and of finite type. 
If $X$ is a smooth $S$-scheme, the homological motive of $X$ is $\Mot_S(X)=\Sus_T^0(\Lambda(X))\in \SH{S}$. 

For any $A\in \SH{S}$, the tensor product $-\otimes A$ admits a right adjoint $\intHom(A,-)$, the internal Hom. Denote by $\Mot_S^\vee(X)=\intHom(\Mot_S(X),\un_S)$. It is the cohomological motive of $X$. 

For $r\in \Nn$, denote by $\un_S(r):=\Sus_T(T^{\otimes r})[-2r]$ and $-(-r)$ the functor $-\otimes \un_S(r)$. It is the Tate twist. As $\Sus_T^r(-)\otimes \Sus_T^s(-)=\Sus_T^{r+s}(-\otimes -)$, we see that $(-r)\circ (-s)=(-r-s)$. Set also $-(r)=-\otimes \Sus^r(\un_S)[2r]$. Thanks to the stabilization, $(r)$ is the inverse of $(-r)$, because $\Sus_T^0(T^{\otimes r})\otimes \Sus_T^r(\un_S)=\Sus_T^r(T^{\otimes r})=\Sus_T^0(\un_S)$.

The categories $\SH{-}$ possess various fonctorialities. If $f : X\to Y$ is a morphism of schemes, then the pull-back $f^*$ and the push-forward $f_*$ defined at the level of sheaves induce functors $f^* : \SH{Y}\to \SH{X}$ and $f_* : \SH{X}\to \SH{Y}$, $f_*$ is a right adjoint to $f^*$. If $f$ is smooth, then $f^*$ also possesses a left adjoint denoted $f_\sharp$, moreover, for any commutative diagram
\[
\xymatrixcolsep{5pc}
\xymatrix{
\bullet \ar[d]_{f'} \ar[r]^-{g'} &\bullet \ar[d]^{f} \\
\bullet \ar[r]_-{g} &\bullet,
}
\]
there is a 2-isomorphism (\emph{i.e.} an invertible natural transformation) between $f'_\sharp g'^*$ and $g^*f_\sharp$. 

\emph{Locality.}
Let $X$ some $S$-scheme, $i : Z \to X$ a closed immersion and $ j : U \to X$ the complementary open immersion.  Then the pair $(i^*, j^*)$ is conservative, that is for any $A\in \SH{X}$, $A=0$ if and only if $i^*A=0$ and $j^*A=0$. Moreover, the counit of the adjuction $i^*i_*\to \mathrm{id}$ is an isomorphism. 

 From this property one can deduce that for any $A\in \SH{X}$, there is a distinguished triangle
\begin{equation}
\label{eqn-localitytriangle}
j_\sharp j^* A \to A  \to i_*i^*A\overset{+1}{\to}.
\end{equation}
In this situation, we set $i_!=i_*$ and $j_!=j_\sharp$, such that the triangle \ref{eqn-localitytriangle} becomes
\begin{equation}
\label{eqn-localitytrianglebis}
j_! j^* A \to A  \to i_!i^*A\overset{+1}{\to}.
\end{equation}

We also set $j^!=j^*$ and $i^!(A)=i^*\mathrm{Cone}(A\to j_*j^*A)[-1]$ (one needs to check that this construction is functorial), such that we have the dual triangle 

\begin{equation}
\label{eqn-localitytrianglebisdual}
i_* i^! A \to A  \to j_*j^!A\overset{+1}{\to}.
\end{equation}

\emph{Thom equivalences.}
Let $\calE$ be a locally free sheaf over $X$. Identifying $\calE$ with the total space of the vector bundle associated to it, set 
$p : \calE \to X$ the projection and $s : X\to \calE$ the zero section. Define $\Th(\calE)=p_\sharp\circ s_*$ the Thom equivalence associated to $\calE$. Thanks to the stabilization, it is an autoequivalence of $\SH{X}$. The exact triangle given by locality also show that $\Th(\calE)(\un_X)=\calE/(\calE\backslash s(X))$, the maybe more familiar definition of Thom space. 

If $\calE\simeq \calO^n_S$ is free, then $\Th(\calE)[-2n]$ is the Tate twist $(n)$.

 For $f$ smooth, denote $\Omega_f$ the relative canonical bundle of $f$ and set $f_!=f_\sharp\circ \Th^{-1}(\Omega_f)$ and $f^!=\Th(\Omega_f)\circ f^*$.

 For $f$ quasi-projective, we can factor (non-uniquely) $f$ as $f=g\circ i$ where $g$ is smooth and $i$ is a closed immersion. In this situation, we set $f_!=g_!i_*$ and $f^!=i^!g^!$.  One of the main results of \cite{ayoub_six_1} is that such a construction is independent of the choice of the factorization $f=g \circ i$. 
 
For any $f$, $f_!$ is a left adjoint to $f^!$. For $f$ projective, we have $f_!=f_*$.
 
One can drop the assuption of quasi-projectivity. Using Nagata compactification theorem, if $f$ is a morphism separated of finite type, we can factor $f$ as $f=p\circ j$ with $j$ an open immersion and $p$ proper and define $f_!=p_*\circ j_\sharp$. Once again, one can show that this does not depend on the choice of the compactification, see \cite{cisinski_triangulated_2012} for details.

 \begin{Definition}
 We can now define for any $f : X\to S$ the homological motive of $X$ as 
 \[\Mot_S(X)=f_!f^!(\un_S),\]
 the cohomological motive of $X$ as 
 \[\Mot^\vee_S(X)=f_*f^*(\un_S),\]
 the cohomological motive with compact support of $X$ as
  \[\Mot^\vee_{S,c}(X)=f_!f^*(\un_S)\]
  and the homological motive with compact support of $X$ as
    \[\Mot_{S,c}(X)=f_*f^!(\un_S).\]
\end{Definition}
Observe that such a definition is compatible with the one given above for $f$ smooth. Indeed, in that case we have $f_!=f_\sharp \Th^{-1}(\Omega_f)$ and $f^!=\Th(\Omega_f)f^*$ hence $f_!f^!=f_\sharp f^*$.

If $f$ is proper, observe $\Mot_{S,c}^\vee(X)=\Mot_{S}^\vee(X)$ because $f_!=f_*$.

\begin{Proposition}[{\cite[Scholie 1.4.2]{ayoub_six_1}}]
Suppose there is a cartesian square
\[
\xymatrixcolsep{5pc}
\xymatrix{
\bullet \ar[d]_{f'} \ar[r]^-{g'} &\bullet \ar[d]^{f} \\
\bullet \ar[r]_-{g} &\bullet
}
\]
Then there is a 2-isomorphism between $f'_!g'^*$ and $g^*f_!$. 
\end{Proposition}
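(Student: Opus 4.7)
The plan is to reduce this base change isomorphism to the elementary cases built into the construction of $f_!$ in Section \ref{section-six-functors}. First I would factor $f$ as $f=p\circ j$ with $j$ an open immersion and $p$ proper (using Nagata compactification, which is already invoked in the excerpt for the very definition of $f_!$ for $f$ separated of finite type), and then factor the proper map $p=g\circ i$ further as a closed immersion $i$ followed by a smooth (or even projective) morphism $g$. Pulling back along $g$ in the given cartesian square produces a compatible factorization of $f'$, so it is enough to establish the base change 2-isomorphism separately for the three building blocks: smooth morphisms, closed immersions, and proper morphisms.

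For a smooth morphism $h$, the definition gives $h_!=h_\sharp\circ \Th^{-1}(\Omega_h)$. The 2-isomorphism $h'_\sharp g'^*\simeq g^* h_\sharp$ over a cartesian square is already recorded in the excerpt among the basic properties of $\SH{-}$. The Thom equivalence commutes with pullback because the relative cotangent sheaf base-changes canonically ($g'^*\Omega_h\simeq \Omega_{h'}$), so $\Th^{-1}(\Omega_{h'})\circ g'^*\simeq g'^*\circ \Th^{-1}(\Omega_h)$; combining these gives the desired 2-isomorphism $h'_!g'^*\simeq g^* h_!$.

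For a closed immersion $i:Z\hookrightarrow X$ with open complement $j:U\hookrightarrow X$ we have $i_!=i_*$, and I would deduce base change from the locality triangle \eqref{eqn-localitytrianglebis} applied to any $A\in \SH{X}$. Pulling back by $g$ and using the already-established smooth base change for $j_\sharp$, one identifies the middle and left terms of the pulled-back triangle with $f'_! f'^*$ applied to appropriate pullbacks, and conservativity of $(i^*,j^*)$ forces the induced map $g^* i_!\to i'_! g'^*$ to be an isomorphism. The proper case $f=p$ can then be handled by factoring $p$ as a closed immersion into a projective bundle followed by a smooth proper map, or directly by invoking proper base change in the form $g^*p_*\simeq p'_*g'^*$.

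The main obstacle, which is why this is a ``scholie'' in \cite{ayoub_six_1} rather than an elementary lemma, is the coherence: one must verify that the 2-isomorphisms built from the smooth/closed factorization are independent of the chosen factorization of $f$, and that they compose associatively when $f$ is itself a composite. This is really what the cross-functors formalism of \cite{ayoub_six_1} is engineered to provide, so in practice I would simply cite the relevant part of \emph{loc. cit.} rather than redo the coherence bookkeeping by hand.
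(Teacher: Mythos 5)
The paper does not actually prove this statement: it is quoted verbatim from Ayoub (Scholie 1.4.2 of \cite{ayoub_six_1}) and used as a black box, so your concluding move --- ``in practice I would simply cite the relevant part of \emph{loc.\ cit.}'' --- is exactly what the paper does, and there is no in-paper argument to compare your sketch against. That said, your outline is a faithful summary of how the result is established in Ayoub's formalism: smooth base change via the exchange $h'_\sharp g'^*\simeq g^*h_\sharp$ combined with compatibility of Thom equivalences with pullback, closed immersions via the locality triangle and conservativity of $(i^*,j^*)$, and the recognition that the genuine content is the coherence (independence of the factorization), which is precisely what the cross-functors machinery delivers.

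One inaccuracy worth flagging: a general proper morphism does \emph{not} factor as a closed immersion followed by a smooth (or projective-bundle) morphism; that factorization is available for quasi-projective morphisms, which is why the paper's Section \ref{section-six-functors} defines $f_!=g_!i_*$ only in the quasi-projective case and passes to Nagata compactification ($f=p\circ j$ with $p$ proper, $j$ open) for the general separated finite-type case. Your fallback of ``directly invoking proper base change $g^*p_*\simeq p'_*g'^*$'' is fine as a citation but is itself one of the main theorems of the six-functor formalism rather than an elementary input, so it cannot serve as an independent ingredient of a self-contained proof without circularity. Since the whole proposition is being cited anyway, this does not damage your answer, but a genuinely self-contained argument would need either Chow's lemma or the full cross-functor apparatus at that step.
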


The following proposition is the projection formula. 
\begin{Proposition}[{\cite[Proposition 2.3.40]{ayoub_six_1}}]
Let $f : Y\to X, A\in \SH{X}, B\in\SH{Y}$. Then
\[
f_!(f^*A\otimes_Y B)=A\otimes_X f_! B. 
\]
\end{Proposition}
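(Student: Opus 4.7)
The plan is to exhibit a canonical comparison morphism
\[
\alpha : f_!(f^*A \otimes_Y B) \longrightarrow A \otimes_X f_!B
\]
and then show it is invertible by reducing $f$ to its elementary pieces. The map $\alpha$ is natural in $A$ and $B$, and is obtained by adjunction from a morphism $f^*A \otimes_Y B \to f^!(A \otimes_X f_!B)$ assembled out of the unit $B \to f^! f_! B$, the symmetric monoidality of $f^*$, and the exchange transformations between $f^*$, $f^!$ and the tensor product.

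By the factorizations recalled in the text, $f$ is locally of the form $g \circ i$ with $g$ smooth (further compactified via Nagata when needed) and $i$ a closed immersion, and the functor $f_!$ respects such a decomposition; the comparison map $\alpha$ behaves naturally with respect to these composites. It therefore suffices to verify that $\alpha$ is an isomorphism when $f$ is either smooth or a closed immersion. In the smooth case one has $f_! = f_\sharp \circ \Th^{-1}(\Omega_f)$; since the Thom equivalence commutes with $f^*$ and with tensoring by objects pulled back from the base (it amounts to tensoring with a $\otimes$-invertible object), the question reduces to the analogous formula for $f_\sharp$. This last statement follows from the adjunction $(f_\sharp, f^*)$ and the fact that $f^*$ is symmetric monoidal: for any test object $C \in \SH{X}$, both $\mathrm{Hom}_{\SH{X}}(f_\sharp(f^*A \otimes B), C)$ and $\mathrm{Hom}_{\SH{X}}(A \otimes f_\sharp B, C)$ compute to $\mathrm{Hom}_{\SH{Y}}(B, f^* \intHom(A,C))$, and one checks that $\alpha$ realizes this identification.

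The main difficulty lies in the closed immersion case $f = i$, where $i_! = i_*$. Here I would use the conservative family $(i^*, j^*)$ provided by the locality property, with $j$ the complementary open immersion. Via $j^*$, both sides vanish: the locality triangle yields $j^* i_* = 0$, so $j^*i_!(i^*A \otimes B) = 0$, and since $j^*$ is symmetric monoidal, $j^*(A \otimes i_!B) \simeq j^*A \otimes j^* i_! B = 0$ as well. Via $i^*$, the counit $i^* i_* \to \mathrm{id}$ is an isomorphism, giving $i^* i_!(i^*A \otimes B) \simeq i^*A \otimes B$; on the other hand, monoidality of $i^*$ and the same counit yield $i^*(A \otimes i_! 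B) \simeq i^*A \otimes i^* i_* B \simeq i^*A \otimes B$, and a direct inspection of the construction of $\alpha$ shows that $i^*\alpha$ is the identity under these identifications. Hence $\alpha$ is an isomorphism for $i$, and combining the two cases finishes the proof.
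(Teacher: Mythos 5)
The paper does not prove this statement at all: it is quoted verbatim from Ayoub's monograph (Proposition 2.3.40 of \emph{Les six op\'erations\dots I}) and used as a black box, so there is no internal proof to compare against. Your sketch is the standard argument one finds in the six-functors literature --- construct a natural comparison map, reduce along the factorization $f=g\circ i$ to the smooth case (where $f_!=f_\sharp\circ\Th^{-1}(\Omega_f)$ and the claim follows from the $(f_\sharp,f^*)$ adjunction plus monoidality of $f^*$) and to the closed-immersion case (where one tests against the conservative pair $(i^*,j^*)$ furnished by locality). Both reductions are carried out correctly, and the closed-immersion verification via $j^*i_*=0$ and $i^*i_*\simeq\mathrm{id}$ is exactly right.

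One point deserves more care: the construction of $\alpha$ itself. You build it from the unit $B\to f^!f_!B$ together with an exchange map of the shape $f^*A\otimes f^!C\to f^!(A\otimes C)$; in most treatments (including Ayoub's) that exchange map is \emph{defined} as the adjoint of the projection formula, so invoking it here is circular. A related subtlety is that the maps produced directly by adjunction point in opposite directions in your two cases: for $f_\sharp$ the unit $B\to f^*f_\sharp B$ gives $f_\sharp(f^*A\otimes B)\to A\otimes f_\sharp B$, whereas for $i_*$ the counit $i^*i_*\to\mathrm{id}$ gives the map $A\otimes i_*B\to i_*(i^*A\otimes B)$ going the other way. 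The clean fix is to define the comparison morphism separately on each elementary piece (for $f_\sharp\circ\Th^{-1}$ and for $i_*$), prove each is invertible as you do, and then check that the resulting isomorphisms are compatible with the connection isomorphisms $(g\circ i)_!\simeq g_!i_!$ --- which is precisely the coherence data packaged in Ayoub's crossed-functor formalism and the reason the paper simply cites him rather than reproving it.
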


The subcategory $\SH{S}_{\mathrm{cp}}$ of compact objects of $\SH{S}$ is the smallest triangulated subcategory  of $\SH{S}$ stable by direct factors and generated by homological motives of smooth quasi-projective varieties and their twists. 

The following motivic realisation has already been considered by Ivorra and Sebag in \cite[Lemma 2.1]{ivorra_nearby_2013}.
\begin{Proposition}
\label{realisation-motives-is}
Let $S$ be a $k$-scheme. There is a unique ring morphism \[\chi_S: \K{\Var{S}} \to \K{\SH{S}}\] such that $\chi_S([X])=\Mot_{S,c}^\vee(X)$ for any $S$-scheme $f : X \to S$. 
\end{Proposition}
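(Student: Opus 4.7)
The plan is to verify that the assignment $X \mapsto [\Mot_{S,c}^\vee(X)]$ passes to the Grothendieck ring, then identify it as the unique ring morphism with the prescribed values on generators. Uniqueness is automatic since $\K{\Var{S}}$ is generated as an abelian group by classes of $S$-schemes. So I only have to check (a) the scissors relations and (b) compatibility with products.

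For the scissors relations, let $i : Y \hookrightarrow X$ be a closed $S$-subscheme with open complement $j : U \hookrightarrow X$, and let $f : X \to S$ be the structural morphism. Apply the locality triangle \eqref{eqn-localitytrianglebis} to the object $A = f^*\un_S \in \SH{X}$ to obtain a distinguished triangle
\[
j_! j^* f^* \un_S \to f^* \un_S \to i_! i^* f^* \un_S \overset{+1}{\to}.
\]
Since $f_!$ is a triangulated functor, applying it gives a distinguished triangle in $\SH{S}$. Using $f_! j_! = (f \circ j)_!$, $j^* f^* = (f \circ j)^*$, and similarly for $i$, this triangle reads
\[
\Mot_{S,c}^\vee(U) \to \Mot_{S,c}^\vee(X) \to \Mot_{S,c}^\vee(Y) \overset{+1}{\to},
\]
which yields $[\Mot_{S,c}^\vee(X)] = [\Mot_{S,c}^\vee(U)] + [\Mot_{S,c}^\vee(Y)]$ in $\K{\SH{S}}$, as required.

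For multiplicativity, let $f : X \to S$ and $g : Y \to S$ be two $S$-schemes, and consider the cartesian square with projections $p_X, p_Y$ from $X \times_S Y$. I use the projection formula to write
\[
\Mot_{S,c}^\vee(X) \otimes \Mot_{S,c}^\vee(Y) = f_! f^* \un_S \otimes g_! g^* \un_S = g_!(g^* f_! f^* \un_S \otimes \un_Y),
\]
then apply the base change 2-isomorphism $g^* f_! \simeq p_{Y,!} p_X^*$ from the previous proposition to rewrite the right-hand side as $g_! p_{Y,!} p_X^* f^* \un_S = (g \circ p_Y)_! (f \circ p_X)^* \un_S = \Mot_{S,c}^\vee(X \times_S Y)$. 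Passing to classes in $\K{\SH{S}}$ gives the ring morphism property, and combined with the scissors relations this produces the desired $\chi_S$.

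The main conceptual ingredients are really just the locality triangle and the base change / projection formula from the six functor formalism; no step should present genuine difficulty, since everything is formal once one unwinds the definitions of $\Mot_{S,c}^\vee(-)$. The only mild subtlety to watch for is that $f_!$ is well-defined (via Nagata compactification) for an arbitrary separated morphism of finite type, so that $\Mot_{S,c}^\vee(X)$ makes sense for any $S$-variety $X$ and not only quasi-projective ones; this is already guaranteed by the framework recalled above.
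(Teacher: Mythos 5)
Your proof is correct and follows essentially the same route as the paper: the scissors relations come from applying $f_!$ to the locality triangle for $A=f^*\un_S$, and multiplicativity from combining the base change 2-isomorphism with the projection formula (you merely run that computation in the opposite direction from the paper's). No gaps.
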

\begin{proof}
To show that $\chi_S$ is a group morphism well defined on $\K{\Var{S}}$, we need to check that for any $S$-scheme $X$,  if $i : Y\hookrightarrow X$ is a closed immersion, with $j :U\hookrightarrow X$ the open complement immersion, then $[\Mot_{S,c}(X)]=[\Mot_{S,c}(Y)]+[\Mot_{S,c}(U)]$ in $\K{\SH{S}}$. 
For any object $A\in \K{\SH{X}}$ we have the distinguished triangle
\[
j_!j^*A\to A\to i_!i^*A\overset{+1}{\to}
\]
By applying $f_!$ to this triangle and substituting $A$ by $f^*{\un}_S$, we get the required distinguished triangle
\[
\Mot_{S,c}^\vee(U)\to \Mot_{S,c}^\vee(X)\to \Mot_{S,c}^\vee(Y)\overset{+1}{\to}.\]

It remains to show that it is compatible with ring structure. Pick $f : X\to S$ and $g : Y \to S$ two $S$-schemes. Form the fiber product $r : X\times_S Y \to S$. From the exchange property applied to the cartesian square 
\[
\xymatrixcolsep{5pc}
\xymatrix{
X\times_S Y \ar[d]_{f'} \ar[r]^-{g'} \ar[rd]^-{r} &X \ar[d]^{f} \\
Y \ar[r]_-{g} &S
}
\]
and the projection formula, we have
\begin{eqnarray*}
\chi_S([X]\cdot[Y])&=&\chi_S([X\times_S Y])=[\Mot_{S,c}(X\times_S Y)]=[r_!\un_{X\times_S Y}]=[g_!f'_!g'^*f^*\un_S]\\
&=&[g_!g^*f_!f^*\un_S]=[g_!g^*\Mot_{S,c}(X)]=[g_!(g^*\Mot_{S,c}(X)\otimes_Y \un_Y)]\\
&=&[\Mot_{S,c}^\vee(X)\otimes_S g_!\un_Y)]=[\Mot_{S,c}^\vee(X)\otimes_S \Mot_{S,c}(Y)]\\
&=&\chi_S([X])\cdot \chi_S([Y]).
\end{eqnarray*}
\end{proof}

\begin{Proposition}
\label{prop-compactcohom-motive-smooth}
Let $f : X \to S$ a smooth morphism of pure relative dimension $d$. Then 
\[
[\Mot_{S,c}^\vee(X)]=[\Mot_S(X)(-d)]\in \K{\SH{S}}.
\]
\end{Proposition}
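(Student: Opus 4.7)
I would first unwind both sides using smooth purity. Since $f$ is smooth, $f_!=f_\sharp\circ\Th^{-1}(\Omega_f)$ and $f^!=\Th(\Omega_f)\circ f^*$, so $f_!f^!=f_\sharp f^*$. Combining with the projection formula $f_\sharp(A\otimes f^*B)=f_\sharp A\otimes B$, I obtain
\[
\Mot_{S,c}^\vee(X)=f_\sharp\bigl(\Th^{-1}(\Omega_f)\un_X\bigr),\qquad \Mot_S(X)(-d)=f_\sharp\bigl(\un_X(-d)\bigr),
\]
so the goal reduces to showing these two objects have the same class in $\K{\SH{S}}$. When $\Omega_f\simeq\calO_X^d$ is trivial, the recalled formula $\Th(\calO_X^d)=(d)[2d]$ gives the stronger identity $\Mot_{S,c}^\vee(X)=\Mot_S(X)(-d)[-2d]$ in $\SH{S}$; the even shift is invisible in $\K{\SH{S}}$ because $[A[n]]=(-1)^n[A]$, settling the trivial case.

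For the general case I would induct on the minimal number $n$ of Zariski opens covering $X$ on which $\Omega_f$ becomes free; $n$ is finite since $\Omega_f$ is locally free of rank $d$. For $n\geq 2$, write $X=V\cup U$ with $V=U_1\cup\cdots\cup U_{n-1}$, $U=U_n$ and $W=V\cap U$; then $U,V$ and $W$ are smooth of relative dimension $d$ over $S$ and admit trivializing covers of length at most $n-1$, so the inductive hypothesis applies to them. It suffices to establish the Mayer--Vietoris identity
\[
[\calM(X)]+[\calM(W)]=[\calM(U)]+[\calM(V)]
\]
in $\K{\SH{S}}$ for $\calM=\Mot_{S,c}^\vee$ and $\calM=\Mot_S(-)(-d)$. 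For $\Mot_S(-)(-d)$ this follows by applying $f_\sharp$ to the Zariski Mayer--Vietoris triangle $j_{W,\sharp}\un_W\to j_{U,\sharp}\un_U\oplus j_{V,\sharp}\un_V\to\un_X\overset{+1}{\to}$ in $\SH{X}$ and tensoring with $\un_S(-d)$. For $\Mot_{S,c}^\vee$ it follows by applying $f_!$ to the localization triangles of the pairs $(U\subset X,\, X\setminus U)$ and $(W\subset V,\, V\setminus W)$, and using that the reduced closed complements $X\setminus U$ and $V\setminus W$ coincide, so that subtracting the two resulting identities in $\K{\SH{S}}$ yields the Mayer--Vietoris relation.

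\textbf{Main obstacle.} The delicate point is the Mayer--Vietoris identity for $\Mot_{S,c}^\vee$: since $f_!$ does not commute with gluing along open immersions the way $f_\sharp$ does, this identity is not a direct consequence of Nisnevich descent but must be extracted from the localization property applied twice, and it holds only at the level of classes rather than as a distinguished triangle. Once it is in place, the Zariski induction reduces formally to the case of a trivial cotangent bundle.
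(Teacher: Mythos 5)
Your proof is correct and follows the same strategy as the paper's: use smooth purity to reduce everything to comparing $\Th^{-1}(\Omega_f)$ with $(-d)[-2d]$, treat the case where $\Omega_f$ is free directly (the even shift vanishing in $\K{\SH{S}}$), and handle the general case by locality. The paper compresses the reduction into the single phrase ``as $\Mot_{S,c}(-)$ is additive and $\Omega_f$ is locally free, we can assume $\Omega_f$ is free''; your Zariski Mayer--Vietoris induction is a careful implementation of exactly that step, correctly accounting for the fact that $\Mot_S(-)$ satisfies Mayer--Vietoris for open covers while $\Mot_{S,c}^\vee(-)$ is additive for closed-open decompositions.
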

\begin{proof}
By definition, $\Mot_{S,c}^\vee(X)=f_!f^*(\un_S)$ and $f_!=f_*\Th^{-1}(\Omega_f)$. As $\Mot_{S,c}(-)$ is additive and $\Omega_f$ is locally free, we can assume $\Omega_f$ is free (of rank $d$). In that case, $\Th^{-1}(\Omega_f)=(-d)[-2d]$. The result now follows because suspension function is idempotent in the Grothendieck group. 
\end{proof}

\subsection{From $\K{\Varmu{k}}$ to $\K{\QUSH{k}}$}
\label{section-chihatmu}

Let $X=\Spec{A}$ a $k$-scheme of finite type, $r\in \Nn^*$ and$f\in A^\times$. We denote $Q_r^{gm}(X,f)$ the ${\mathbb{G}_{m}}_k$-scheme 
\[
\Spec{A[T,T^{-1},V]/(V^r-fT)}\to {\mathbb{G}_{m}}_k=\Spec{k[T,T^{-1}]}.
\]

More generally, we define by gluing for $X=\Spec{A}$ $k$-scheme of finite type, $r\in \Nn\backslash\set{0}$, $f\in\Gamma(X,\mathcal{O}_X^\times)$.

\[
Q_r^{gm}(X,f)\to {\mathbb{G}_{m}}_k.
\]

Let $\QUSH{k}$ the triangulated subcategory with infinite sums of $\SH{{\mathbb{G}_{m}}_k}$ spanned by objects $\Sus_T^p(Q_r^{gm}(X,f)\otimes A_{\mathrm{cst}})$ for $X$ smooth $k$-scheme. (define $A\in \mathcal{E}$). 

Let $q : {\mathbb{G}_{m}}_k \to \Spec{k}$ the structural projection and $1 : \Spec{k}\to {\mathbb{G}_{m}}_k$ its unit section.

 \begin{Proposition}
 \label{prop-chi-muhat}
There is a unique ring morphism 
\[
\chi_{\hat\mu} : \K{\Varmu{k}} \to \K{\QUSH{k}}
\]
 such that the class of $X[V]/(V^r-f)$, for $X$ a smooth $k$-scheme, $f\in \Gam(X,\mathcal{O}_X^\times)$, $r\in \Nn\backslash\set{0}$ (with the $\mu_n$ action on $V$), is send to $[\Mot_{\Gm_k,c}^\vee(Q_r^{gm}(X,f))]$.
 \end{Proposition}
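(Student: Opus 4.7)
The plan is to realize $\chi_{\hat\mu}$ as the composition
\[
\K{\Varmu{k}} \overset{\Phi}{\longrightarrow} \K{\Var{\Gm_k}} \overset{\chi_{\Gm_k}}{\longrightarrow} \K{\SH{\Gm_k}},
\]
where $\Phi$ is the map from Corollary \ref{cor-varmu-vargm} and $\chi_{\Gm_k}$ is the motivic realization of Proposition \ref{realisation-motives-is}. On a standard generator $[Y]$ with $Y=X[V]/(V^r-f)$ ($X$ smooth, $f\in\Gamma(X,\mathcal{O}_X^\times)$, $\mu_r$-action $V\mapsto \zeta V$), this composite produces exactly $[\Mot_{\Gm_k,c}^\vee(Q_r^{gm}(X,f))]$, which is the prescribed value. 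Since the proof of Proposition \ref{iso-res-varmu} shows that such classes generate $\K{\Varmu{k}}$ as an abelian group, uniqueness is immediate from this formula.

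To verify that the image lies in $\K{\QUSH{k}}$, I would observe that $Q_r^{gm}(X,f)\to\Gm_k$ is smooth: in the equation $V^r=Tf$ the element $T$ is a unit and $f\in\Gamma(X,\mathcal{O}_X^\times)$ is a unit, hence $V$ is a unit, so $rV^{r-1}$ is invertible, making $Q_r^{gm}(X,f)\to X\times_k\Gm_k$ \'etale; and $X\times_k\Gm_k\to\Gm_k$ is smooth since $X$ is. Decomposing $X$ by connected components to reduce to pure relative dimension $d=\dim X$, Proposition \ref{prop-compactcohom-motive-smooth} gives
\[
[\Mot_{\Gm_k,c}^\vee(Q_r^{gm}(X,f))] = [\Mot_{\Gm_k}(Q_r^{gm}(X,f))(-d)],
\]
and the right-hand side lies in $\K{\QUSH{k}}$ by the very definition of $\QUSH{k}$.

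The main work, and what I expect to be the main obstacle, is showing that $\chi_{\hat\mu}$ is a ring morphism. Since $\chi_{\Gm_k}$ is one by Proposition \ref{realisation-motives-is}, it suffices to prove that $\Phi$ carries the cartesian product with diagonal $\hat\mu$-action in $\Varmu{k}$ to the fiber product over $\Gm_k$. The subtlety is that the product of two standard generators $Y_i=X_i[V_i]/(V_i^{m_i}-f_i)$ carries a diagonal $\mu_N$-action ($N=\mathrm{lcm}(m_1,m_2)$) which is generically not of standard form. The plan is to mimic the Kummer-theoretic argument in the proof of Proposition \ref{iso-res-varmu}: stratify $Y_1\times_k Y_2$ by locally closed pieces on which the stabilizer is constant, reduce each piece to a faithful action, and present it in standard form by Kummer theory. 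Running the same stratification on $Z_1\times_{\Gm_k} Z_2=X_1\times X_2[V_1,V_2,T,T^{-1}]/(V_1^{m_1}-Tf_1,V_2^{m_2}-Tf_2)$ must then yield a matching decomposition term by term. The pattern is already visible in the toy case $X_1=X_2=\Spec k$, $f_1=f_2=1$, $m_1=m_2=m$, where both sides decompose into $m$ copies of $\Spec k[V,T,T^{-1}]/(V^m-T)$ over $\Gm_k$; the task is to promote this example to the general statement.

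Compatibility of $\Phi$ with the scissors relations and with the linear-action relations $[(E,\rho)]=[(E,\rho')]$ (dropping the flat) was already verified in Corollary \ref{cor-varmu-vargm}, and is then preserved by $\chi_{\Gm_k}$. Hence modulo the Kummer-matching step in the previous paragraph, all remaining verifications are routine.
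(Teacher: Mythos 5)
Your construction coincides with the paper's: define $\chi_{\hat\mu}$ as $\chi_{\Gm_k}\circ\Phi$ with $\Phi$ the map of Corollary \ref{cor-varmu-vargm}, read off the value on the standard generators $X[V]/(V^r-f)$, deduce uniqueness from the generation statement extracted from the proof of Proposition \ref{iso-res-varmu}, and check that the image lands in $\K{\QUSH{k}}$ via smoothness of $Q_r^{gm}(X,f)\to\Gm_k$ together with Proposition \ref{prop-compactcohom-motive-smooth}. Your explicit verification that $Q_r^{gm}(X,f)\to X\times_k\Gm_k$ is \'etale (hence the structural map is smooth of relative dimension $\dim X$) is a detail the paper leaves implicit but uses in exactly the same way.

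The one step you leave open --- multiplicativity of $\Phi$ --- is a legitimate worry: Corollary \ref{cor-varmu-vargm} only produces a map of groups, and the paper's proof simply declares the composite to be a ring morphism without comment. As written, however, your stratify-and-match Kummer argument is only a plan, so this is the one genuine gap in the proposal. It can be closed without any stratification by recognizing $\Phi$ as an associated-bundle construction: for $Y$ with good action factoring through $\mu_N$, one has $\Phi([Y])=[(Y\times_k P_N)/\mu_N]$, where $P_N=\Spec{k[S,S^{-1}]}\to\Gm_k$, $T=S^N$, is the Kummer $\mu_N$-torsor (in coordinates, the invariant $W=VS$ identifies $(X[V]/(V^r-f)\times_k P_r)/\mu_r$ with $Q_r^{gm}(X,f)$, and $P_{rN}/\mu_N\cong P_r$ shows independence of the chosen $N$). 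Multiplicativity then reduces to the isomorphism of $\mu_N\times\mu_N$-varieties $P_N\times_{\Gm_k}P_N\cong P_N\times\mu_N$ given by $(s_1,s_2)\mapsto(s_1,s_1^{-1}s_2)$, which collapses $(Y_1\times Y_2\times(P_N\times_{\Gm_k}P_N))/(\mu_N\times\mu_N)$ onto $((Y_1\times Y_2)\times P_N)/\mu_N$ with the diagonal action; your toy computation is the case $Y_1=Y_2=\mu_m$. With that substitution (or with your Kummer-matching carried out in full), the remaining verifications are indeed routine and the proof is complete.
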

 
 \begin{proof}
The ring morphism 
\[
\chi_{\hat\mu} : \K{\Varmu{k}} \to\K{\Var{\Gm_k}}\to \K{\SH{\Gm_k}}.
\]
is defined by composition of maps from Corollary \ref{cor-varmu-vargm} and Proposition  \ref{realisation-motives-is}. 

Hence it suffices to show that the image of this morphism lies in $\K{\QUSH{k}}$. From the proof of \ref{iso-res-varmu}, $\K{\Varmu{k}}$ is generated by classes of $X[V,V^{-1}]/V^r=f$ as in the statement of the proposition. Hence it suffices to show that 
\[
[\Mot_{\Gm_k,c}(Q_r^{gm}(X,f))]\in \K{\QUSH{k}}.
\] 
But  $\QUSH{k}$ is the triangulated subcategory with infinite sums generated by the set of objects $\Sus_T^p(Q_r^{gm}(X,f)\otimes A_{\mathrm{cst}})$, which is stable by Tate twist, hence by Proposition \ref{prop-compactcohom-motive-smooth}, $[\Mot_{\Gm_k,c}(Q_r^{gm}(X,f))]\in \K{\QUSH{k}}$. 
 \end{proof}

\begin{Lemma}
\label{lem-com-diag-varmu-var-QUSH-SH}
We have a commutative diagram :
\[
\xymatrixcolsep{5pc}
\xymatrix{
 \K{\Varmu{k}} \ar[d]_{\chi_{\hat\mu} } \ar[r]^-{} & \K{\Var{k}} \ar[d]^{\chi_k} \\
\K{\QUSH{k}} \ar[r]_-{1^*} &\K{\SH{k}},
}
\]
where $ \K{\Varmu{k}} \to  \K{\Var{k}}$ is induced by the forgetful functor and 
\[
1^* :\K{\QUSH{k}} \to \K{\SH{k}}
\]
is the composite 
\[
\K{\QUSH{k}} \hookrightarrow \K{\SH{{\mathbb{G}_{m}}_k}} \overset{1^*}{\longrightarrow} \K{\SH{k}}.
\]
\end{Lemma}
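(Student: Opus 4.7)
The plan is to verify commutativity of the square on a set of generators of $\K{\Varmu{k}}$ and then to apply base change for the exceptional direct image to identify the two compositions. By the proof of Proposition \ref{iso-res-varmu} (which also underlies Corollary \ref{cor-varmu-vargm}), the group $\K{\Varmu{k}}$ is generated by classes of the form $[Y]$, where $Y = \Spec{A[V]/(V^r - f)}$ for $X = \Spec{A}$ a smooth affine $k$-scheme, $f \in \Gamma(X,\mathcal{O}_X^\times)$, and $r \in \Nn^*$, equipped with the $\mu_r$-action scaling $V$ by roots of unity. It is therefore enough to chase such a generator around the diagram.

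Going right then down, the forgetful functor sends $[Y]$ to the class of the underlying $k$-variety $Y$, and $\chi_k$ then produces $[\Mot_{k,c}^\vee(Y)] \in \K{\SH{k}}$. Going down then right, the map of Corollary \ref{cor-varmu-vargm} sends $[Y]$ to the class of $Q_r^{gm}(X,f) = \Spec{A[T,T^{-1},V]/(V^r - Tf)}$, viewed as a $\Gm_k$-scheme via the variable $T$ (note that $V$ is automatically invertible here since $Tf$ is); by Proposition \ref{prop-chi-muhat} this becomes $[\Mot_{\Gm_k,c}^\vee(Q_r^{gm}(X,f))]$, and applying $1^*$ yields $[1^*\Mot_{\Gm_k,c}^\vee(Q_r^{gm}(X,f))]$.

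The key observation is that the fiber of the structural map $h : Q_r^{gm}(X,f) \to \Gm_k$ over the unit $T = 1$ is canonically isomorphic to $Y$, so there is a cartesian square
\[
\xymatrix{
Y \ar[r]^-{g'} \ar[d]_{h'} & Q_r^{gm}(X,f) \ar[d]^{h} \\
\Spec{k} \ar[r]_-{1} & \Gm_k.
}
\]
Invoking the base-change $2$-isomorphism $1^* h_! \simeq h'_! (g')^*$ recalled in Section \ref{section-six-functors}, together with the monoidality of pullback ($1^*\un_{\Gm_k} = \un_k$) and the identity $(g')^* h^* \un_{\Gm_k} = (hg')^*\un_{\Gm_k} = (1 h')^*\un_{\Gm_k} = (h')^*\un_k$, we compute
\[
1^*\Mot_{\Gm_k,c}^\vee(Q_r^{gm}(X,f)) = 1^* h_! h^* \un_{\Gm_k} \simeq h'_! (h')^* \un_k = \Mot_{k,c}^\vee(Y).
\]
Taking classes in $\K{\SH{k}}$ then gives equality of the two composites on the generator $[Y]$, which concludes the verification. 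There is essentially no real obstacle beyond correctly identifying the cartesian square; the content resides entirely in the base-change $2$-isomorphism for $f_!$, which is already part of the six-functor formalism recalled earlier.
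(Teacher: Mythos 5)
Your proof is correct and follows essentially the same route as the paper: the paper reduces to the square $\K{\Var{\Gm_k}}\to\K{\Var{k}}$ (via the fiber over $T=1$) and applies the base-change $2$-isomorphism $1^*f_!\cong f'_!1'^*$ to the same cartesian square, which is exactly your computation once one notes that the fiber of $Q_r^{gm}(X,f)$ over $T=1$ is the underlying variety $Y$. The only cosmetic difference is that you verify the identity directly on generators of $\K{\Varmu{k}}$ rather than first establishing the intermediate commutative square for arbitrary $\Gm_k$-varieties.
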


\begin{proof}
It suffices to show that the following diagram is commutative :
\[
\xymatrixcolsep{5pc}
\xymatrix{
 \K{\Var{\Gm_k}} \ar[d]_{\chi_{\Gm_k} } \ar[r]^-{} & \K{\Var{k}} \ar[d]^{\chi_k} \\
\K{\SH{\Gm_k}} \ar[r]_-{1^*} &\K{\SH{k}},
}
\]
where the upper map is induced by picking the fiber above 1 of a $\Gm_k$-variety. For $X\in \Var{\Gm_k}$, we consider the following cartesian square :
\[
\xymatrixcolsep{5pc}
\xymatrix{
 X' \ar[d]_{f'}  \ar[r]^-{1'} & X \ar[d]^{f} \\
k \ar[r]_-{1} &{\Gm_k}.
}
\]
One needs to show that $1^*\Mot_{\Gm_k,c}(X)\simeq  \Mot_{k,c}(X')$. By \cite[Scholie 1.4.3]{ayoub_six_1}, there is a 2-isomorphism $f'_!1'^*\cong 1^*f_!$. Hence
 \[1^*\Mot_{\Gm_k,c}(X)=1^*f_!f^*\un_{\Gm_k}\simeq f'_!1'^*f^*\un_{\Gm_k}\simeq f'_!f'^*1^*\un_{\Gm_k}=\Mot_{k,c}(X'). \]
\end{proof}

\subsection{Rigid analytic geometry}
\label{section-rigid-geometry}

We will use the formalism of Tate's rigid analytic geometry \cite{tate_rigid_1971}. For details and proofs, see \cite{bosch_non-archimedean_1984} and \cite{fresnel_rigid_2004}.

Denote by 
\[
\abs{\cdot} : K\to \Rr
\]
the ultrametric absolute value induced by the valuation on $K=k((t))$. 

Let $(A,\abs{\cdot})$ be a complete ring equipped with a semi-norm. Define the ring of converging power series by 
\[
A\{T_1,...,T_n\}:=\set{\sum_{i=(i_1,...,i_n)\in \Nn^n} a_iT_1^{i_1}...T_n^{i_n}\mid \underset{i\to +\infty}{\abs{a_i}}\to 0 }.
\]
Recall that an affinoid $K$-algebra is a $K$-algebra $A$ such that there is a  surjective morphism $ \pi : K\{T_1,...,T_n\}\to A$.

To such an affinoid $K$-algebra $A$ one can associate a affinoid rigid analytic $K$-variety $\Spm{A}$ with underlying set the set of maximal ideals of $A$. 

Let us introduce the notation
\[
A<f_1\mid f_2,...,f_n>=A\{T_2,...,T_n\}/(f_iT_i=f_1)_{i=2,...,n},
\]
for $f_1,...,f_n\in A$ generating the unit ideal.

If $X=\Spm{A}$ is a $K$-affinoid space, $f,g\in A$ and, $p,q\in \Nn^*$ such that $\abs{f}^{1/p}\leq \abs{g}^{1/q}$, let 
$\Bb_X(o,\abs{f^{1/p}})=\Spm{A\{f^{-1/p}T\}}$ where
\[
A\{f^{-1/p}T\}=\set{h=\sum_i a_i t^i\in A[[t]]\mid \lim_i \abs{a_i^pf^i}=0}.
\]
We define similarly annuli $\Ccr_X(o,\abs{f}^{1/p},\abs{g}^{1/q})=\Spm{A\{g^{-1/q}T\}<T^p\mid f>}$ and thin annuli
$\parBb_X(o,\abs{f}^{1/p})=\Ccr_X(o,\abs{f}^{1/p},\abs{f}^{1/p})$. 

The variety $\Bb_X(o,\abs{f^{1/p}})$ represents a variety of closed balls parametrized by $X$ and of radii $\abs{f}^{1/p}$. Similarly, $\Ccr_X(o,\abs{f}^{1/p},\abs{g}^{1/q})$ is a family of closed annuli  of radii between $\abs{f}^{1/p}$ and $\abs{g}^{1/q}$. Such constructions can be glued in order to define similar notions for any rigid variety $X$. 

If $X=\Spm{K}$, we will drop the $X$ to ease the notation. If $p/q\in \Qq$ and $r=\abs{t^p}^{1/q}$ we will denote $\Bb(o,r)=\Bb(o,\abs{t^p}^{1/q})$ the closed ball of radius $r$. In particular, $\cup_{r<1} \Bb(o,r)$ is the unit open ball. Denote $\Aa^{1,\an}_K=\cup_{r>0} \Bb(o,r)$ the rigid affine line. Observe that neither the unit open ball or $\Aa^{1,\an}_K$ are quasi-compact.

Recall also the analytfication functor
\[(-)^{\an}: \Sch_A \to \VarRig_A,\]
such that 
\[\Hom_\Spec{A}(\Spec{B},X)\simeq\Hom_\Spm{A}(\Spm{A},X^\an)\]
for $X$ a separated scheme. It preserves smoothness, properness. 

\subsection{Formal schemes}
\label{section-formal-schemes}

We will use Raynaud's viewpoint on formal schemes in order to make a bridge between algebraic varieties over $k$ and rigid varieties over $K=k((t))$. See \cite{berthelot_cohomologie_1996} or \cite{le_stum_rigid_2007} for details on tubes. Denote by $R=k[[t]]$ the valuation ring of $K$. 

All the formal $R$-schemes we consider are assumed to be topologically of finite type. 

Given an affine formal $R$-scheme $\calX=\Spf{A}$, the $K$-algebra $K\otimes_R A$ is affinoid. Denote by $\calX_\eta$ its maximal spectrum. It is the generic fiber of $\calX$. By gluing affine pieces, such a construction gives rise to the generic fiber functor
\[
(-)_\eta : \mathrm{FSch}_R\to \VarRig_K.
\]
If $X$ is a rigid $K$-variety, a formal model of $X$ is a formal $R$-scheme $\calX$ together with an isomorphism $X\simeq \calX_\eta$. 

\begin{Proposition}
Any separated quasi-compact rigid $K$-variety admits an admissible formal $R$-model. 
\end{Proposition}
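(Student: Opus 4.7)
The plan is to follow Raynaud's classical strategy: produce local affine formal models from an affinoid cover, and then glue them using admissible formal blowing-ups.

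First, using quasi-compactness of $X$, I would choose a finite admissible affinoid covering $X = \bigcup_{i=1}^{N} U_i$ with each $U_i = \Spm{A_i}$. For each affinoid algebra $A_i$, fix a surjection $\pi_i : K\{T_1,\ldots,T_{n_i}\} \twoheadrightarrow A_i$ with kernel $I_i$, and set $\mathfrak{A}_i := R\{T_1,\ldots,T_{n_i}\}/J_i$, where $J_i$ is the $t$-saturation of $I_i \cap R\{T_1,\ldots,T_{n_i}\}$. The resulting $\mathfrak{A}_i$ is a topologically finitely presented, $t$-torsion-free $R$-algebra with $\mathfrak{A}_i \otimes_R K \simeq A_i$, so $\mathfrak{U}_i := \Spf{\mathfrak{A}_i}$ is an admissible affine formal $R$-scheme with generic fiber $(\mathfrak{U}_i)_\eta \simeq U_i$.

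Next, I need to glue the $\mathfrak{U}_i$ along formal models of the intersections $U_{ij} := U_i \cap U_j$. The difficulty is that the inclusion of rigid generic fibers $U_{ij} \hookrightarrow U_i$ is not automatically induced by a morphism of formal schemes. Here I would invoke the fundamental theorem of Raynaud (developed by Bosch--L\"utkebohmert): any morphism between the rigid generic fibers of two admissible formal $R$-schemes lifts to a morphism of formal schemes after performing a suitable admissible formal blowing-up on the source. Since admissible formal blowing-ups leave the generic fiber untouched, iterating such blow-ups on each $\mathfrak{U}_i$ yields new formal models of $U_i$ in which $U_{ij}$ appears as an open formal subscheme, compatibly with the analogous construction from the $j$-side. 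Because the cover is finite, only finitely many such blow-ups are needed.

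Finally, one checks that the resulting transition data satisfies the cocycle condition on triple intersections (again, possibly after finitely many further admissible blow-ups to refine models of $U_{ijk}$) and glues the pieces into a single admissible formal $R$-scheme $\calX$ with $\calX_\eta \simeq X$. The hypothesis that $X$ is separated is used to ensure that the diagonal $\calX \to \calX \times_R \calX$ is a closed immersion, so $\calX$ inherits separatedness. The main obstacle, and the technical heart of the argument, is the lifting step via admissible formal blowing-ups: producing the local models $\mathfrak{U}_i$ is elementary, but turning the rigid-analytic gluing data into genuine formal-scheme morphisms genuinely requires Raynaud's flattening machinery.
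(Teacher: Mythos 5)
The paper does not actually prove this proposition: it is recalled as a standard fact from Raynaud's theory of formal models (the reference is Bosch--L\"utkebohmert, \emph{Formal and rigid geometry I}, Theorem 4.1), so there is no in-text argument to compare yours against. Your outline is the classical proof and is essentially correct: finite affinoid cover, elementary local $R$-models $\Spf{R\{T\}/(I_i\cap R\{T\})}$ (note that $I_i\cap R\{T_1,\dots,T_{n_i}\}$ is already $t$-saturated, and finite generation is automatic here since $R=k[[t]]$ is a DVR so $R\{T\}$ is Noetherian), and gluing via the fullness-up-to-admissible-blow-up of the generic-fiber functor, which rests on Raynaud's flattening theorem. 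The logical order you use (flattening $\Rightarrow$ lifting of morphisms $\Rightarrow$ existence of global models) is the correct one and is not circular.

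Two points in your sketch deserve more care. First, separatedness is used earlier than you indicate: it guarantees that each overlap $U_i\cap U_j$ is again affinoid (quasi-separatedness would at least make it quasi-compact), which is what lets you produce a formal model of the overlap in the first place before invoking the lifting theorem. Second, lifting the inclusion $U_{ij}\hookrightarrow U_i$ to a morphism of formal schemes is not quite enough; you also need the (separate, nontrivial) lemma that a morphism of admissible formal schemes inducing an open immersion on generic fibers can be refined to an actual open immersion after further admissible blow-ups. The standard proof then avoids your cocycle verification altogether by inducting on the number of affinoid pieces, writing $X=U\cup V$ with $U$ already modeled and $V$ affinoid, and gluing two formal schemes along a common open formal subscheme at each step. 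With these adjustments your argument is the standard one.
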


We also have a functor 
\[
(-)_\sigma : \mathrm{FSch}_R\to \Var{k}
\]
induced by $X\mapsto X\times_R k$, it is the special fiber of $X$ and can be though as the reduction modulo $t$ of $X$. At the level of topological spaces, $X$ and $X_\sigma$ are in bijection, because $\Spf{A}\simeq \Spec{A/(t)}$ for any $R$-algebra $A$ topologically of finite type. 

\begin{Proposition}
Given an admissible formal $R$-scheme $\calX$, there is a canonical map, called the specialization map (or the reduction map),  
\[
\spc : \calX_\eta \to \calX_\sigma.
\]
It is defined at the level of topological spaces and is surjective on the closed points of $\calX_\sigma$.  
\end{Proposition}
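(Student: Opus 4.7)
The plan is to construct the map locally on affine pieces, then glue. Write $\calX = \Spf{A}$ with $A$ topologically of finite type over $R$, so that $\calX_\eta = \Spm{A_K}$ with $A_K := A \otimes_R K$, while $\calX_\sigma$ has underlying topological space $\Spec{A/tA}$. Given $x \in \calX_\eta$ corresponding to a maximal ideal $\mathfrak{m}$ of $A_K$, the quotient $L := A_K/\mathfrak{m}$ is a finite extension of $K$, to which the valuation of $K$ extends uniquely. Since every element of a topologically finitely generated $R$-algebra is power-bounded in $A_K$, the natural map $A \to L$ factors through the valuation ring $L^\circ$. Post-composing with the reduction $L^\circ \to L^\circ/\mathfrak{m}_L$ yields a ring morphism $A \to L^\circ/\mathfrak{m}_L$ whose kernel is a prime ideal of $A$ containing $t$; this defines $\spc(x) \in \Spec{A/tA} = |\calX_\sigma|$. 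Continuity is then a direct check using the definition of the canonical topology on both sides.

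Next I would verify functoriality with respect to admissible open immersions, which is what is needed to glue. A basic admissible open $\calU \subset \calX$ has the form $\Spf{A\{f^{-1}\}}$ for $f \in A$ whose reduction is invertible on the corresponding open of $\calX_\sigma$, and on the generic fiber this cuts out the affinoid subdomain $\{|f|=1\}$ of $\calX_\eta$. Since power-boundedness of elements of $A\{f^{-1}\}$ in $A\{f^{-1}\}\otimes_R K$ is inherited from that of $A$ in $A_K$, the local construction above is compatible with restriction, so the maps glue into a well-defined continuous map $\spc : \calX_\eta \to |\calX_\sigma|$ for any admissible formal $R$-scheme $\calX$.

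It remains to establish surjectivity on closed points. Let $x \in \calX_\sigma$ be closed, corresponding to a maximal ideal $\mathfrak{m}_x$ of $A$ containing $t$, with residue field a finite extension $k'/k$. After restricting to an affine open neighbourhood I would consider the $\mathfrak{m}_x$-adic completion $\widehat{A}_{\mathfrak{m}_x}$: it is a complete Noetherian local $R$-algebra with residue field $k'$, and the Cohen structure theorem, together with Hensel's lemma, provides a continuous $R$-algebra homomorphism $A \to R'$ where $R' \supset R$ is a complete discrete valuation ring finite over $R$ with residue field $k'$, whose reduction modulo the maximal ideal of $R'$ is the projection $A \to A/\mathfrak{m}_x = k'$. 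Dually, one obtains a morphism of formal $R$-schemes $\Spf{R'} \to \calX$; passing to generic fibers produces a point $y \in \calX_\eta(R'[1/t])$ whose specialization is $x$ by construction.

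The main obstacle is this last surjectivity step: the local construction and gluing are routine manipulations of affinoid algebras and power-boundedness, but lifting a closed point of the special fibre to a rigid point requires the correct Henselian/Cohen-structure input so that the lift genuinely lands in an integral extension of $R$ and reduces to the prescribed point. Once that lift is available, tracing through the definition of $\spc$ shows the lifted point reduces to $x$, completing the proof.
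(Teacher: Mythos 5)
The paper records this proposition without proof, as standard background on formal and rigid spaces (it points to Berthelot and Le Stum for this circle of ideas), so there is no in-paper argument to compare with. Judged on its own terms, your local construction of $\spc$ on $\Spf{A}$ via power-boundedness of $A$ in $A_K$, and the gluing over basic opens $\Spf{A\{f^{-1}\}}$ (whose generic fibers are the loci $\abs{f}=1$), are correct and are the standard route; one detail worth making explicit is that the kernel of $A\to L^\circ/\mathfrak{m}_L$ is automatically maximal, since its image is a $k$-subalgebra of a finite extension of $k$, so $\spc$ indeed sends closed points to closed points of $\calX_\sigma$.

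The surjectivity step, however, has a genuine gap. You never invoke admissibility, i.e.\ $R$-flatness of $A$, and without it the statement is false: for $\calX=\Spf{R/tR}$ the special fiber is a point while the generic fiber is empty, and no $R$-algebra map $R/tR\to R'$ into a discrete valuation ring finite over $R$ can exist, since $t$ maps to $0$ on one side and is nonzero on the other. Your appeal to ``Cohen's structure theorem together with Hensel's lemma'' would apply verbatim to this example, so it cannot by itself produce the homomorphism $A\to R'$; the existence of that homomorphism is precisely the content that needs proving. The standard argument is: flatness makes $t$ a non-zero-divisor in $\widehat{A}_{\mathfrak{m}_x}$, hence $t$ lies in no minimal prime, so one may pass to a quotient which is a complete local domain with $t\neq 0$ in its maximal ideal, and then cut down further to a one-dimensional complete local domain $D$ with $t\neq 0$; the normalization $\widetilde{D}$ of $D$ is then a complete discrete valuation ring, finite over $D$ (finiteness of normalization for complete local rings) and hence finite over $R=k[[t]]$, and the composite $A\to\widehat{A}_{\mathfrak{m}_x}\to D\to\widetilde{D}=R'$ is the desired lift. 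Note also that the residue field of $R'$ may be a proper finite extension $k''$ of $k'$, so one should not insist that the reduction of $A\to R'$ be the projection onto $A/\mathfrak{m}_x=k'$; what matters, and what holds, is that the kernel of $A\to R'/\mathfrak{m}_{R'}$ equals $\mathfrak{m}_x$, so the resulting point of $\calX_\eta$ specializes to $x$.
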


\begin{Proposition}
\label{prop-openim-genericfiber}
Given an $R$-scheme $X$, we can consider its formal $t$-adic completion $\calX$. We can also base change to $K$ and take the analytification. Then there is an open immersion of rigid varieties
\[
\calX_\eta \to (X\times_R K)^\an.
\]
Assuming $X$ is flat and irreducible, this immersion is an isomorphism if and only if $X$ is proper on $R$. 
\end{Proposition}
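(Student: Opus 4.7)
The plan is to establish the map first in the affine case where everything is computable, then globalize by gluing, and finally use the valuative criterion of properness to characterize when it is an isomorphism.

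First, I would reduce to the affine case. Suppose $X = \Spec{A}$ where $A$ is a flat $R$-algebra of finite type, presented as $A = R[T_1, \dots, T_n]/I$. Its $t$-adic completion is $\hat{A} = R\{T_1,\dots,T_n\}/\hat{I}$, so $\calX = \Spf{\hat{A}}$ and $\calX_\eta = \Spm{\hat A \otimes_R K}$. On the other hand, $(X \times_R K)^\an$ is the analytification of $\Spec{A \otimes_R K}$. There is a natural $R$-algebra map $A \to \hat A$ (extending to $A \otimes_R K \to \hat A \otimes_R K$) given by the inclusion of polynomials into convergent power series, and this induces the desired morphism of rigid varieties. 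To see it is an open immersion, note that the image consists exactly of those points $x \in (X \times_R K)^\an$ at which $|T_i(x)| \leq 1$ for all $i$; this is the affinoid subdomain cut out by these inequalities, in agreement with the universal property of affinoid subdomains. For a general $R$-scheme $X$, one covers $X$ by affines and checks that the constructions glue coherently, using that the formation of the $t$-adic completion and the analytification are both compatible with open immersions.

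For the equivalence with properness, I would argue via the valuative criterion. Assume $X$ is flat and proper over $R$. To show the immersion $\calX_\eta \hookrightarrow (X \times_R K)^\an$ is an isomorphism, it suffices, by general principles of rigid geometry, to show it is surjective on rigid points. A rigid point $x$ of $(X\times_R K)^\an$ has residue field a finite extension $L/K$ with valuation ring $R_L$ finite over $R$, and corresponds to a morphism $\Spec{L} \to X \times_R K$. By the valuative criterion of properness applied to $X \to \Spec{R}$, this lifts uniquely to $\Spec{R_L} \to X$. Passing to $t$-adic completions yields $\Spf{R_L} \to \calX$, whose generic fiber provides the desired preimage point in $\calX_\eta$. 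Conversely, if the immersion is an isomorphism, every $R_L$-valued point of $X\times_R K$ (over $L/K$ finite) already comes from an $R_L$-point of $\calX$, and unraveling this shows that the valuative criterion is satisfied for all such $R_L$, which suffices for properness given that $X$ is already assumed to be of finite type; the irreducibility hypothesis is used here to ensure that the generic fiber detects the behavior of all of $X$.

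The main obstacle I anticipate is the last step: deducing properness (a statement about all valuation rings, or at least about all DVRs extending $R$) from the isomorphism of rigid spaces (which a priori only concerns the valuation ring $R$ and its finite extensions). One must invoke that, for $R$ complete with algebraically closed residue field replaced by passing to $K^{\mathrm{alg}}$, finite extensions $R_L$ of $R$ are sufficient to test the valuative criterion for a morphism of finite type, together with the flatness assumption to ensure no components of $X$ are supported entirely in the special fiber and hence invisible to the rigid generic fiber.
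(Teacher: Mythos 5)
The paper does not actually prove this proposition: it is recalled as a standard fact from Raynaud's theory of formal models, with the references given at the head of Section 3.5 (Berthelot, Le Stum), so there is no in-text proof to measure you against. Judged on its own terms, your construction of the map and the proof that it is an open immersion (affine case via $A\to\hat A$, identification of the image with the affinoid subdomain $\set{\abs{T_i}\leq 1}$, then gluing) is correct and is the standard argument. The direction ``proper $\Rightarrow$ isomorphism'' is also essentially right: the valuative criterion applied to $\Spec{R_L}\to\Spec{R}$ gives surjectivity on rigid points, and a surjective open immersion of rigid spaces is an isomorphism (this last step deserves a sentence, since admissible opens are not determined by their point sets in a completely naive way, but it is standard).

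The genuine gap is in the converse, exactly where you flagged it. Knowing that every $\Spec{L}\to X_K$ with $L/K$ finite extends to $\Spec{R_L}\to X$ is \emph{not} the valuative criterion for properness of $X\to\Spec{R}$: the criterion must be tested against discrete valuation rings $V$ whose closed point can land anywhere in $X$ (e.g.\ $V$ dominating the local ring at the generic point of a divisor in the special fiber), and these are not finite over $R$. There is no general principle that finite extensions of a complete DVR suffice, even after passing to $K^{\mathrm{alg}}$; your appeal to one does not close the argument. The standard repair uses the isomorphism hypothesis more fully than through $R_L$-points alone: choose a Nagata compactification $X\hookrightarrow\bar X$ over $R$ with $X$ dense (so $\bar X$ is still flat), and identify $\calX_\eta=\spc^{-1}(X_\sigma)$ inside $\hat{\bar X}_\eta=(\bar X\times_RK)^\an$ using the already-proved forward direction for $\bar X$. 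The hypothesis then says $\spc^{-1}(X_\sigma)=(X_K)^\an$ inside $(\bar X_K)^\an$. If $Z=\bar X\setminus X$ were nonempty, surjectivity of $\spc$ onto closed points of $\bar X_\sigma$ (which \emph{is} a consequence of the valuative criterion for the proper $\bar X$ together with the finite extensions $R_L$) forces the tube of a closed point of $Z\cap\bar X_\sigma$ to be nonempty and contained in $(Z\cap\bar X_K)^\an$, which is impossible for dimension reasons since $X_K$ is dense in $\bar X_K$; the case $Z\subseteq\bar X_\sigma$ is excluded directly. This is where flatness and irreducibility are actually used, rather than in the way you suggest at the end.
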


\begin{Definition}
Let $\calX$ a formal $R$-scheme. 
If $D$ is a locally closed subset of the special fiber, the tube of $D$ in $\calX$ is the inverse image $]D[_\calX:=\spc^{-1}(D)$, with its reduced rigid variety structure. It is an open rigid analytic subvariety of $\calX_\eta$. When there is no possible confusion, we will denote $]D[=]D[_\calX$. 
\end{Definition}

If $\calU$ is an open formal subscheme of $\calX$ such that $D\subset \calU_\sigma$, then $]D[_\calX =]D[_\calU$. In particular, $\tube{\calU_\sigma}_\calX=\calU_\eta$.

\begin{Proposition}[{\cite[Proposition 1.1.14]{berthelot_cohomologie_1996}}]
\label{prop-tube-admissible-cover}
Let $\calX$ a formal $R$-scheme, locally of finite type, and $D\subset \calX_\sigma$. 

\begin{enumerate}
\item For any open cover $(V_i)_i$ of $V$, the tubes $\tube{V_i}$ form an admissible cover of $\tube{D}$. 
\item If $(D_i)_i$ is a finite closed cover of $D$, then the tubes $\tube{D_i}$ form an admissible cover of $\tube{D}$. 
\end{enumerate}
\end{Proposition}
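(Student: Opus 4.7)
The plan is to reduce both parts to the affine situation $\calX = \Spf{A}$ and then exploit the compatibility between open formal subschemes and tubes of open subsets of the special fiber. Throughout, admissibility is checked against the standard criterion: a cover $\tube{D} = \bigcup_i \calW_i$ by admissible opens is admissible provided every affinoid (hence quasi-compact) piece of $\tube{D}$ is covered by finitely many of the $\calW_i$.

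For part (i), I would use the well-known fact that $\calX$ and its special fiber $\calX_\sigma$ share the same underlying topological space, so each open subset $V_i \subset \calX_\sigma$ is the special fiber of a unique open formal subscheme $\calU_i \hookrightarrow \calX$. By functoriality of the specialization map with respect to open immersions,
\[
\tube{V_i}_\calX = (\calU_i)_\eta \cap \tube{D}_\calX,
\]
so each $\tube{V_i}$ is an admissible open in $\tube{D}$. The pointwise covering follows from surjectivity of $\spc$ on closed points. For admissibility, any affinoid subdomain $U \subset \tube{D}$ is quasi-compact, and since the $\calU_i$ give an open cover of the locally finite-type formal scheme over which $U$ lives, one can extract a finite subfamily of generic fibers covering $U$.

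For part (ii), I would work locally with $\calX = \Spf{A}$ affine and pick lifts $f_{i,1}, \ldots, f_{i,k_i} \in A$ of generators of the ideals defining $D_i$ in $A/(t)$. Then
\[
\tube{D_i} = \bigcup_{n \in \Nn^*} W_{i,n}, \qquad W_{i,n} = \{x \in \calX_\eta \mid \abs{f_{i,j}(x)} \leq \abs{t}^{1/n}\text{ for all }j\},
\]
each $W_{i,n}$ being a Weierstrass affinoid subdomain of $\calX_\eta$, hence an admissible open. The crucial claim is that for every affinoid $U \subset \tube{D}$ there exists an integer $n$ with $U = \bigcup_i (W_{i,n} \cap U)$, yielding the required finite affinoid refinement. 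This is where the finiteness of the cover $(D_i)$ is used: the ideal $\bigcap_i \bar{I}_i$ in $A/(t)$ vanishes on $D$, and by the Nullstellensatz combined with $t$-adic approximation, the lifts $f_{i,j}$ jointly become $t$-adically small on any quasi-compact region of $\tube{D}$.

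The main obstacle is verifying admissibility in part (ii): pointwise covering is automatic from the reduction map, but obtaining uniform control over how quickly the Weierstrass exhaustions $W_{i,n}$ fill $\tube{D}$ on a fixed quasi-compact region is the technical heart. A cleaner route, which I would ultimately favor, is to invoke Raynaud's theorem: choose an admissible blow-up $\calX' \to \calX$ along the ideal jointly generated by the defining equations of the $D_i$'s in such a way that the strict transforms become \emph{open} in $\calX'_\sigma$. Since the tube is insensitive to admissible blow-ups, this reduces part (ii) to part (i). Executing this blow-up carefully (and checking it preserves tubes) is routine in Berthelot's framework but is the step that requires the most care.
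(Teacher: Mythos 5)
First, note that the paper does not prove this statement at all: it is quoted verbatim from Berthelot \cite[Proposition 1.1.14]{berthelot_cohomologie_1996}, so the only meaningful comparison is with Berthelot's argument. Your treatment of part (i) is essentially that argument and is fine: opens of $\calX_\sigma$ are special fibers of open formal subschemes $\calU_i$, one has $\tube{V_i}=(\calU_i)_\eta\cap\tube{D}$, and admissibility follows from quasi-compactness of affinoids together with the fact that the generic fibers of an open cover of a formal scheme form an admissible cover.

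Part (ii) is where the genuine gap lies, in two places. For the direct route, you correctly isolate the key claim (each closed tube $[D]_{\abs{t}^{1/n}}=\set{x\mid \abs{f_j(x)}\leq\abs{t}^{1/n}}$ should be contained in $\bigcup_i W_{i,m}$ for some $m$ depending on $n$), but the justification you offer --- ``the lifts $f_{i,j}$ jointly become $t$-adically small on any quasi-compact region'' --- is not the right statement and, taken literally, is false: it is not true that all the $f_{i,j}$ become small on $\tube{D}$ (that would say every point of $\tube{D}$ lies in every $\tube{D_i}$). The actual content, which is the whole proposition, is a pigeonhole estimate on the product ideal: since $V(\prod_i \bar I_i)=\bigcup_i D_i=D$, each product $\prod_i f_{i,j(i)}$ over a choice function $j(\cdot)$ has a power lying in $\bar I_D+(t)$, hence satisfies $\abs{\prod_i f_{i,j(i)}(x)}\leq\abs{t}^{1/Nn}$ on $[D]_{\abs{t}^{1/n}}$; one then deduces that at each such $x$ there is a \emph{single} index $i$ with $\abs{f_{i,j}(x)}\leq\abs{t}^{1/m}$ for \emph{all} $j$, where $m$ depends only on $n$, $N$ and the number of branches. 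Without this uniform estimate the admissibility of the cover is exactly what remains to be shown. Your preferred alternative via Raynaud does not repair this: an admissible blow-up cannot turn the closed cover into an open one, because the tubes $\tube{D_i}$ of closed subsets are not quasi-compact, whereas Raynaud's dictionary identifies open formal subschemes of admissible blow-ups with \emph{quasi-compact} admissible opens of the generic fiber. No single blow-up can realize $\tube{D_i}$ as a generic fiber of an open formal subscheme (it would have to separate the branches ``at all radii simultaneously''), which is precisely why Berthelot's proof exhausts $\tube{D}$ by the closed tubes of increasing radius and runs the product estimate on each of them.
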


\begin{Proposition}[{\cite[Proposition 1.3.1]{berthelot_cohomologie_1996}}]
\label{prop-berth-iso-tubes-etale}
Let $D$ a $k$-scheme of finite type, $\calX$, $\calX'$ two formal $R$-schemes of finite type, two immersions $i : D \hookrightarrow \calX_\sigma$, $i' : D \hookrightarrow \calX'_\sigma$, $u : \calX'\to \calX$ an \'etale morphism such that $i=i'\circ u$. Then
\[u_K : \tube{D}_{\calX'}\to \tube{D}_\calX\]
is an isomorphism. 
\end{Proposition}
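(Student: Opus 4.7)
The plan is to reduce the question to an isomorphism of formal completions along $D$, where the étale hypothesis on $u$ is straightforward to exploit, and then to observe that taking generic fibers of such completions reproduces the tubes.

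\textbf{Reduction to the affine case.} I would first argue that the assertion is local on $\calX$. By Proposition \ref{prop-tube-admissible-cover}, if $(\calU_j)_j$ is an affine open cover of $\calX$ and $\calU'_j := u^{-1}(\calU_j)$, then the tubes $\tube{i(D)\cap \calU_{j,\sigma}}_{\calU_j}$ form an admissible cover of $\tube{D}_\calX$, and likewise the $\tube{i'(D)\cap \calU'_{j,\sigma}}_{\calU'_j}$ form an admissible cover of $\tube{D}_{\calX'}$. Since each restriction $u|_{\calU'_j}:\calU'_j\to \calU_j$ is again étale and compatible with the immersions of $D$, it suffices to treat the case $\calX=\Spf{A}$, $\calX'=\Spf{A'}$, with $D$ affine.

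\textbf{Passage to formal completions.} Let $\widehat{\calX}$ (resp.\ $\widehat{\calX'}$) denote the formal completion of $\calX$ along $i(D)$ (resp.\ of $\calX'$ along $i'(D)$). The hypothesis $i=u_\sigma\circ i'$ says exactly that $u_\sigma$ restricts to an isomorphism from $i'(D)$ onto $i(D)$; in particular $u$ is injective on a neighborhood of $i'(D)$ and, at every point $d\in D$, induces an isomorphism of residue fields $\kappa(i(d))\xrightarrow{\sim}\kappa(i'(d))$. Combining this with the étaleness of $u$, the induced morphism $\widehat{u}:\widehat{\calX'}\to \widehat{\calX}$ is étale, radicial and surjective on the closed subscheme $D$, hence is an isomorphism of formal schemes (at each infinitesimal thickening one applies the standard characterization of an étale morphism that is a bijection on points as an isomorphism).

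\textbf{Identification of tubes with generic fibers of completions.} The final step is to identify $\widehat{\calX}_\eta$ with $\tube{D}_\calX$ (and similarly on the $\calX'$-side), after which applying the generic fiber functor to $\widehat{u}$ produces the desired isomorphism $u_K:\tube{D}_{\calX'}\to \tube{D}_\calX$. Concretely, if $\calI\subseteq A$ is an ideal of definition of $i(D)$, then on the one hand $\widehat{\calX}=\Spf{\widehat{A}_\calI}$ and, on the other, $\tube{D}_\calX$ decomposes as the increasing union $\bigcup_n\set{x\in\calX_\eta\mid \val(f(x))\geq 1/n\text{ for all }f\in\calI}$. The main obstacle is to match these two descriptions rigorously: one must verify that each "closed tube of radius $1/n$" is the generic fiber of a specific admissible blow-up appearing in the $\calI$-adic completion, and that these glue coherently to give an isomorphism of rigid $K$-varieties. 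Once this identification is established, functoriality of the generic fiber construction delivers the conclusion.
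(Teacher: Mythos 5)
A preliminary remark: the paper does not prove this statement — it is recalled verbatim from Berthelot \cite[Proposition 1.3.1]{berthelot_cohomologie_1996} — so your argument can only be judged on its own terms, not against an internal proof. Your overall route (complete along $D$, use invariance of formal completions under \'etale morphisms that are isomorphisms on the reduced subscheme, identify the tube with the generic fiber of the completion) is a legitimate and standard strategy, and the middle step is essentially correct: since $i'(D)$ is open and closed in the scheme-theoretic preimage $u_\sigma^{-1}(i(D))$ (a section of an unramified morphism is an open immersion), each infinitesimal neighbourhood of $i'(D)$ in $\calX'$ is \'etale over the corresponding neighbourhood of $i(D)$ in $\calX$ and an isomorphism on reductions, hence an isomorphism.

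The genuine gap is exactly the step you flag as ``the main obstacle'' and then do not carry out: the identification of $\tube{D}_\calX$ with the generic fiber of the completion of $\calX$ along $i(D)$. This is not a routine verification. The completion is no longer topologically of finite type over $R$ (already for $\calX=\Spf{R\{T\}}$ and $D=V(T)$ one gets $\Spf{R[[T]]}$), so one must first make sense of its generic fiber — this requires Berthelot's theory of special formal schemes — and then prove that this generic fiber coincides with the increasing union of closed tubes of radii tending to $1$; that comparison is a theorem of essentially the same depth as the proposition you are proving, so as written your argument assumes the hard part. Two smaller issues: an ``ideal of definition of $i(D)$'' only exists after shrinking to an open formal subscheme in which $i(D)$ is closed, so the locally closed case needs an explicit (easy) extra reduction; and in the affine reduction, knowing that $u_K$ restricts to isomorphisms between corresponding members of two admissible covers does not by itself give a global isomorphism — you should add that two points of $\tube{D}_{\calX'}$ with the same image have the same specialization (because $u_\sigma$ is injective on $i'(D)$), hence lie in a common member of the source cover, which restores global injectivity. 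If you want to avoid the completion machinery altogether, a more self-contained alternative is to argue directly on the closed tubes of radius $\abs{t}^{1/n}$, using the standard-\'etale local form of $u$ and Hensel's lemma to lift points of $\tube{D}_\calX$ uniquely for $n$ large.
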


\begin{Definition}
Let $\calX$ a formal $R$-scheme of finite type. Say that $\calX$ is semi-stable if for every $x\in \calX_\sigma$, there is a regular open formal subscheme $\calU\subset \calX$ containing $x$ and elements $u,t_1,...,t_r\in \calO(\calU)$ such that the following properties hold:
\begin{enumerate}
\item $u$ is invertible and there are positive integers $N_1,...,N_r$ such that $t=ut_1^{N_1}\cdots t_r^{N_r}$,
\item for every non empty $I\subset \set{1,...,r}$, the subscheme $D_I\subseteq \calU_\sigma$ defined by equations $t_i=0$ for $i\in I$ is smooth over $k$, has codimension $\abs{I}-1$ in $\calU_\sigma$ and contains $x$. 
\end{enumerate}

\end{Definition}
If $\calX$ is a formal $R$-scheme, $f\in \Gam(\calX, \calO_X)$, $\underline{N}\in \Nn^k$, we define
 \[
 \St^{f}_{\calX,\underline{N}}=\calX\{T_1,...,T_k\}/(T_1^{N_1}...T_k^{N_k}-f).
 \]

\begin{Proposition}[{\cite[Proposition 1.1.60]{ayoub_rigide}}]
\label{prop-semistable-etaleproj}
Let $X$ a semi-stable $R$-scheme of type $\underline{N}$ around some $x\in X$. Then for some Zariski neighborhood $U$of $x[V,V^{-1}]$ in $X[V,V^{-1}]$, there is an \'etale $R$-morphism 
\[ U\to \St^{Ut}_{\Spf{R[U,U^{-1},S_1,...,S_r}],\underline{N}}.\]
\end{Proposition}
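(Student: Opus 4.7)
The plan is to write down the étale chart explicitly using the local structure provided by the semi-stability at $x$, and then verify étaleness via a Jacobian computation. The rôle of the auxiliary variable $V$ is to absorb the invertible factor $u$ appearing in the equation $t=u t_1^{N_1}\cdots t_r^{N_r}$, so that after adjoining $V$ the local model matches the standard form $T_1^{N_1}\cdots T_r^{N_r}=Ut$ of the target.

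First, I would restrict to a regular open formal subscheme $\calU\subseteq X$ containing $x$ on which the semi-stability data $(u,t_1,\ldots,t_r)$ are defined and on which the strata $D_I$ satisfy the smoothness and codimension requirements of the definition. Exploiting the smoothness of the deepest stratum $D_{\{1,\ldots,r\}}$ at $x$ and the regularity of $\calU$, I would extend $t_1,\ldots,t_r$ to a regular system of parameters at $x$ by choosing additional regular functions $s_j\in\calO(\calU)$ whose restrictions to $D_{\{1,\ldots,r\}}$ form an étale coordinate system near $x$, shrinking $\calU$ if necessary.

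Next, I would define the ring map underlying a candidate morphism
\[
\varphi:\calU[V,V^{-1}]\longrightarrow \St^{Ut}_{\Spf{R[U,U^{-1},S_1,\ldots,S_r]},\underline{N}}
\]
by sending $T_i\mapsto t_i$, $S_j\mapsto s_j$, and $U\mapsto u^{-1}$, with the variable $V$ playing the role of the extra degree of freedom introduced on the source side. The defining relation $T_1^{N_1}\cdots T_r^{N_r}=Ut$ of the target pulls back to $t_1^{N_1}\cdots t_r^{N_r}=u^{-1}t$, which holds identically on $\calU$ by the semi-stability equation, so $\varphi$ is well defined.

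Finally, I would check étaleness at the closed points lying above $x[V,V^{-1}]$ via the Jacobian criterion: the induced map on completed local rings sends the generators $(U-u^{-1},T_i-t_i,S_j-s_j)$ to a regular system of parameters modulo the defining relation, by virtue of the regularity of $\calU$, the invertibility of $u$, and the transversality of the $s_j$'s to the deepest stratum. Étaleness then spreads from $x[V,V^{-1}]$ to a Zariski neighborhood, which we take as $U$. The main obstacle is pinning down the right formula for $\varphi$ and the auxiliary coordinates $s_j$; once these are set up correctly, étaleness reduces to a routine Jacobian computation supported entirely by the smoothness data built into the definition of semi-stability.
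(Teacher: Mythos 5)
First, a remark on scope: the paper offers no proof of this statement at all --- it is quoted directly from Ayoub \cite[Proposition 1.1.60]{ayoub_rigide} --- so there is no in-paper argument to compare yours with; I am judging your proposal on its own terms.

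Your overall strategy (complete $t_1,\ldots,t_r$ to a regular system of parameters at $x$, write the chart explicitly, verify \'etaleness on cotangent spaces) is the right one, but the map you actually write down cannot be \'etale, and the failure is located exactly at the point you leave vague: the role of $V$. As you define it, $\varphi$ sends $T_i\mapsto t_i$, $S_j\mapsto s_j$, $U\mapsto u^{-1}$, so the underlying ring homomorphism lands in $\calO(\calU)\subset\calO(\calU)[V,V^{-1}]$ and $\varphi$ factors through the projection $\calU[V,V^{-1}]\to\calU$. Its fibers therefore contain copies of $\Gm$, so it is not quasi-finite, let alone \'etale. A dimension count shows this cannot be repaired without genuinely using $V$: if $\dim\calU_\sigma=d$, a regular system of parameters at $x$ is $t_1,\ldots,t_r,s_1,\ldots,s_m$ with $m=d+1-r$, and the target $\St^{Ut}_{\Spf{R[U,U^{-1},S_1,\ldots,S_m]},\underline{N}}$ has special fiber of dimension $m+r=d+1$, one more than $\calU_\sigma$, precisely because of the extra unit coordinate $U$ carrying the function $u$. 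That extra direction must be matched on the source, which is the entire reason the statement adjoins $V,V^{-1}$.

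The repair is to twist the chart by $V$: send, for instance, $T_1\mapsto t_1V$, $T_i\mapsto t_i$ for $i\geq 2$, $S_j\mapsto s_j$ and $U\mapsto u^{-1}V^{N_1}$. The target relation then pulls back to $(t_1V)^{N_1}t_2^{N_2}\cdots t_r^{N_r}=u^{-1}V^{N_1}t$, which is again the semi-stability equation. At a point $(x,v_0)$ with $v_0\neq 0$ the differentials of the images of $T_1,\ldots,T_r,S_1,\ldots,S_m,U$ are $v_0\,dt_1,\,dt_2,\ldots,dt_r,\,ds_1,\ldots,ds_m$ and $N_1u^{-1}v_0^{N_1-1}\,dV+(\text{terms in }dt_i,ds_j)$, which span the cotangent space of $\calU[V,V^{-1}]$ at $(x,v_0)$ (characteristic zero enters through $N_1\neq 0$ in the residue field); since both completed local rings are regular of the same dimension, the map is \'etale there, and \'etaleness spreads to a Zariski neighborhood of $x[V,V^{-1}]$ as you say. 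With your untwisted formula the same computation produces only $dt_i$ and $ds_j$ and misses $dV$, so the Jacobian check you propose would in fact fail.
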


\subsection{Rigid motives}
\label{section-rigid-motives}
Ayoub builds in \cite{ayoub_rigide} a category $\RigSH{K}$ of rigid motives over $K$, in an analogous manner of $\SH{K}$, but with starting point rigid varieties instead of schemes. 

As in the algebraic case, one needs to choose a category of coefficients $\mathfrak{M}$, the main examples being $\mathrm{RigSH}(K)$ and $\RigDA{K,\Lambda}$. 

We need to use a topology on rigid varieties, however for technical reasons, it is not the straightforward translation of Nisnevich topology that will be used.

\begin{Definition}
\begin{itemize}
\item First we define the notion of Nisnevich cover of formal schemes. A family $\calR=(u_i :\calU_i\to \calX)_{i\in I}$ is a Nisnevich cover of a formal scheme $\calX$ if the $u_i$ are \'etale morphisms and the reduction $\calR_\sigma=(u_{i \sigma} : \calU_{i \sigma} \to \calX_\sigma)_{i\in I}$ is a Nisnevich cover of $\calX_\sigma$ (in the algebraic sense). 

\item Let $X$ a quasi-compact smooth rigid $K$-variety and $\calX$ a formal $R$-model of $X$. Then a family $(U_i\to X)_{i\in I}$ is a Nisnevich cover of $X$ if there is a formal $R$-model $\calX$ of $X$ and a Nisnevich cover $\calR$ of $\calX$ such that $\calR_\eta$ is a refinement of $(U_i\to X)_{i\in I}$. 

\item A Nisnevich cover of a (non-quasi-compact) rigid variety $X$ is a family of \'etale morphisms $(U_i\to X)_{i\in I}$ such that for any rigid quasi-compact smooth rigid $X$-variety $V\to X$, the family $(U_i \times_X V\to V)_{i\in I}$ is a Nisnevich cover of $V$. 
\end{itemize}
\end{Definition}

An admissible cover of some rigid variety $X$ is in particular a Nisnevich cover.

The other ingredient we need to change in the construction of $\RigDA{K,\Lambda}$ is what we want to localize. We replace $\Aa^1$-invariance by $\Bb^1$-invariance, and $T$-spectra by $T^{\an}$-spectra, where $T^{\an}$ is the cokernel of the inclusion (of sheaves represented by) $\parBb(o,1)\to \Bb(o,1)$.
The category $\RigDA{K,\Lambda}$ is now defined as the Verdier localization of the derived category of $T^{\an}$-spectra of $\Lambda$-sheaves of smooth rigid varieties for the Nisnevich topology, with respect to the subcategory generated by complexes
\[
[\dots\to 0\to \Sus_{T^\an}^r(\Bb_U(o,1))\to \Sus_{T^\an}^r(U)\to 0\to\dots]
\]
and 
\[
[\dots\to 0\to \Sus_{T^\an}^r(U\otimes T^\an)\to \Sus_{T^\an}^{r+1}(U)\to 0\to\dots].
\]

As in the algebraic case, we have suspension functor $\Sus_{T^\an}^r(-)$, the tensor $-\otimes A$ has a right adjoint $\intHom(-,A)$, the Tate twist $(-)(n)$ is also defined as in the algebraic case. 

\begin{Definition}
We define for $X$ a smooth rigid $K$-variety its homological motive by $\Motr(X)=\Sus_{T^\an}^0(X\otimes \un_K)$ and dually, its cohomological motive by $\Motr^\vee(X)=\intHom(\Motr(X),\un_K)$.
\end{Definition}

A relative version $\RigSH{S}$ of motives of rigid $S$-varieties exists and satisfies various fonctorialities, like the existence of a pair of adjoint functors $(f^*,f_*)$ for any $f : S'\to S$. To our knowledge a full six functor formalism is not available in this context, the missing ingredients being $f_!$ and $f^!$. Hence there is no already defined notion of compactly supported rigid motive. 

The analytification functor induce a (monoidal triangulated) functor 
\[
\Rig^* : \SH{K}\to \RigSH{K}.
\]
Such a functor is compatible in a strong sense with the six operations defined on $\SH{-}$, see \cite[Th\' eor\`eme 1.4.40]{ayoub_rigide}, but we do not need it.

The first point of the following proposition is direct consequence of $\Bb^1$-invariance imposed in the construction of $\RigSH{K}$, the others follows after some reductions. 
\begin{Proposition}[{\cite[Proposition 1.3.4]{ayoub_rigide}}]
\label{prop-wequiv-annuli}
Let $X$ a rigid $K$-variety, $f,g,h\in \Gam(X,\mathcal{O}_X^\times)$ and $p, q, r\in \Nn\backslash\set{0}$ such that $\abs{f(x)}^{1/p}\leq \abs{g(x)}^{1/q}\leq \abs{h(x)}^{1/r}$. Then we have the following isomorphisms in $\RigSH{K}$ :
\begin{itemize}
\item $\Motr(\Bb_X(o,f^{1/p}))\simeq \Motr(X)$,
\item$ \Motr(\Bb_X(o,f^{1/p}))\simeq \Motr(X)$,
\item$ \Motr((\Aa^1_K)^{\an} \times_K X) \simeq \Motr(X)$,
\item $\Motr(\partial \Bb_X(o,g^{1/q})) \simeq \Motr(\Ccr_X(o,f^{1/p},h^{1/r}))$,
\item $\Motr(\partial \Bb_X(o,g^{1/q}))\simeq \Motr((\Gm_{K}^{\an} \times_K X)) $,
\item $\Motr(\partial \Bb_X(o,g^{1/q}) \simeq \Motr((\Bb_X(o,g^{1/q})\backslash \set{o_X})) $.
\end{itemize}

\end{Proposition}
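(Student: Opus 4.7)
The plan is to reduce each statement to the $\Bb^1$-invariance relations built into the construction of $\RigSH{K}$, using Mayer--Vietoris for admissible covers and the fact that $\Motr$ commutes with filtered admissible unions (thanks to $\RigSH{K}$ having arbitrary sums and the sheaf property for the Nisnevich topology).

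First I would handle the closed ball. The defining imposed relation gives $\Motr(\Bb_U(o,1))\simeq \Motr(U)$ for every smooth quasi-compact $U$. To deduce $\Motr(\Bb_X(o,f^{1/p}))\simeq \Motr(X)$, I would pass to a Nisnevich cover of $X$ on which $f$ admits a $p$-th root; the change of variable $T\mapsto f^{1/p}T$ then identifies $\Bb_X(o,f^{1/p})$ with the relative unit ball $\Bb_X(o,1)$, and Nisnevich descent concludes. The second item I would read as the open ball $\Bo_X(o,f^{1/p})$ (the repetition appears to be a typo), which is a filtered admissible union of closed sub-balls and so collapses onto $\Motr(X)$ by the preceding case, all transition maps becoming identities on motives. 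The analytic affine line is treated identically via the exhaustion $(\Aa_K^{1,\an})\times_K X=\bigcup_n \Bb_X(o,t^{-n})$.

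For the thin/thick annulus equivalence I would proceed by Mayer--Vietoris. The admissible cover
\[
\Ccr_X(o,f^{1/p},h^{1/r})=\Ccr_X(o,f^{1/p},g^{1/q})\cup \Ccr_X(o,g^{1/q},h^{1/r})
\]
has intersection $\partial\Bb_X(o,g^{1/q})$, so it is enough to handle the adjacent case, \emph{i.e.}\ to show that each of the two inclusions $\partial\Bb_X(o,g^{1/q})\hookrightarrow \Ccr_X(o,f^{1/p},g^{1/q})$ and $\partial\Bb_X(o,g^{1/q})\hookrightarrow \Ccr_X(o,g^{1/q},h^{1/r})$ induces an equivalence of motives; Mayer--Vietoris then forces $\Motr(\Ccr_X(o,f^{1/p},h^{1/r}))\simeq \Motr(\partial\Bb_X(o,g^{1/q}))$. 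To treat the adjacent case I would descend along a Nisnevich cover of $X$ on which $g$ and $h$ (resp.\ $f$ and $g$) admit the necessary roots so that $u=T^q/g$ becomes a bona fide coordinate on the annulus, presenting the thick annulus as a relative closed ball over the thin annulus, the circle $|u|=1$ corresponding to $\partial\Bb_X(o,g^{1/q})$; the closed-ball case then supplies the equivalence.

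The last two points follow immediately: writing
\[
\Gm_K^\an\times_K X=\bigcup_{n,m\geq 1}\Ccr_X(o,t^n,t^{-m}),\qquad \Bb_X(o,g^{1/q})\backslash\set{o_X}=\bigcup_{n\geq 1}\Ccr_X(o,t^n,g^{1/q}),
\]
as admissible unions of thick annuli and applying the previous equivalence collapses each term onto the fixed thin annulus $\partial\Bb_X(o,g^{1/q})$. The main obstacle is really the adjacent-annulus step: algebraically identifying a thick annulus with a relative ball over its boundary is only possible after extracting roots of $g$ and $h$, so one must organise the descent argument carefully and combine it with the correct $\Bb^1$-homotopy to transport the closed-ball equivalence into this relative setting.
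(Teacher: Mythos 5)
The paper itself does not prove this proposition --- it is quoted from Ayoub \cite[Proposition 1.3.4]{ayoub_rigide}, with only the remark that the first item is a direct consequence of the imposed $\Bb^1$-invariance and the others ``follow after some reductions'' --- so I assess your argument on its own terms. Two of your steps do not work as written. First, the covers you invoke to extract roots of $f$ (resp.\ of $g$ and $h$) are \emph{\'etale} but not Nisnevich: adjoining a $p$-th root of a unit creates residue field extensions in general, and $\RigSH{K}$ is built with the Nisnevich topology, so descent along such covers is not available. For the closed ball this detour is in any case unnecessary: the multiplication map $H(T,S)=TS$ is a well-defined morphism $\Bb_X(o,\abs{f}^{1/p})\times_K\Bb(o,1)\to\Bb_X(o,\abs{f}^{1/p})$ interpolating between the identity and $o\circ\pi$, so the ball of arbitrary radius is $\Bb^1$-contractible onto $X$ directly, with no roots and no descent. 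Granting that, your treatment of the open ball and of $\Aa^{1,\an}_K\times_K X$ as filtered admissible unions with invertible transition maps is fine.

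The serious gap is the adjacent-annulus step. A thick annulus is \emph{not} a relative closed ball over its boundary circle: after extracting $g^{1/q}$ the substitution $v=T/g^{1/q}$ merely normalizes $\Ccr_X(o,\abs{f}^{1/p},\abs{g}^{1/q})$ into an annulus with outer radius $1$ over $X$ --- still an annulus --- while $u=T^q/g$ is a finite map of degree $q$ onto another annulus; neither exhibits a ball fibration over $\parBb_X(o,\abs{g}^{1/q})$, and no algebraic projection of the annulus onto the circle is available. Since your Mayer--Vietoris decomposition of $\Ccr_X(o,\abs{f}^{1/p},\abs{h}^{1/r})$ into two sub-annuli presupposes exactly this adjacent case, the whole annulus part collapses, and with it the last two items. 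The standard repair is to apply Mayer--Vietoris to a cover of the \emph{ball} instead: $\Bb_X(o,\abs{h}^{1/r})=\Bb_X(o,\abs{g}^{1/q})\cup\Ccr_X(o,\abs{f}^{1/p},\abs{h}^{1/r})$, with intersection $\Ccr_X(o,\abs{f}^{1/p},\abs{g}^{1/q})$. Contractibility of the two balls (already established) degenerates the triangle and shows that enlarging the outer radius of an annulus does not change its motive; the automorphism $T\mapsto T^{-1}$ of $\Gm_K^{\an}\times_K X$ gives the analogous statement for the inner radius, and specializing the radii (in particular taking them equal) yields the comparison of the thin annulus with all thick annuli containing it, hence the remaining assertions.
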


Let $X$ a smooth $k$-scheme, $f\in \Gamma(X,\mathcal{O}_X^\times)$ and $p\in \Nn^*$. Then define $Q^\mathrm{for}_p(X,f)$ as the $t$-adic completion of the $R$-scheme $X\times_k R[V]/(V^p-tf)$, and $Q^\Rig_p(X,f)$ the generic fiber of $Q^\mathrm{for}_p(X,f)$. 

Define also $Q^\an_p(X,f)$ as the analytification of $X\times_k K[V]/(V^p-tf)$. By Proposition \ref{prop-openim-genericfiber}, there is an open immersion of rigid $K$-varieties 
\[
Q^\Rig_p(X,f)\to Q^\an_p(X,f).
\]

\begin{Theorem}[{\cite[Th\'eor\`eme 1.3.11]{ayoub_rigide}}]
\label{thm-ayoub-equi-Qrig-Qan}
Let $X$ a smooth $k$-scheme, $f\in \Gamma(X,\mathcal{O}_X^\times)$, and $p$ a positive integer. Then the inclusion
\[
Q_p^\Rig(X,f)\to Q_p^\an(X,f)
\]
induce an isomorphism
\[
\Motr(Q_p^\Rig(X,f))\simeq \Motr(Q_p^\an(X,f)).
\]
\end{Theorem}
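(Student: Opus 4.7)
The strategy is to realize $Q_p^\an(X,f)$ as an admissible filtered union of quasi-compact affinoid subdomains, each being (after rescaling) a variant of $Q_p^\Rig$, and to verify that every transition in this filtration is a motivic equivalence via Proposition \ref{prop-wequiv-annuli}.

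First, I would reduce to the case $X = \Spec A$ affine using Nisnevich descent: both constructions $Q_p^{\Rig}(-,f)$ and $Q_p^\an(-,f)$ are functorial and compatible with Zariski covers of $X$, so a Mayer--Vietoris argument reduces the problem to the affine situation. Then choose a closed immersion $X \hookrightarrow \Aa_k^N$. For each $n \geq 0$, set $X^{(n)} := X^\an \cap \Bb(o,|t|^{-n})^N$, a quasi-compact affinoid subdomain of $X^\an = (X\times_k K)^\an$. The family $\{X^{(n)}\}_n$ is an admissible cover of $X^\an$, and $X^{(0)}$ coincides (through the chosen embedding) with $\calX_\eta$, the generic fiber of the $t$-adic formal completion of $X \times_k R$. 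Pulling back along the projection $Q_p^\an(X,f) \to X^\an$ produces quasi-compact affinoids $Q_p^{(n)}$ forming an admissible cover of $Q_p^\an(X,f)$, with the distinguished identification $Q_p^{(0)} = Q_p^\Rig(X,f)$.

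The core step is to check that each inclusion $Q_p^{(n)} \hookrightarrow Q_p^{(n+1)}$ becomes an isomorphism in $\RigSH{K}$. The linear change of coordinates $x_i \mapsto t x_i'$ identifies $\Bb(o,|t|^{-(n+1)})^N$ with $\Bb(o,|t|^{-n})^N$, and under this rescaling both $Q_p^{(n)}$ and $Q_p^{(n+1)}$ take the shape of $p$-th root covers $V^p = t\tilde{f}$ over rescaled formal models, with the inclusion translating into a family of closed ball inclusions (of radii $|t|^{s+1/p}$ into $|t|^{s/p}$ for an appropriate $s$) over a common base. By applying Proposition \ref{prop-wequiv-annuli} to each ball factor in the relative situation, both ends have motives canonically isomorphic to $\Motr$ of the common base, and the transition map is therefore a motivic equivalence.

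Finally, since the motive of an admissible filtered union of quasi-compact opens is the homotopy colimit of the constituent motives in $\RigSH{K}$, and every transition in $\{Q_p^{(n)}\}_n$ is an equivalence, we conclude
\[
\Motr(Q_p^\an(X,f)) \simeq \mathrm{colim}_n\, \Motr(Q_p^{(n)}) \simeq \Motr(Q_p^{(0)}) = \Motr(Q_p^\Rig(X,f)).
\]
The main obstacle is the rescaling step: one must verify carefully that the change of variables preserves the structure of the $p$-th root cover (so that $V^p = tf$ really becomes $V^p = t\tilde{f}$ for an invertible regular $\tilde{f}$), and that the resulting transition is genuinely a relative ball inclusion to which Proposition \ref{prop-wequiv-annuli} applies, rather than a more delicate inclusion that would require additional analysis of the finite \'etale structure.
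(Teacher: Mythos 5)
The paper does not actually prove this statement: it is imported verbatim from Ayoub (Th\'eor\`eme 1.3.11 of \emph{Motifs des vari\'et\'es analytiques rigides}), so there is no in-paper argument to compare yours against. Judged on its own merits, your proposal has a genuine gap at its central step.

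Your reduction to affine $X$ via Zariski Mayer--Vietoris is fine (both $Q_p^\Rig(-,f)$ and $Q_p^\an(-,f)$ turn a Zariski cover of $X$ into admissible covers, by Proposition \ref{prop-tube-admissible-cover} on one side and analytification on the other), the identification $Q_p^{(0)}=Q_p^\Rig(X,f)$ is correct (on $X^{(0)}$ one has $\abs{f}=1$ since $f$ and $f^{-1}$ have coefficients in $k$, so $\abs{V}=\abs{t}^{1/p}\leq 1$ automatically), and the homotopy-colimit formalism for an admissible increasing union is legitimate. The problem is the claim that each transition $Q_p^{(n)}\hookrightarrow Q_p^{(n+1)}$ is a motivic equivalence. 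The rescaling $x_i\mapsto tx_i'$ does not preserve $X^\an$: it changes the defining ideal of $X$, producing a $K$-variety no longer defined over $k$, so it does not place $Q_p^{(n)}$ and $Q_p^{(n+1)}$ inside a common inductive framework. More seriously, the inclusion $X^{(n)}\hookrightarrow X^{(n+1)}$ is \emph{not} a relative closed-ball or annulus inclusion over a common base for a general smooth affine $X$: the region $X^{(n+1)}\setminus X^{(n)}$ records the geometry of $X$ at infinity and carries no product or fibration structure to which Proposition \ref{prop-wequiv-annuli} could be applied factor by factor (your own examples $\Aa^1$ and $\Gm$ are precisely the cases where such a structure accidentally exists). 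Note that already for $p=1$, $f=1$ your key step asserts that $\calX_\eta\hookrightarrow (X_K)^\an$ is a motivic equivalence, which is essentially the full content of the theorem in that case; so the argument is circular in difficulty rather than a reduction.

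What is actually needed to control the transitions is an analysis of $X^\an$ near the boundary of a compactification: one compactifies $X$ with a normal crossings divisor at infinity (resolution of singularities) and studies the tubes of the boundary strata, showing their contributions cancel or retract motivically. This is the kind of semi-stable/tube analysis that Ayoub carries out to prove Th\'eor\`eme 1.3.11, and that the present paper performs for related purposes in Section \ref{section-motive-tube} (Propositions \ref{propAIS-MDI=MDI0} and \ref{prop-cutting-Mtubes}). Your flagged ``main obstacle'' (whether the $p$-th root structure survives the rescaling) is a secondary issue; the primary one is that the annular shells of the exhaustion are not products of annuli over a common base.
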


Define a functor $\mathfrak{F} : \QUSH{k}\to \RigSH{K}$ as the composite
\[
\mathfrak{F} : \QUSH{k}\to \SH{\Gm_k}\underset{\pi^*}{\to}\SH{K}\underset{\Rig^*}{\to} \RigSH{K},
\]
where $\pi : \Spec{K}\to \Gm_k$ corresponds to the ring morphism $k[T,T^{-1}]\to K=k((t))$ sending $T$ to $t$. Observe that $\mathfrak{F}$ send the generators $\Mot_{\Gm_k}(Q^\gm_p(X,f))\in\QUSH{k}$ to $\Motr(Q^\an_p(X,f))$. 

One of the main results of Ayoub in \cite{ayoub_rigide} is the following theorem. 
\begin{Theorem}[{\cite[Scholie 1.3.26]{ayoub_rigide}}]
\label{thm-scholie-Rig-QUSH} 
The functor
\[
\mathfrak{F} : \QUSH{k}\to \RigSH{K}
\]
is an equivalence of categories, denote by $\mathfrak{R}$ a quasi-inverse.
\end{Theorem}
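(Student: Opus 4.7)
The plan is to reduce the question to showing that $\mathfrak{F}$ is (i) essentially surjective onto a set of generators of $\RigSH{K}$ (as a triangulated category with infinite sums) and (ii) fully faithful on the generators of $\QUSH{k}$. Because $\mathfrak{F}$ is a triangulated functor commuting with infinite sums (it is built out of base change and analytification, both of which are left adjoints), these two properties will extend to a full equivalence by a standard argument, and a quasi-inverse $\mathfrak{R}$ can then be produced. Since $\QUSH{k}$ is by definition generated by the $\Sus_T^p(\Mot_{\Gm_k}(Q^\gm_r(X,f))\otimes A_{\mathrm{cst}})$, and $\mathfrak{F}$ sends these to $\Sus_{T^\an}^p(\Motr(Q^\an_r(X,f))\otimes A_{\mathrm{cst}})$, everything will come down to controlling these specific objects.

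For essential surjectivity, I would first show that $\RigSH{K}$ is generated by the homological motives of smooth quasi-compact rigid $K$-varieties admitting a formal model. Using Raynaud's theorem combined with a semi-stable reduction result for formal $R$-schemes (after pullback along a suitable ramified cover $\Spf R[t^{1/n}]\to \Spf R$, which is absorbed because $k$ contains all roots of unity), every such motive can be written as the motive of a smooth rigid variety whose formal model is semi-stable. Proposition \ref{prop-semistable-etaleproj} then locally identifies the formal model, étale-locally, with a $\St^{Ut}_{-,\underline N}$, whose generic fibre is precisely the rigid $Q^\Rig$ construction. Nisnevich descent for $\RigSH{K}$, combined with Theorem \ref{thm-ayoub-equi-Qrig-Qan} which identifies $\Motr(Q^\Rig_p(X,f))$ with $\Motr(Q^\an_p(X,f))$, then shows that the image of $\mathfrak{F}$ generates $\RigSH{K}$. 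The annuli/thin annuli comparisons of Proposition \ref{prop-wequiv-annuli} will be used to glue the pieces coming from an open cover of a semi-stable model.

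For fully faithfulness, the strategy is to reduce computing $\Hom$'s in $\RigSH{K}$ to computing $\Hom$'s in $\QUSH{k}$ via an explicit model. The key input is again Theorem \ref{thm-ayoub-equi-Qrig-Qan}, which is the ``comparison'' theorem between algebraic $Q$'s (over $\Gm_k$) and analytic $Q$'s (over $K$). One should show that the functor $\pi^*\circ\Rig^*$ induces an isomorphism on hom-groups $[\Mot_{\Gm_k}(Q^\gm_r(X,f)),\Mot_{\Gm_k}(Q^\gm_{r'}(X',f'))(n)[m]]$, using the fact that on both sides these groups can be computed by explicit simplicial/cosimplicial resolutions coming from $\Bb^1$/$\Aa^1$-invariance and Nisnevich hypercovers, and that the two resolutions match via the generic fibre functor once one lifts to formal semi-stable models. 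This is essentially where $\Bb^1$-invariance on the rigid side corresponds to the ``unipotent'' part of $\Gm_k$-motives imposed in the definition of $\QUSH{k}$.

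The hard part, and the technical heart of the proof, is the fully faithful step: it is exactly here that one must justify that passing from a $\Gm_k$-motive to its image in $\RigSH{K}$ loses no information beyond what the quasi-unipotency condition already records. This requires a delicate analysis of the Nisnevich cohomology of rigid $Q^\an$-spaces together with a careful manipulation of the ramified coverings $t\mapsto t^{1/n}$, which provide the monodromy action. In practice one would invoke Ayoub's machinery on continuity of stable homotopy categories under filtered colimits of base schemes and his descent results for rigid motives along tame ramified covers, which together reduce the hom-computation to a computation over $\Gm_k$ that can be matched term-by-term with the definition of $\QUSH{k}$.
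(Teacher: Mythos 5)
The paper does not prove this statement: it is imported verbatim from Ayoub's memoir (Scholie 1.3.26 of \cite{ayoub_rigide}), where it is one of the main results and its proof occupies a large part of Chapter 1. So there is no internal argument here to compare yours against; the only legitimate ``proof'' within this paper is the citation.

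Your sketch does capture the broad architecture of Ayoub's actual argument: $\RigSH{K}$ is compactly generated by motives of generic fibres of (semi-stable, after ramified base change) formal models, Proposition \ref{prop-semistable-etaleproj} reduces these \'etale-locally to the standard models $\St^{Ut}_{-,\underline{N}}$, and Theorem \ref{thm-ayoub-equi-Qrig-Qan} matches the resulting $Q^\Rig$'s with the images of the generators of $\QUSH{k}$, so that $\mathfrak{F}$ hits a set of compact generators; the reduction of the equivalence to generation plus full faithfulness on generators is then standard for coproduct-preserving triangulated functors between compactly generated categories. The genuine gap is the full faithfulness step, which is exactly where the entire difficulty of the theorem lives and which you do not actually carry out: the claim that the hom-groups between the $\Mot_{\Gm_k}(Q^\gm_r(X,f))$ and their images under $\pi^*\circ\Rig^*$ ``can be computed by explicit resolutions that match'' is a restatement of what must be proved, not an argument. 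Ayoub's treatment of this point is a long d\'evissage (comparison of the unit of the adjunction between $\mathfrak{F}$ and its right adjoint on the generators, explicit computation of direct images of units of annuli and punctured disks, and control of the monodromy via the ramified covers $t\mapsto t^{1/n}$), none of which is reproduced or replaced by your appeal to unspecified ``machinery on continuity and descent''. As written, the proposal is an outline of where a proof would go, not a proof; for the purposes of this paper the correct move is simply to cite Ayoub, as the author does.
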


\section{Realization map for definable sets}
\label{section-realization-map}

This aim of this section is to define a morphism $\chir : \K{\VF_K}\to \K{\RigSH{K}}$. We will first define it on $\K{\Gam[*]}$ and $\K{\RES_K[*]}$ in Sections \ref{section-gam-part} and \ref{section-res-part}, before using Hrushovski and Kazhdan isomorphism of define it on $\K{\VF_K}$ in Section \ref{section-def-chirRV}. We will in fact define two morphisms $\chir$ and $\chirp$. Section \ref{section-motive-tube} is devoted to the proof of Theorem \ref{main-thm1} via the study of motives of tubes in a semi-stable situation, the main results are grouped in Section \ref{section-compatib-chir}. The last Section \ref{section-few-more-realization} is devoted to the definition of two other realization maps $\K{\VF_K}\to \K{\RigSH{K}}$ and the statement of analogous of Theorem \ref{main-thm1} for them.

\subsection{The $\Gam$ part}
\label{section-gam-part}

Recall that $\Gamn{n}$ is the category with objects subsets of $\Gam^n$ defined by piecewise linear equations and inequations with $\Zz$-coefficients. A morphism between $Y$ and $Y'$ is a bijection defined piecewise as $x\mapsto Ax+B$

 with $A\in GL_n(\Zz)$ and $B\in \Zz^n$.  From this one forms the graded
\[
\Gamn{*}:= \bigoplus_{n\in \Nn} \Gamn{n}. 
\]
One defines also $\Gamnfin{n}$ and $\Gamnfin{*}$ to be the full subcategories of $\Gamn{n}$ and $\Gamn{*}$ whose objects are finite. 

Note that morphisms in $\Gamn{*}$ must respect the graduation, in particular, the class $[\set{0}]_1\in \Gamn{1}$ is not equal to the class $[\set{0}]_n\in \Gamn{n}$ for $n\neq 1$. Recall the $o$-minimal Euler characteristics $\eu$ and $\eu_c$ defined above Proposition \ref{prop-def-calE}.

\begin{Definition}
If $[X]\in \K{\Gamn{*}}$, with $X\in \Gamn{d}$, define 
\[
\chirGam(X)=\eu_c(X)[\Motr(\parBb(o,1)^d)(-d)]\in \K{\RigSH{K}}
\]
and 
\[
\chirpGam([X])=\eu(X)[\Motr(\parBb(o,1)^d)]\in \K{\RigSH{K}}.
\]
Hence we get two ring morphisms
\[
\chirGam, \chirpGam : \K{\Gamn{*}} \to \K{\RigSH{K}}.
\]

\end{Definition}
It is well defined because $\K{\Gamn{*}}$ is naturally graded and by additivity of Euler characteristic.

\begin{Proposition}
\label{prop-chirGam-locclosed}
Let $X\subseteq \Gam^n$ be a convex compact polytope.  If $X$ is closed, then
\[
\chirGam([X])=[\Motr(\val^{-1}(X)^\Rig)(-n)] \;\mathrm{and}\;\chirpGam([X])=[\Motr(\val^{-1}(X)^\Rig)].
\]
If $X$ is open, then
\[
\chirGam([X])=(-1)^n[\Motr(\val^{-1}(X)^\Rig)(-n)]  \;\mathrm{and}\;
\chirGam([X])=(-1)^n[\Motr(\val^{-1}(X)^\Rig)].
\]
\end{Proposition}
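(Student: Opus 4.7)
Unfolding the definitions of $\chirGam$ and $\chirpGam$ and computing the relevant $o$-minimal Euler characteristics, both halves of the statement reduce to the single identity
\[
[\Motr(\val^{-1}(X)^\Rig)] = [\Motr(\parBb(o,1)^n)] \quad \text{in } \K{\RigSH{K}}
\]
valid for every bounded convex polytope $X \subseteq \Gam^n$, closed (of any dimension $\le n$) or open of full dimension $n$: indeed $\eu_c(X) = \eu(X) = 1$ in the closed case by contractibility, while $\eu_c(X) = \eu(X) = (-1)^n$ in the open case since such an $X$ is definably homeomorphic to an open $n$-cube.

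The core of the proof is the one-dimensional case. For a singleton $\set{a}$ and a closed interval $[a,b]$, the tube is respectively a thin annulus and a closed annulus, and Proposition~\ref{prop-wequiv-annuli} yields the identity directly. For an open interval $(a,b)$, the key step is to write $\val^{-1}((a,b))$ as the filtered union, for $m$ large, of the closed annuli $\Ccr(o, \abs{t^{a+1/m}}, \abs{t^{b-1/m}})$. This is an admissible cover, and each successive inclusion $\Ccr_m \hookrightarrow \Ccr_{m+1}$ induces an isomorphism on motives: the two nested closed annuli share a common thin annulus, whose motive is simultaneously equivalent to that of each outer annulus by Proposition~\ref{prop-wequiv-annuli}, forcing the inclusion itself to be an isomorphism. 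Passing to the colimit in $\RigSH{K}$ then identifies $\Motr(\val^{-1}((a,b)))$ with $\Motr(\parBb(o,1))$ and in particular makes it compact with the desired class in $\K{\RigSH{K}}$.

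For higher-dimensional polytopes, the K\"unneth-type isomorphism $\Motr(A \times_K B) \simeq \Motr(A) \otimes \Motr(B)$ immediately extends the one-dimensional computation to any product of intervals and singletons of the above types. For a general bounded convex polytope $X \subseteq \Gam^n$ of the prescribed type, I will construct a finite admissible cover of $\val^{-1}(X)^\Rig$ whose pieces and all multiple intersections are tubes over such axis-aligned boxes (typically by refining $X$ with coordinate hyperplanes through its vertices and then slightly enlarging each piece to preserve openness or closedness as appropriate). Nisnevich descent in $\RigSH{K}$ converts this into an inclusion--exclusion expansion of $[\Motr(\val^{-1}(X)^\Rig)]$ in which every summand equals $[\Motr(\parBb(o,1)^n)]$; the signed count of summands is the Euler characteristic of the nerve of the cover, which is $1$ since the nerve is homotopy equivalent to the contractible set $X$. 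The sum therefore collapses to $[\Motr(\parBb(o,1)^n)]$.

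The main technical obstacle is the open annulus computation in dimension one; once that is handled by the filtered colimit argument above, K\"unneth together with the combinatorial Mayer--Vietoris argument extend the identity routinely to all bounded convex polytopes.
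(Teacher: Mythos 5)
Your one-dimensional computations are sound: the closed interval and singleton cases follow from Proposition \ref{prop-wequiv-annuli}, the filtered-union argument for an open interval (transition maps are isomorphisms by two-out-of-three against a common thin annulus) is essentially what the paper also leaves implicit in its open case, and K\"unneth does extend this to products of intervals and singletons. The gap is in the passage from axis-aligned boxes to a general convex polytope. Slicing $X$ by coordinate hyperplanes through its vertices does not produce boxes: each cell of the resulting subdivision still carries whatever slanted facets of $X$ it meets. Worse, no finite cover of a general convex polytope by axis-parallel boxes contained in it exists at all. Take $X=\set{(x,y)\in\Gam^2 \mid x\geq 0,\ y\geq 0,\ x+y\leq 1}$: any box $[a,b]\times[c,d]\subseteq X$ satisfies $b+d\leq 1$ and hence meets the facet $\set{x+y=1}$ in at most the single corner $(b,d)$, so a finite union of such boxes (degenerate or not) meets that facet in only finitely many points. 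Your inclusion--exclusion/nerve argument therefore has no cover to run on, and the reduction to the K\"unneth case fails for every polytope with a facet not parallel to a coordinate hyperplane --- which is the generic case.

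The mechanism that is missing, and that the paper uses in Lemma \ref{lem-Mrig-convex-polyhedron}, is an induction on $n$ that never insists on product structure: project $X$ to the last $n-1$ coordinates to get a polytope $Y$ handled by induction, cover $\val^{-1}(Y)^\Rig$ admissibly by rational domains $Y_{i,j}$ on which a fixed pair of the defining inequalities is extremal, and observe that over each $Y_{i,j}$ the tube of $X$ is a single \emph{relative} closed annulus $\Ccr_{Y_{i,j}}(o,\abs{\tilde{f_i}}^{1/p_i},\abs{\tilde{g_j}}^{1/q_j})$ whose radii are monomials varying over the base; Proposition \ref{prop-wequiv-annuli} identifies its motive with that of a relative thin annulus, and Mayer--Vietoris for the cover by the $Y_{i,j}$ (whose intersections are again of the same shape) finishes the induction. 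It is exactly this relative-annulus step that absorbs slanted facets, and it is absent from your plan. The open case is then handled in the paper by writing the open polytope as an increasing union of closed ones, which avoids having to redo any cover argument with open boxes.
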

\begin{proof}
If $X$ is empty, $\eu(X)=\eu_c(X)=0$ hence the proposition is verified. Hence we can suppose $X$ in nonempty. We have $\eu(X)=\eu_c(X)=1$ if $X$ is closed, and $\eu(X)=\eu_c(X)=(-1)^n$ if $X$ is open. Hence the result follows from the following Lemma \ref{lem-Mrig-convex-polyhedron}.
\end{proof}

\begin{Lemma}
\label{lem-Mrig-convex-polyhedron}
Let $X\subseteq \Gam^n$ be a non-empty convex polytope, either closed or open. Then 
\[
\Motr(\val^{-1}(X)^\Rig)\simeq \Motr(\parBb(o,1)^n).
\]
\end{Lemma}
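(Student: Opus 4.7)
The plan is to argue by induction on $n$, with Proposition~\ref{prop-wequiv-annuli} as the main tool.

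For $n = 1$, a non-empty convex polytope $I \subseteq \Gam$ is an interval. If $I$ is closed, then $\val^{-1}(I)^\Rig$ is either a closed annulus or a thin annulus, and the equivalence $\Motr(\val^{-1}(I)^\Rig) \simeq \Motr(\parBb(o,1))$ is a direct instance of Proposition~\ref{prop-wequiv-annuli}. If $I = (a,b)$ is open, then $\val^{-1}(I)^\Rig$ admits an admissible cover by the increasing family of closed annuli $V_m = \val^{-1}([a+1/m, b-1/m])^\Rig$ for $m$ large; each inclusion $V_m \hookrightarrow V_{m+1}$ factors through a common fixed thin annulus, so by two-out-of-three applied to Proposition~\ref{prop-wequiv-annuli} it is a motivic equivalence. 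Passing to the admissible colimit gives $\Motr(\val^{-1}(I)^\Rig) \simeq \Motr(\parBb(o,1))$.

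For the inductive step, let $X \subseteq \Gam^n$ be a non-empty convex polytope of either type. After a piecewise $\mathrm{GL}_n(\Zz) \ltimes \Qq^n$ transformation, which corresponds to a rigid analytic automorphism on $\val^{-1}(-)^\Rig$ and therefore preserves motives, we may assume $X$ is full-dimensional. Project $\pi : \Gam^n \to \Gam^{n-1}$ onto the first $n-1$ coordinates and set $X' = \pi(X)$. The fibre over $y \in X'$ is an interval with endpoints $a(y) \leq b(y)$, where $a$ is convex and $b$ concave, both piecewise linear with $\Zz$-coefficients. Subdivide $X'$ into convex polytopes $X'_1, \ldots, X'_N$ on each of which $a, b$ are globally linear. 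Set $X_i = X \cap \pi^{-1}(X'_i)$ and $Y_i = \val^{-1}(X'_i)^\Rig$. On each piece the linear formulas for $a_i, b_i$ lift to invertible analytic functions $f_i, g_i$ on $Y_i$, and $\val^{-1}(X_i)^\Rig$ is a relative (possibly thin) annulus $\Ccr_{Y_i}(o, f_i, g_i)$. By the relative form of Proposition~\ref{prop-wequiv-annuli}, $\Motr(\val^{-1}(X_i)^\Rig) \simeq \Motr(Y_i \times \parBb(o,1))$; combining with the inductive hypothesis for $X'_i$ yields $\Motr(\val^{-1}(X_i)^\Rig) \simeq \Motr(\parBb(o,1)^n)$.

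The main obstacle is to glue these pieces back together to $\val^{-1}(X)^\Rig$ while keeping track of the isomorphism. I would argue by a secondary induction on $N$ via Mayer-Vietoris, at each step peeling off one piece $X_N$ along a codimension-one face shared with the union of the others. The intersection, being a convex polytope of the same type and one dimension less, satisfies the lemma by the same recipe (with one fibre coordinate degenerating). By arranging the identifying isomorphisms coherently---e.g., by factoring them all through the inclusion $\parBb(o,1)^n \hookrightarrow \val^{-1}(X)^\Rig$ induced by any fixed interior point of $X$---one makes the Mayer-Vietoris distinguished triangle degenerate and deduces the required isomorphism. The delicate point is precisely to check that such coherent choices of identifications can be made at every stage of the gluing.
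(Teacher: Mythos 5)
Your overall strategy---induction on $n$, projecting to $\Gam^{n-1}$, subdividing the base so that the preimage becomes a relative annulus, and invoking Proposition \ref{prop-wequiv-annuli}---is exactly the strategy of the paper's proof; the paper's subdivision of the base into the pieces $Y_{i,j}$ (indexed by which defining inequality achieves the extremal radius) amounts to the same thing as your subdivision into domains of linearity of $a$ and $b$. The gluing/coherence issue you flag at the end is real, but the paper's own treatment is no more detailed (it simply asserts that it suffices to prove the statement on each member of the admissible cover), so this is not a point on which your proposal falls short of the source.

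The one step that genuinely fails as written is your attempt to run the inductive step uniformly for polytopes ``of either type.'' If $X$ is open, the fibre of $\pi$ over $y$ is an \emph{open} interval $(a(y),b(y))$, so $\val^{-1}(X_i)^\Rig$ is not a relative annulus $\Ccr_{Y_i}(o,f_i,g_i)$ --- those are cut out by non-strict valuative inequalities and are quasi-compact over the base --- but only an increasing union of such, and Proposition \ref{prop-wequiv-annuli} does not apply to it directly; moreover $Y_i$ itself is then non-quasi-compact, so even the ``relative form'' of the proposition needs justification. The paper avoids this entirely: it carries out the induction only for closed polytopes, and then, for $X=\set{h_i>0}$ open and non-empty, sets $X_r=\set{h_i\geq r}$, notes that $X_r$ is a non-empty closed convex polytope for $r$ small, and deduces the open case from $\val^{-1}(X)^\Rig=\bigcup_{r\in\Qq_{>0}}\val^{-1}(X_r)^\Rig$ --- precisely the exhaustion you already use at $n=1$. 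Restructure your argument the same way (closed case by induction first, open case by exhaustion afterwards) and the proposal matches the paper's proof.
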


\begin{proof}
We first assume that $X$ is closed. 
We work by induction on $n$. If $n=1$, then 
\[X=\set{x\mid \alpha \leq px\leq \beta}\] for $\alpha, \beta \in \Zz, p\in\Nn$. Hence
\[
\val^{-1}(X)^\Rig=\Ccr(o,\abs{\pi^\beta}^{1/p}, \abs{\pi^\alpha}^{1/p}).
\]
By Proposition \ref{prop-wequiv-annuli}, $\Motr(\val^{-1}(X)^\Rig)=\Motr(\parBb(o,1))$. 

Suppose now the result is known for $n-1$. There are finitely many affine functions $h_i$, $i\in I_0$ with $\Zz$-coefficients such that 
\[
X=\set{x\in \Gam^n\mid h_i(x)\geq 0}.
\]

We can rewrite $X$ as
\[
X=\set{(x,y)\in \Gam\times \Gam^{n-1}\mid p_ix\leq f_i(y), q_jx \geq g_j(y), i\in I, j\in J}
\]
for some (possibly empty) finite sets $I,J$ integers $p_i,q_j\in \Nn$, affine functions $f_i, g_j: \Gam^{n-1}\to \Gam$ with $\Zz$ coefficients. Now observe that the projection of $X$ on the last $n$-th coordinates is 
\[
Y=\set{y\in \Gam^{n-1}\mid \forall (i,j)\in I\times J, p_i g_j(y)\leq q_j f_i(y)}.
\]
It satisfies the hypotheses of the proposition hence we get that $[\Motr(\val^{-1}(Y)^\Rig)]=[\Motr(\parBb(o,1)^{n-1})]$ by induction. 

We set $I'=\set{i\in I\mid p_i>0}$ and $J'=\set{j\in J\mid q_j>0}$ and observe that
\[
X=\set{(x,y)\in \Gam\times Y\mid p_ix\leq f_i(y), q_jx \geq g_j(y), i\in I', j\in J'}.
\]
Set $X_{i,j}=\Ccr_{Y^\Rig}(o,\abs{\tilde{f_i}^{1/p_i}},\abs{\tilde{g_j}^{1/q_j}})$, where we used the notation that if
 \[
 f(y_1,... ,y_{n-1})=b+a_1y_1+...+a_{n-1}y_{n-1},
 \] then 
 \[
\tilde{f}(x_1,... ,x_{n-1})= t^b\cdot x_1^{a_1}\cdot ... \cdot x_{n-1}^{a_{n-1}}.
\]
We have now 
\[X^\Rig=\bigcap_{(i,j)\in I'\times J'} X_{i,j}.\]

Set 
\[
Y_{i,j}=\set{y\in Y^\Rig\mid\forall (i',j')\in I\times J, \abs{\tilde{f_i}^{1/p_i}}\geq \abs{\tilde{f_{i'}}^{1/p_{i'}}}, \abs{\tilde{g_j}^{1/q_j}}\leq \abs{\tilde{g_{j'}}^{1/q_{j'}}}}.
\]
The $(Y_{i,j})_{(i,j)\in I\times J}$ form an admissible cover of $Y^\Rig$, indeed, $Y_{i,j}$ is defined in $Y^\Rig$ by some non-strict valuatives inequalities, if $D$ is a rational domain, the standard cover of $D$ induced by functions used to defined $D$ and the functions used to defines all $Y_{i,j}$ gives the required refinement of $(D\cap X_{i,j})_{(i,j)\in I\times J}$. 

Then it suffices to show the result for $X^\Rig \cap (Y_{i,j}\times_K\Aa_K^{1,\an})$. But we have then 
\[
X^\Rig \cap (Y_{i,j}\times_K\Aa_K^{1,\an})=\Ccr_{Y_{i,j}}(o,\abs{\tilde{f_i}^{1/p_i}},\abs{\tilde{g_j}^{1/q_j}})
\]
hence the result follows from Proposition \ref{prop-wequiv-annuli},  which gives 
\[
\Motr(\Ccr_{Y_{i,j}}(o,\abs{\tilde{f_i}^{1/p_i}},\abs{\tilde{g_j}^{1/q_j}}))\simeq\Motr(\parBb_{Y_{i,j}}(o,1)).
\]

Suppose now that $X$ is an open polyhedron. There are finitely many affine functions $h_i$, $i\in I_0$ with $\Zz$-coefficients such that 
\[
X=\set{x\in \Gam^n\mid h_i(x)> 0}.
\]
For $r\in \Qq_+^*$, set 
\[
X_r=\set{x\in \Gam^n\mid h_i(x) \geq r}.
\]
As $X$ is non-empty, $X_r$ is non-empty for $r$ small enough and  by the previous case, $\val^{-1}(X_r)^\Rig\simeq \parBb(o,1)^n$. 
As $\val^{-1}(X)^\Rig=\cup_{r\in \Qq_+^*}\val^{-1}(X_r)^\Rig$, we have the result. 
\end{proof}

We will not use it, but Proposition \ref{prop-chirGam-locclosed} can be extended to all closed polyhedral complexes. 
\begin{Proposition}
Let $X\subseteq \Gamma^n$ be (the realization of) a closed polyhedral complex. Then 
\[
\chirGam([X])=\eu(X)[\Motr(\val^{-1}(X)^\Rig)].
\]
\end{Proposition}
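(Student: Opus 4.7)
The plan is to subdivide $X$ into closed convex polytopal cells and reduce to Proposition \ref{prop-chirGam-locclosed} by an inclusion-exclusion argument, proceeding by induction on the combinatorial complexity of the subdivision. Fix a polyhedral subdivision $(C_i)_{i\in I}$ of $X$ in which every pairwise intersection $C_i\cap C_j$ is a common face of both cells (hence again a cell of the subdivision); the base case $|I|=1$ is exactly Proposition \ref{prop-chirGam-locclosed}, using that a compact convex closed polytope has Euler characteristic $1$.

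For the inductive step, pick a maximal cell $C$ and let $X'$ be the union of the remaining cells, so $X=X'\cup C$ with $X'\cap C$ a closed polyhedral subcomplex of strictly lower complexity. Inclusion-exclusion on the $\Gamma$-side yields
\[
[X]=[X']+[C]-[X'\cap C]
\]
in $\K{\Gamn{*}}$ together with the matching identity $\eu(X)=\eu(X')+\eu(C)-\eu(X'\cap C)$. Applying $\chirGam$ and the inductive hypothesis reduces the problem to the parallel identity on the rigid side,
\[
[\Motr(\val^{-1}(X)^\Rig)]=[\Motr(\val^{-1}(X')^\Rig)]+[\Motr(\val^{-1}(C)^\Rig)]-[\Motr(\val^{-1}(X'\cap C)^\Rig)]
\]
in $\K{\RigSH{K}}$.

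To produce this relation, one observes that $\val^{-1}(X')^\Rig$ and $\val^{-1}(C)^\Rig$ cover $\val^{-1}(X)^\Rig$ with intersection $\val^{-1}(X'\cap C)^\Rig$, and that this cover can be realised as an admissible cover: one chooses a semi-stable formal $R$-model of a rational polydomain enveloping $\val^{-1}(X)^\Rig$ whose combinatorics matches the polyhedral subdivision, so that $\val^{-1}(X')^\Rig$ and $\val^{-1}(C)^\Rig$ appear as tubes of closed pieces of its special fiber, and Proposition \ref{prop-tube-admissible-cover}(2) gives admissibility. The associated Mayer--Vietoris distinguished triangle in $\RigSH{K}$ produces the desired Grothendieck-group relation. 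The main obstacle is precisely this last point, namely constructing a formal model adapted to the subdivision and extracting the Mayer--Vietoris triangle; once in place, the combinatorial and Euler-characteristic bookkeeping reduces everything to the convex case handled in Proposition \ref{prop-chirGam-locclosed}.
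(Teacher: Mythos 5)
Your strategy coincides with the paper's: peel a top-dimensional cell off the complex, use a Mayer--Vietoris triangle on the rigid side, and induct down to the convex case of Proposition \ref{prop-chirGam-locclosed}. However, the inductive bookkeeping as you set it up does not close. Writing $M_Z=[\Motr(\val^{-1}(Z)^\Rig)]$, additivity of $\chirGam$ together with your inductive hypothesis gives
\[
\chirGam([X])=\eu(X')\,M_{X'}+\eu(C)\,M_C-\eu(X'\cap C)\,M_{X'\cap C},
\]
while the target is $\eu(X)\,M_X$ with $\eu(X)=\eu(X')+\eu(C)-\eu(X'\cap C)$ and, by Mayer--Vietoris, $M_X=M_{X'}+M_C-M_{X'\cap C}$. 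A weighted sum is not the sum of the weights times the sum of the terms, so the ``parallel identity on the rigid side'' does not imply the claimed equality: the sentence ``applying $\chirGam$ and the inductive hypothesis reduces the problem to\dots'' is a non sequitur. The repair is to strengthen the induction hypothesis so that each motive is identified explicitly, namely to prove $M_Z=\eu(Z)[\Motr(\parBb(o,1)^n)]$ for every closed subcomplex $Z$ encountered; this is exactly what the paper's induction establishes (using Lemma \ref{lem-Mrig-convex-polyhedron} both for the new cell and for its boundary), after which both sides of the desired equality can be evaluated as multiples of $[\Motr(\parBb(o,1)^n)]$ and compared.

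A secondary issue is your justification of admissibility of the cover by $\val^{-1}(X')^\Rig$ and $\val^{-1}(C)^\Rig$. Building a semi-stable formal model of an enveloping polydomain whose special-fiber stratification matches an arbitrary polyhedral subdivision, and then invoking Proposition \ref{prop-tube-admissible-cover}, is both heavy and left unexecuted (you identify it yourself as the main obstacle). It is also unnecessary: since $X'$, $C$ and $X'\cap C$ are closed, their preimages are cut out by non-strict valuative inequalities and are therefore finite unions of rational subdomains of the quasi-compact space $\val^{-1}(X)^\Rig$; admissibility of such a finite cover is obtained by the same refinement-by-rational-domains argument already used in the proof of Lemma \ref{lem-Mrig-convex-polyhedron}, and the Mayer--Vietoris distinguished triangle then yields the relation in $\K{\RigSH{K}}$ without any formal model.
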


\begin{proof}
We work by double induction on the maximal dimension of simplex in $X$ and number of simplex of maximal dimension. 
Let $\Delta\subset X$ a simplex of maximal dimension. Let $Y=X\backslash \Delta^\circ$, with $\Delta^\circ$ the interior of $\Delta$, $\partial\Delta=\Delta\backslash\Delta^\circ$. 

Then $(\val^{-1}(\Delta)^\Rig,\val^{-1}(Y)^\Rig)$ is an admissible cover of $\val^{-1}(X)^\Rig$, with intersection $\val^{-1}(\partial\Delta)^\Rig$ hence
\[
[\Motr(\val^{-1}(X)^\Rig)]=[\Motr(\val^{-1}(\Delta)^\Rig)]+[\Motr(\val^{-1}(Y)^\Rig)]-[\Motr(\val^{-1}(\partial\Delta)^\Rig)].
\]
By Lemma \ref{lem-Mrig-convex-polyhedron}, $[\Motr(\val^{-1}(\Delta)^\Rig)]=[\parBb(o,1)^n]$. Apply the induction hypothesis to get $[\Motr(\val^{-1}(\partial\Delta)^\Rig)]=(1-(-1)^{d})([\parBb(o,1)^n]$ and $[\Motr(\val^{-1}(Y)^\Rig)]=\eu(Y)[\parBb(o,1)^n]$.
We have the result, since $\eu(X)=\eu(Y)+(-1)^d$.
\end{proof}

\subsection{The $\RES$ part}
\label{section-res-part}

Recall that the category $\RES_K[*]$ is defined as the full subcategory of $\RV_K[*]$ such that for $X\in \RV_K[*]$, $X\in \RES_K[*]$ if and only if $\val(X)$ is finite. 
Alternatively, one can define $\RES_K$ as the category of definable sets consisting of $V_\gamma$ for $\gamma\in \Qq$, the residue field $\kk$ together with the functions $H : V_{\overline{\gamma}} \to \kk$ associated to every $\overline{\gamma}$-polynomial $H$. 

Fix $\overline{\gamma}= (\gamma_1,...,\gamma_n)\in \Zz^n$. A $\overline{\gamma}$-weighted monomial is a term $a_\alpha X^\alpha=a_\alpha \Pi_i X_i^{\alpha_i}$ with $a_\alpha\in \RV(K)$, $\alpha_i\in \Nn$ such that $\val(a_\alpha)+\sum_i \alpha_i \gamma_i=0$. 
A $\overline{\gamma}$-polynomial $H$ is a finite sum of $\overline{\gamma}$-monomials. It leads to a well defined function $V_{\overline{\gamma}} \to \kk$.

\begin{Definition}
Define ring morphisms
\[
\chirRes : \K{\RES_K}[*] \to \K{\RigSH{K}}
\]
and 
\[
\chirpRes : \K{\RES_K}[*] \to \K{\RigSH{K}}
\]
by the formulas, for $X$ a smooth $k$-variety of pure dimension~$r$, $f\in \Gam(X,\mathcal{O}^\times_X)$, $m\in \Nn^*$, 
\[\chirRes([Q^\RV_m(X,f)]_n)=[Q^\Rig_m(X,f)(-r)]\]
and
\[\chirpRes([Q^\RV_m(X,f)]_n)=[Q^\Rig_m(X,f)(n-r)].\]
\end{Definition}
As the $[Q^\RV_m(X,f)]_n$ generate $ \K{\RES_K[*]}$ by Corollary \ref{cor-QRES-generators-KRES}, one only need to show that the maps are well defined. But we can check that they coincide with the composite

\begin{multline*}
\K{\RES_K[*]}\to !\K{\RES_K}[\eL^{-1}]\overset{\Theta}{\to} \K{\Varmu{k}}[\eL^{-1}]\overset{\chi_{\hat\mu}}{\to} \\
\K{\QUSH{k}}\overset{\mathfrak{F}}{\to} \K{\RigSH{K}}.
\end{multline*}

where the map $\K{\RES_K[*]}\to !\K{\RES_K}[\Aa^1_k]^{-1}$ is $[X]_n\mapsto [X]$ for $\chirRes$ and $[X]_n\mapsto [X]\eL^{-n}$ for $\chirpRes$. The maps $\Theta$, $\chi_{\hat\mu}$, $\mathfrak{F}$ are respectively defined at Propositions \ref{iso-res-varmu}, \ref{prop-chi-muhat} and Theorem \ref{thm-scholie-Rig-QUSH}. Note that this also implies that it is a morphism of rings.

\subsection{Definition of $\chirRV$}
\label{section-def-chirRV}

Recall the isomorphism $\K{\RES_K[*]} \otimes_{\K{\Gamnfin{*}}} \K{\Gamn{*}}\to \K{\RV_K[*]}$ from Proposition \ref{prop-iso-RV-RES-Gam}. Using this isomorphism, to define a ring morphism $\K{\RV_K[*]} \to \K{\RigSH{K}}$, it suffices to specify two ring morphisms $\K{\RES_K[*]}\to \K{\RigSH{K}}$ and $\K{\Gamn{*}}\to \K{\RigSH{K}}$ that coincide on $\K{\Gamnfin{*}}$. 
\begin{Definition}
\label{def-chirRV}
Define
\[
\chirRV : \K{\RV_K[*]} \to \K{\RigSH{K}}
\]
using the morphisms $\chirRes$ and $\chirGam$ and
\[
\chirpRV : \K{\RV_K[*]} \to \K{\RigSH{K}}
\]
using the morphisms $\chirpRes$ and $\chirpGam$.
\end{Definition}
To show that is is well defined, one needs to check that if $A\subseteq \Gam^n$ is definable and finite, then $\chirGam([A])=\chirRes([\val^{-1}(A)]_n)$ and $\chirpGam([A])=\chirpRes([\val^{-1}(A)]_n)$.
By additivity, one can assume $A=\set{\alpha}$. Hence it follows from the following lemma.
\begin{Lemma}
Let $\alpha=(\alpha_1,...,\alpha_n)\in \Gam^n$ be definable. Then 
\[
\chirRes([\val^{-1}(\set{\alpha})]=[\parBb(o,1)^n(-n)]
\]
and
\[
\chirpRes([\val^{-1}(\set{\alpha})]=[\parBb(o,1)^n]
\]
\end{Lemma}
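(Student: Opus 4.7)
The first move is to exploit the product structure. Since $\val^{-1}(\set{\alpha}) = V_{\alpha_1}^*\times\cdots\times V_{\alpha_n}^* \subset \RV^n$, we have the factorisation $[\val^{-1}(\set{\alpha})]_n = \prod_{i=1}^n [V_{\alpha_i}^*]_1$ in $\K{\RES_K[*]}$. By multiplicativity of $\chirRes$ and $\chirpRes$, and the compatibility of Tate twists with tensor product, it suffices to prove the statement when $n=1$: namely, that $\chirRes([V_\alpha^*]_1) = [\Motr(\parBb(o,1))(-1)]$ and $\chirpRes([V_\alpha^*]_1) = [\Motr(\parBb(o,1))]$ for any definable $\alpha \in \Gam = \Qq$.

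Next I would settle the base case $\alpha=0$ directly, using the defining formulas of $\chirRes$ and $\chirpRes$ on the generators $[Q^\RV_m(X,f)]_n$. The projection $(u,v)\mapsto u$ gives a $K$-definable bijection $Q^\RV_1(\Gm_k,u)\simeq V_0^*$, so $[V_0^*]_1 = [Q^\RV_1(\Gm_k,u)]_1$ in $\K{\RES_K[1]}$. With $r=\dim\Gm_k=1$ and $n=1$, the defining formulas yield $\chirRes([V_0^*]_1) = [\Motr(Q^\Rig_1(\Gm_k,u))(-1)]$ and $\chirpRes([V_0^*]_1) = [\Motr(Q^\Rig_1(\Gm_k,u))]$. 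A direct computation identifies $Q^\Rig_1(\Gm_k,u) = \Spm K\{u,u^{-1},V\}/(V-tu)$; eliminating $V=tu$ gives $\Spm K\{u,u^{-1}\} = \parBb(o,1)$, which settles the case $\alpha=0$.

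The third step addresses general $\alpha\in\Qq$. The subtlety is that when $\alpha \notin \Zz$, there need not exist any $K$-definable bijection between $V_\alpha^*$ and $V_0^*$: the torsor $V_{k/m}^*$ for $\kk^\times$ is only trivialisable after adjoining $t^{1/m}$. To bypass this, I would invoke the composite description of $\chirRes$ and $\chirpRes$ through $!\K{\RES_K}[\eL^{-1}]$ given just after their definition. By construction of the ideal $!I$, we have $e_\alpha=e_0$, that is $[V_\alpha^*] = [V_0^*]$ in $!\K{\RES_K}$. The composite maps therefore send $[V_\alpha^*]_1$ and $[V_0^*]_1$ to the same class in $\K{\RigSH{K}}$, reducing the general case to the one already treated. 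The main (and only) obstacle is exactly this non-integer case, which is why one must route the argument through $!\K{\RES_K}$ rather than attempting to exhibit an explicit $K$-definable bijection.
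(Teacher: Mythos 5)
Your proof is correct, but it takes a genuinely different route from the paper's at the one nontrivial step. The paper also reduces to $n=1$ by multiplicativity, but it then handles an arbitrary $\alpha=k/m$ (with $k,m$ coprime) head-on: choosing $a,b\in\Zz$ with $am+bk=1$, it exhibits an explicit $K$-definable bijection $V^*_{k/m}\simeq Q^\RV_m(\Gm_k,u^b)$ via $w\mapsto(\bt^a w^b,\bt^{-k}w^m)$, applies the defining formula of $\chirRes$ and $\chirpRes$ to this generator, and identifies $Q^\Rig_m(\Gm_k,u^b)$ with the thin annulus $\parBb(o,k/m)$, whose motive equals that of $\parBb(o,1)$ by Proposition \ref{prop-wequiv-annuli}. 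You instead settle only $\alpha=0$ by a direct computation (which matches the $k/m=0/1$ case of the paper's formula) and dispose of general $\alpha$ by routing through the composite description of $\chirRes$ and $\chirpRes$ via $\KIRES[\eL^{-1}]$, where $e_\alpha=e_0$ holds by definition of the ideal $!\mathrm{I}$. This is legitimate: the paper itself invokes that composite precisely to establish well-definedness of the two maps, so no circularity arises, and your observation that $V^*_\alpha$ need not be $K$-definably isomorphic to $V^*_0$ for $\alpha\notin\Zz$ correctly identifies why some such detour (or the paper's Bézout trick) is unavoidable. What the paper's explicit computation buys is a definable normal form for $V^*_{k/m}$ and a concrete identification of the associated rigid space as an annulus of radius $\abs{t}^{k/m}$; coordinate changes of exactly this type are reused later (e.g.\ in the proof of Proposition \ref{prop-chir-single-tube}). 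Your version is shorter but leans on the agreement between the generator-formula definition and the composite, which the paper asserts with ``we can check'' rather than proving in detail.
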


\begin{proof}
We have $\val^{-1}(\set{\alpha})=V^*_{\alpha_1}\times...\times V^*_{\alpha_n}$. Because $\chirRes$ and $\chirpRes$ are ring morphisms and $[\parBb(o,1)^n(-n)]=[\parBb(o,1)(-1)]^n$, we can assume $n=1$. Suppose $\alpha=k/m$, with $k\in \Zz$ and $m\in \Nn^*$ relatively prime. Let $a,b\in \Zz$ such that $am+bk=1$. In this case, we have
\[
V^*_{k/m}\simeq \set{(z,u)\in V^*_{1/m}\times V^*_0\mid z^m=\bt u^b}=Q_m^\RV(\Gm_k, u^b)
\]
via the isomorphism $w\in V^*_{k/m}\mapsto (\bt^aw^b,\bt^{-k}w^m)$.
But now, $Q_m^\Rig(\Gm_k, u^b)\simeq \parBb(o,k/m)$ via the isomorphism $(z,u)\mapsto z^k u^a$ and we know that $\Motr(\parBb(o,k/m))\simeq \Motr(\parBb(o,1))$ by Proposition \ref{prop-wequiv-annuli}.
\end{proof}

\begin{Remark}
If $X\subseteq \RV^n$ is definable, then $\chir([X]_m)=\chir([X]_n)$ for any $m\geq n$, hence $\chir$ does not depends on the grading in $\K{\RV[*]}$, it is in fact defined on $\K{\RV}$
\end{Remark}

\begin{Proposition}
The ring morphisms $\chirRV$ and $\chirpRV$ of Definition \ref{def-chirRV} induce ring morphisms 
\[
\chirRV, \chirpRV : \K{\RV_K[*]}/\Isp \to \K{\RigSH{K}}.
\]
\end{Proposition}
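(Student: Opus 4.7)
The plan is to verify that both $\chirRV$ and $\chirpRV$ annihilate the single generator
\[
\rho := [1]_1 - [1]_0 - [\RV^{>0}]_1
\]
of $\Isp$, which is exactly what is needed for descent to $\K{\RV_K[*]}/\Isp$. The computation splits according to the RES--$\Gamma$ tensor decomposition of Proposition \ref{prop-iso-RV-RES-Gam}: the point-classes $[1]_0$ and $[1]_1$ sit purely in the $\RES$ part, while $[\RV^{>0}]_1$ corresponds to the pure $\Gamma$-class $1\otimes [\Gam^{>0}]_1$ with $\Gam^{>0}=(0,\infty)\in \Gamn{1}$.

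For the point-classes, I first observe that $[1]_1\in\K{\RES_K[1]}$ can be realized as $[Q^\RV_1(\Spec{k},1)]_1$ via the obvious $\RV[1]$-isomorphism $\{1\}\subset V_0 \simeq \{\bt\}\subset V_1$. Feeding this into the defining formulas of $\chirRes$ and $\chirpRes$ (with $X=\Spec{k}$, $r=0$, $m=1$), together with the analogous trivial computation for $[1]_0$, yields
\[
\chirRV([1]_0)=\chirRV([1]_1)=[\un_K],\quad \chirpRV([1]_0)=[\un_K],\quad \chirpRV([1]_1)=[\un_K(1)].
\]
For the $\Gamma$-contribution I will just compute two $o$-minimal Euler characteristics: $\eu_c((0,\infty))=0$ (indeed already $\eu((0,M])=\eu((0,M))+\eu(\{M\})=-1+1=0$) and $\eu((0,\infty))=\eu(\Rr)=-1$. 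This gives
\[
\chirRV([\RV^{>0}]_1)=0,\qquad \chirpRV([\RV^{>0}]_1)=-[\Motr(\parBb(o,1))],
\]
and combined with the first step this already produces $\chirRV(\rho)=[\un_K]-[\un_K]-0=0$.

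The only remaining, and essentially only nontrivial, step is the identity $[\Motr(\parBb(o,1))]=[\un_K]-[\un_K(1)]$ in $\K{\RigSH{K}}$ needed to conclude $\chirpRV(\rho)=0$. The plan to establish it is to invoke Proposition \ref{prop-wequiv-annuli} to identify $\Motr(\parBb(o,1))\simeq \Motr(\Gm_K^{\an})$, and then apply the monoidal triangulated functor $\Rig^*:\SH{K}\to \RigSH{K}$ to the standard splitting $\Mot(\Gm_K)\simeq \un_K\oplus \un_K(1)[1]$ in $\SH{K}$ (the compatibility of $\Rig^*$ with the Tate twist being built into the construction of $\RigSH{K}$). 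Passing to classes and substituting gives $\chirpRV(\rho)=[\un_K(1)]-[\un_K]+[\un_K]-[\un_K(1)]=0$, completing the argument.
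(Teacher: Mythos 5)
Your proposal is correct, and its overall skeleton coincides with the paper's: both reduce to checking that the generator $[1]_1-[1]_0-[\RV^{>0}]_1$ of $\Isp$ is killed, both compute the point classes the same way (giving $[\un_K]$, $[\un_K]$, $[\un_K(1)]$), and both evaluate $\eu_c((0,\infty))=0$ and $\eu((0,\infty))=-1$ to dispose of the $\Gamma$-contribution, so that only the relation $[\Motr(\parBb(o,1))]=[\un_K]-[\un_K(1)]$ remains for $\chirpRV$. Where you genuinely diverge is in establishing that relation. The paper works over $k$: it takes the locality triangle for $\set{0}\hookrightarrow \Aa^1_k\hookleftarrow \Gm_k$ applied to compactly supported cohomological motives, rewrites each term via $\Mot_c(-)=\Mot(-)(-d)[-2d]$ for smooth varieties, and then transports the resulting triangle to $\RigSH{K}$ through the equivalence $\mathfrak{F}:\QUSH{k}\to\RigSH{K}$, using $\Motr(\parBb(o,1))\simeq\mathfrak{F}(\Mot(\Gm_k))$. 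You instead work over $K$: you use the canonical splitting $\Mot_K(\Gm_K)\simeq \un_K\oplus\un_K(1)[1]$ induced by the unit section (valid in $\SH{K}$ without transfers), push it through the monoidal triangulated functor $\Rig^*$, and identify $\Motr(\Gm_K^{\an})\simeq\Motr(\parBb(o,1))$ by Proposition \ref{prop-wequiv-annuli}. Your route is more elementary in that it avoids both the six-functor bookkeeping with $f_!$ and the equivalence $\mathfrak{F}$; its only extra inputs are that $\Rig^*$ sends $\Mot_K(\Gm_K)$ to $\Motr(\Gm_K^{\an})$ and commutes with Tate twists, both of which hold by construction of $\Rig^*$ (it is induced by analytification at the level of sheaves and is monoidal triangulated). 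The paper's route, by contrast, stays consistent with its general philosophy of transporting everything through $\QUSH{k}$, which is the mechanism reused in Theorem \ref{thm-diagram-chir-int-R-mu}. Either argument is complete.
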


\begin{proof}
We need to check that the generator of $\Isp$ vanish under $\chirRV$ and $\chirpRV$. 

We have $\Motr(\set{1})=\Motr(\Spm{K})=\un_K$, hence $\chirRV([\set{1}]_1)=\chirRV([\set{1}]_0)=\chirpRV([\set{1}]_0)=[\un_K]$ and $\chirpRV([\set{1}]_1)=[\un_K(1)]$. 

Moreover, $\RV^{>0}=\val^{-1}((0,+\infty))$ and $\eu_c((0,+\infty))=0$, $\eu((0,+\infty))=-1$. Hence $\chirRV([\RV^{>0}]_1)=0$ and $\chirRV([\RV^{>0}]_1)=-[\Motr(\parBb(o,1))]$. 

We already see that 
\[\chirRV([\set{1}]_1)=\chirRV([\set{1}]_0+[\RV^{>0}]_1).\]

For $\chirpRV$, we need more. As in the proof of Proposition \ref{realisation-motives-is}, there is an distinguished triangle
\[
\Mot_c(\Gm_k)\to \Mot_c(\Aa_k^1)\to \Mot_c(\Spec{k})\to\Mot_c(\Gm_k)[1].
\]
As $\Mot_c(\Gm_k)=\Mot(\Gm_k)(-1)[-2]$, $\Mot_c(\Aa_k^1)=\Mot(\Spec{k})(-1)[-2]$ and $\Mot_c(\Spec{k})=\Mot(\Spec{k})$, we have the distinguished triangle
\[
\Mot(\Gm_k)(-1)[-2]\to \Mot(\Spec{k})(-1)[-2]\to \Mot(\Spec{k})\to \Mot(\Gm_k)(-1)[-1].
\]
We apply the equivalence of categories $\mathfrak{F}$ and use the fact that
 $\Motr(\parBb(o,1))\simeq\mathfrak{F}(\Mot(\Gm_k))$ and $\Motr(\Spm{K})=\mathfrak{F}\Mot(\Spec{k})$. Hence we get the following triangle in $\RigSH{K}$ :
\[
\Motr(\parBb(o,1))(-1)[-2]\to \Motr(\Spm{K})(-1)[-2]\to \Motr(\Spm{K})\overset{+1}{\to}.
\]
Applying the Tate twist , we get finally the following equality in $\K{\RigSH{K}}$
\[ 
[\Motr(\Spm{K})]=[\Motr(\Spm{K})(1)]+[\Motr(\parBb(o,1))],
\]
which is
\[
\chirpRV([\set{1}]_0]=\chirpRV([\set{1}]_1]-\chirpRV([\RV^{>0}]_1), 
\]
hence
\[
\chirpRV([\set{1}]_1)=\chirpRV([\set{1}]_0+[\RV^{>0}]_1).
\] 
\end{proof}

Recall the isomorphism $\oint : K(\VF_K)\to K(\RV_K[*])/\Isp$. 
\begin{Definition}
Define $\chir$ and $\chirp$ : 
\[\K{\VF}\to \K{\RigSH{K}}\] 
by $\chir=\chirRV\circ\oint$ and $\chirp=\chirpRV\circ\oint$.
\end{Definition}

\begin{Proposition}
\label{prop-chir-QVF}
For any irreducible smooth $k$-variety $X$ of dimension $d$, $f\in \calO_X^\times(X)$ and $r\in \Nn^*$, 
\[
\chir(Q_r^\VF(X,f))=[\Motr(Q_r^\Rig(X,f))(-d)].
\]
\end{Proposition}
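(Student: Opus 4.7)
The plan is to decompose $\chir = \chirRV \circ \oint$ and to compute each factor on the class $[Q^\VF_r(X,f)]$, using the explicit description of $\mathfrak{L}$ from Section \ref{section-recall-HK} and the definition of $\chirRes$ from Section \ref{section-res-part}.

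For the first step, I would identify $\oint[Q^\VF_r(X,f)]$. Embedding $X\subseteq \Aa^n_k$ as a locally closed subvariety, the set $Q^\VF_r(X,f)$ should be (or be semi-algebraically in bijection with) the set
\[
\mathfrak{L}([Q^\RV_r(X,f)]_{n+1}) = \set{(u,v)\in \VF^{n+1}\mid (\rv(u),\rv(v))\in Q^\RV_r(X,f)},
\]
that is, pairs $(u,v)$ with $\val(u_i)=0$, $\val(v)=1/r$, $\rv(u)\in X(\kk)$ and $\rv(v)^r = \bt f(\rv(u))$. The nontrivial point is that the ``exact'' equation $v^r = tf(u)$ defining $Q^\VF_r(X,f)$ coincides, after projecting to $\RV$, with the weaker residual condition $\rv(v^r) = \rv(tf(u))$: indeed, in residue characteristic zero, $w \mapsto w^r$ is a bijection of $1+\calM_K$, so the two definitions differ only by the action of $(1+\calM_K)$ on $v$, for which one can exhibit a semi-algebraic section. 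Once this is established, Theorem \ref{int-HK-additive} yields
\[
\oint [Q^\VF_r(X,f)] = [Q^\RV_r(X,f)]_{n+1}\in K(\RV_K[*])/\Isp.
\]

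For the second step, since $\valrv(Q^\RV_r(X,f)) \subseteq \set{0,1/r}^{n+1}$ is finite, $Q^\RV_r(X,f)$ belongs to $\RES_K[n+1]$, so the $\Gam$ part in the decomposition of Proposition \ref{prop-iso-RV-RES-Gam} is trivial and $\chirRV$ reduces to $\chirRes$ on this class. By the very definition of $\chirRes$ (Section \ref{section-res-part}),
\[
\chirRes([Q^\RV_r(X,f)]_{n+1}) = [\Motr(Q^\Rig_r(X,f))(-d)],
\]
using that $X$ has pure dimension $d$. Combining the two steps yields the desired equality.

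The main obstacle is the first identification, i.e., proving $[Q^\VF_r(X,f)] = [\mathfrak{L}([Q^\RV_r(X,f)]_{n+1})]$ in $K(\VF_K)$. A workable strategy is to reduce Zariski-locally on $X$ to an étale chart $X\to \Aa^d_k$, so that $X(\calO_K)$ and the integrality conditions become transparent. Then the $r$-divisibility of $1+\calM_K$ in residue characteristic zero supplies the required semi-algebraic bijection between the ``exact'' locus $v^r = tf(u)$ and its $\RV$-pull-back, completing the argument.
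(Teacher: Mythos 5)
Your overall route is the same as the paper's: the entire content of the proposition is the identity $\oint[Q_r^\VF(X,f)]=[Q_r^\RV(X,f)]_{\bullet}$ in $\K{\RV_K[*]}/\Isp$, after which one just reads off the definition of $\chirRes$; and your treatment of the exact equation $v^r=tf(u)$ versus the residual one is correct (an element of $1+\calM_K$ has a unique $r$-th root in $1+\calM_K$, which is therefore definable, so rescaling $v$ gives the required semi-algebraic bijection).

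The gap is in the displayed identification. The set $\set{(u,v)\in\VF^{n+1}\mid(\rv(u),\rv(v))\in Q_r^\RV(X,f)}$ imposes on $u$ only the residual condition that its reduction lie in $X(\kk)$; over each such residue point the fibre in $u$ is an open polydisc of $\VF$-dimension $n$, and $v$ then sweeps out a one-dimensional fibre, so this set has $\VF$-dimension $n+1$. By contrast $Q_r^\VF(X,f)$, the set of exact $\calO$-points of $Q_r^{\mathrm{for}}(X,f)$, has $\VF$-dimension $d$: the tube of the smooth special fibre is $d$-dimensional, and $v$ is one of the $r$ exact roots of $tf(u)$, hence finite over $u$. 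So for $d<n$ there is no semi-algebraic bijection between the two, and $\oint[Q_r^\VF(X,f)]\neq[Q_r^\RV(X,f)]_{n+1}$. The correct statement is $\oint[Q_r^\VF(X,f)]=[Q_r^\RV(X,f)]_{d}$, the class being taken with respect to the finite-to-one map $Q_r^\RV(X,f)\to\RV^d$ induced by a Zariski-local \'etale chart $e:X\to\Aa^d_k$; the definable bijection between $\mathfrak{L}$ of this datum and $Q_r^\VF(X,f)$ then uses Hensel's lemma twice, once to lift each residue point of $X(\kk)$ uniquely to an $\calO$-point of $X$ over each point of the polydisc in $e$-coordinates, and once for the $r$-th root of $v$. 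You do gesture at the chart in your last paragraph, but as a convenience rather than as a correction to the formula, and you treat $v$ as a free $\VF$-coordinate rather than as finite over $u$. Note finally that the grading error is not harmless in $\K{\RV_K[*]}/\Isp$, where $[W]_{m+1}-[W]_m\equiv[W\times\RV^{>0}]_{m+1}$ is in general nonzero; it fails to affect your final answer only because $\chirRes$ is independent of the grading, whereas the same computation for $\chirp$, whose $\RES$-component twists by the grading, would come out wrong by a Tate twist of $n+1-d$.
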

\begin{proof}
Since $\oint [Q_r^\VF(X,f)]=[Q_r^\RES(X,f)]_r$, it simply follows from the definition of $\chirRes$. 
\end{proof}

\begin{Proposition}
\label{prop-chir-v-1Delta}
Let $\Delta\subset\Gamma^n$ defined by linearly independent affine equations $l_i>0$ for $i=0,...,n$.
Then 
\[\chir(\val^{-1}(\Delta))=(-1)^n [\Motr(\val^{-1}(\Delta)^\Rig)(-n)].\]
\end{Proposition}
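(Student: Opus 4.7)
The plan is to compute $\chir(\val^{-1}(\Delta)) = \chirRV(\oint[\val^{-1}(\Delta)])$ by reducing to the $\Gam$-part of the decomposition in Proposition \ref{prop-iso-RV-RES-Gam} and then invoking Proposition \ref{prop-chirGam-locclosed}. First I would identify the preimage of $[\val^{-1}(\Delta)]$ under the isomorphism $\mathfrak{L}$. Viewing $\valrv^{-1}(\Delta) \subseteq \RV^n$ as an object of $\RV_K[n]$ via the identity map, we have
\[
\mathfrak{L}(\valrv^{-1}(\Delta), \mathrm{id}) = \{(x,y) \in \VF^n \times \valrv^{-1}(\Delta) : \rv(x) = y\},
\]
which projects bijectively as a semi-algebraic set onto $\val^{-1}(\Delta) \subseteq \VF^n$. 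Hence $\oint[\val^{-1}(\Delta)] = [\valrv^{-1}(\Delta)]_n$ in $\K{\RV_K[*]}/\Isp$.

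Next I would transfer this class to the $\Gamma$-side. Under the ring isomorphism of Proposition \ref{prop-iso-RV-RES-Gam}, the element $[\valrv^{-1}(\Delta)]_n$ is the image of the pure class $1 \otimes [\Delta]_n \in \K{\RES_K[*]} \otimes_{\K{\Gamnfin{*}}} \K{\Gamn{*}}$. By the construction of $\chirRV$ in Definition \ref{def-chirRV}, which agrees with $\chirGam$ on purely $\Gamma$-classes, this gives
\[
\chir(\val^{-1}(\Delta)) = \chirGam([\Delta]_n).
\]

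Finally, $\Delta$ is a non-empty open bounded convex polytope in $\Gam^n$: the $n+1$ linearly independent strict affine inequalities $l_i > 0$ cut out an open $n$-simplex. Applying the ``open'' case of Proposition \ref{prop-chirGam-locclosed} yields directly $\chirGam([\Delta]_n) = (-1)^n[\Motr(\val^{-1}(\Delta)^\Rig)(-n)]$, which is the desired identity. The only point requiring attention is the geometric interpretation of $\Delta$ as a bounded open simplex, so that the hypothesis of Proposition \ref{prop-chirGam-locclosed} is satisfied; this is built into the phrase ``linearly independent affine equations'' cutting out an $n$-dimensional polytope by $n+1$ hyperplanes in general position. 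Once this is in hand, the remainder of the argument is formal manipulation of the motivic integration isomorphism and the definition of $\chir$.
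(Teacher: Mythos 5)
Your proposal is correct and follows essentially the same route as the paper, which simply cites Proposition \ref{prop-chirGam-locclosed} (open case); your write-up just makes explicit the intermediate steps that $\oint[\val^{-1}(\Delta)]=[\valrv^{-1}(\Delta)]_n$, that this class is $1\otimes[\Delta]_n$ under the decomposition of Proposition \ref{prop-iso-RV-RES-Gam}, and that $\Delta$ is a bounded open simplex so that $\eu_c(\Delta)=(-1)^n$. All of these checks are valid.
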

\begin{proof}
It follows from Proposition \ref{prop-chirGam-locclosed}.
\end{proof}

\begin{Theorem}
\label{thm-diagram-chir-int-R-mu}
There are commutative squares
\[
\xymatrixcolsep{5pc}
\xymatrix{
\K{\VF_K} \ar[d]_{\chir} \ar[r]^-{\Theta\circ \calE_c \circ \oint} & \K{\Varmu{k}} \ar[d]^{\chi_{\hat \mu}} \\
\K{\RigSH{K}} \ar[r]_-{\mathfrak{R}} & \K{\QUSH{k}}
}
\]
and
\[
\xymatrixcolsep{5pc}
\xymatrix{
\K{\VF_K} \ar[d]_{\chirp} \ar[r]^-{\Theta\circ \calE \circ \oint} & \K{\Varmu{k}}[\Aa^1_k]^{-1} \ar[d]^{\chi_{\hat \mu}} \\
\K{\RigSH{K}} \ar[r]_-{\mathfrak{R}} & \K{\QUSH{k}}.
}
\]
\end{Theorem}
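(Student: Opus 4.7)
The plan is to reduce, via the Hrushovski--Kazhdan isomorphism $\oint$, to verifying the analogous commutativity on $\K{\RV_K[*]}/\Isp$, and then to exploit the tensor decomposition of Proposition \ref{prop-iso-RV-RES-Gam} to treat the $\RES$ and $\Gamma$ factors independently. Since $\chir=\chirRV\circ\oint$ and $\chirp=\chirpRV\circ\oint$, it suffices to show that
\[ \mathfrak{R}\circ\chirRV=\chi_{\hat\mu}\circ\Theta\circ\calE_c \quad\text{and}\quad \mathfrak{R}\circ\chirpRV=\chi_{\hat\mu}\circ\Theta\circ\calE \]
as ring morphisms on $\K{\RV_K[*]}/\Isp$. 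By Proposition \ref{prop-iso-RV-RES-Gam} this in turn splits into two verifications, one on $\K{\RES_K[*]}$ and one on $\K{\Gamn{*}}$, which must agree on their common subring $\K{\Gamnfin{*}}$.

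For the $\RES$ factor, the identity is built into the construction. Indeed, by the discussion immediately following the definitions of $\chirRes$ and $\chirpRes$ in Section \ref{section-res-part}, these morphisms are by definition the composites $\mathfrak{F}\circ\chi_{\hat\mu}\circ\Theta\circ\calE_c$ and $\mathfrak{F}\circ\chi_{\hat\mu}\circ\Theta\circ\calE$, restricted to $\K{\RES_K[*]}$. Applying the quasi-inverse $\mathfrak{R}$ and invoking Theorem \ref{thm-scholie-Rig-QUSH} yields both identities on the $\RES$ factor immediately.

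The substantive step is the $\Gamma$ factor. For $[X]\in\K{\Gamn{d}}$, one has $\chirGam([X])=\eu_c(X)\,[\Motr(\parBb(o,1)^d)(-d)]$, while $\chi_{\hat\mu}\circ\Theta\circ\calE_c$ applied to $1\otimes [X]$ produces $\eu_c(X)\cdot\chi_{\hat\mu}([\Gm_k]^d)$ with trivial $\hat\mu$-action on $[\Gm_k]^d$. Unwinding Corollary \ref{cor-varmu-vargm}, this latter class equals $[\Mot_{\Gm_k,c}^\vee(\Gm_k^{d+1})]$, which by Proposition \ref{prop-compactcohom-motive-smooth} is $[\Mot_{\Gm_k}(\Gm_k^{d+1})(-d)]$. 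On the rigid side, iterating Proposition \ref{prop-wequiv-annuli} gives $\Motr(\parBb(o,1)^d)\simeq\Motr((\Gm_K^{\an})^d)$, and the defining property of $\mathfrak{F}$ applied to $Q_1^{gm}(\Gm_k^d,1)=\Gm_k^{d+1}$ (with structure map the last projection) identifies this with $\mathfrak{F}(\Mot_{\Gm_k}(\Gm_k^{d+1}))$. Combined with the compatibility of $\mathfrak{R}$ with Tate twists, this yields the matching formula. The case of $\chirp$ is entirely parallel, with $(-d)$ replaced by the trivial twist and $\eu_c$ by $\eu$, the extra $\eL^{-n}$ factors appearing in $\calE$ being absorbed after the identification $\chi_{\hat\mu}(\eL)=[\un_{\Gm_k}(-1)]$ and the inversion of $\Aa^1_k$ on the target side.

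It remains to check that the two prescriptions glue over $\K{\Gamnfin{*}}$: for a singleton $\alpha=k/m\in\Qq$ with $\gcd(k,m)=1$, the isomorphism $V^*_{k/m}\simeq Q_m^\RV(\Gm_k,u^b)$ from the proof of Proposition \ref{iso-res-varmu}, combined with $Q_m^\Rig(\Gm_k,u^b)\simeq\parBb(o,k/m)$ and Proposition \ref{prop-wequiv-annuli}, shows that both the $\RES$ and $\Gamma$ recipes output $[\Motr(\parBb(o,1))(-1)]$ (respectively $[\Motr(\parBb(o,1))]$ for $\chirp$). The main obstacle throughout will be the careful bookkeeping of relative dimensions and Tate twists along the chain of equivalences; once these are aligned, no motivic input beyond Theorems \ref{thm-ayoub-equi-Qrig-Qan} and \ref{thm-scholie-Rig-QUSH} is required.
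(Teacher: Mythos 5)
Your proposal is correct and follows essentially the same route as the paper: reduce via $\oint$ to $\K{\RV_K[*]}/\Isp$, split along the decomposition of Proposition \ref{prop-iso-RV-RES-Gam}, observe that on the $\RES$ factor the identity is built into the very definition of $\chirRes$ (resp. $\chirpRes$) as a composite through $\K{\Varmu{k}}$ and $\mathfrak{F}$, and handle the $\Gamma$ factor by the identification $\mathfrak{R}[\Motr(\parBb(o,1))]=[\Mot_{\Gm_k}(\Gm_k\times_k\Gm_k)]$ together with Euler-characteristic bookkeeping. Your extra verification of the gluing over $\K{\Gamnfin{*}}$ is not needed at this stage (it was already used to define $\chirRV$) but does no harm.
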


\begin{proof} We will only show the commutativity of the first diagram, the second being similar. 

We need to show that the following diagram is commutative :
\[
\xymatrixcolsep{5pc}
\xymatrix{
\K{\RV[*]}/\Isp \ar[d]_{\chirRV} \ar[r]^-{\Theta\circ \calE_c} & \K{\Varmu{k}} \ar[d]^{\chi_{\hat \mu}} \\
\K{\RigSH{K}} \ar[r]_-{\mathfrak{R}} & \K{\QUSH{k}}. 
}
\]
It suffices to show that the following diagrams are commutative :

\[
\xymatrixcolsep{5pc}
\xymatrix{
\K{\RES_K[*]} \ar[d]_{\chirRes} \ar[r]^-{\Theta\circ \calE_c} & \K{\Varmu{k}} \ar[d]^{\chi_{\hat \mu}} \\
\K{\RigSH{K}} \ar[r]_-{\mathfrak{R}} & \K{\QUSH{k}}. 
}
\]

and

\[
\xymatrixcolsep{5pc}
\xymatrix{
\K{\Gam} \ar[d]_{\chirGam} \ar[r]^-{\Theta\circ \calE_c} & \K{\Varmu{k}} \ar[d]^{\chi_{\hat \mu}} \\
\K{\RigSH{K}} \ar[r]_-{\mathfrak{R}} & \K{\QUSH{k}}. 
}
\]
For the first one, we already observed that $\chirRes$ does not depend on the grading, hence we need to show that $\mathfrak{R}\circ\chirRes=\chi_{\hat \mu}\circ \Theta$, as morphisms from $!\K{\RES_K}$ to $\K{\QUSH{k}}$.  By Corollary \ref{cor-QRES-generators-KRES}, $!\K{\RES_K}$ is generated by classes of $Q_r^\RV(X,f)$, for $X$ a $k$-variety smooth of pure dimension $d$, $r\in \Nn\backslash \set{0}$ and $f\in \Gam(X,\calO_X^\times)$. The definition of $\chi_{\hat \mu}\circ \Theta$ show that $\chi_{\hat \mu}\circ \Theta(Q_r^\RV(X,f))=[\Mot_{\Gm_k}(Q_r^\gm(X,f))(-\dim(X))]$. From the definition of $\chirRes$, $\chirRes(Q_r^\RV(X,f))=[\Motr(Q_r^\Rig(X,f)(-d))]$ ; from Theorem \ref{thm-ayoub-equi-Qrig-Qan}, $\Motr(Q_r^\Rig(X,f)\simeq \Motr(Q_r^\an(X,f)$, and from the definition of $\mathfrak{R}$, $\mathfrak{R}(\Motr(Q_r^\an(X,f))=\Mot_{\Gm_k}(Q_r^\gm(X,f))$.
 For the second square, for any $X\subset\Gam^n$, $\chirGam(X)=\eu_c(X)[\parBb(o,1)^n(-n)]$ and $\calE_c(X)=\eu_c(X)[\Gm_k^n]$, so it follows from the fact that $\mathfrak{R}[\Motr(\parBb(o,1)]= [\Mot_{\Gm_k}(\Gm_k\times_k \Gm_k)]$.
\end{proof}

\subsection{Motives of tubes}
\label{section-motive-tube}

The aim of this section is to compute $\chir$ for a quasi-compact smooth rigid $K$-variety. We will use semi-stable formal models, and in particular tubes of their branches.

Let $\calX$ a semi-stable formal $R$-scheme and $(D_i)_{i\in J}$ be the branches of its special fiber $\calX_\sigma$.  For any non empty $I\subset J$, let $D_I=\cap_{i\in I} D_i$ 	and $D(I)=\cup_{i\in I} D_i$. Denote also for $I'\subset J\backslash I$, $D_{I}^{\circ I'}=D_I\backslash D(I')$ and if $I'=J\backslash I$, simply $D_{I}^\circ=D_I\backslash D(J\backslash I)$.

Ayoub, Ivorra and Sebag prove the following proposition. 
\begin{Proposition}[{\cite[Theorem 5.1]{AIS}}]
\label{propAIS-MDI=MDI0}
For any non empty $I\subset J$ and $I'\subset I''\subset J\backslash I$, the inclusion $\tube{D_I^{\circ I''}}\hookrightarrow \tube{D_I^{\circ I'}}$ induces an isomorphism
\[\Motr(\tube{D_I^{\circ I''}})\simeq \Motr(\tube{D_I^{\circ I'}}).\]
\end{Proposition}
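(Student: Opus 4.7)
The plan is to induct on $n = |I'' \setminus I'|$, reducing to the case $n = 1$, \emph{i.e.}, $I'' = I' \cup \set{j_0}$ for some $j_0 \in (J \setminus I) \setminus I'$. Writing $D_I^{\circ I'}$ as the disjoint union of $D_I^{\circ I''}$ and $D_{I \cup \set{j_0}}^{\circ I'}$, the task becomes showing that removing from $\tube{D_I^{\circ I'}}$ the tube of the additional closed stratum along $D_{j_0}$ does not change its rigid motive.

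I would argue étale locally on $\calX$. By Proposition \ref{prop-semistable-etaleproj}, every point of $\calX$ admits an étale neighborhood modeled on
\[
\St^{Ut}_{\Spf{R[U, U^{-1}, S_1, \ldots, S_r]}, \underline{N}} = \Spf{R[U,U^{-1},S_1,\ldots,S_r]\{T_1,\ldots,T_k\}/(T_1^{N_1}\cdots T_k^{N_k}-Ut)},
\]
whose branches are cut out by $\set{T_j = 0}$ for $j = 1, \ldots, k$. Proposition \ref{prop-berth-iso-tubes-etale} shows that tubes are étale-local on $\calX$, while Proposition \ref{prop-tube-admissible-cover} provides admissible covers of $\tube{D_I^{\circ I'}}$ and $\tube{D_I^{\circ I''}}$ from covers of the respective special-fiber strata by such étale charts. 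Combined with the Mayer--Vietoris property of $\RigSH{K}$ built into its Nisnevich construction, it then suffices to establish the isomorphism chart by chart.

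On a chart whose image does not meet $D_{j_0}$, the restrictions of $\tube{D_I^{\circ I'}}$ and $\tube{D_I^{\circ I''}}$ coincide and there is nothing to prove. On a chart where $D_{j_0}$ does appear, it is cut out by one of the coordinates, say $T_{j_0}$, and the two tubes admit explicit descriptions governed by the semi-stable relation $\sum_i N_i \val(T_i) = 1$ in the generic fiber. After using this relation to eliminate $T_{j_0}$ in favor of the remaining coordinates, the inclusion $\tube{D_I^{\circ I''}} \hookrightarrow \tube{D_I^{\circ I'}}$ becomes, locally, an inclusion of the form $\partial \Bb_Y(o, g^{1/q}) \hookrightarrow \Bb_Y(o, g^{1/q}) \setminus \set{o_Y}$ over a suitable affinoid base $Y$ in the remaining coordinates. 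By the sixth item of Proposition \ref{prop-wequiv-annuli}, this induces an isomorphism on rigid motives. (A clarifying special case is $\calX = \Spf{R[T_1,T_2]/(T_1T_2-t)}$ with $I = \set{1}$, $I' = \emptyset$, $j_0 = 2$: there $\tube{D_1}$ is the punctured closed disc $\set{0 < \val(T_1) \leq 1}$, $\tube{D_1^{\circ \set{2}}}$ is the thin annulus $\set{\val(T_1) = 1}$, and the inclusion is precisely the motivic equivalence supplied by item~6 of Proposition \ref{prop-wequiv-annuli}.)

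The main obstacle is making the local product decomposition precise: the semi-stable relation couples all coordinates $T_1, \ldots, T_k$, so extracting a ``$T_{j_0}$-factor'' of the tube requires a careful fibration argument in which the remaining coordinates are repackaged (using the equivalences of Proposition \ref{prop-wequiv-annuli} for closed balls, annuli, thin annuli, and $\Gm_K^{\an}$-factors) into an affinoid base $Y$ over which the $T_{j_0}$-direction appears as a one-dimensional disc or annulus. One must also verify compatibility of these local identifications on overlaps of the admissible cover, which follows from the naturality of the Mayer--Vietoris triangles in $\RigSH{K}$ and of the equivalences of Proposition \ref{prop-wequiv-annuli}.
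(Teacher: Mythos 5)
First, a point of comparison: this paper does not prove Proposition \ref{propAIS-MDI=MDI0} at all — it is imported verbatim from \cite[Theorem 5.1]{AIS} — so there is no internal proof to measure yours against. Your overall strategy (induct down to adjoining a single index $j_0$, localize on the semi-stable model via Propositions \ref{prop-semistable-etaleproj}, \ref{prop-berth-iso-tubes-etale} and \ref{prop-tube-admissible-cover}, and reduce to the annuli equivalences of Proposition \ref{prop-wequiv-annuli}) is the right one and is in the spirit of the argument of Ayoub--Ivorra--Sebag.

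Two steps, however, do not go through as written. The local model of the inclusion is not $\parBb_Y(o,g^{1/q})\hookrightarrow \Bb_Y(o,g^{1/q})\setminus\set{o_Y}$, and item 6 of Proposition \ref{prop-wequiv-annuli} does not apply. Already in your test case $T_1T_2=t$: the tube $\tube{D_1}=\set{\abs{t}\le\abs{T_1}<1}$ is a half-open annulus bounded away from the origin, not a punctured closed disc, and $\tube{D_1^{\circ\set{2}}}=\set{\abs{T_1}=\abs{t}}$ sits at its closed inner edge. What you actually need is item 4 (a thin annulus at an end of a closed annulus is a motivic equivalence), applied to each $\Ccr(o,\abs{t},\rho)$ with $\rho<1$, together with the fact that $\Motr$ of the increasing admissible union over $\rho\to 1^-$ is the homotopy colimit. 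This is fixable but is not the equivalence you cite.

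The genuine gap is the fibration step. On the generic fiber of the standard chart the relation $\prod_i T_i^{N_i}=Ut$ makes $T_{j_0}$ algebraic of degree $N_{j_0}$ over the remaining coordinates, so ``eliminating $T_{j_0}$ in favor of the remaining coordinates'' produces a projection with \emph{finite} fibers, not annulus fibers; no amount of repackaging via Proposition \ref{prop-wequiv-annuli} turns that into a relative annulus. The correct decoupling is multiplicative: all $T_i$ are invertible on the generic fiber, so one applies a $\mathrm{GL}(\Zz)$ change of torus coordinates — exactly as in Proposition \ref{prop-chir-single-tube}, together with the fiber-product trick of Lemma \ref{lem-reduction-chir-single-tube} needed to make Proposition \ref{prop-berth-iso-tubes-etale} usable — after which the tube becomes a product of a $Q^{\Rig}$-piece and a region $\val^{-1}(\Delta)$, and the comparison of $\tube{D_I^{\circ I'}}$ with $\tube{D_I^{\circ I''}}$ reduces to comparing $\val^{-1}(\Delta)$ with $\val^{-1}$ of the face $\Delta\cap\set{\gamma_{j_0}=0}$, in the manner of Lemma \ref{lem-Mrig-convex-polyhedron}. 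You correctly flag this as the main obstacle, but the proposal does not supply the idea that resolves it, so the local computation does not close as stated.
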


We will mostly use this proposition in the following particular case. 

\begin{Corollary}
\label{corAIS-MDI-MDI0}
For any non empty $I\subseteq J$, there is an isomorphism
\[ \Motr(\tube{D_I^\circ})\simeq \Motr(\tube{D_I}).\]
\end{Corollary}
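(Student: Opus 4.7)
The plan is to derive Corollary \ref{corAIS-MDI-MDI0} directly as a special case of Proposition \ref{propAIS-MDI=MDI0}, noting that the corollary is essentially just the extreme choice of the indexing sets in that proposition. So there is very little to do beyond unwinding the notation carefully.

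First I would handle the boundary case $I = J$. In that case $J\backslash I = \emptyset$, so $D(J\backslash I) = \emptyset$ and hence $D_I^\circ = D_I \setminus \emptyset = D_I$. The inclusion $\tube{D_I^\circ} \hookrightarrow \tube{D_I}$ is then the identity, and the isomorphism on motives is trivial. This case does not need Proposition \ref{propAIS-MDI=MDI0} (which is stated for $I \subsetneq J$).

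For the main case $I \subsetneq J$, I would apply Proposition \ref{propAIS-MDI=MDI0} with the specific choice $I' = \emptyset$ and $I'' = J\setminus I$. Since $I' \subset I'' \subset J\setminus I$ is then satisfied, the proposition gives an isomorphism
\[
\Motr(\tube{D_I^{\circ\, I''}}) \simeq \Motr(\tube{D_I^{\circ\, I'}})
\]
induced by the inclusion. By the conventions set in the text, $D_I^{\circ\, \emptyset} = D_I \setminus D(\emptyset) = D_I$, and $D_I^{\circ\, (J\setminus I)} = D_I \setminus D(J\setminus I) = D_I^\circ$. Substituting these identifications directly yields the claimed isomorphism $\Motr(\tube{D_I^\circ}) \simeq \Motr(\tube{D_I})$.

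There is no real obstacle here, since Proposition \ref{propAIS-MDI=MDI0} is already assumed to be proven (it is quoted from Ayoub--Ivorra--Sebag). The only thing to be careful about is the notational bookkeeping: making sure that the conventions $D_I^{\circ\, \emptyset} = D_I$ and $D_I^{\circ\, (J\setminus I)} = D_I^\circ$ are correctly matched with the general $D_I^{\circ\, I'}$ notation used in the proposition, and separating out the trivial case $I = J$ so that the hypothesis $I \subsetneq J$ of the proposition is respected.
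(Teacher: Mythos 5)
Your proposal is correct and matches the paper's intent exactly: the corollary is stated as the special case $I'=\emptyset$, $I''=J\setminus I$ of Proposition \ref{propAIS-MDI=MDI0}, using $D_I^{\circ\,\emptyset}=D_I$ and $D_I^{\circ\,(J\setminus I)}=D_I^\circ$, and the paper gives no further argument. Your separate treatment of the boundary case $I=J$ is harmless extra care.
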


\begin{Proposition}
\label{prop-cutting-Mtubes}
Let $\calX$ a semi-stable formal $R$-scheme and $D=\cup_{i\in J'} D_i$ a finite union of its branches. 
Then the following equalities holds in $\K{\RigSH{K}}$
\[
[\Motr(\tube{D})]=\sum_{I\subset J'}(-1)^{\abs{I}-1} [\Motr(\tube{D_I})]
\]
and
\[
[\Motr^\vee(\tube{D})]=\sum_{I\subset J'}(-1)^{\abs{I}-1} [\Motr^\vee(\tube{D_I})].
\]

\end{Proposition}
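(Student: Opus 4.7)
My plan is to prove the identity by induction on $|J'|$, reducing everything to a Mayer--Vietoris distinguished triangle coming from an admissible open cover of $\tube{D}$ by two tubes. The same argument then handles both the homological and cohomological statements.

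First, I would record the intersection formula for tubes: for any $I \subseteq J'$, one has $\tube{D_I}_\calX = \bigcap_{i \in I} \tube{D_i}_\calX$ as rigid subvarieties of $\calX_\eta$, because the specialization map satisfies $\spc^{-1}(D_I) = \bigcap_{i \in I} \spc^{-1}(D_i)$. This is the set-theoretic statement; the reduced structures match because both sides are open in $\calX_\eta$. Combined with Proposition \ref{prop-tube-admissible-cover}(2), the family $(\tube{D_i})_{i \in J'}$ is an admissible (and in particular Nisnevich) cover of $\tube{D}$, and intersections of members of this cover are again tubes of the corresponding intersections of branches.

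Next, I would invoke Mayer--Vietoris in $\RigSH{K}$: for any admissible open cover $W = U \cup V$ of a rigid $K$-variety with intersection $U \cap V$, Nisnevich descent yields a distinguished triangle
\[
\Motr(U \cap V) \to \Motr(U) \oplus \Motr(V) \to \Motr(W) \overset{+1}{\to},
\]
and hence in $\K{\RigSH{K}}$ the relation $[\Motr(W)] = [\Motr(U)] + [\Motr(V)] - [\Motr(U \cap V)]$. Applying the contravariant triangulated functor $\intHom(-,\un_K)$ produces the analogous identity for cohomological motives.

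With these ingredients the induction on $n = |J'|$ is routine. The case $n = 1$ is trivial. For the inductive step, write $J' = J'' \sqcup \{i_0\}$, set $U = \tube{\cup_{i \in J''} D_i}$ and $V = \tube{D_{i_0}}$; then $W = U \cup V = \tube{D}$ is an admissible cover with intersection $U \cap V = \tube{D_{i_0} \cap \bigcup_{i \in J''} D_i} = \bigcup_{i \in J''} \tube{D_{\{i, i_0\}}}$, itself presented by an admissible cover of the same shape on one fewer index. Applying the inductive hypothesis to $U$ and to $U \cap V$ and assembling via Mayer--Vietoris yields the inclusion--exclusion sum over $I \subseteq J'$. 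The identical argument, using $\intHom(-, \un_K)$ throughout, gives the formula for $\Motr^\vee$.

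The only non-formal point is the availability of the Mayer--Vietoris triangle for admissible covers of rigid varieties; this is the place where one must use that admissible covers are Nisnevich in the sense of Section \ref{section-rigid-geometry} together with Nisnevich descent built into $\RigSH{K}$. Once this is granted, the rest is bookkeeping on the indexing set $J'$.
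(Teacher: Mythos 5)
Your proposal is correct and follows essentially the same route as the paper: Proposition \ref{prop-tube-admissible-cover}(2) turns the closed cover $(D_i)_{i\in J'}$ of $D$ into an admissible cover $(\tube{D_i})_{i\in J'}$ of $\tube{D}$, and then Mayer--Vietoris plus induction gives the inclusion--exclusion formula, with $\intHom(-,\un_K)$ handling the cohomological case. The paper's proof is just a terser version of exactly this argument.
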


\begin{proof}
The collection $(D_i)_{i\in J'}$ is a closed cover of $D$, hence by Proposition \ref{prop-tube-admissible-cover}, the $(\tube{D_i})_{i\in J'}$ is an admissible cover of $\tube{D}$. Hence by Mayer-Vietoris distinguished triangle and induction on the cardinal of $I$, we have the result. 
\end{proof}

Using Corollary \ref{corAIS-MDI-MDI0}, we deduce the following formula. 
\begin{Corollary}
\label{cor-cutting-Mtubes}
Under the hypotheses of Proposition \ref{prop-cutting-Mtubes}, we have
\[ [\Motr(\tube{D})]=\sum_{I\subseteq J'}(-1)^{\abs{I}-1} [\Motr(\tube{D_I^\circ})]
\]
and
\[ [\Motr^\vee(\tube{D})]=\sum_{I\subseteq J'}(-1)^{\abs{I}-1} [\Motr^\vee(\tube{D_I^\circ})].
\]
\end{Corollary}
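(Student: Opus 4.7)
The proof will be a direct term-by-term substitution. Starting from the two identities of Proposition \ref{prop-cutting-Mtubes},
\[
[\Motr(\tube{D})]=\sum_{\emptyset\neq I\subseteq J'}(-1)^{\abs{I}-1}[\Motr(\tube{D_I})]
\quad\text{and}\quad
[\Motr^\vee(\tube{D})]=\sum_{\emptyset\neq I\subseteq J'}(-1)^{\abs{I}-1}[\Motr^\vee(\tube{D_I})],
\]
my plan is to replace each summand $[\Motr(\tube{D_I})]$ (respectively $[\Motr^\vee(\tube{D_I})]$) by $[\Motr(\tube{D_I^\circ})]$ (respectively $[\Motr^\vee(\tube{D_I^\circ})]$) using the comparison isomorphism provided by Corollary \ref{corAIS-MDI-MDI0}. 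Since $I$ ranges over non-empty subsets of $J'\subseteq J$, each such $I$ is a non-empty subset of $J$ as well, so the hypothesis of Corollary \ref{corAIS-MDI-MDI0} applies to each term in the sum.

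For the homological formula the substitution is immediate: Corollary \ref{corAIS-MDI-MDI0} asserts that the inclusion $\tube{D_I^\circ}\hookrightarrow \tube{D_I}$ induces an isomorphism $\Motr(\tube{D_I^\circ})\simeq \Motr(\tube{D_I})$ in $\RigSH{K}$, and isomorphic objects have the same class in $\K{\RigSH{K}}$. Summing these equalities of classes with the alternating signs $(-1)^{|I|-1}$ yields the first claimed formula.

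For the cohomological formula, I will use the fact that the cohomological rigid motive is defined as $\Motr^\vee(Y)=\intHom(\Motr(Y),\un_K)$, and $\intHom(-,\un_K)$ is a (triangulated) functor on $\RigSH{K}$, hence carries the isomorphism $\Motr(\tube{D_I^\circ})\simeq \Motr(\tube{D_I})$ to an isomorphism $\Motr^\vee(\tube{D_I^\circ})\simeq \Motr^\vee(\tube{D_I})$. Again equality of classes follows, and inserting this into the cohomological version of Proposition \ref{prop-cutting-Mtubes} produces the second formula.

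There is essentially no obstacle: the corollary is a formal consequence once one is comfortable that the indexing set $J'\subseteq J$ in Proposition \ref{prop-cutting-Mtubes} is compatible with the definition $D_I^\circ = D_I\setminus D(J\setminus I)$, which uses the full set of branches $J$ (not $J'$). The only point worth writing down carefully is this compatibility — the "open stratum" $D_I^\circ$ in the target formula is taken relative to all branches of $\calX_\sigma$, while the Mayer–Vietoris sum runs only over subsets of $J'$; this is consistent precisely because Corollary \ref{corAIS-MDI-MDI0} gives the comparison $\Motr(\tube{D_I^\circ})\simeq\Motr(\tube{D_I})$ irrespective of whether $I\subseteq J'$ or $I\subseteq J$.
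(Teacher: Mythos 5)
Your proposal is correct and follows exactly the route the paper takes: the paper's proof of this corollary is just the one-line remark that it follows from Corollary \ref{corAIS-MDI-MDI0} by substituting $[\Motr(\tube{D_I^\circ})]$ for $[\Motr(\tube{D_I})]$ (and likewise for the cohomological classes) in the formulas of Proposition \ref{prop-cutting-Mtubes}. Your additional observation that the cohomological case needs the functoriality of $\intHom(-,\un_K)$ to transport the isomorphism, and your check that $D_I^\circ$ is taken relative to all of $J$ while the sum runs over subsets of $J'$, are both accurate and fill in the details the paper leaves implicit.
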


\begin{Theorem}
\label{thm-chir-tube-semistable}
Let $\calX$ a semi-stable formal $R$-scheme of dimension $d$. 
Then 
\[
\chir(\calX_\eta^\VF)=[\Motr(\calX_\eta(-d))].
\]

\end{Theorem}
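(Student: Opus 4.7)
The plan is to stratify $\calX_\eta$ via the tubes of the open strata of $\calX_\sigma$, compute $\chir$ on each piece, and recombine using Corollary~\ref{cor-cutting-Mtubes}. Let $(D_i)_{i\in J}$ be the branches of $\calX_\sigma$. The specialization map partitions $\calX_\eta$ as a disjoint union of the tubes $\tube{D_I^\circ}$ for $\emptyset\neq I\subseteq J$, so that
\[
\calX_\eta^\VF=\sum_{\emptyset\neq I\subseteq J}\tube{D_I^\circ}^\VF\qquad\text{in }\K{\VF_K},
\]
while applying Corollary~\ref{cor-cutting-Mtubes} to $D=\calX_\sigma$ yields
\[
[\Motr(\calX_\eta)(-d)]=\sum_{\emptyset\neq I\subseteq J}(-1)^{|I|-1}[\Motr(\tube{D_I^\circ})(-d)]\quad\text{in }\K{\RigSH{K}}.
\]
It therefore suffices to prove, for each non-empty $I\subseteq J$, the stratum identity
\[
\chir(\tube{D_I^\circ}^\VF)=(-1)^{|I|-1}[\Motr(\tube{D_I^\circ})(-d)].
\]

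For the stratum identity, I would reduce to a standard model. By Proposition~\ref{prop-semistable-etaleproj}, $\calX$ is covered by opens each étale over a standard model $\St^{Ut}_{\Spf{R[U,U^{-1},S_1,\ldots,S_{r'}]},\underline{N}}$, and Proposition~\ref{prop-berth-iso-tubes-etale} ensures that the tubes of the strata $D_I^\circ$ are preserved under such morphisms. Both sides of the stratum identity are additive with respect to an admissible refinement coming from these charts (by additivity of $\chir$ on $\K{\VF_K}$ on the left and Mayer--Vietoris for rigid motives on the right), so one reduces to the standard model. There, $\tube{D_I^\circ}$ is the explicit semi-algebraic set
\[
\{(u,\underline{s},\underline{t}) : \val(u)=0,\ \val(s_i)\geq 0,\ \val(t_j)=0\ (j\notin I),\ \val(t_i)>0\ (i\in I),\ \textstyle\prod_i t_i^{N_i}=ut\},
\]
and the equation forces $(\val(t_i))_{i\in I}$ to lie in the open simplex $\Delta_I^\circ\subset\Gamma^{|I|}$ of dimension $|I|-1$, whose bounded Euler characteristic equals $(-1)^{|I|-1}$. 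Via the decomposition of Proposition~\ref{prop-iso-RV-RES-Gam}, the class $\oint\tube{D_I^\circ}^\VF$ splits as the product of a $\RES$-piece encoding the angular Kummer data on $D_I^\circ$ (arising from the equation $\prod_i\rv(t_i)^{N_i}=\rv(u)\bt\prod_{j\notin I}\rv(t_j)^{-N_j}$) and the $\Gamma$-piece $[\Delta_I^\circ]$.

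Applying $\chirRV$ produces two factors: $\chirGam([\Delta_I^\circ])$ contributes $(-1)^{|I|-1}$ (via $\eu_c(\Delta_I^\circ)$) together with a polytopal-tube motive identified with an annulus via Lemma~\ref{lem-Mrig-convex-polyhedron}, while $\chirRes$ applied to the $\RES$-piece, passing through $\Theta$, $\chi_{\hat\mu}$, and the equivalence $\mathfrak{F}$ of Theorem~\ref{thm-scholie-Rig-QUSH}, identifies with the rigid motive of the corresponding $Q^\Rig$-variety via Theorem~\ref{thm-ayoub-equi-Qrig-Qan}. The main obstacle is to verify that the product of these two contributions, with the correct Tate-twist bookkeeping, equals $(-1)^{|I|-1}[\Motr(\tube{D_I^\circ})(-d)]$: geometrically, on the standard model $\tube{D_I^\circ}$ is a family over the Kummer residue cover of $D_I^\circ$ whose fibers are the twisted valuation tori parametrized by $\Delta_I^\circ$, so the needed identity is a Fubini-type splitting of $[\Motr(\tube{D_I^\circ})]$ into a Kummer factor and an annulus factor whose Tate twists sum to $-d$. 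Once this stratum identity is established, summation over $\emptyset\neq I\subseteq J$ yields the theorem.
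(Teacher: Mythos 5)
Your overall architecture is exactly the paper's: decompose $\calX_\eta^\VF$ as the disjoint union of the $\tube{D_I^\circ}^\VF$, match this against Corollary~\ref{cor-cutting-Mtubes} applied to $D=\calX_\sigma$, reduce everything to the stratum identity $\chir(\tube{D_I^\circ}^\VF)=(-1)^{\abs{I}-1}[\Motr(\tube{D_I^\circ})(-d)]$ (this is Proposition~\ref{prop-chir-single-tube} in the paper), and reduce that to the standard model $\St^{u_I^{-1}t}_{D_I^\circ\times_k R,\underline{N}}$ via Propositions~\ref{prop-semistable-etaleproj} and~\ref{prop-berth-iso-tubes-etale}. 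Up to that point the proposal is sound.

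However, what you label ``the main obstacle'' is precisely the content of the stratum identity, and you do not resolve it; as written this is a genuine gap. Two assertions are left unjustified. First, on the definable side, you claim that $\oint\tube{D_I^\circ}^\VF$ splits as a product of a $\RES$-piece and the $\Gam$-piece $[\Delta_I^\circ]$ via Proposition~\ref{prop-iso-RV-RES-Gam}. But the equation $\prod_{i\in I}\rv(t_i)^{N_i}=\rv(u)\bt$ couples the angular components $\rv(t_i)$ across the whole simplex: over each point $\gamma\in\Delta_I^\circ$ the fiber is a torsor, and one must trivialize this family before the class becomes a product of a $\RES$-class with a $\Gam$-class. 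Second, on the motivic side, the ``Fubini-type splitting'' of $[\Motr(\tube{D_I^\circ})]$ into a Kummer factor and an annulus factor is asserted, not proved. The paper disposes of both points with a single device: letting $N_I=\gcd_{i\in I}N_i$ and $N_i'=N_i/N_I$, one completes $(N_1',\dots,N_r')$ to a matrix $A\in\mathrm{GL}_r(\Zz)$, which acts as a monomial automorphism of $\Gm_K^{r,\an}$ and carries $\tube{D_I^\circ}$ isomorphically (simultaneously as a semi-algebraic set and as a rigid variety) onto the honest product $Q_{N_I}^\VF(D_I^\circ,u_I)\times\val^{-1}(\Delta)$ with $\Delta\subset\Gam^{r-1}$ an open simplex. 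After this change of variables, Propositions~\ref{prop-chir-QVF} and~\ref{prop-chir-v-1Delta} together with multiplicativity of $\chir$ give the stratum identity, with the Tate twists $(-d+r-1)$ and $(-r+1)$ adding up to $-d$ and the sign $(-1)^{r-1}$ coming from $\eu_c(\Delta)$. Without this unimodular coordinate change (or an equivalent argument), neither the splitting of the $\RV$-class nor the matching of the two sides is established, so you should supply it to complete the proof.
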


Still denoting $\calX_\sigma=\cup_{i\in J} D_i$ the irreducible components of $\calX_\sigma$, the special fiber of $X$, we can write $\calX_\eta^\VF$ as a disjoint union
$\calX_\eta^\VF=\overset{.}{\bigcup}_{I\subset J}\tube{D_I^\circ}$,
hence
\[
\chir(\tube{D}_\calX^\VF)=\sum_{I\subset J} \chir(\tube{D_I^\circ}).
\]
In view of the formula of Corollary \ref{cor-cutting-Mtubes}, to prove Theorem \ref{thm-chir-tube-semistable}, it suffices to prove the following proposition.

\begin{Proposition}
\label{prop-chir-single-tube}
Let $\calX$ be a semi-stable $R$-scheme. Then
\[
\chir(\tube{D_I^\circ}^\VF)=(-1)^{\abs{I}-1}[\Motr(\tube{D_I^\circ})(-d)], 
\]
where $d=\dim(\calX_\sigma)$. 
\end{Proposition}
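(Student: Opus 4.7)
The strategy is an explicit computation in the standard étale-local model for semi-stable formal schemes, combined with a Mayer--Vietoris patching argument.

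By Proposition~\ref{prop-semistable-etaleproj}, every point of $D_I^\circ$ admits a Zariski neighborhood in $\calX$ on which there is an étale $R$-morphism to the standard scheme
\[\St = \Spf{R[U,U^{-1},S_1,\ldots,S_r]\{T_1,\ldots,T_k\}/(T_1^{N_1}\cdots T_k^{N_k} - Ut)}.\]
Proposition~\ref{prop-berth-iso-tubes-etale} implies that such a morphism induces a rigid isomorphism on tubes, hence both a semi-algebraic bijection preserving $\chir$ and a motivic isomorphism. Covering $D_I^\circ$ by such opens and combining the additivity of $\chir$ with the admissible cover machinery of Propositions~\ref{prop-tube-admissible-cover} and~\ref{prop-cutting-Mtubes}, the claim reduces to the case of $\St$, with $I = \{1,\ldots,\ell\}$ where $\ell = |I|$.

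In $\St$, the set $\tube{D_I^\circ}^\VF$ is semi-algebraically described by $\val(U) = 0$, $\val(S_j) \geq 0$, $\val(T_i) > 0$ for $i \in I$, $\val(T_i) = 0$ for $i \notin I$, and $\prod_i T_i^{N_i} = Ut$. The key observation is that the projection $(T_i)_{i \in I} \mapsto (\val(T_i))_{i \in I}$ exhibits $\tube{D_I^\circ}^\VF$ as a fibration over the open $(\ell-1)$-simplex
\[
\Delta_I = \{\gamma \in \Gamma^\ell_{>0} : \sum_i N_i\gamma_i = 1\},
\]
where the residue-level fiber encodes the variety $D_I^\circ$, up to a $\hat\mu$-cover arising from the substitution $T_i = t^{\gamma_i}\tilde T_i$ together with the choice of an $m$-th root of $t$ (for $m$ a common denominator of the $\gamma_i$). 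Via the isomorphism of Proposition~\ref{prop-iso-RV-RES-Gam}, $\oint[\tube{D_I^\circ}^\VF]$ splits as a tensor product of a $\RES$-class $[Y]$ (built from $D_I^\circ$ and the relation obtained after substitution) and the $\Gamma$-class $[\Delta_I]$.

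Applying $\chir = \chirRV\circ\oint$ on the two factors: by Proposition~\ref{prop-chirGam-locclosed}, the $\Gamma$-contribution is $\eu_c(\Delta_I) = (-1)^{\ell-1}$ times the motive of the associated annular family, with appropriate Tate twist, while by the definition of $\chirRes$ in Section~\ref{section-res-part} the $\RES$-contribution is a $Q^\Rig$-motive over a cover of $D_I^\circ$ with its own Tate twist. Their product in $\K{\RigSH{K}}$ is $(-1)^{\ell-1}$ times a product motive with cumulative Tate twist $-d$. Finally, Proposition~\ref{prop-wequiv-annuli}, combined with the explicit presentation of $\tube{D_I^\circ}$ in $\St$ as an annulus family over the $Q^\Rig$-variety, identifies this product motive with $\Motr(\tube{D_I^\circ})$, giving the desired formula.

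The principal technical obstacle is the motivic identification at the end: tracking the $\hat\mu$-action introduced by the root substitution, distinguishing the ambient from the intrinsic dimensions in $\K{\Gamn{*}}$ so that the Tate twists add to $-d$, and verifying that the product of the annulus motive coming from $\val^{-1}(\Delta_I)^\Rig$ with the $\RES$-motive genuinely coincides with $\Motr(\tube{D_I^\circ})$. This rests crucially on the $\Bb^1$-invariance and annulus-identification statements of Proposition~\ref{prop-wequiv-annuli}.
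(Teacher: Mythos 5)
Your overall architecture matches the paper's (reduce to a standard semi-stable model, split the class under $\oint$ into a $\RES$-factor and a $\Gam$-factor, evaluate each, multiply), but the two steps you treat most lightly are exactly where the real work lies, and the first of them is wrong as stated. The étale chart $e:\calX\to\calS=\St^{Ut}_{\Spf{R[U,U^{-1},S_1,\dots,S_r]},\underline{N}}$ supplied by Proposition \ref{prop-semistable-etaleproj} does \emph{not} satisfy the hypotheses of Proposition \ref{prop-berth-iso-tubes-etale}: that proposition requires the \emph{same} subscheme $D$ embedded in both special fibers with $i=i'\circ u$, whereas here $e_\sigma:D_I\to C=\Spec{k[U,U^{-1},S_1,\dots,S_r]}$ is merely étale. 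Consequently $e$ does not induce an isomorphism $\tube{D_I^\circ}_\calX\simeq\tube{C_I^\circ}_\calS$ — and it could not, since reducing to $\calS$ would replace $D_I^\circ$ by a torus and destroy the global datum (the class of $u_I$, hence the cover $Q_{N_I}(D_I^\circ,u_I)$) on which the answer genuinely depends; your own next paragraph, where "the residue-level fiber encodes the variety $D_I^\circ$", tacitly assumes you have \emph{not} made this replacement. The paper's Lemma \ref{lem-reduction-chir-single-tube} exists precisely to repair this: it builds $\calX'=\St^{u_I^{-1}t}_{D_I\times_kR,\underline{N}}$ over $D_I$ itself, forms $\calX\times_\calS\calX'$, removes the non-diagonal component of $D_I\times_CD_I$ (using that the diagonal of an étale map is open and closed), and only then applies Proposition \ref{prop-berth-iso-tubes-etale} — twice, to the two projections, each of which now \emph{does} restrict to an isomorphism over $D_I$.

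The second gap is the splitting itself. The set you describe (valuations constrained by $\sum_iN_i\gamma_i=\val(t)$, residues constrained by the unit) is a twisted fibration, not literally a product, so the assertion that $\oint[\tube{D_I^\circ}^\VF]$ "splits as a tensor product" under Proposition \ref{prop-iso-RV-RES-Gam}, and that the resulting product motive equals $\Motr(\tube{D_I^\circ})$, is precisely the content to be proved; you correctly identify it as the principal obstacle but do not resolve it. The paper resolves both issues at once with one concrete device: a monomial automorphism $A\in\mathrm{GL}_r(\Zz)$ of $D_I^\circ(R)\times\Gm_K^{r,\an}$ whose first row is $(N_i/N_I)_i$, which carries $\tube{D_I^\circ}$ onto the genuine product $Q^\VF_{N_I}(D_I^\circ,u_I)\times\val^{-1}(\Delta)$ with $\Delta\subset\Gam^{r-1}$ an open simplex. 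Since $A$ acts both semi-algebraically and as an automorphism of rigid varieties, the identity then follows from Propositions \ref{prop-chir-QVF} and \ref{prop-chir-v-1Delta}, multiplicativity of $\chir$, and transport back along $A^{-1}$. Without some such explicit untwisting, your argument does not close.
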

Before proving the proposition, we need a reduction. 
\begin{Lemma}
\label{lem-reduction-chir-single-tube}
To prove Proposition \ref{prop-chir-single-tube}, we can assume 
$\calX=\St^{u^{-1}t}_{D_I^\circ\times_k R, \underline{N}}$,
where $\underline{N}=(N_1,...,N_r)\in (\Nn^\times)^r$ (where $r=\abs{I}$), $u_I\in \calO^\times(D_I^\circ\times_k R)$. 
\end{Lemma}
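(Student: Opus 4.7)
The plan is to perform two successive reductions: a Zariski-local reduction on $D_I^\circ$ that brings $\calX$ into a standard affine form with explicit semi-stable coordinates, followed by an étale comparison that replaces $\calX$ by the standard model $\calY := \St^{u^{-1}t}_{D_I^\circ \times_k R, \underline{N}}$.

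First, I would exploit the fact that both sides of the identity claimed in Proposition~\ref{prop-chir-single-tube} are additive with respect to Zariski covers of $D_I^\circ$. Given a Zariski cover $D_I^\circ = V \cup W$ (coming from a Zariski cover of $\calX$), Proposition~\ref{prop-tube-admissible-cover}(1) yields an admissible cover $\tube{D_I^\circ} = \tube{V} \cup \tube{W}$ with intersection $\tube{V \cap W}$. The scissors relations in $\K{\VF_K}$ produce the inclusion-exclusion identity
\[
[\tube{D_I^\circ}^\VF] = [\tube{V}^\VF] + [\tube{W}^\VF] - [\tube{V \cap W}^\VF],
\]
while the associated Mayer-Vietoris distinguished triangle gives the analogous identity in $\K{\RigSH{K}}$ after twisting by $(-d)$. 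Hence Proposition~\ref{prop-chir-single-tube} for $\calX$ is implied by the same statement for each open in the cover. Iterating and using the defining local structure of a semi-stable scheme, we may assume $\calX = \Spf{A}$ is affine and admits distinguished coordinates $u \in A^\times$ and $t_1, \ldots, t_r \in A$ (with $r = |I|$) satisfying $t = u t_1^{N_1} \cdots t_r^{N_r}$, with $D_i = V(t_i)$ for $i \in I$.

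Second, I would compare $\calX$ with the standard model $\calY = \St^{u^{-1}t}_{D_I^\circ \times_k R, \underline{N}}$, which is itself semi-stable of type $\underline{N}$ along its zero-section $D_I^\circ \hookrightarrow \calY_\sigma$. Applying Proposition~\ref{prop-semistable-etaleproj} to both $\calX$ and $\calY$ produces compatible étale morphisms from Zariski opens of $\calX[V,V^{-1}]$ and $\calY[V,V^{-1}]$ into a common universal standard formal scheme $\St^{Ut}_{\Spf{R[U,U^{-1},S_1,\ldots,S_r]}, \underline{N}}$, each realizing $D_I^\circ$ as the same open of the central $T_i = 0$ stratum. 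Proposition~\ref{prop-berth-iso-tubes-etale} then yields a canonical isomorphism of rigid $K$-varieties $\tube{D_I^\circ}_\calX \simeq \tube{D_I^\circ}_\calY$. Since this isomorphism is subanalytic, the classes $[\tube{D_I^\circ}^\VF]$ computed from $\calX$ and from $\calY$ coincide in $\K{\VF_K} \simeq \K{\VF_K^\an}$, and the associated rigid motives are isomorphic. Hence both sides of Proposition~\ref{prop-chir-single-tube} depend only on the data $(D_I^\circ, u|_{D_I^\circ}, \underline{N})$, justifying the replacement of $\calX$ by $\calY$.

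The main obstacle is the auxiliary invertible variable $V$ introduced by Proposition~\ref{prop-semistable-etaleproj}: the guaranteed étale comparison lives over $\calX[V,V^{-1}]$ and $\calY[V,V^{-1}]$ rather than over $\calX$ and $\calY$ directly. I would handle this by noting that the same variable appears symmetrically on both sides of the comparison, so the induced identification of tubes along $D_I^\circ$ involves the same $V$-enlargement on $\calX$ and $\calY$ and descends to the original tubes. Combined with the Zariski-locality established in the first step, this descent yields the lemma.
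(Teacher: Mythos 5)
Your first step (Zariski--locality of both sides via additivity in $\K{\VF_K}$ and Mayer--Vietoris in $\K{\RigSH{K}}$) matches the paper. The gap is in your second step, where you claim that Proposition~\ref{prop-berth-iso-tubes-etale} ``then yields'' an isomorphism $\tube{D_I^\circ}_\calX\simeq\tube{D_I^\circ}_\calY$ from the two \'etale morphisms $e:\calX\to\calS$ and $e':\calY\to\calS$ into the common standard model $\calS=\St^{Ut}_{\Spf{R[U,U^{-1},S_1,\dots,S_r]},\underline{N}}$. That proposition requires an \'etale morphism \emph{between the two formal schemes themselves}, restricting to the identity on a common locally closed subscheme $D$ of the special fibers. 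What you actually have is two \'etale maps into a third scheme, and the induced map $e_\sigma:D_I\to C=\Spec{k[U,U^{-1},S_1,\dots,S_r]}$ on central strata is merely \'etale, not an immersion --- so your assertion that both models ``realize $D_I^\circ$ as the same open of the central $T_i=0$ stratum'' is not justified ($D_I^\circ$ is an arbitrary smooth variety and is not an open subscheme of $C$ in general). Consequently the hypotheses of Proposition~\ref{prop-berth-iso-tubes-etale} are not met by the maps you have produced, and the comparison of tubes does not follow.

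The missing idea is a roof construction. One forms the fiber product $\calX\times_\calS\calX'$ (with $\calX'=\St^{u_I^{-1}t}_{D_I\times_k R,\underline{N}}$) and observes that its central stratum is $D_I\times_C D_I$. Since $e_\sigma:D_I\to C$ is \'etale, the diagonal $D_I\hookrightarrow D_I\times_C D_I$ is open and closed, giving a decomposition $D_I\times_C D_I\simeq D_I\cup F$; setting $\calX''=\calX\times_\calS\calX'\setminus F$, the two projections $\calX''\to\calX$ and $\calX''\to\calX'$ are \'etale and restrict to isomorphisms over $D_I$. Only then can Proposition~\ref{prop-berth-iso-tubes-etale} be applied --- twice --- to obtain $\tube{D_I^\circ}_\calX\simeq\tube{D_I^\circ}_{\calX''}\simeq\tube{D_I^\circ}_{\calX'}$. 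Your treatment of the auxiliary variable $V$ is in the same spirit as the paper's (which is itself brief on this point), and your observation that the resulting identification is definable, hence yields equality of classes in $\K{\VF_K}$, corresponds to Remark~\ref{rem-reduction-chir-single-tube}; but without the fiber-product step the central comparison of tubes is not established.
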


\begin{proof}
Replace $\calX$ by $\calX\{V,V^{-1}\}$. Using Mayer-Vietoris distinguished triangles, we can also work Zariski locally, hence suppose by Proposition \ref{prop-semistable-etaleproj} that there is an \'etale $R$-morphism
\[ e :\calX \to \calS=\St^{Ut}_{\Spec{R[U,U^{-1}]},\underline{N}}[S_1,...,S_r],\]
where $\underline{N}$ is the type of $\calX$ at $x\in \tube{D_I^\circ}$. The irreducible components of $\calS_\sigma$ are defined by equations $T_i=0$, denote by $C$ their intersection. We have $C=\Spec{k[U,U^{-1},S_1,...,S_r]}$. Up to permuting the $D_i$, we can assume $D_i$ is defined in $\calX_\sigma$ by $T_i\circ e=0$, inducing an \'etale morphism $e_\sigma : D_I\to C$ and a cartesian square of $R$-schemes
\[
\xymatrixcolsep{5pc}
\xymatrix{
D_I \ar[d]_{e_\sigma} \ar[r]^-{} & X \ar[d]^{e} \\
C \ar[r]_-{} & S.
}
\]
 The morphism $e_\sigma$ induces an \'etale morphism of $R$-schemes 
 \[
 D_I\times_k R\to C\times_k R, 
 \]
which itself induces an \' etale $R$-morphism
\[
e' : \calX'=\St^{u_I^{-1}t}_{D_I\times_k R, \underline{N}}\to \calS,
\]
together with a cartesian square
\[
\xymatrixcolsep{5pc}
\xymatrix{
D_I \ar[d]_{e_\sigma} \ar[r]^-{} & X' \ar[d]^{e'} \\
C \ar[r]_-{} & S.
}
\]
The fiber product $\calX\times_\calS \calX'$ hence satisfies
$\calX\times_\calS \calX' \times_\calS  C\simeq D_I\times_C D_I$. Because $e_\sigma : D_I \to C$ is \'etale, the diagonal embedding $D_I\to D_I\times_C D_I$ is an open and closed immersion, hence induces a decomposition $D_I\times_C D_I\simeq D_I\cup F$. Set $\calX''=\calX\times_\calS \calX'\backslash F$. We have two \'etale morphisms
$f : \calX'' \to \calX$ and $f' : \calX'' \to \calX$ such that $f^{-1}(D_I)\simeq D_I$ and $f'^{-1}(D_I)\simeq D_I$, hence we can apply twice Proposition \ref{prop-berth-iso-tubes-etale} to get that
\[
\tube{D_I^\circ}_\calX\simeq \tube{D_I^\circ}_{\calX''}
\]
and 
\[
\tube{D_I^\circ}_{\calX'}\simeq \tube{D_I^\circ}_{\calX''}.
\]

Recall that be replaced $\calX$ by $\calX\{V,V^{-1}\}$ at the begining of the proof. Hence going back to the notation in the statement of the lemma, what we showed is 
\[
\tube{D_I^\circ}_{\calX\set{V,V^{-1}}}\simeq \tube{D_I^\circ}_{\calX'\set{V,V^{-1}}}.
\]
Inspection of this isomorphism show that it induces the required isomorphism
\[
\tube{D_I^\circ}_{\calX}\simeq \tube{D_I^\circ}_{\calX'},
\]
see also \cite[Theorem 2.6.1]{nicaise_tropical_2017}.
\end{proof}
\begin{Remark}
\label{rem-reduction-chir-single-tube}
The proof of Lemma \ref{lem-reduction-chir-single-tube} also gives a definable bijection $\tube{D_I^\circ}_{\calX}^\VF\simeq \tube{D_I^\circ}_{\calX'}^\VF$.
\end{Remark}

\begin{proof}[Proof of Proposition \ref{prop-chir-single-tube}]
We can suppose that we are in the situation of Lemma \ref{lem-reduction-chir-single-tube}, with $\calX=\St^{u^{-1}t}_{D_I^\circ\times_k R, \underline{N}}$. Let $N_I$ be the greatest common divisor of the $N_i$ for $i\in I$. Let $N_i'=N_i/N_I$. As the $N_i'$ are coprime, we can form an $r\times r$ matrix $A\in \mathrm{GL}_n(\Zz)$ which first row is constituted by the $N_i'$. The matrices $A$ and $A^{-1}$ define automorphisms of $\Gm_K^{r,\an}$, hence of $G=D_I^\circ(R) \times \Gm_K^{r,\an}$. As $\tube{D_I^\circ}_\calX$ is a rigid subvariety of $G$, we can consider $W$, its image by $A$. Then $W$ is the locally closed semi-algebraic subset of $G$ defined by

\[
\set{(x,w)\in D_I^\circ(R)\times (K^\times)^r\mid w_1^{N_I}u_I(x)=t, l_1(\val(w))>0,...,l_{r-1}(\val(w))>0},
\]

where the $l_i : \Gam^r \to \Gam$ are  linearly independent affine functions of linear parts with integer coefficients. Hence $W=Q_{N_I}^\VF(D_I^\circ,u_I)\times \val^{-1}(\Delta)$, where $\Delta\subset \Gam^{r-1}$ is defined by equations $l_i>0$ for $i=1,...,r$. 

By Propositions \ref{prop-chir-QVF} and \ref{prop-chir-v-1Delta}, we know that 
\[\chir(Q_{N_I}^\VF(D_I^\circ,u_I))=[\Motr(Q_{N_I}^\Rig(D_I^\circ,u_I))(-d+r-1)]\]
and
\[\chir(\val^{-1}(\Delta))=(-1)^{r-1}[\Motr(\val^{-1}(\Delta)^\Rig)(-r+1)].\]
Hence as $\chir$ is multiplicative, 
\[ \chir(W)=(-1)^{r-1}[\Motr(W^\Rig)(-d)]\]
and applying the isomorphism $A^{-1}$, we get as required \[ \chir(\tube{D_I^\circ}^\VF_X)=(-1)^{\abs{I}-1}[\Motr(\tube{D_I^\circ})(-d)].\]
\end{proof}
The proof of Theorem \ref{thm-chir-tube-semistable} is now complete.

For later use, note that the proofs of Proposition \ref{prop-chir-single-tube} and Lemma \ref{lem-reduction-chir-single-tube} gives the following equality. 
\begin{Corollary}
\label{cor-prop-chir-single-tube} With the notation of Proposition \ref{prop-chir-single-tube}, we have
\[
\Motr(\tube{D_I^\circ})\simeq \Motr(Q_{N_I}^\Rig(D_I^\circ,u_I)\times \parBb(o,1)^{\abs{I}-1}).\]
\end{Corollary}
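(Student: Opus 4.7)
The plan is to follow the construction carried out in the proof of Proposition \ref{prop-chir-single-tube} and Lemma \ref{lem-reduction-chir-single-tube}, but to read off the corresponding isomorphisms of rigid analytic varieties (and therefore of rigid motives) rather than merely the equalities they yield in $\K{\RigSH{K}}$. Concretely, by Lemma \ref{lem-reduction-chir-single-tube} and the explicit étale correspondence $\calX\leftarrow\calX''\to\calX'$ constructed via $\St^{u_I^{-1}t}_{D_I\times_k R,\underline N}$, applied twice through Proposition \ref{prop-berth-iso-tubes-etale}, one obtains an honest isomorphism of rigid varieties $\tube{D_I^\circ}_\calX\simeq \tube{D_I^\circ}_{\calX'}$ (cf.\ Remark \ref{rem-reduction-chir-single-tube}). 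Thus one may assume from the start that $\calX=\St^{u_I^{-1}t}_{D_I^\circ\times_kR,\underline N}$.

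In that setting the computation in the proof of Proposition \ref{prop-chir-single-tube} produces a $\mathrm{GL}_r(\Zz)$-automorphism $A$ of $\Gm_K^{r,\an}$, hence of the ambient rigid variety $G=D_I^\circ(R)\times\Gm_K^{r,\an}$, under which $\tube{D_I^\circ}_\calX$ is carried to the locally closed rigid subvariety
\[
W=Q_{N_I}^\Rig(D_I^\circ,u_I)\times \val^{-1}(\Delta)^\Rig,
\]
where $\Delta\subset\Gam^{r-1}$ is the nonempty open polytope cut out by the linearly independent strict inequalities $l_1>0,\dots,l_{r-1}>0$. This gives already an isomorphism of rigid varieties $\tube{D_I^\circ}_\calX\simeq W$, and hence the induced isomorphism of homological motives
\[
\Motr(\tube{D_I^\circ})\simeq \Motr(Q_{N_I}^\Rig(D_I^\circ,u_I)\times\val^{-1}(\Delta)^\Rig).
\]

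It then remains to identify the second factor motivically. For this I would invoke the \emph{relative} version of Lemma \ref{lem-Mrig-convex-polyhedron} with base $Q_{N_I}^\Rig(D_I^\circ,u_I)$: the proof of that lemma is an induction on $n=r-1$ using only Proposition \ref{prop-wequiv-annuli}, which is stated already for an arbitrary rigid $K$-variety $X$, so the same argument shows
\[
\Motr(Q_{N_I}^\Rig(D_I^\circ,u_I)\times \val^{-1}(\Delta)^\Rig)\simeq \Motr(Q_{N_I}^\Rig(D_I^\circ,u_I)\times\parBb(o,1)^{r-1}).
\]
Composing the two displayed isomorphisms gives the corollary.

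The only subtle point, and hence the main obstacle, is checking that each step of Lemma \ref{lem-Mrig-convex-polyhedron} goes through relatively; but since every weak equivalence used there comes from Proposition \ref{prop-wequiv-annuli}, which is already relative, and since the Mayer--Vietoris and ascending-union arguments are preserved by an arbitrary base change of rigid varieties, no new ingredient is needed. Everything else is just bookkeeping of the isomorphisms that were implicitly constructed in the course of proving Proposition \ref{prop-chir-single-tube}.
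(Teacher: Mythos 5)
Your proposal is correct and matches the paper's (implicit) argument: the paper simply records that the proofs of Lemma \ref{lem-reduction-chir-single-tube} and Proposition \ref{prop-chir-single-tube} already produce the isomorphisms of rigid varieties $\tube{D_I^\circ}_\calX\simeq\tube{D_I^\circ}_{\calX'}$ and (via the automorphism $A$) $\tube{D_I^\circ}\simeq Q_{N_I}^\Rig(D_I^\circ,u_I)\times\val^{-1}(\Delta)^\Rig$, after which the identification with $\parBb(o,1)^{\abs{I}-1}$ at the level of motives is exactly the open case of Lemma \ref{lem-Mrig-convex-polyhedron}, as you use it. Whether one runs that lemma relatively over $Q_{N_I}^\Rig(D_I^\circ,u_I)$, as you propose, or combines its absolute form with monoidality of $\Motr$ on products of smooth rigid varieties, is immaterial.
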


\subsection{Compatibilities of $\chir$}
\label{section-compatib-chir}

We will now derive consequences of Theorem \ref{thm-chir-tube-semistable}.

\begin{Theorem}
\label{thm-chir-uniquenss-qcsmvarieties}
The morphism $\chir$ is the unique ring morphism
\[
\K{\VF_K}\to \K{\RigSH{K}}
\]
such that for any quasi-compact smooth rigid $K$-variety of pure dimension $d$,
\[
\chir(X^\VF)=[\Motr(X)(-d)].
\]
\end{Theorem}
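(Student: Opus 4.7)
The plan is to split the claim into existence (the formula holds for every quasi-compact smooth rigid $X$ of pure dimension $d$) and uniqueness. The existence part is essentially already proved in Theorem \ref{thm-chir-tube-semistable}, which treats the special case $X = \calX_\eta$ for $\calX$ a semi-stable formal $R$-scheme. To reduce the general case to this one, I would invoke the semi-stable reduction theorem for smooth quasi-compact rigid $K$-varieties in residue characteristic zero, producing a semi-stable formal $R$-model $\calX$ with $\calX_\eta \simeq X$; Theorem \ref{thm-chir-tube-semistable} then immediately yields $\chir(X^\VF) = [\Motr(X)(-d)]$.

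For uniqueness, suppose $\chi'$ is another ring morphism satisfying the same compatibility. Via the Hrushovski--Kazhdan isomorphism $\oint$ of Theorem \ref{int-HK-additive}, $\chi'$ corresponds to a ring morphism $\K{\RV_K[*]}/\Isp \to \K{\RigSH{K}}$, and by the tensor decomposition of Proposition \ref{prop-iso-RV-RES-Gam} it suffices to check that this morphism agrees with $\chirRV$ on a set of generators of $\K{\RES_K[*]}$ and of $\K{\Gamn{*}}$. By Corollary \ref{cor-QRES-generators-KRES} the former is generated by the classes $[Q_r^\RV(X,f)]_n$, which correspond under $\mathfrak{L}$ to classes of $Q_r^\VF(X,f)$ together with open-polydisc factors; since $Q_r^\Rig(X,f)$ is quasi-compact and smooth, and the polydisc factors can be rewritten using the defining relation $[1]_1 = [1]_0 + [\RV^{>0}]_1$ of $\Isp$ (which expresses open unit polydiscs in terms of closed polyannuli and a point), the values of $\chi'$ on these generators are forced by the hypothesis. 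For the $\Gamma$-part, scissors relations reduce the question to the case of closed bounded rational polytopes $\Delta$, whose pullbacks $\val^{-1}(\Delta)^\Rig$ are polyannuli, again quasi-compact and smooth. Thus $\chi'$ is determined on all generators and must coincide with $\chir$.

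The main obstacle is the appeal to semi-stable reduction for smooth quasi-compact rigid $K$-varieties; once such a formal $R$-model is available, the computational work of Theorem \ref{thm-chir-tube-semistable}, built on Corollary \ref{cor-cutting-Mtubes} and Proposition \ref{prop-chir-single-tube}, does the remaining heavy lifting.
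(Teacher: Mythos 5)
Your existence argument is exactly the paper's: choose a formal $R$-model, make it semi-stable by resolution of singularities, and invoke Theorem \ref{thm-chir-tube-semistable}. For uniqueness, however, the paper's proof is a one-liner that you have replaced by a longer detour: by quantifier elimination in $\ACVF_K$, the ring $\K{\VF_K}$ is generated by classes of smooth affinoid (hence quasi-compact smooth) rigid $K$-varieties, so any ring morphism satisfying the stated formula is determined. Your route back through $\oint$ and the tensor decomposition of Proposition \ref{prop-iso-RV-RES-Gam} can be made to work, but as written it has a flawed step: it is not true that scissors relations reduce the $\Gamma$-part to closed bounded rational polytopes inside $\K{\Gamn{*}}$. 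Unbounded polytopes such as $(0,+\infty)$ are not $\Zz$-linear combinations of classes of bounded sets, since $\eu$ and $\eu_c$ agree on bounded sets but differ on $(0,+\infty)$ (values $-1$ and $0$). The repair is to decompose not in $\K{\Gamn{*}}$ but after pulling back to $\VF$, where extra semi-algebraic bijections (e.g.\ inversion $x\mapsto x^{-1}$, and $[\text{open ball}]=[\Bb(o,1)]-[\parBb(o,1)]$) express these non-quasi-compact pullbacks in terms of quasi-compact smooth pieces --- which is precisely the content of the paper's generation statement. Similarly, the $\Isp$ relation $[1]_1=[1]_0+[\RV^{>0}]_1$ does not by itself express the open polydisc in terms of closed polyannuli; one again needs the $\VF$-level scissors relation against the closed ball. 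So your uniqueness argument is salvageable but both less direct and less accurate than the paper's; the existence half is correct and identical to the paper's.
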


\begin{proof}
By quantifier elimination in the theory $\ACVF_K$, $\K{\VF_K}$ is generated by classes of smooth affinoid rigid $K$-varieties, which shows uniqueness. 
For the existence, fix $X$ a quasi-compact smooth rigid $K$-variety of pure dimension $d$. We can find $\calX$, a formal $R$-model of $X$ and by Hironaka's resolution of singularities, we can assume $\calX$ is semi-stable. We can now apply Theorem \ref{thm-chir-tube-semistable}.
\end{proof}

\begin{Theorem} There is a commutative diagram
\label{thm-comp-chir-chi_K}
\[
\xymatrixcolsep{5pc}
\xymatrix{
\K{\Var{K}} \ar[d]_{\chi_K} \ar[r]^-{} & \K{\VF_K} \ar[d]^{\chir} \\
\K{\SH{K}} \ar[r]_-{\Rig} & \K{\RigSH{K}}.
}
\]

\end{Theorem}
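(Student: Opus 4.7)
The plan is to reduce to the case of smooth projective $K$-varieties, where both compositions admit an explicit common value, and then invoke the already-established formula for $\chir$ on smooth quasi-compact rigid varieties.

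Denote by $\iota : \K{\Var{K}} \to \K{\VF_K}$ the canonical top arrow, sending the class of an algebraic $K$-variety $X$ to the class of its analytification $X^{\an}$, viewed as a semi-algebraic set as in the Remark of Section \ref{section-recall-HK}. A preliminary check, which is straightforward, is that $\iota$ is a well-defined ring morphism: if $Y \subset X$ is closed with open complement $U$, then $X^{\an} = Y^{\an} \cupd U^{\an}$ as a semi-algebraic partition, giving the scissors relation $[X^{\an}] = [Y^{\an}] + [U^{\an}]$ in $\K{\VF_K}$; multiplicativity follows from $(X\times_K Y)^{\an} = X^{\an} \times_K Y^{\an}$. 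Hence both $\chir\circ\iota$ and $\Rig^*\circ\chi_K$ are ring morphisms $\K{\Var{K}} \to \K{\RigSH{K}}$, and it suffices to check they agree on a generating family.

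By Hironaka's resolution of singularities (valid since $\mathrm{char}(K)=0$) combined with the scissors relations, one can prove by induction on dimension that $\K{\Var{K}}$ is generated as a $\Zz$-module by classes $[X]$ with $X$ smooth projective over $K$. Fix such an $X$, of pure dimension $d$. On one hand, by Proposition \ref{prop-compactcohom-motive-smooth},
\[
\chi_K([X]) \;=\; [\Mot^\vee_{K,c}(X)] \;=\; [\Mot_K(X)(-d)] \in \K{\SH{K}},
\]
and since $\Rig^*$ is a monoidal triangulated functor sending $\Mot_K(X)$ to $\Motr(X^{\an})$ by construction and commuting with Tate twists, the lower-left route yields $[\Motr(X^{\an})(-d)]$. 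On the other hand, properness of $X$ implies that $X^{\an}$ is a smooth quasi-compact rigid $K$-variety of pure dimension $d$, so Theorem \ref{thm-chir-uniquenss-qcsmvarieties} gives directly
\[
\chir(\iota([X])) \;=\; \chir([X^{\an}]) \;=\; [\Motr(X^{\an})(-d)].
\]
The two values coincide, so the diagram commutes on the generators and therefore everywhere.

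The only nontrivial input is Theorem \ref{thm-chir-uniquenss-qcsmvarieties}, which has already been proved; the rest of the argument is a routine reduction. There is no serious obstacle: the main point is the compatibility of $\Rig^*$ with Tate twists and with the assignment $X \mapsto X^{\an}$ at the level of homological motives, which is built into the definition of $\Rig^*$.
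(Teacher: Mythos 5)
Your proposal is correct and follows essentially the same route as the paper: reduce to smooth projective generators of $\K{\Var{K}}$ (the paper cites Nagata plus Hironaka for this), identify $\chi_K([X])$ with $[\Mot_K(X)(-d)]$ via Proposition \ref{prop-compactcohom-motive-smooth}, apply $\Rig^*$, and compare with $\chir([X^{\an}])$ computed on the quasi-compact smooth rigid variety $X^{\an}$. The only cosmetic difference is that you invoke Theorem \ref{thm-chir-uniquenss-qcsmvarieties} while the paper goes directly through a semi-stable formal model and Theorem \ref{thm-chir-tube-semistable}, which is the same argument one level down.
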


\begin{proof}
By Nagata's compactification theorem and Hironaka's resolution of singularities, the ring $\K{\Var{K}}$ is generated by classes of smooth projective varieties, hence it suffices to check the compatibility for such a $K$-variety $X$. Denoting $f : X \to K$ the structural morphism, one has by definition 
\[\chi_K([X])=[f_!f^*\un_K]=[f_\sharp f^*\un_K(-d)]=[\Mot_K(X)(-d)],\] where $d=\dim(X)$ and we used $f_!=f_\sharp \circ\mathrm{Th}^{-1}(\Omega_f)$ for $f$ smooth. 
Applying the functor $\Rig$, one needs to show that $\chir([X]^\VF)=[\Motr(X^\an)(-d)]$. 
As $X$ is projective, $X^\an$ is a quasi-compact rigid $K$-variety hence one can find a semi-stable formal model of $X^\an$ over $R$, denote it $\widetilde{X}$.
Hence $X\simeq \widetilde{X}_K\simeq \tube{\widetilde{X}_\sigma}_{\widetilde{X}}$, so by Theorem \ref{thm-chir-tube-semistable}, $\chir([X]^\VF)=[\Motr(X^\Rig)(-d)]$.
\end{proof}

Note that combining Theorems \ref{thm-chir-uniquenss-qcsmvarieties} and \ref{thm-comp-chir-chi_K} gives Theorem \ref{main-thm1}. 

\subsection{A few more realization maps}
\label{section-few-more-realization}

In this section we construct in addition to $\chir$ and $\chirp$ two more realization maps $\chirt$ and $\chirpt$ obtained by considering homological motives with compact support instead of cohomological motives with compact support. 

Recall the ring morphism of Proposition \ref{realisation-motives-is} $\chi_S : \K{\Var{S}}\to \K{\SH{S}}$ sending $[f:X\to S]$ to $[\Mot_{S,c}^\vee(X)]=[f_!f^*\un_S]$. 

Working dually, we can define also a morphism 
$\widetilde{chi_S} : \K{\Var{S}}\to \K{\SH{S}}$ sending $[f:X\to S]$ to $[\Mot_{S,c}(X)]=[f_*f^!\un_S]$. The proof that it respect the scissors relations is similar, using exact triangle {eqn-localitytrianglebisdual} instead of {eqn-localitytrianglebis}. We can also use duality involutions of the following Section \ref{section-duality}. 

Composing with the morphism $\K{\Varmu{k}}\to \K{\Var{\Gm_k}}$, we get a morphism $\widetilde{\chi_{\hat\mu}} : \K{\Varmu{k}} \to \K{\QUSH{\Gm_k}}$, fitting in the following commutative square :
\[
\xymatrixcolsep{5pc}
\xymatrix{
 \K{\Varmu{k}} \ar[d]_{\widetilde{\chi_{\hat\mu}} } \ar[r]^-{} & \K{\Var{k}} \ar[d]^{\widetilde{\chi_k}} \\
\K{\QUSH{k}} \ar[r]_-{1^*} &\K{\SH{k}}.
}
\]

Recall that 
\[
\chir=\mathfrak{F}\circ{\chi_{\hat\mu}}\circ\Theta\circ\calE_c\circ\oint
\;\mathrm{and}\; 
\chirp=\mathfrak{F}\circ{\chi_{\hat\mu}}\circ\Theta\circ\calE\circ\oint.
\]

We can now define 
\[
\chirt=\mathfrak{F}\circ\widetilde{\chi_{\hat\mu}}\circ\Theta\circ\calE_c\circ\oint
\;\mathrm{and}\; 
\chirpt=\mathfrak{F}\circ\widetilde{\chi_{\hat\mu}}\circ\Theta\circ\calE\circ\oint.
\]

Unraveling the definitions, we see that if $X$ is a smooth connected $k$-variety of dimension $d$, $f\in \Gam(X,\calO_X^\times)$, $r\in \Nn^*$ and $\Delta\subset\Gam^n$ an open simplex of dimension $n$, 
\[\chirt(Q_r^\VF(X,f))=[\Motr^\vee(Q_r^\Rig(X,f))(d)],
\]
\[\chirpt(Q_r^\VF(X,f))=[\Motr^\vee(Q_r^\Rig(X,f))],
\]
\[\chirt(\val^{-1}(\Delta))=(-1)^{d}[\Motr^\vee(\val^{-1}(\Delta)^\Rig)(d)],
\]
and
\[\chirpt(Q_r^\VF(X,f))=(-1)^{d}[\Motr^\vee(\val^{-1}(\Delta)^\Rig)].
\]
See the proofs of Propositions \ref{prop-chir-QVF} and \ref{prop-chir-v-1Delta} for details. 

Hence the proof of Theorem \ref{thm-chir-tube-semistable} can be adapted to $\chirp$, $\chirt$ and $\chirpt$, showing in particular that if $X$ is a quasi-compact smooth connected rigid $K$-variety of dimension $d$, 

\[
\chir(X^\VF)=[\Motr(X)(-d)], \quad \chirp(X^\VF)=[\Motr(X)],
\]

\[
\chirt(X^\VF)=[\Motr^\vee(X)(d)], \quad \chirpt(X^\VF)=[\Motr^\vee(X)].
\]

If $X$ is a proper algebraic $K$-variety of structural morphism $f$, since $f_*=f_!$, we have $\Mot_{K,c}^\vee(X)=\Mot_K^\vee(X)$ and $\Mot_{K,c}^\vee(X)=\Mot_K(X)$, hence we can adapt the proof of Theorem \ref{thm-comp-chir-chi_K} to get commutative diagrams similar of Theorem \ref{main-thm1}, the first one being the statement of Theorem \ref{main-thm1}. 

\begin{Proposition}
The squares in the following diagrams commutes :
\[
\xymatrixcolsep{1pc}
\xymatrix{
\K{\Var{K}} \ar[d]_{\chi_K} \ar[r]^-{} &\K{\VF_K} \ar[d]_{\chir} \ar[r]^-{\Theta\circ \calE_c\circ\oint }&\K{\Varmu{k}} \ar[d]^{\chi_{\hat \mu}}\ar[r]^-{} & \K{\Var{k}} \ar[d]^{\chi_k} \\
\K{\mathrm{SH}(K)} \ar[r]_-{\Rig^*} &\K{\mathrm{RigSH}(K)} \ar[r]^-{\simeq}_-{\mathfrak{R}} & \K{\mathrm{QUSH}(k)} \ar[r]_-{1^*} &\K{\mathrm{SH}(k)},
}
\]
\[
\xymatrixcolsep{1pc}
\xymatrix{
\K{\Var{K}} \ar[d]_{\widetilde{\chi_K}} \ar[r]^-{} &\K{\VF_K} \ar[d]_{\chirp}\ar[r]^-{\Theta\circ \calE\circ\oint }&\K{\Varmu{k}}[\eL^{-1}] \ar[d]^{\chi_{\hat \mu}}\ar[r]^-{} & \K{\Var{k}}[\eL^{-1}] \ar[d]^{\chi_k} \\
\K{\mathrm{SH}(K)} \ar[r]_-{\Rig^*} &\K{\mathrm{RigSH}(K)} \ar[r]^-{\simeq}_-{\mathfrak{R}} & \K{\mathrm{QUSH}(k)} \ar[r]_-{1^*} &\K{\mathrm{SH}(k)},
}
\]
\[
\xymatrixcolsep{1pc}
\xymatrix{
\K{\Var{K}} \ar[d]_{\widetilde{\chi_K}} \ar[r]^-{} &\K{\VF_K} \ar[d]_{\chirt} \ar[r]^-{\Theta\circ \calE_c \circ\oint}&\K{\Varmu{k}} \ar[d]^{\widetilde{\chi_{\hat \mu}}}\ar[r]^-{} & \K{\Var{k}} \ar[d]^{\widetilde{\chi_k}} \\
\K{\mathrm{SH}(K)} \ar[r]_-{\Rig^*} &\K{\mathrm{RigSH}(K)} \ar[r]^-{\simeq}_-{\mathfrak{R}} & \K{\mathrm{QUSH}(k)} \ar[r]_-{1^*} &\K{\mathrm{SH}(k)},
}
\]
\[
\xymatrixcolsep{1pc}
\xymatrix{
\K{\Var{K}} \ar[d]_{\chi_K} \ar[r]^-{} &\K{\VF_K} \ar[d]_{\chirpt} \ar[r]^-{\Theta\circ \calE\circ\oint }&\K{\Varmu{k}}[\eL^{-1}] \ar[d]^{\widetilde{\chi_{\hat \mu}}}\ar[r]^-{} & \K{\Var{k}}[\eL^{-1}] \ar[d]^{\widetilde{\chi_k}} \\
\K{\mathrm{SH}(K)} \ar[r]_-{\Rig^*} &\K{\mathrm{RigSH}(K)} \ar[r]^-{\simeq}_-{\mathfrak{R}} & \K{\mathrm{QUSH}(k)} \ar[r]_-{1^*} &\K{\mathrm{SH}(k)}.
}
\]
\end{Proposition}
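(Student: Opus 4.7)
The plan is to break each three-square diagram into two verifications: the middle-plus-right composite and the leftmost square. For the composite, observe that the very definitions given just before the proposition exhibit the four realization maps as factorizations
\[
\chir = \mathfrak{F}\circ\chi_{\hat\mu}\circ\Theta\circ\calE_c\circ\oint,\quad \chirp = \mathfrak{F}\circ\chi_{\hat\mu}\circ\Theta\circ\calE\circ\oint,
\]
and similarly for $\chirt,\chirpt$ with $\widetilde{\chi_{\hat\mu}}$ in place of $\chi_{\hat\mu}$. Since $\mathfrak{R}$ is a chosen quasi-inverse of $\mathfrak{F}$, the middle square of each diagram commutes by construction (this is exactly Theorem~\ref{thm-diagram-chir-int-R-mu} in the first case, and identical reasoning gives the other three). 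For the rightmost square, the case of $\chi_{\hat\mu}$ is Lemma~\ref{lem-com-diag-varmu-var-QUSH-SH}. For $\widetilde{\chi_{\hat\mu}}$, the analogue follows from the dual base-change 2-isomorphism $f'_*g'^!\cong g^!f_*$ in $\SH{-}$ (the $!$-counterpart of the base change used in Lemma~\ref{lem-com-diag-varmu-var-QUSH-SH}), applied to the square obtained by pulling back along the unit section $1:\Spec{k}\to\Gm_k$.

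The substantive step is therefore the leftmost square in each diagram. By Nagata compactification together with Hironaka resolution of singularities, $\K{\Var{K}}$ is generated by classes of smooth projective $K$-varieties, so it suffices to treat $X$ smooth projective of pure dimension $d$ with structural morphism $f$. Since $f$ is proper, $f_!=f_*$, giving $\Mot_{K,c}^\vee(X)=\Mot_K^\vee(X)$ and $\Mot_{K,c}(X)=\Mot_K(X)$. Combined with $f^!=\Th(\Omega_f)f^*$ for $f$ smooth and the idempotency of shifts in the Grothendieck group, this yields
\[
[\chi_K(X)] = [\Mot_K(X)(-d)],\qquad [\widetilde{\chi_K}(X)] = [\Mot_K(X)]\in\K{\SH{K}}.
\]
Applying $\Rig^*$ (which commutes with Tate twists and sends $\Mot_K(X)$ to $\Motr(X^\an)$) produces $[\Motr(X^\an)(-d)]$ and $[\Motr(X^\an)]$ respectively. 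On the other side, the explicit values of the four realization maps on $X^\VF$ (stated in the paragraph preceding the proposition and obtained by adapting Theorem~\ref{thm-chir-tube-semistable}) give
\[
\chir(X^\VF)=[\Motr(X^\an)(-d)],\ \chirp(X^\VF)=[\Motr(X^\an)],
\]
\[
\chirt(X^\VF)=[\Motr^\vee(X^\an)(d)],\ \chirpt(X^\VF)=[\Motr^\vee(X^\an)].
\]
The first two diagrams thus match on the nose.

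For the last two diagrams, matching the right-hand sides requires the identity
\[
[\Motr(X^\an)]=[\Motr^\vee(X^\an)(d)]\in\K{\RigSH{K}}
\]
for $X$ smooth projective of dimension $d$ over $K$ (so that $[\Motr(X^\an)]$ matches $\chirt(X^\VF)$ and $[\Motr(X^\an)(-d)]$ matches $\chirpt(X^\VF)$). This is a form of Poincar\'e duality for the analytified proper smooth rigid variety $X^\an$. The cleanest route is to deduce it by transfer from its algebraic counterpart: for $X$ smooth proper of dimension $d$, one has $\Mot_K(X)\simeq \Mot_K^\vee(X)(d)[2d]$ in $\SH{K}$ (combining $f_!=f_*$ for $f$ proper with $f^!=\Th(\Omega_f)f^*$ for $f$ smooth, and the fact that for a smooth proper morphism $\Omega_f$ is locally free of rank $d$ so $\Th(\Omega_f)$ agrees with $(d)[2d]$ up to Zariski covers), hence $[\Mot_K(X)]=[\Mot_K^\vee(X)(d)]$ in $\K{\SH{K}}$; since smooth proper motives are strongly dualizable and $\Rig^*$ is symmetric monoidal, it preserves the corresponding duality, yielding $[\Rig^*\Mot_K^\vee(X)]=[\Motr^\vee(X^\an)]$, and applying $\Rig^*$ to the algebraic identity gives the desired rigid identity.

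The main obstacle is thus the rigid Poincar\'e duality identity in $\K{\RigSH{K}}$. If direct access to strong dualizability of $\Motr(X^\an)$ inside $\RigSH{K}$ is delicate, an alternative is to reduce first to a semi-stable formal $R$-model $\calX$ of $X^\an$, write $X^\an=\tube{\calX_\sigma}$ and use the Mayer--Vietoris decomposition of Corollary~\ref{cor-cutting-Mtubes} together with Corollary~\ref{cor-prop-chir-single-tube}, which expresses each $\Motr(\tube{D_I^\circ})$ as a motive of $Q_{N_I}^\Rig(D_I^\circ,u_I)\times\parBb(o,1)^{\abs I-1}$; on such building blocks the duality is explicit because $Q_{N_I}^\Rig$-type varieties correspond through $\mathfrak{R}$ to $\Gm_k$-motives where algebraic Poincar\'e duality is available, and punctured balls contribute only Tate twists that swap under $(\cdot)^\vee$. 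This is exactly the kind of computation that already underlies the proofs of the individual values of $\chirt$ and $\chirpt$ on $X^\VF$, so the verification is internal to the machinery already developed.
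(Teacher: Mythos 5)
Your proposal is correct in substance and follows the same route as the paper: reduce the middle and right squares to the factorizations of the four realization maps (Theorem \ref{thm-diagram-chir-int-R-mu} and Lemma \ref{lem-com-diag-varmu-var-QUSH-SH} and their duals), and reduce the left squares, via Bittner/Nagata/Hironaka, to smooth projective $X$, where $f_!=f_*$ collapses the compactly supported motives to $\Mot_K(X)$ resp.\ $\Mot_K^\vee(X)$ and one compares with the stated values of $\chir,\chirp,\chirt,\chirpt$ on $X^\VF$. Two remarks. First, in your treatment of the right square for $\widetilde{\chi_{\hat\mu}}$, the dual base-change $1^!f_*\cong f'_*1'^!$ computes $1^!\Mot_{\Gm_k,c}(Y)$, not $1^*\Mot_{\Gm_k,c}(Y)$, which is what the diagram requires; you need in addition purity for the unit section (a codimension-one closed immersion of smooth $k$-schemes, so $1^!\cong 1^*(-1)[-2]$), after which the twist cancels against $1^!\un_{\Gm_k}=\un_k(-1)[-2]$ and the square does commute in the Grothendieck group. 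As stated, the one-line justification is incomplete, though easily repaired. Second, for the left squares of the last two diagrams, your reduction to the identity $[\Motr(X^\an)]=[\Motr^\vee(X^\an)(d)]$ and its proof via strong dualizability of $\Mot_K(X)$ for $X$ smooth proper (so that the symmetric monoidal $\Rig^*$ carries $\Mot_K^\vee(X)$ to $\Motr^\vee(X^\an)$) is a legitimate and arguably cleaner packaging of what the paper does implicitly, namely applying $\Rig^*$ to $[\Mot_{K,c}(X)]=[\Mot_K^\vee(X)(d)]$ using the compatibility of $\Rig^*$ with cohomological motives; the same identity is also recovered independently from Theorem \ref{thm-chir-proper-cohomo}, so your fallback computation in the last paragraph is not needed.
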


In particular, we see that $\chir$ and $\chirpt$ agree on the image of $\K{\Var{K}}$ and similarly $\chirp$ and $\chirt$ agree on the image of $\K{\Var{K}}$.

\begin{Remark}[Volume forms]
In addition of the additive morphism $\oint$, Hrushovski and Kazhdan also study the Grothendieck ring of definable sets with volume forms $\K{\mu_\Gamma \VF_K}$. Objects in $\mu_\Gamma \VF_K$ are pairs $(X,\omega)$ with $X\subseteq \VF^\bullet$ a definable set and $\omega : X\to \Gamma$ a definable function. Morphisms are measure preserving definable bijections (up to a set of lower dimension). In this context, they build an isomorphism
\[
\oint^\mu : \K{\mu_\Gamma \VF_K}\to \K{\mu_\Gamma \RV_K}/\mu\Isp,
\]
see \cite[Theorem 8.26]{hrushovski_integration_2006}.

One can further decompose $\K{\mu_\Gamma \RV_K}$ similarly to Proposition \ref{prop-iso-RV-RES-Gam}. Using this Hrushovski and Loeser define in \cite{HL_monodromy} for $m\in \Nn$ morphisms
\[
h_m : \K{\mu_\Gamma \RV^\mathrm{bdd}_K}/\mu\Isp\to \K{\Varmu{k}}_{\mathrm{loc}}
\]
with the $m$ related to considering rational points in $k((t^{1/m}))$. Here, $\mathrm{bdd}$ means we consider only bounded sets. Note that there is an inacurracy in the definition of $h_m$ in \cite{HL_monodromy} since they use \cite[Proposition 10.10 (2)]{hrushovski_integration_2006} which happens to be incorrect. Using the category of bounded sets with volume forms deals with the issue.

We can further compose with the morphism $\mathfrak{F}\circ\chi_{\hat \mu}$ in order to get for each $m\in \Nn^*$ a morphism
\[
\K{\mu_\Gamma \VF^{\mathrm{bdd}}_K}\to \K{\RigSH{K}}.
\]

\end{Remark}

\section{Duality}
\label{section-duality}

The goal of this section is to prove Theorem \ref{main-thm2}. We will adapt Bittner's results on duality in the Grothendieck group of varieties in Section \ref{subsection-duality-involutions} in order to be able to compute in Section \ref{subsection-computation-cohomological-motives} explicitly the cohomological motive of some tubes in terms of homological motives. The last Section \ref{subsection-analytic-milnor-fib} is devoted to an application to the motivic Milnor fiber and the analytic Milnor fiber.

\subsection{Duality involutions}
\label{subsection-duality-involutions}

Bittner developed in \cite{bittner_universal_2004} an abstract theory of duality in the equivariant Grothendieck group of varieties. We recall here some of her results and show that they imply similar results for $\K{\SH{K}}$. 

Using the weak factorization theorem, Bittner prove the following alternative description of $\K{\Var{X}}$. We state if for a variety $S$ above $K$, but it holds for varieties above any field of characteristic zero. 
\begin{Proposition}[{\cite[Theorem 5.1]{bittner_universal_2004}}]
\label{prop-bittner-presentation-K0}
Fix $S$, a $K$-variety. The group $\K{\Var{S}}$ is isomorphic to the abelian group generated by classes of $S$-varieties which are smooth over $k$, proper over $S$, subject to the relations $[\emptyset]_S=0$ and  $[\mathrm{Bl_Y(X)}]_S-[E]_S=[X]_S-[Y]_S$, where $X$ is smooth over $K$, proper over $S$, $Y\subset X$ a closed smooth subvariety, $\mathrm{Bl_Y(X)}$ is the blow-up of $X$ along $Y$ and $E$ is the exceptional divisor of this blow-up. 
\end{Proposition}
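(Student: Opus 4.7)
The plan is to follow Bittner's original argument, whose crux is an application of the weak factorization theorem of Abramovich--Karu--Matsuki--W\l odarczyk. Let $B(S)$ denote the abelian group with the presentation in the statement (reading ``smooth over $K$'' in place of the apparent typo ``smooth over $k$''). First, the assignment $[X]_S\mapsto [X]_S$ defines a homomorphism $B(S)\to \K{\Var{S}}$: the blow-up relation holds there because scissors gives
$[\mathrm{Bl_Y(X)}]_S = [\mathrm{Bl_Y(X)}\setminus E]_S + [E]_S = [X\setminus Y]_S + [E]_S = [X]_S - [Y]_S + [E]_S$,
using that $\mathrm{Bl}_Y(X)\setminus E \simeq X\setminus Y$. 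Surjectivity follows from Nagata compactification together with Hironaka's resolution: every $S$-variety $Z$ embeds as a locally closed subvariety of a smooth proper $S$-variety $\tilde Z$ whose complement is a simple normal crossings divisor $\bigcup_i D_i$, and inclusion-exclusion on the strata $D_I=\bigcap_{i\in I} D_i$ expresses $[Z]_S$ in $\K{\Var{S}}$ as a $\Zz$-linear combination of classes of smooth proper $S$-varieties, hence as an element in the image.

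The heart of the argument is constructing an inverse $\psi:\K{\Var{S}}\to B(S)$. Given an arbitrary $S$-variety $Z$, I would choose a compactification $\bar Z$ and a log resolution $\tilde Z\to \bar Z$ such that $\tilde Z$ is smooth and proper over $S$ and such that the preimage of $\bar Z\setminus Z$ is a simple normal crossings divisor with components $D_1,\ldots,D_m$. Then set
$\psi([Z]_S) := \sum_{I\subseteq\{1,\ldots,m\}} (-1)^{|I|}\,[D_I]_S$,
with the convention $D_\emptyset=\tilde Z$. Each stratum $D_I$ is smooth and proper over $S$, so $\psi([Z]_S)$ indeed lies in $B(S)$.

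The main obstacle is showing that $\psi$ is well-defined independently of the choice of compactification and log resolution. This is precisely where weak factorization enters: any two such choices are connected by a chain of blow-ups with smooth centers meeting the boundary transversally (and blow-downs of the same kind). One must verify that each elementary blow-up along such a center $C$ leaves the alternating sum $\sum_I(-1)^{|I|}[D_I]_S$ invariant. This reduces, after a careful combinatorial analysis of how strata of the boundary are modified and how new strata (pieces of the exceptional divisor and its intersections with the proper transforms) appear, to repeated applications of the blow-up relation imposed in the definition of $B(S)$. Once well-definedness of $\psi$ on representatives is granted, checking that $\psi$ respects the scissors relations is a direct inclusion-exclusion argument based on compatible log resolutions of a closed subvariety $Y\subseteq Z$ and its open complement $Z\setminus Y$; that $\psi$ is a two-sided inverse to the natural map $B(S)\to \K{\Var{S}}$ then follows by evaluating both compositions on generators. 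I expect the tracking of boundary combinatorics through a single admissible blow-up to be the one step that needs genuine care, with everything else being bookkeeping.
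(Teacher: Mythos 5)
The paper does not prove this proposition; it cites it from Bittner, and your sketch correctly identifies Bittner's strategy (weak factorization, alternating sums over boundary strata, the blow-up relation). The map $B(S)\to \K{\Var{S}}$ and the well-definedness argument via weak factorization are as in Bittner.

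There is, however, one genuine gap in your construction of the inverse $\psi$: you define it on an \emph{arbitrary} $S$-variety $Z$ by choosing a compactification $\bar Z$ and a log resolution $\tilde Z\to\bar Z$ and taking $\sum_I(-1)^{|I|}[D_I]_S$. When $Z$ is singular this is not the right assignment. The alternating sum equals, in $\K{\Var{S}}$, the class of the \emph{preimage} of $Z$ in $\tilde Z$, and a resolution necessarily modifies $Z$ over its singular locus; so $\psi$ cannot be a section of the natural map $B(S)\to\K{\Var{S}}$ on singular classes. Concretely, for $S=\Spec{K}$ and $Z$ a nodal plane cubic one has $[Z]=[\mathbb{P}^1]-1$ in $\K{\Var{K}}$ (the normalization identifies two points), while your recipe gives $\tilde Z=\mathbb{P}^1$ with empty boundary and hence the value $[\mathbb{P}^1]$. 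The missing ingredient is Bittner's intermediate presentation: one first proves (by Noetherian induction and stratification by smooth locally closed pieces) that $\K{\Var{S}}$ is generated by classes of \emph{smooth} quasi-projective $S$-varieties subject to the scissor relations $[X]_S=[Y]_S+[X\setminus Y]_S$ for $Y\subset X$ a \emph{smooth} closed subvariety of a smooth $X$. One then defines $\psi$ only on smooth generators via compactification with simple normal crossings boundary (which does exist for smooth $Z$, with $\tilde Z\setminus Z$ a divisor), checks independence of choices by weak factorization as you describe, and verifies the smooth scissor relations. With that reduction inserted, the rest of your outline --- including the surjectivity argument, which for singular $Z$ should likewise go through stratification rather than a single smooth compactification --- is the standard proof.
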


For $ f : X\to Y$ a morphism of $S$-varieties, composition with $f$ induce a (group) morphism $f_! : \K{\Var{X}} \to \K{\Var{Y}}$ and pull-back along $f$ induces a (group) morphism $f^* : \K{\Var{Y}}\to \K{\Var{X}}$. Both induces $\calM_S$-linear morphisms $f_! :\calM_X\to \calM_Y$ and $f^*: \calM_Y\to \calM_X$.

\begin{Definition}
We define now a duality operator $\calD_X : \K{\Var{X}}\to \calM_X$ for any $K$-variety $X$. Define $\calD_X([Y])=[Y]\eL^{-\dim(Y)}$ if $Y$ is an $X$-variety proper over $X$, connected and smooth over $K$. In view of Proposition \ref{prop-bittner-presentation-K0}, to show that it induces an unique (group) morphism $\K{\Var{X}}\to \calM_X$, it suffices to show that if $Y\subset Z$ is a closed immersion of $X$-varieties, proper over $X$, smooth and connected over $K$,
\[
[\mathrm{Bl}_Y(Z)]\eL^{-\dim(Z)}-[E]\eL^{-\dim(Z)+1}=[Z]\eL^{-\dim(Z)}-[Y]\eL^{-\dim(Y)},
\]
it holds since $(\eL-1)[E]=(\eL^{\dim(Z)-\dim(Y)}-1)[Y]$. 
\end{Definition}
See \cite[Definition 6.3]{bittner_universal_2004} for details. 

Observe that $\calD_X(\eL)=\eL^{-1}$ and that $\calD_X$ is $\calD_S$-linear, hence $\calD_X$ can be extended as a $\calD_K$-linear morphism $\calD_X : \calM_X\to \calM_X$, which is an involution. 

Although $\calD_X$ is not in general a ring morphism, $\calD_K$ is a ring morphism. 

For $f : X\to Y$, set $f^!=\calD_Xf^*\calD_Y$ and $f_*=\calD_Yf_! \calD_X$. Observe that if $f$ is proper, $f_!=f_*$ and if $f$ is smooth of relative dimension $d$ over $S$, $f^!=\eL^{-d} f^*$. 

Such a duality operator can also be defined in the Grothendieck ring of varieties equipped with a good action of some finite group $G$, see \cite[Sections 7,8]{bittner_universal_2004} for details.

In $\SH{K}$, the internal hom gives also notion of duality, define the duality functor $\mathbb{D}_K(A)=\intHom_{K}(A,\un_K)$. As $\mathbb{D}_K$ is triangulated, it induces an morphism on $\K{\SH{S}}$, still denoted $\mathbb{D}_K$. By \cite[Th\'eor\`eme 2.3.75]{ayoub_six_1}, $\mathbb{D}_K$ is an autoequivalence on constructible objects and its own quasi-inverse. In particular, (compactly supported) (co)homological motives of $S$-varieties are constructible, hence $\mathbb{D}_K$ is an involution on the image of $\K{\Var{K}}$ by $\chi_K$. One can also define more generally for $a : X\to \Spec{K}$, $\mathbb{D}_X(A)=\intHom_X(A,a^!\un_K)$, but we will not use those. By \cite[Th\'eor\`eme 2.3.75]{ayoub_six_1}, $\mathbb{D}_K(\Mot_K(X))=\Mot_K^\vee(X)$ for any $K$-variety $X$. 

The following proposition shows the compatibility between those two duality operators.  
\begin{Proposition}
\label{prop-compatibility-duality}
There is a commutative diagram
\[
\xymatrixcolsep{5pc}
\xymatrix{
\calM_K \ar[d]_{\chi_{K}} \ar[r]^-{\calD_K} & \calM_K \ar[d]^{\chi_{K}} \\
\K{\SH{K}} \ar[r]_-{\mathbb{D}_K} & \K{\SH{K}}.
}
\]
\end{Proposition}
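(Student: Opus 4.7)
The plan is to reduce the identity to a computation on a convenient set of generators, then match both sides explicitly. Both $\chi_K\circ\calD_K$ and $\mathbb{D}_K\circ\chi_K$ are additive maps $\calM_K\to\K{\SH{K}}$, so it suffices to show they agree on a generating set. By Bittner's presentation (Proposition \ref{prop-bittner-presentation-K0}, applied with $S=\Spec{K}$), the group $\K{\Var{K}}$ is generated by classes $[X]$ with $X$ smooth and proper over $K$; inverting $\eL$ preserves this property, so these classes still generate $\calM_K$ as a $\Zz[\eL,\eL^{-1}]$-module.

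Before comparing the two sides I would first record that $\chi_K$ extends to $\calM_K$: indeed, Proposition \ref{prop-compactcohom-motive-smooth} gives $\chi_K(\eL)=[\Mot_K(\Aa^1_K)(-1)]=[\un_K(-1)]$, which is invertible in $\K{\SH{K}}$ with inverse $[\un_K(1)]$. Thus multiplication by $\eL^{\pm 1}$ corresponds, under $\chi_K$, to the Tate twist $(\mp 1)$.

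Now fix $X$ smooth and proper over $K$, of pure dimension $d$. For the left-hand side, by definition of $\calD_K$ one has $\calD_K([X])=[X]\eL^{-d}$, so
\[
\chi_K(\calD_K([X]))=\chi_K([X])\cdot[\un_K(d)]=[\Mot_{K,c}^\vee(X)(d)].
\]
Since $X$ is smooth, Proposition \ref{prop-compactcohom-motive-smooth} yields $[\Mot_{K,c}^\vee(X)]=[\Mot_K(X)(-d)]$, so the LHS simplifies to $[\Mot_K(X)]\in\K{\SH{K}}$. For the right-hand side, properness of $X$ implies $f_!=f_*$ for the structural morphism $f:X\to\Spec{K}$, hence $\Mot_{K,c}^\vee(X)=\Mot_K^\vee(X)$. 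Then Ayoub's duality result \cite[Th\'eor\`eme 2.3.75]{ayoub_six_1} (which is already cited just before the statement) gives $\mathbb{D}_K(\Mot_K^\vee(X))\simeq\Mot_K(X)$ on constructible objects, so
\[
\mathbb{D}_K(\chi_K([X]))=[\mathbb{D}_K(\Mot_K^\vee(X))]=[\Mot_K(X)],
\]
matching the LHS.

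The only delicate point is the handling of the localization at $\eL$: one must verify that the Bittner relations (blowup relations and $[\emptyset]=0$) are compatible with the extension of $\chi_K$, and that $\chi_K(\eL)$ really is invertible so that $\chi_K$ descends to $\calM_K$ in the first place; both are clean consequences of Proposition \ref{prop-compactcohom-motive-smooth} and the fact that $(-1)$ is an autoequivalence of $\SH{K}$. Apart from that, the argument is a direct comparison on smooth proper generators, and the two compatibilities (smoothness on the left, properness on the right) conspire to produce the same class $[\Mot_K(X)]$.
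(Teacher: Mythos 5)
Your proposal is correct and follows essentially the same route as the paper: reduce to smooth proper generators via Bittner's presentation, use $\calD_K([X])=[X]\eL^{-d}$ and Proposition \ref{prop-compactcohom-motive-smooth} on one side, and properness ($f_!=f_*$) together with Ayoub's duality theorem on the other, both sides landing on $[\Mot_K(X)]$. The extra care you take with the localization at $\eL$ is a reasonable (and correct) elaboration of a point the paper leaves implicit.
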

\begin{proof}
It suffices to show that $\chi_K\calD_K([X])=\mathbb{D}_K\chi_K([X])$ for $X$ a connected, smooth and proper $K$-variety, of dimension $d$. Denote $f : X\to \Spec{K}$. As $f$ is smooth and proper, $[\Mot_{K,c}^\vee(X)]=[\Mot_K^\vee(X)]=[\Mot_K(X)(-d)]$. 
We then have 
\[
\chi_K\calD_K([X])=\chi_K([X]\eL^{-d})=[\Mot_{K,c}^\vee(X)(d)]=[\Mot_{K}(X)]
\]
and
\[\mathbb{D}_K\chi_K([X])=\mathbb{D}_K([\Mot_{K,c}^\vee(X)])=\mathbb{D}_K([\Mot_{K}^\vee(X)])=[\Mot_{K}(X)].
\]
\end{proof}

All our duality results will ultimately boils down to the following lemma, which states that normal toric varieties satisfy Poincare duality. It is due to Bittner, see \cite[Lemma 4.1]{bittner_motivic_2005}. The proof rely on toric resolution of singularities and the Dehn-Sommerville equations, see for example \cite{fulton_introduction_1993}. 
\begin{Lemma}
\label{lem-duality-normal-toric-varieties}
Let $X$ an affine toric $K$-variety associated to a simplicial cone, $X\to Y$ be a proper morphism, $G$ a finite group acting on $X$ via the torus with trivial action on $Y$. Then 
\[
\calD_Y([X])=[X]\eL^{-\dim(X)}\in \calM_Y^G.
\]
\end{Lemma}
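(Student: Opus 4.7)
The approach follows Bittner \cite{bittner_motivic_2005} and relies on reducing to a smooth projective simplicial toric variety, where Poincar\'e duality becomes a direct consequence of the Dehn-Sommerville equations. The $G$-equivariance is essentially harmless: since $G$ acts via the ambient torus, it preserves every torus orbit $O_\tau$ of $X$ setwise and acts on each $O_\tau\cong\Gm^{d-\dim\tau}$ by a linear torus action, so the relations defining the equivariant Grothendieck ring absorb the action and no new obstruction is created.

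The first step is a reduction to $Y=X$. The identity $f_*=\calD_Y f_!\calD_X$ combined with $f_!=f_*$ whenever $f$ is proper, which holds because both operators agree on classes of $K$-smooth $X$-varieties proper over $X$ (these also being proper over $Y$, so that $\calD_Y$ acts as $\eL^{-\dim}$ on them), and such classes generate $\K{\Var{X}}$ by Proposition \ref{prop-bittner-presentation-K0}, yields $\calD_Y\circ f_!=f_!\circ\calD_X$. Applied to $f:X\to Y$ and the identity class $[X]_X\in\calM_X^G$ this gives $\calD_Y([X]_Y)=f_!(\calD_X([X]_X))$, so it suffices to prove $\calD_X([X]_X)=[X]_X\,\eL^{-\dim X}$ in $\calM_X^G$.

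Next, use toric (equivariant) resolution of singularities to refine the simplicial fan of $\sigma$ to a smooth, still simplicial, refinement; this yields a proper birational $G$-equivariant morphism $\pi:\widetilde X\to X$ with $\widetilde X$ smooth. Then extend the refined fan to a complete smooth simplicial fan by adjoining cones at infinity, producing a smooth projective toric $G$-variety $\overline X$ containing $\widetilde X$ as an open subvariety. Both the exceptional locus of $\pi$ and the boundary $\overline X\setminus\widetilde X$ are unions of toric varieties associated to cones of strictly smaller dimension, to which the lemma applies by induction on $d=\dim X$.

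The key step is Poincar\'e duality for $\overline X$. The torus orbit decomposition gives
\[
[\overline X]=\sum_{\rho}(\eL-1)^{d-\dim\rho}
\]
in $\calM_K^G$, where $\rho$ ranges over the cones of the complete simplicial fan; this expression rewrites as a polynomial $P(\eL)$ whose coefficients are the $h$-vector of the fan. The Dehn-Sommerville equations say this $h$-vector is palindromic, equivalently $P(\eL)=\eL^d\,P(\eL^{-1})$. Since $\calD_K$ is a ring morphism with $\calD_K(\eL)=\eL^{-1}$, palindromy is precisely $\calD_K([\overline X])=[\overline X]\,\eL^{-d}$. Combining this identity with excision and the inductive hypothesis applied to the boundary yields the formula for $\widetilde X$, and applying $\pi_!$ together with the inductive hypothesis applied to the exceptional locus yields it for $X$. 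The main obstacle is the combinatorial bookkeeping: ensuring that the palindromic identity on the complete fan descends correctly through the two successive subtractions (boundary of $\overline X$, then exceptional locus of $\pi$), and that the appropriate relative form of Dehn-Sommerville is used at each step; the equivariance itself poses no independent difficulty.
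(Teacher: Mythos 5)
The ingredients you name (toric resolution, Dehn--Sommerville, induction on dimension) are exactly the ones the paper attributes to Bittner's proof, and your justification that $f_!=f_*$ for proper $f$, hence $\calD_Y f_!=f_!\calD_X$, is fine. But the architecture of your key step does not work. The lemma is about the \emph{relative} duality $\calD_Y$, which is only $\calD_K$-linear and is not determined by classes in $\calM_K$: for an affine simplicial toric variety one has $[X]=\eL^{\dim X}$ in $\calM_K$, which carries no information about the map to $Y$. Your projective completion $\overline X$ admits no map to $Y$ (nor to $X$, after your first reduction: $\overline X$ is complete and connected while $X$ is affine, so any such map is constant), so the identity $\calD_K([\overline X])=\eL^{-d}[\overline X]$ lives in $\calM_K^G$ and cannot be ``descended'' into $\calM_Y^G$ by subtracting boundaries. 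Moreover, for a smooth projective connected $\overline X$ that identity holds by the very definition of $\calD_K$, so Dehn--Sommerville is being invoked where it carries no content. The intermediate claim ``the formula for $\widetilde X$'' is either trivially true (over $Y$: $\widetilde X$ is smooth over $K$ and proper over $Y$, so $\calD_Y[\widetilde X]=\eL^{-d}[\widetilde X]$ by definition, with no need for $\overline X$) or false (over $K$: $\calD_K[\Aa^d]=\eL^{-d}$ whereas $[\Aa^d]\eL^{-d}=1$; smooth non-proper varieties do not satisfy absolute Poincar\'e duality).

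The place where the combinatorics is genuinely needed is the passage from $\widetilde X=X_\Sigma$ back to $X=X_\sigma$ \emph{over $Y$}: one writes both classes as signed sums of orbit closures $V(\tau)$, which for $\tau\in\Sigma$ are smooth over $K$ and proper over $Y$ (so $\calD_Y$ acts on them by $\eL^{-\dim V(\tau)}$ by definition), and for $0\neq\tau\preceq\sigma$ are lower-dimensional affine simplicial toric varieties proper over $Y$ (handled by the induction hypothesis). One must then verify that the resulting expression for $\calD_Y[X_\sigma]$ equals $\eL^{-d}[X_\sigma]$; this is a face-count identity for the triangulated ball $\Sigma$ relative to its boundary, i.e.\ the (relative) Dehn--Sommerville equations, and it is the entire content of the lemma rather than bookkeeping. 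A secondary issue: your reduction to $Y=X$ installs a nontrivial $G$-action on the base, whereas the hypothesis (and the induction) requires $G$ to act trivially on $Y$; it is both necessary and simpler to keep $Y$ fixed and run the induction over $Y$ throughout, as in Bittner's argument.
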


For the rest of the section, we fix a semi-stable formal $R$-scheme $\calX$ and let $(D_i)_{i\in J}$ be the branches of its special fiber $\calX_\sigma$. Fix $I\subset J$, up to reordering the coordinates, suppose $I=\set{1,...,k}$. Recall that around every closed point $x\in D_I^\circ$, there is a Zariski open neighborhood $\calU$ and regular functions $u_I,x_1,...,x_k$ such that $u_I\in \calO^\times(\calU)$ and $t=u_I x_1^{N_1}...x_k^{N_k}$, with the branch $D_i$ defined by $x_i=0$. Still denote $u_I, x_1,...,x_k$ their reductions to $U=\calU_\sigma$. The various $u_I$ glue to define a section $u_I\in\Gam(D_I^\circ,\calO^\times_{D_I^\circ})$. Recall that $N_I$ is the greatest common divisor of the $N_i$, $i\in I$. 

We already considered (the analytification of) the $K$-variety
\[
Q^\mathrm{geo}_{N_I}(D_I^\circ,u_I)=D_I^\circ\times_kK[V]/(V^{N_I}-tu_I).
\]
In this section, we will denote $\widetilde{D_I^\circ}=Q^\mathrm{geo}_{N_I}(D_I^\circ,u_I)$ to simplify the notations. We will also abuse the notations and still denote $D_I^\circ$, $D_I$, $U$ the base change to $K$ of those varieties. 

Let $\widetilde{D_I}$ be the normalization of $D_I$ in $\widetilde{D_I^\circ}$. 

We also set for $K\subset I$, $\widetilde{D_I}_{\vert D_K}=\widetilde{D_I}\times_{D_I} {D_K}$

\begin{Proposition}
\label{prop-dual-DIo-kvargm}
For every $I\subset K\subset J$, we have $\widetilde{D_I}_{\vert D_K}\simeq \widetilde{D_K}$ and $\calD_{D_I}[\widetilde{D_I}]=\eL^{\abs{I}-d+1}[\widetilde{D_I}]$.
\end{Proposition}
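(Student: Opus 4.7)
I would prove the isomorphism $\widetilde{D_I}_{\vert D_K}\simeq\widetilde{D_K}$ by a Zariski-local calculation near a point $x\in D_K\subseteq D_I$ using the semi-stable coordinates of $\calX$. On such a neighbourhood one has $t=u_K\prod_{i\in K}x_i^{N_i}$ with $u_K\in\calO^\times$ and $D_i=\{x_i=0\}$, and consequently on $D_I$ one has $u_I=u_K\prod_{i\in K\setminus I}x_i^{N_i}$. The defining equation $V^{N_I}=tu_I$ of $\widetilde{D_I^\circ}$ then reads $V^{N_I}=tu_K\prod_{i\in K\setminus I}x_i^{N_i}$. The key arithmetic point is $N_K=\gcd(N_I,\{N_i:i\in K\setminus I\})$; writing $m_I=N_I/N_K$ and $m_i=N_i/N_K$, globally coprime integers, the element $W=V^{m_I}\prod_{i\in K\setminus I}x_i^{-m_i}$ of the total ring of fractions satisfies $W^{N_K}=tu_K\in\calO(D_I)$ and hence lies in the normalization. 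Together with the relation $V^{m_I}=W\prod x_i^{m_i}$, this is an explicit local model for $\widetilde{D_I}$. Restricting to $D_K$ forces the $x_i$ ($i\in K\setminus I$) to vanish, whence $V=0$ in the reduction, and leaves precisely $W^{N_K}=tu_K$, which is the equation of $\widetilde{D_K}$. An \'etale descent argument in the spirit of Proposition \ref{prop-berth-iso-tubes-etale} then patches these local identifications into the desired global isomorphism.

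\textbf{Second part.} For the duality identity I would rely on Bittner's Lemma \ref{lem-duality-normal-toric-varieties}. The local description above realises $\widetilde{D_I}$, over a Zariski neighbourhood of each stratum $D_K^\circ\subseteq D_I$ ($K\supseteq I$), as a relative normal affine toric variety from a simplicial cone: the variables $V$ and $\{x_i\}_{i\in K\setminus I}$ and the single monomial relation $V^{N_I}=c\prod x_i^{N_i}$ (with $c$ a unit on $D_I$) define such a cone, and the $\mu_{N_I}$-scaling on $V$ factors through the toric torus. Using the stratification $D_I=\bigsqcup_{K\supseteq I}D_K^\circ$ and the first part of the proposition to identify $\widetilde{D_I}|_{D_K^\circ}$ with $\widetilde{D_K}|_{D_K^\circ}$, I would apply Lemma \ref{lem-duality-normal-toric-varieties} in the $\hat\mu$-equivariant setting stratum-by-stratum, and then reassemble the pieces via the scissors relations and $\calD_K$-linearity of $\calD_{D_I}$ to obtain the stated identity.

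\textbf{Main obstacle.} The delicate step is making the relative simplicial-toric structure of $\widetilde{D_I}$ precise globally and equivariantly over $D_I$: locally the picture is transparent, but gluing the local toric models into a coherent relative toric $D_I$-variety, with compatible torus (hence $\hat\mu$) actions, requires an \'etale descent for normalizations along the lines of Proposition \ref{prop-berth-iso-tubes-etale}. Once this setup is in place and the simplicial-cone hypothesis of Lemma \ref{lem-duality-normal-toric-varieties} is verified, the duality formula drops out directly.
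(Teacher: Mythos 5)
Your strategy coincides with the paper's --- local monomial models, normalizations, and Bittner's toric duality --- and your arithmetic is exactly the right one: the identity $N_K=\gcd(N_I,\set{N_i}_{i\in K\setminus I})$ and the integral element $W=V^{m_I}\prod_{i\in K\setminus I}x_i^{-m_i}$ with $W^{N_K}=tu_K$ are precisely what underlie the lattice computation in the paper's Lemma \ref{lem-tildeCI-restricts-CK}. Still, two steps are genuinely missing. For the first claim, exhibiting one element of the integral closure does not determine the normalization: the unnormalized cover restricts over $D_K$ to $\Spec{\calO_{D_K}[V]/(V^{N_I})}$, which is non-reduced, and to see that the normalization restricts to $\widetilde{D_K}$ as a scheme (not merely after passing to the reduction, as your sketch does) one needs the full description of the normalization as a monoid algebra $k[M^+]$, the identification of its restriction to $\set{x_1=0}$ with $k[M_1^+]$, and the lattice isomorphism $M_1\simeq M'$. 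One must also treat the case where $N_I$ and the $N_i$ share a common factor, in which the cover is disconnected and the normalization is a quotient of the form $(\mu_N\times\widetilde{T})/\mu_e$; your coprimality remark about the $m_i$ does not dispose of this.

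The more serious gap is the one you flag yourself: the local model $\set{V^{N_I}=tu_K\prod x_i^{N_i}}$ is \emph{not} a toric variety relative to $D_I$, because of the non-constant unit $tu_K$; it is only an \'etale twist of one, so Lemma \ref{lem-duality-normal-toric-varieties} does not apply to it as written. Proposition \ref{prop-berth-iso-tubes-etale}, which you invoke to repair this, concerns tubes in formal schemes and has no bearing on descending Bittner's involution. The paper untwists by adjoining an $N_1$-th root $Z$ of $tu$, i.e.\ passing to a $\mu_{N_1}$-\'etale cover of the ambient chart on which the unit becomes a power; it then shows that the pullback of $\widetilde{D_I}$ along the resulting smooth map is again a normalization $\widetilde{F_I}$, that $\widetilde{D_I}\simeq\widetilde{F_I}/\mu_{N_1}$, and that $\widetilde{F_I}$ is the \'etale pullback of a genuine toric normalization $\widetilde{C_I}$ over affine space, where Lemmas \ref{lem-tildeCI-restricts-CK} and \ref{lem-tildeCI-dual} literally apply; the conclusion is transported back using $\pi^*\calD_{C_I}=\calD_{F_I}\pi^*$ for \'etale $\pi$ and the $\mu_{N_1}$-equivariant duality formalism. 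Finally, be careful with your stratum-by-stratum reassembly: $\calD$ interchanges $j_!$ and $j_*$ for a locally closed immersion $j$, so applying the toric lemma on the strata $D_K^\circ$ and summing by scissors relations is circular --- the gluing must instead be run over a cover on which the untwisted finite toric model is proper over the base, which is how the paper proceeds.
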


Observe that Bittner's Lemma 5.2 in \cite{bittner_motivic_2005} is analogous, but holds in $\K{\Varmu{k}}$. Since it is not a priori clear that dualities on $\K{\Varmu{k}}$ and $\K{\QUSH{k}}$ are compatible, we cannot apply directly her Lemma. We will nevertheless follow closely her proof.

Combination of Propositions \ref{prop-dual-DIo-kvargm} and \ref{prop-compatibility-duality} yields the following corollary. 
\begin{Corollary}
\label{cor-dual-DI}
For any $I\subset J$ such that $D_I$ is proper, we have the equality in $\K{\Var{K}}$
\[
[\widetilde{D_I}]=\sum_{I\subset K\subset J} [\widetilde{D_K^\circ}],
\]
and $[\mathbb{D}_{K}(\Mot_{K,c}^\vee(\widetilde{D_I}))]=[\Mot_{K,c}^\vee(\widetilde{D_I})(d-\abs{I}+1)]\in \K{\SH{K}}$.
\end{Corollary}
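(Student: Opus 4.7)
The plan is to deduce both halves of the corollary by transporting the statements of Proposition~\ref{prop-dual-DIo-kvargm} down the structural morphism $\pi_I : D_I \to \Spec{K}$ and then applying Proposition~\ref{prop-compatibility-duality}.

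For the scissors identity $[\widetilde{D_I}] = \sum_{I \subset K \subset J} [\widetilde{D_K^\circ}]$, I would first observe that the locally closed pieces $D_K^\circ$ for $I \subset K \subset J$ form a finite partition of $D_I$, obtained by grouping points according to which additional branches of $\calX_\sigma$ they meet. Pulling this partition back along the finite map $\widetilde{D_I} \to D_I$ produces a decomposition $\widetilde{D_I} = \bigsqcup_{I \subset K \subset J} \widetilde{D_I} \times_{D_I} D_K^\circ$. The first half of Proposition~\ref{prop-dual-DIo-kvargm} identifies $\widetilde{D_I} \times_{D_I} D_K \simeq \widetilde{D_K}$; restricting to the open subscheme $D_K^\circ \subset D_K$, on which the normalization $\widetilde{D_K}$ coincides tautologically with $\widetilde{D_K^\circ}$, yields $\widetilde{D_I} \times_{D_I} D_K^\circ \simeq \widetilde{D_K^\circ}$. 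The scissors relations in $\K{\Var{K}}$ then give the first identity.

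For the duality assertion, I would push the identity $\calD_{D_I}[\widetilde{D_I}] = \eL^{\abs{I}-d+1}[\widetilde{D_I}]$ in $\calM_{D_I}$ along $\pi_I$. Since $D_I$ is proper over $K$, $\pi_I$ is proper, so $\pi_{I,!} = \pi_{I,*}$; combined with Bittner's defining relation $\pi_{I,*} = \calD_K \pi_{I,!} \calD_{D_I}$ and the fact that $\calD_K$ is an involution, this gives the intertwining formula $\pi_{I,!} \calD_{D_I} = \calD_K \pi_{I,!}$. Applying $\pi_{I,!}$ to both sides of the proposition's identity, using the $\calM_K$-linearity of $\pi_{I,!}$ and that $\pi_{I,!}[\widetilde{D_I}]_{D_I} = [\widetilde{D_I}]$, I obtain $\calD_K[\widetilde{D_I}] = \eL^{\abs{I}-d+1}[\widetilde{D_I}]$ in $\calM_K$.

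Finally, I would apply the ring morphism $\chi_K$ to this identity. Proposition~\ref{prop-compatibility-duality} rewrites $\chi_K \calD_K [\widetilde{D_I}]$ as $\mathbb{D}_K[\Mot_{K,c}^\vee(\widetilde{D_I})]$; on the other side, Proposition~\ref{prop-compactcohom-motive-smooth} applied to $\Aa^1_K$ (together with $\Aa^1$-invariance $\Mot_K(\Aa^1_K) \simeq \un_K$) yields $\chi_K(\eL) = [\un_K(-1)]$, so multiplication by $\eL^n$ in $\calM_K$ becomes the Tate twist $(-n)$ after $\chi_K$. Combining gives the claimed formula in $\K{\SH{K}}$. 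The proof is essentially formal bookkeeping once the two propositions are in hand; the only delicate point is to keep the Tate-twist conventions straight, and the lone substantive input that is not purely formal is the properness hypothesis on $D_I$, which is exactly what makes the pushforward commute with the duality involutions.
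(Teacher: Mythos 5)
Your proof is correct and follows essentially the same route as the paper: the first identity comes from pulling the stratification of $D_I$ by the $D_K^\circ$ back along $\widetilde{D_I}\to D_I$ and invoking $\widetilde{D_I}\vert_{D_K}\simeq \widetilde{D_K}$, and the second from pushing the identity of Proposition~\ref{prop-dual-DIo-kvargm} forward along the proper structural morphism (so that $\pi_{I,!}\calD_{D_I}=\calD_K\pi_{I,!}$) and then applying Proposition~\ref{prop-compatibility-duality}. Your care with the twist is warranted: the exponent in the statement of Proposition~\ref{prop-dual-DIo-kvargm} should be $\eL^{\abs{I}-d-1}=\eL^{-\dim\widetilde{D_I}}$, and with that reading your final twist $(d-\abs{I}+1)$ agrees with the corollary.
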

\begin{proof}
The first equality is the first point of Proposition \ref{prop-dual-DIo-kvargm}. For the second equality, since $D_I$ is proper, denoting $f : D_I\times_k\Gm_k\to \Gm$, we have $f_!\calD_{D_I}= \calD_{K} f_!$ hence by the second point of Proposition \ref{prop-dual-DIo-kvargm},
\[
\calD_{K}([\widetilde{D_I}]=[\widetilde{D_I}]\eL^{-d+\abs{I}-1}.
\]
Since $\widetilde{D_I}$ is proper over $K$, we have $\Mot_{K}^\vee(\widetilde{D_I})=\Mot_{K,c}^\vee(\widetilde{D_I})$, hence the result follows from Proposition \ref{prop-compatibility-duality} after applying a Tate twist. 
\end{proof}

\begin{proof}[Proof of Proposition \ref{prop-dual-DIo-kvargm}]
Working inductively on the codimension of $D_K$ in $D_I$ and
up to reordering the branches, we can suppose $I=\set{1,...,k-1}\subset K=\set{1,...,k}$. 
As the statement is local we can work on an open neighborhood $U$ of some closed point of $D_K$ and choose a system of local coordinates $x_1,...,x_d$ on $U$ such that $u_Ix_1^{N_1}...x_{k-1}^{N_{k-1}}=u_Kx_1^{N_1}...x_k^{N_k}=ux_1^{N_1}...x_d^{N_d}$, where $u\in \calO^\times(U)$
and for $i=1,...,k$, $D_i$ is defined by equation $x_i=0$. Define a $\mu_{N_1}$-\'etale cover of $U$ by 
\[
V=U[Z]/(Z^{N_1}-tu).
\]
We consider $V$ as a variety with a $\mu_{N_1}$-action induced by multiplication of $z$ by $\zeta\in \mu_{N_1}$. 

Then $y_1=sx_1,y_2=x_2,...,x_d=y_{d+1}$ is a system of local coordinates on $V$. Shrinking $U$, we can assume the morphism $V \to \Aa^{d+1}_k$ induced by $y_1,...,y_{d+1}$ is \'etale. 
Denote by $F_I$, $F_I^\circ$, $F_K$, $F_K^\circ$ the pull-backs of $D_I$, $D_I^\circ$, $D_K$, $D_K^\circ$.

Denote by $\widetilde{F_I^\circ}$ the following \'etale cover of $F_I^\circ$ :
\[
\widetilde{F_I^\circ}=F_I^\circ[W]/(W^{N_I}-y_{k+1}^{N_{k+1}}...y_d^{N_d}).
\] 
Observe that $\widetilde{F_I^\circ}$ is isomorphic to the fiber product $(F_I^\circ)\times_{D_I^\circ}\widetilde{D_I^\circ}$. The variety $\widetilde{F_I^\circ}$ is equipped with a $\mu_{N_1}$-action, with $\zeta\in\mu_{N_1}$ acting on $w$ by multiplication by $\zeta^{N_1/N_I}$.

Denote by $\widetilde{F_I}$ the normalization of $F_I$ in $\widetilde{F_I^\circ}$ and consider the following diagram:
\[
\xymatrixcolsep{5pc}
\xymatrix{
p^*\widetilde{D_I} \ar[d]_{} \ar[r]^-{} & \widetilde{D_I}  \ar[d]^{} \\
F_I \ar[r]_-{p} & D_I.
}
\]

As $p : F_I\to D_I$ is smooth and $\widetilde{D_I}$ is normal, $p^*\widetilde{D_I}$ is normal. As $p^*\widetilde{D_I}\to F_I$ is finite and surjective, $p^*\widetilde{D_I}$ is isomorphic to $\widetilde{F_I}$. Denoting $\widetilde{F_I}/\mu_{N_1}$ the quotient of $\widetilde{F_I}$ by the $\mu_{N_1}$-action, we then have $\widetilde{D_I}\simeq \widetilde{F_I}/\mu_{N_1}$ and similarly $\widetilde{D_K}\simeq \widetilde{F_K}/\mu_{N_1}$. Hence it suffices to show that $\widetilde{F_I}_{\vert F_K}\simeq \widetilde{F_K}$ and $\calD_{F_I}[\widetilde{F_I}]=\widetilde{F_I}\eL^{-n+k-1}$, both equalities being compatible with the $\mu_{N_1}$-actions. 

Now consider the \'etale morphism $\pi : V\to \Aa^{d+1}_K$. Denoting $z_1,...,z_{d+1}$ the coordinates of $\Aa^{d+1}_K$, define $C_I\subseteq \Aa^{d+1}_{\Gm_k}$ by the equations $z_1=...=z_{k-1}=0$ and $C_I^\circ=C_I\backslash \cup_{j=k,...d+1}\set{z_j=0}$, define similarly $C_K$ and $C_K^\circ$. 

Define an \'etale cover of $C_I^\circ$ by
\[
\widetilde{C_I^\circ}=C_I^\circ[S]/(S^{N_I}-z_k^{N_k}...z_{d+1}^{N_{d+1}}),
\]
define similarly $\widetilde{C_K^\circ}$ and let $\widetilde{C_I}$ and $\widetilde{C_K}$ be the normalizations of $C_I$ and $C_K$ in $\widetilde{C_I^\circ}$ and $\widetilde{C_K^\circ}$. 

We then have a cartesian diagram
\[
\xymatrixcolsep{5pc}
\xymatrix{
\widetilde{F_I}\ar[d]_{} \ar[r]^-{} & \widetilde{C_I}  \ar[d]^{} \\
F_I \ar[r]_-{\pi} & C_I
}
\]
with $\pi$ \'etale, hence it suffices to show that $\widetilde{C_I}_{\vert C_K}\simeq \widetilde{C_K}$ and $\calD_{C_I}[\widetilde{C_I}]=\widetilde{C_I}\eL^{-n+k-1}$, both equalities being compatible with the $\mu_{N_1}$-actions ($\zeta\in \mu_{N_1}$ acts on $\widetilde{C_I^\circ}$ by multiplication of $s$ by $\zeta^{N_1/N_I}$). Indeed, since $\pi$ is \'etale, we have $\pi*\calD_{C_I}=\calD_{F_I\times_k\Gm_k}\pi^*$.

Since the projection $\Aa^{d+1}\to \Aa^{d+1-k+1}$ is smooth, the result now follows from the following Lemmas \ref{lem-tildeCI-restricts-CK} and \ref{lem-tildeCI-dual} which correspond to Lemmas 5.3 and 5.4 in \cite{bittner_motivic_2005}.
\end{proof}

\begin{Lemma}
\label{lem-tildeCI-restricts-CK}
The restriction of the normalization $\widetilde{S}$ of $S=\set{s^N=x_1^{a_1}...x_d^{a_d}}\subset \Aa^1_k\times_k\Aa^d_{\Gm_k}$ to $\set{x_1=0}\subset \Aa^d_{\Gm_k}$ is isomorphic to the normalization $\widetilde{S'}$ of $S'=\set{s^{N'}=x_2^{a_2}...x_d^{N_d}}$, where $N'=\gcd(N,a_1)$. If $N$ divides some $q\in \Nn^*$, then the isomorphism is compatible with the $\mu_q$-actions on $\widetilde{S}$ and $\widetilde{S'}$ where $\zeta\in \mu_q$ acts on $S$ by multiplication of $s$ by $\zeta^{q/N}$ and on $S'$ by multiplication of $s$ by $\zeta^{q/N'}$.
\end{Lemma}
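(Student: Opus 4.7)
The plan is to give an explicit presentation of $\widetilde{S}$ by adjoining a single integral element. Write $N = N'm$ and $a_1 = N'b$ with $\gcd(m,b) = 1$, and choose Bezout integers $u, v \in \Zz$ with $um - vb = 1$. Set $g = x_2^{a_2}\cdots x_d^{a_d}$, which is a unit on $\Aa^d_{\Gm_k}$. In the total quotient ring of $A := \Gam(S, \calO_S)$, introduce $\sigma := s^m/x_1^b$. A direct check gives $\sigma^{N'} = s^N/x_1^{a_1} = g$, so $\sigma$ is integral over $A$, and the relation $s^m = \sigma x_1^b$ is forced. My candidate normalization is
\[
\widetilde{S} = \Spec{\widetilde{A}}, \qquad \widetilde{A} := A[\sigma]/(\sigma^{N'} - g,\; s^m - \sigma x_1^b).
\]

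The main step is to prove $\widetilde{A}$ is normal. I would do so by exhibiting an explicit isomorphism
\[
\widetilde{A} \;\xrightarrow{\sim}\; B := k[x_2^{\pm 1}, \ldots, x_d^{\pm 1}][t, \sigma]/(\sigma^{N'} - g),
\]
defined by $s \mapsto t^b \sigma^u$, $x_1 \mapsto t^m \sigma^v$, $\sigma \mapsto \sigma$, $x_i \mapsto x_i$ for $i \geq 2$. The verification that the three defining relations of $\widetilde{A}$ are killed is a routine computation that ultimately boils down to $um - vb = 1$. For the inverse, $\sigma$ is a unit in $\widetilde{A}$ since $\sigma^{N'} = g$, so Bezout applied to $(b, m)$ produces $\alpha, \beta \in \Zz$ with $b\alpha + m\beta = 1$; then $t = s^\alpha x_1^\beta \sigma^{-u\alpha - v\beta}$ recovers the preimage of $t$. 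Since $B$ is \'etale over the regular ring $k[x_2^{\pm 1}, \ldots, x_d^{\pm 1}][t]$, it is normal. Combined with the fact that $\widetilde{S} \to S$ is finite and an isomorphism on the dense open $\set{x_1 \neq 0}$ (where $\sigma$ is already regular on $S$), this identifies $\widetilde{S}$ with the normalization of $S$.

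Under this isomorphism, $x_1 = t^m \sigma^v$ vanishes iff $t = 0$ (since $\sigma$ is a unit), so the reduced restriction of $\widetilde{S}$ to $\set{x_1 = 0}$ is $\Spec{k[x_2^{\pm 1}, \ldots, x_d^{\pm 1}][\sigma]/(\sigma^{N'} - g)}$, which is exactly $S'$ after relabelling $\sigma$ as $s$. Because $x_2, \ldots, x_d$ are units, the Jacobian of $s^{N'} - g$ never vanishes on $S'$, so $S'$ is smooth and in particular already normal, whence $S' = \widetilde{S'}$. The $\mu_q$-compatibility is then automatic, since $\zeta \in \mu_q$ acts on $\sigma = s^m/x_1^b$ by $\zeta^{mq/N} = \zeta^{q/N'}$, which matches the prescribed action on $\widetilde{S'}$.

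The main obstacle is the normality claim, namely the identification $\widetilde{A} \cong B$. The crucial ingredients are the toric-style substitution $s = t^b \sigma^u$, $x_1 = t^m \sigma^v$ prescribed by the Bezout relation $um - vb = 1$, and the fact that $g$ is a unit so that $\sigma$ becomes invertible, which reduces $\widetilde{A}$ to a free polynomial extension of the \'etale $k$-algebra defining $S'$. Without the unit hypothesis on $g$, the target ring would itself be a singular toric variety and a more delicate normality argument would be needed.
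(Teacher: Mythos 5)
Your argument rests on the claim that $g=x_2^{a_2}\cdots x_d^{a_d}$ is a unit, and that claim is false: $\Aa^d_{\Gm_k}$ denotes affine $d$-space over the base scheme $\Gm_k$, not a $d$-fold torus, so the coordinates $x_2,\dots,x_d$ vanish on closed subsets of $S$ just as $x_1$ does. This is already visible in the statement, which takes the normalization $\widetilde{S'}$ of $S'$ seriously --- if $x_2,\dots,x_d$ were units, $S'$ would be \'etale over the base and there would be nothing to normalize --- and in the way the lemma is used in the proof of Proposition \ref{prop-dual-DIo-kvargm}, where the $C_I$ are cut out by the vanishing of coordinates and $\widetilde{C_I}$ is the normalization of $C_I$, not only of $C_I^\circ$. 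Once $g$ can vanish, $\sigma$ is not invertible, your ring $B$ is itself a non-normal toric-type ring, $S'$ is singular, and the concluding identification $S'=\widetilde{S'}$ fails. As your own last sentence concedes, the strategy collapses without the unit hypothesis.

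There is a second, independent problem: even where $g$ is a unit, $\widetilde{A}=A[\sigma]/(\sigma^{N'}-g,\;s^m-\sigma x_1^b)$ is not the normalization and is not isomorphic to $B$. Your inverse formula $t=s^{\alpha}x_1^{\beta}\sigma^{-u\alpha-v\beta}$ with $b\alpha+m\beta=1$ forces one of $\alpha,\beta$ to be negative as soon as $b,m\geq 2$, and $s$, $x_1$ are not invertible in $\widetilde{A}$, so the map $\widetilde{A}\to B$ is not surjective. Concretely, for $N=6$, $a_1=4$, $g=1$ one has $N'=2$, $m=3$, $b=2$; then $\widetilde{A}$ splits into the two cuspidal curves $s^3=\pm x_1^2$, neither of which is normal, while $B$ is two disjoint affine lines, and the image of $\widetilde{A}$ in each component of $B$ is only $k[t^2,t^3]$. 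The paper instead describes $\widetilde{S}$ (in the case where $N,a_1,\dots,a_d$ are coprime, so $S$ is irreducible) as $\Spec{k[M^+]}$ for $M$ the lattice generated by $\Zz^d$ and $(a_1/N,\dots,a_d/N)$ and $M^+=M\cap\Rr^d_+$ --- i.e., it saturates the whole monoid of integral monomials rather than adjoining the single element $\sigma$ --- and then compares the sublattice $M_1=\set{u\in M\mid u_1=0}$ with the lattice $M'$ defining $\widetilde{S'}$ by exactly the Bezout computation you have in mind, finishing with a separate d\'evissage through $(\mu_N\times\widetilde{T})/\mu_e$ for the reducible case. So your arithmetic with $um-vb=1$ is in the right spirit, but it has to be carried out inside the lattice description of the normalization, not via a one-element ring extension.
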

\begin{proof}
Assume first that $N, a_1,...,a_d$ are coprime. Then $S$ is irreducible. 
Let $M$ be the lattice of $\Rr^{d}$ spanned by $\Zz^{d}$ and $v=(a_1/N ,..., a_d/N)$. Set $M^+=M\cap \Rr^d_+$ Then $\widetilde{S}\simeq \Spec{k[M^+]}$. If $M_1:=\set{u\in M\mid u_1=0}$ and $M_1^+=M_1\cap \Rr^d_+$, then $\Spec{k[M_1^+]}$ is the restriction of $\widetilde{S}$ to $\set{x_1=0}$. 

Now consider the lattice $M'$ generated by $v'=(0,a_2/N',...,a_d/N')$ and  $\set{0}\times \Zz^{d-1}$, and set $M'^+=M'\cap \Rr^d_+$. We have $\widetilde{S'}\simeq \Spec{k[M'^+]}$, hence it suffices to show that $M'\simeq M_1$. 

Denote $e_1,...,e_{d}$ the canonical basis of $\Zz^{d}$. Set $k=a_1/N'$ and $l=N/N'$. 
Observe that $v'=N/N'v-a_1/N'e_1=lv-ke_1$, hence $M'\subseteq M_1$. Reciprocally, if $u=\sum_{i=1}^{d}\lambda_i e_i +\mu v\in M_1$, then $\lambda_1+\mu k/l\in \Zz$, hence $\mu'=\mu/l\in \Zz$ (since $k$ and $\ell$ are coprime), hence $u=\sum_{i=2}^{d} \lambda_i e_i + \mu' v'\in M'$. 

The $\mu_q$-actions are compatible, since $s'^{N'}=s^Nx_1^{-a_1}$. 

Back to the general case, let $c$ be the greatest common divisor of $N, a_1,...,a_d$. Define $e=N/c$, $a_i'=a_i/c$, $e'=N'/c$. Let $\widetilde{T}$ be the normalization of $T=\set{s^e=x_1^{a_1'}...x_d^{a_d'}}$ and $\widetilde{T'}$ be the normalization of $T'=\set{s^{e'}=x_2^{a_2'}...x_d^{a_d'}}$. Both $\widetilde{T}$ and $\widetilde{T'}$ are equipped with a $\mu_e$ action as in the statement of the lemma. 

The mapping 
\[
(\zeta,s,x)\in \mu_N\times T\to (\zeta s,x)\in S
\]
induce an isomorphism 
\[
(\mu_N\times \widetilde{T})/\mu_e\simeq \widetilde{S},
\]
where the $\mu_N$ action on $\widetilde{S}$ correspond to the action on $(\mu_N\times \widetilde{T})/\mu_e$ given by multiplication on $\mu_N$. 
Similarly, the mapping 
\[
(\zeta,s,x)\in \mu_N\times T'\to (\zeta^{N/N'} s,x)\in S'
\]
induce an isomorphism 
\[
(\mu_N\times \widetilde{T'})/\mu_e\simeq \widetilde{S'}.
\]

Hence we can apply the first case to $\widetilde{T}$ and $\widetilde{T'}$ to get $\widetilde{S}_{\vert x_1=0}\simeq \widetilde{S'}$ and check that the actions correspond. 
\end{proof}

\begin{Lemma}
\label{lem-tildeCI-dual}
Denote again by $\widetilde{S}$ the normalization of $S=\set{s^N=x_1^{a_1}...x_d^{a_d}}$. Then $\calD_{\Aa^d_K}[\widetilde{S}]=[\widetilde{S}]\eL^{-d}\in\calM_{\Aa^d_K}^{\mu_q}$, $\zeta\in \mu_q$ acting again by multiplication of $s$ by $\zeta^{q/N}$.
\end{Lemma}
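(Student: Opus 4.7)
The plan is to identify $\widetilde{S}$ as a normal affine toric variety attached to a simplicial cone and then invoke Lemma \ref{lem-duality-normal-toric-varieties}.

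First I would reduce to the case where $N, a_1,\ldots, a_d$ are coprime. Setting $c=\gcd(N,a_1,\ldots,a_d)$, $e=N/c$ and $a_i'=a_i/c$, the isomorphism $(\mu_N\times \widetilde{T})/\mu_e\simeq \widetilde{S}$ established in the end of the proof of Lemma \ref{lem-tildeCI-restricts-CK} exhibits $\widetilde{S}$ as a disjoint union of $c$ copies of $\widetilde{T}$, and it is compatible with the $\mu_q$-actions and with the morphisms to $\Aa^d_K$. Hence it suffices to prove the statement for the normalization $\widetilde{T}$ of $T=\{s^e=x_1^{a_1'}\cdots x_d^{a_d'}\}$, in which case $e,a_1',\ldots,a_d'$ are coprime.

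Next, assuming coprimality, I would follow the toric description used in the proof of Lemma \ref{lem-tildeCI-restricts-CK}: letting $M\subset \Rr^d$ be the lattice generated by $\Zz^d$ and $v=(a_1/N,\ldots,a_d/N)$, and $M^+=M\cap \Rr^d_+$, we have $\widetilde{S}\simeq \Spec{k[M^+]}$. This is the affine toric variety associated to the simplicial cone $\Rr^d_+\subset \Rr^d$ with respect to the lattice $M$, of dimension $d$. The natural morphism $\widetilde{S}\to \Aa^d_K$ corresponds to the inclusion of lattices $\Zz^d\hookrightarrow M$ and, since $S\to \Aa^d_K$ is finite of degree $N$ and normalization is finite, the morphism $\widetilde{S}\to \Aa^d_K$ is finite, hence proper.

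It then remains to check that the $\mu_q$-action is through the torus $T_M=\Hom(M,\Gm)$ of $\widetilde{S}$ and is trivial on $\Aa^d_K$. Since $s$ corresponds to the character $v\in M$ and the $x_i$ correspond to the characters $e_i\in \Zz^d\subset M$, the homomorphism $\mu_q\to T_M$ sending $\zeta$ to the character $v\mapsto \zeta^{q/N}$, $e_i\mapsto 1$ is well-defined because the relation $Nv=\sum_i a_i e_i$ is mapped consistently ($\zeta^q=1$), and it realizes exactly the given $\mu_q$-action on $\widetilde{S}$ with trivial action on the $x_i$. We can now apply Lemma \ref{lem-duality-normal-toric-varieties} with $X=\widetilde{S}$, $Y=\Aa^d_K$ and $G=\mu_q$ to conclude $\calD_{\Aa^d_K}[\widetilde{S}]=[\widetilde{S}]\eL^{-d}$ in $\calM^{\mu_q}_{\Aa^d_K}$. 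The step most likely to require care is the verification that the $\mu_q$-action genuinely factors through $T_M$ with trivial action on $\Aa^d_K$, since this is what allows us to appeal to the equivariant version of the toric Poincar\'e duality.
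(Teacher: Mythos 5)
Your proposal is correct and follows essentially the same route as the paper: the paper's proof simply states that the coprime case is a particular instance of Lemma \ref{lem-duality-normal-toric-varieties} and that the general case reduces to it exactly as in the proof of Lemma \ref{lem-tildeCI-restricts-CK}, which is precisely your two-step plan. Your added verifications (that $\Spec{k[M^+]}$ is the toric variety of the simplicial cone $\Rr^d_+$ for the lattice $M$, that $\widetilde{S}\to\Aa^d_K$ is finite hence proper, and that the $\mu_q$-action factors through the torus and is trivial on the base) are exactly the hypotheses the paper leaves implicit.
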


\begin{proof}
It is a particular case of Lemma \ref{lem-duality-normal-toric-varieties} when $N, a_1,...,a_d$ are coprime, and the general case follows as in the proof of Lemma \ref{lem-tildeCI-restricts-CK}. 
\end{proof}

\subsection{Computation of cohomological motives}
\label{subsection-computation-cohomological-motives}

Using Corollary \ref{cor-dual-DI}, we can now compute the cohomological motives of $\widetilde{D_I^\circ}$ in terms of their homological motives. 

\begin{Proposition}
\label{prop-DI-cohomo-homo}
For any $I\subset J$, we have
\[
[\Mot_K^\vee(\widetilde{D_I^\circ})]=\sum_{I\subset L\subset J} (-1)^{\abs{L}-\abs{I}} [\Mot_{K}(\widetilde{D_L^\circ}\times_K \Gm_K^{\abs{L}-\abs{I}})(-d+\abs{I}-1)]\in \K{\SH{K}}.
\]
\end{Proposition}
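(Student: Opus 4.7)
The plan is to combine the scissors decomposition of $\widetilde{D_I}$, Bittner's duality operator $\calD_K$, its compatibility with the motivic duality $\mathbb{D}_K$ via $\chi_K$ (Proposition \ref{prop-compatibility-duality}), and a combinatorial rearrangement that makes the $\Gm_K^{|L|-|I|}$-factors emerge from alternating sums of Tate twists.

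First I would derive the scissors identity $[\widetilde{D_I}] = \sum_{I \subset L \subset J}[\widetilde{D_L^\circ}]$ in $\K{\Var K}$. This follows from pulling the stratification $D_I = \bigsqcup_{L \supset I} D_L^\circ$ back through $\widetilde{D_I} \to D_I$, using the identifications $\widetilde{D_I}|_{D_L} \simeq \widetilde{D_L}$ (first half of Proposition \ref{prop-dual-DIo-kvargm}) and $\widetilde{D_L}|_{D_L^\circ} \simeq \widetilde{D_L^\circ}$ (tautological from the definition of $\widetilde{D_L}$ as the normalization of $D_L$ in $\widetilde{D_L^\circ}$). M\"obius inversion then yields $[\widetilde{D_I^\circ}] = \sum_{L \supset I}(-1)^{|L|-|I|}[\widetilde{D_L}]$.

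Next I would apply $\calD_K$ to both sides. Using the second half of Proposition \ref{prop-dual-DIo-kvargm}, together with proper pushforward along $D_L \to \Spec K$ as in Corollary \ref{cor-dual-DI}, one obtains $\calD_K[\widetilde{D_L}] = \eL^{|L|-d-1}[\widetilde{D_L}]$. Applying $\chi_K$, Proposition \ref{prop-compatibility-duality} converts the left-hand side into $\mathbb{D}_K[\Mot_{K,c}^\vee(\widetilde{D_I^\circ})]$; since $\widetilde{D_I^\circ}$ is smooth of dimension $d-|I|+1$, Proposition \ref{prop-compactcohom-motive-smooth} combined with Ayoub's duality $\mathbb{D}_K\Mot_K(X) \simeq \Mot_K^\vee(X)$ on smooth objects rewrites this as $[\Mot_K^\vee(\widetilde{D_I^\circ})(d-|I|+1)]$. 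On the right-hand side, Corollary \ref{cor-dual-DI} gives $\chi_K(\eL^{|L|-d-1}[\widetilde{D_L}]) = [\Mot_K(\widetilde{D_L})]$. After a global Tate twist this produces the intermediate formula
\[
[\Mot_K^\vee(\widetilde{D_I^\circ})] = \sum_{L \supset I}(-1)^{|L|-|I|}[\Mot_K(\widetilde{D_L})(-d+|I|-1)].
\]

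The last step is to bring the $\Gm_K^{|L|-|I|}$-factors of the statement into view. I would substitute the motivic scissors relation $[\Mot_K(\widetilde{D_L})] = \sum_{M \supset L}[\Mot_K(\widetilde{D_M^\circ})(|M|-|L|)]$ (obtained from applying $\chi_K$ to $[\widetilde{D_L}] = \sum_{M \supset L}[\widetilde{D_M^\circ}]$, converting via Proposition \ref{prop-compactcohom-motive-smooth} on the smooth pieces $\widetilde{D_M^\circ}$ and using Corollary \ref{cor-dual-DI} to trade $\Mot_K^\vee$ for $\Mot_K$) and interchange the two summations. For each fixed $M \supset I$ the inner alternating sum over $L$ with $I \subset L \subset M$ is a binomial expression in Tate twists of the form $(1-[\un_K(-1)])^{|M|-|I|}$; using the elementary identity $[\Mot_K(\Gm_K)] = 1 - [\un_K(1)]$ (coming from the Gysin triangle of $\{0\} \subset \Aa^1_K$) and clearing Tate twists, this simplifies to $(-1)^{|M|-|I|}[\Mot_K(\Gm_K^{|M|-|I|})(-d+|I|-1)]$, which when multiplied by $[\Mot_K(\widetilde{D_M^\circ})]$ is exactly the summand in the statement. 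The main obstacle will be this last combinatorial bookkeeping: although no deep new input is required, signs, Tate twists and binomial factors must be aligned carefully to recognize the $\Gm_K$-class in the final sum.
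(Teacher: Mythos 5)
Your proof is correct and reaches the statement with exactly the same inputs as the paper (both halves of Corollary \ref{cor-dual-DI}, Propositions \ref{prop-compatibility-duality} and \ref{prop-compactcohom-motive-smooth}, and the identity $[\Mot_{K}(\Gm_K)]=[\un_K]-[\un_K(1)]$), but it is organized differently. The paper first establishes a recursive identity (Lemma \ref{lem-DI-cohomo-homo}) expressing $[\Mot_K^\vee(\widetilde{D_I^\circ})]$ in terms of the unknown classes $[\Mot_K^\vee(\widetilde{D_L^\circ})]$ for $L\supsetneq I$, and then proves the proposition by induction on $d-\abs{I}+1$, the binomial computation with $\Gm_K$-classes occurring inside the induction step. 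You instead perform a M\"obius inversion of the scissors decomposition already in $\K{\Var{K}}$, apply $\calD_K$ and $\chi_K$ once, and obtain in one pass the closed intermediate formula
\[
[\Mot_K^\vee(\widetilde{D_I^\circ})]=\sum_{I\subset L\subset J}(-1)^{\abs{L}-\abs{I}}[\Mot_{K}(\widetilde{D_L})(-d+\abs{I}-1)],
\]
which does not appear in the paper; the auxiliary lemma and the induction are thereby eliminated, and what remains is the same re-expansion into open strata and the same binomial identification of $(-1)^{\abs{M}-\abs{I}}[\Mot_K(\Gm_K^{\abs{M}-\abs{I}})]$. Two minor points: the argument (like the paper's) needs $D_I$ proper so that Corollary \ref{cor-dual-DI} applies and $\Mot_{K,c}^\vee(\widetilde{D_L})=\Mot_K^\vee(\widetilde{D_L})$, a hypothesis you correctly invoke even though the proposition's statement leaves it implicit; and in the inner sum the twists that actually occur are the positive ones $[\un_K(\abs{M}-\abs{L})]$, so the bracket is $([\un_K(1)]-1)^{\abs{M}-\abs{I}}$ --- your expression $(1-[\un_K(-1)])^{\abs{M}-\abs{I}}$ is the same thing after factoring out the leading twist $[\un_K(\abs{M}-\abs{I})]$, so the bookkeeping does close up as you anticipate.
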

We first prove an auxiliary formula. 
\begin{Lemma}
\label{lem-DI-cohomo-homo}
For any $I\subset J$, we have $[\Mot_K^\vee(\widetilde{D_I^\circ})]=$
\[
[\Mot_{K}(\widetilde{D_I^\circ})(-d+\abs{I}-1)]
+\sum_{I\subsetneq L\subset J} ([\Mot_{K}(\widetilde{D_L^\circ})(-d+\abs{L}-1)]
- [\Mot_K^\vee(\widetilde{D_L^\circ})(\abs{I}-\abs{L})]).
\]
\end{Lemma}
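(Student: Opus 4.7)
The plan is to translate the desired identity, via Poincaré duality for the smooth pieces, into an identity among classes of $\Mot_{K,c}$. The latter is additive for cut-and-paste, so it follows from the decomposition of $\widetilde{D_I}$ into strata together with Bittner-type duality on the (possibly singular) closed stratum $\widetilde{D_I}$, which is Corollary~\ref{cor-dual-DI}.

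Set $n_L:=\dim\widetilde{D_L^\circ}=d-\abs{L}+1$. Each $\widetilde{D_L^\circ}$ is smooth of pure dimension $n_L$ over $K$ (it is an étale cover of the smooth open stratum $D_L^\circ$), so Proposition~\ref{prop-compactcohom-motive-smooth} gives
\[
[\Mot_{K,c}^\vee(\widetilde{D_L^\circ})]=[\Mot_K(\widetilde{D_L^\circ})(-n_L)].
\]
Applying the duality involution $\mathbb{D}_K$ on constructible objects, which exchanges $\Mot_{K,c}^\vee\leftrightarrow\Mot_{K,c}$ and $\Mot_K\leftrightarrow\Mot_K^\vee$ by Ayoub's six functor formalism \cite[Th\'eor\`eme 2.3.75]{ayoub_six_1}, one gets the dual Poincaré formula
\[
[\Mot_K^\vee(\widetilde{D_L^\circ})]=[\Mot_{K,c}(\widetilde{D_L^\circ})(-n_L)],\qquad \text{i.e.}\qquad [\Mot_{K,c}(\widetilde{D_L^\circ})(-n_I)]=[\Mot_K^\vee(\widetilde{D_L^\circ})(\abs{I}-\abs{L})].
\]
Using these two identities for $L=I$ and for $L\supsetneq I$, the lemma is equivalent to
\[
[\Mot_{K,c}(\widetilde{D_I^\circ})(-n_I)]=[\Mot_K(\widetilde{D_I^\circ})(-n_I)]+\sum_{I\subsetneq L}\bigl([\Mot_K(\widetilde{D_L^\circ})(-n_L)]-[\Mot_{K,c}(\widetilde{D_L^\circ})(-n_I)]\bigr).
\]

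To prove this reformulation, apply the additive realization $\widetilde{\chi_K}$ (sending $[X]$ to $[\Mot_{K,c}(X)]$; well defined as in Proposition~\ref{realisation-motives-is} via the dual localization triangle \ref{eqn-localitytrianglebisdual}) to the stratification $[\widetilde{D_I}]=[\widetilde{D_I^\circ}]+\sum_{I\subsetneq L}[\widetilde{D_L^\circ}]$ from Corollary~\ref{cor-dual-DI} and twist by $(-n_I)$. This yields
\[
[\Mot_{K,c}(\widetilde{D_I^\circ})(-n_I)]=[\Mot_{K,c}(\widetilde{D_I})(-n_I)]-\sum_{I\subsetneq L}[\Mot_{K,c}(\widetilde{D_L^\circ})(-n_I)].
\]
Since $D_I$ is proper (as in the setup of Section~\ref{subsection-duality-involutions}), so is $\widetilde{D_I}$, and hence $\Mot_{K,c}(\widetilde{D_I})=\Mot_K(\widetilde{D_I})$ and $\Mot_{K,c}^\vee(\widetilde{D_I})=\Mot_K^\vee(\widetilde{D_I})$. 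Applying $\chi_K$ to the same stratification and using Proposition~\ref{prop-compactcohom-motive-smooth} on each smooth $\widetilde{D_L^\circ}$ gives
\[
[\Mot_K^\vee(\widetilde{D_I})]=[\Mot_{K,c}^\vee(\widetilde{D_I})]=\sum_{I\subset L}[\Mot_K(\widetilde{D_L^\circ})(-n_L)].
\]
Matching the two computations will produce the desired formula provided one knows $[\Mot_K(\widetilde{D_I})(-n_I)]=[\Mot_K^\vee(\widetilde{D_I})]$.

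The main obstacle is exactly this last identification, which is Poincaré duality for $\widetilde{D_I}$ even though the normalization $\widetilde{D_I}\to D_I$ is typically singular, so no direct smooth/proper duality is available. This is precisely what the machinery of Section~\ref{subsection-duality-involutions} provides: the Bittner-style duality operator $\calD_K$ on $\K{\Var{K}}$, together with the local étale-toric computation of Proposition~\ref{prop-dual-DIo-kvargm} (resting ultimately on Lemma~\ref{lem-duality-normal-toric-varieties} for normal toric varieties), yields the equality $\calD_K[\widetilde{D_I}]=[\widetilde{D_I}]\eL^{-n_I}$ encoded in Corollary~\ref{cor-dual-DI}; transported via Proposition~\ref{prop-compatibility-duality} to $\K{\SH{K}}$ and combined with $\mathbb{D}_K\Mot_{K,c}^\vee=\Mot_{K,c}$, this gives $[\Mot_{K,c}(\widetilde{D_I})]=[\Mot_{K,c}^\vee(\widetilde{D_I})(n_I)]$, hence by properness $[\Mot_K(\widetilde{D_I})]=[\Mot_K^\vee(\widetilde{D_I})(n_I)]$, completing the argument.
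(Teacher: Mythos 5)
Your proof is correct and is essentially the paper's argument in dualized form: both rest on the stratification $[\widetilde{D_I}]=\sum_{I\subset L}[\widetilde{D_L^\circ}]$ from Corollary~\ref{cor-dual-DI}, Poincar\'e duality for the smooth open strata via Proposition~\ref{prop-compactcohom-motive-smooth}, and the Bittner-type duality $[\mathbb{D}_K\Mot_{K,c}^\vee(\widetilde{D_I})]=[\Mot_{K,c}^\vee(\widetilde{D_I})(d-\abs{I}+1)]$ transported through Proposition~\ref{prop-compatibility-duality}. The paper applies $\mathbb{D}_K$ to the single additivity equation for $\Mot_{K,c}^\vee$ and untwists, whereas you match the additivity computations for $\Mot_{K,c}$ and $\Mot_{K,c}^\vee$ separately; this is only a reorganization of the same calculation.
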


\begin{proof}
By the first point of Corollary \ref{cor-dual-DI} and additivity of $\Mot_{K,c}^\vee(-)$, we have
\begin{equation}\label{eqn-1}
[\Mot_K^\vee(\widetilde{D_I^\circ})]=[\Mot_{K,c}^\vee(\widetilde{D_I})]-\sum_{I\subsetneq L\subset J} [\Mot_{K,c}^\vee(\widetilde{D_L^\circ})].
\end{equation}
As each of the $\widetilde{D_L^\circ}$ is smooth of pure dimension $d-\abs{L}+1$, by Proposition \ref{prop-compactcohom-motive-smooth}, $[\Mot_{K,c}^\vee(\widetilde{D_L^\circ})]=[\Mot_{K}(\widetilde{D_L^\circ})(-d+\abs{L}-1)]$. We apply $\mathbb{D}_{K}$ to equation $1$. By linearity of $\mathbb{D}_{K}$, the fact that $\mathbb{D}_{K}\Mot_{K}(\widetilde{D_L^\circ})(-d+\abs{L}-1)=\Mot_K^\vee(\widetilde{D_L^\circ})(+d-\abs{L}+1)$ and second point of Corollary \ref{cor-dual-DI}, we get
\begin{equation}
[\Mot_K^\vee(\widetilde{D_I^\circ})(d-\abs{I}+1)]=[\Mot_{K,c}^\vee(\widetilde{D_I})(d-\abs{I}+1)]-\sum_{I\subsetneq L\subset J} [\Mot_K^\vee(\widetilde{D_L^\circ})(d-\abs{L}+1)].
\end{equation}
Twisting this equation $d-\abs{I}+1$ times and applying again Corollary \ref{cor-dual-DI} gives the desired equation. 
\end{proof}

\begin{proof}[Proof of Proposition \ref{prop-DI-cohomo-homo}]
We work by induction on $d-\abs{I}+1$. If $d-\abs{I}+1<0$, then $\widetilde{D_I^\circ}$ is empty and there is nothing to show. If $d-\abs{I}+1=0$, the the formula boils down to
\[
[\Mot_K^\vee(\widetilde{D_I^\circ})]=[\Mot_{K}(\widetilde{D_I^\circ})],
\]
which holds since $\widetilde{D_I^\circ}$ is of dimension 0. 

Suppose now the proposition holds for any $L$ with $\abs{L}>\abs{I}$. Let $r=d-\abs{I}+1$. 

By Lemma \ref{lem-DI-cohomo-homo}, 
\[
[\Mot_K^\vee(\widetilde{D_I^\circ})]=[\Mot_{K}(\widetilde{D_I^\circ})(-r)]
+\sum_{I\subsetneq L\subset J} ([\Mot_{K}(\widetilde{D_L^\circ})(-d+\abs{L}-1)]
- [\Mot_K^\vee(\widetilde{D_L^\circ})(\abs{I}-\abs{L})]).
\]
Applying the induction hypothesis to the $[\Mot_K^\vee(\widetilde{D_L^\circ})]$, we get

\begin{multline*}
[\Mot_K^\vee(\widetilde{D_I^\circ})]=[\Mot_{K}(\widetilde{D_I^\circ})(-r)]
+\sum_{I\subsetneq L\subset J} \bigg ([\Mot_{K}(\widetilde{D_L^\circ})(-d+\abs{L}-1)]\\
 - \sum_{L\subset L'\subset J}(-1)^{\abs{L'}-\abs{L}}[\Mot_{K}(\widetilde{D_{L'}^\circ}\times_K\Gm_K^{\abs{L'}-\abs{L}})(-d+\abs{I}-1)]{\bigg )} .
\end{multline*}
Interverting the sums, we get
\begin{multline*}
[\Mot_K^\vee(\widetilde{D_I^\circ})]=[\Mot_{K}(\widetilde{D_I^\circ})(-r)]
+\sum_{I\subsetneq L\subset J} [\Mot_{K}(\widetilde{D_L^\circ})(-d+\abs{L}-1)]\\
 - \sum_{I\subsetneq L'\subset J}\sum_{i=0}^{\abs{L'}-\abs{I}-1}\binom{\abs{L'}-\abs{I}}{\abs{L'}-\abs{I}-i}(-1)^i[\Mot_{K}(\widetilde{D_{L'}^\circ}\times_K\Gm_K^{i})(-r)].
\end{multline*}
Regrouping the terms, we get 
\begin{multline}
\label{eqn-pf-MveeDI}
[\Mot_K^\vee(\widetilde{D_I^\circ})]=[\Mot_{K}(\widetilde{D_I^\circ})(-r)]
+\sum_{I\subsetneq L'\subset J} [\Mot_{K}(\widetilde{D_{L'}^\circ})(-r)]\\
\cdot\bigg ( [\Mot_{K}(\un)(\abs{L'}-\abs{I})]- \sum_{i=0}^{\abs{L'}-\abs{I}-1}\binom{\abs{L'}-\abs{I}}{i}(-1)^i[\Mot_{K}(\Gm_K^i)]\bigg ).
\end{multline}
We need to compute the expression inside the big brackets. We have

\begin{multline*}
[\Mot_{K}(\un)(\abs{L'}-\abs{I})]- \sum_{i=0}^{\abs{L'}-\abs{I}-1}\binom{\abs{L'}-\abs{I}}{i}(-1)^i[\Mot_{K}(\Gm_K^i)]\\
=[\Mot_{K}(\un)(\abs{L'}-\abs{I})]+(-1)^{\abs{L'}-\abs{I}}[\Mot_{K}(\Gm_K^{\abs{L'}-\abs{I}})]-\big ( [\Mot_{K}(\un)]-[\Mot_{K}(\Gm_K)]\big )^{\abs{L'}-\abs{I}}\\
=(-1)^{\abs{L'}-\abs{I}}[\Mot_{K}(\Gm_K^{\abs{L'}-\abs{I}})]
\end{multline*}
because $[\Mot_{K}(\Gm_K)]=[\Mot_{K}(\un)]-[\Mot_{K}(\un)(1)]$. Injecting in \ref{eqn-pf-MveeDI} gives the required equation for $[\Mot_K^\vee(\widetilde{D_I^\circ})]$. 
\end{proof}

\begin{Proposition}
\label{prop-chir-proper-cohomo}
Let $\calX$ be a semi-stable formal $R$-scheme and $(D_i)_{i\in I}$ the reduced irreducible components of its special fiber. Let $J'\subset J$ such that for every $i\in J'$, $D_i$ is proper. Then setting $D=\bigcup_{i\in J'} D_i$, we have
\[
\chir(\tube{D}_\calX^\VF)=[\Motr^\vee(\tube{D}_\calX)].
\]

\end{Proposition}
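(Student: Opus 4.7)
The strategy is to show that both $\chir(\tube{D}_\calX^\VF)$ and $[\Motr^\vee(\tube{D}_\calX)]$ equal the common expression
\[
\sum_{L \subset J,\; L \cap J' \neq \emptyset} (-1)^{|L|-1}\, [\Motr(\tube{D_L^\circ})(-d)] \in \K{\RigSH{K}},
\]
and compare term by term. Writing $a_L := [\Rig^* \Mot_K(\widetilde{D_L^\circ})]$ and $T := [\un_K(1)]$, the identifications in Corollary~\ref{cor-prop-chir-single-tube}, Theorem~\ref{thm-ayoub-equi-Qrig-Qan}, and Proposition~\ref{prop-wequiv-annuli} yield
$[\Motr(\tube{D_L^\circ})(-d)] = a_L (1-T)^{|L|-1} T^{-d}$,
so the target becomes a polynomial expression in $T$, $T^{-1}$, and the $a_L$'s.

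\emph{Left-hand side.} First I would apply inclusion-exclusion in $\K{\VF_K}$ to the closed cover $\tube{D} = \bigcup_{i \in J'} \tube{D_i}$, getting $[\tube{D}^\VF] = \sum_{\emptyset \neq I \subset J'} (-1)^{|I|-1} [\tube{D_I}^\VF]$. Then partitioning $\tube{D_I} = \bigsqcup_{I \subset L \subset J} \tube{D_L^\circ}$, applying $\chir$ together with Proposition~\ref{prop-chir-single-tube}, and swapping the order of summation produces the target expression, using the elementary identity $\sum_{\emptyset \neq I \subset S} (-1)^{|I|-1} = 1$ for any non-empty finite set $S$, applied with $S = L \cap J'$.

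\emph{Right-hand side.} By the cohomological version of Corollary~\ref{cor-cutting-Mtubes},
$[\Motr^\vee(\tube{D})] = \sum_{\emptyset \neq I \subset J'} (-1)^{|I|-1} [\Motr^\vee(\tube{D_I^\circ})]$.
For $I \subset J'$, $D_I$ is proper, so Proposition~\ref{prop-DI-cohomo-homo} applies to $\widetilde{D_I^\circ}$, giving $[\Mot_K^\vee(\widetilde{D_I^\circ})] = \sum_{I \subset L \subset J} (-1)^{|L|-|I|} a_L (1-T)^{|L|-|I|} T^{-d+|I|-1}$ after passing to $\K{\RigSH{K}}$ via $\Rig^*$. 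The key identification $[\Motr^\vee(\tube{D_I^\circ})] = (1-T^{-1})^{|I|-1}\, [\Rig^* \Mot_K^\vee(\widetilde{D_I^\circ})]$ (coming from the factorization of $\Motr(\tube{D_I^\circ})$ and the K\"unneth formula for duals) combined with the arithmetic identity $(1-T^{-1}) \cdot T = -(1-T)$ collapses the two factors of $T^{\pm 1}$ into a single factor $(-1)^{|I|-1}(1-T)^{|I|-1}$. Plugging back in, the outer signs $(-1)^{|I|-1}$ cancel those from this collapse, and swapping the sums gives
\[
[\Motr^\vee(\tube{D})] = \sum_{L \cap J' \neq \emptyset} a_L (1-T)^{|L|-1} T^{-d} \sum_{\emptyset \neq I \subset L \cap J'} (-1)^{|L|-|I|}.
\]
The inner sum evaluates to $(-1)^{|L|-1}$, matching the target expression.

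\emph{Main obstacle.} The delicate step is the identification $[\Motr^\vee(\tube{D_I^\circ})] = (1-T^{-1})^{|I|-1}[\Rig^* \Mot_K^\vee(\widetilde{D_I^\circ})]$: the functor $\Rig^*$ does not \emph{a priori} commute with internal Hom, so this requires invoking strong dualizability of the constructible algebraic motives $\Mot_K(\widetilde{D_I^\circ})$ and $\Mot_K(\Gm_K)$ (which follows from \cite[Th\'eor\`eme 2.3.75]{ayoub_six_1}) together with the strong monoidality of $\Rig^*$. Once this compatibility is established, the remainder of the argument is purely combinatorial, mirroring in structure the proof of Proposition~\ref{prop-DI-cohomo-homo}.
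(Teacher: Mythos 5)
Your proposal is correct and follows essentially the same route as the paper: both sides are reduced to the common expression $\sum_{L\cap J'\neq\emptyset}(-1)^{\abs{L}-1}[\Motr(\tube{D_L^\circ})(-d)]$ using the stratification by the $\tube{D_L^\circ}$, Corollary \ref{cor-prop-chir-single-tube}, the Mayer--Vietoris decomposition of Corollary \ref{cor-cutting-Mtubes}, the duality formula of Proposition \ref{prop-DI-cohomo-homo}, the relation $[\Motr^\vee(\parBb(o,1))]=-[\Motr(\parBb(o,1))(-1)]$, and the same alternating-sum identity over $\emptyset\neq I\subset L\cap J'$. The one point you flag as delicate --- that $\Rig^*$ carries $[\Mot_K^\vee(\widetilde{D_I^\circ})]$ to $[\Motr^\vee(Q_{N_I}^\Rig(D_I^\circ,u_I))]$ --- is indeed passed over quickly in the paper (via Theorem \ref{thm-ayoub-equi-Qrig-Qan} ``and similarly for cohomological motives''), and your justification through dualizability of constructible objects and monoidality of $\Rig^*$ is an adequate way to fill it.
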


\begin{proof}

By additivity of $\chir$, Proposition \ref{prop-chir-single-tube} and Corollary \ref{cor-prop-chir-single-tube}, we have
\begin{align}
\chir(\tube{D}_\calX^\VF)&=\sum_{\underset{I\cap J'\neq \emptyset}{I\subset J}} \chir(\tube{D_I^\circ}^\VF)\notag\\
&=\sum_{\underset{I\cap J'\neq \emptyset}{I\subset J}} (-1)^{\abs{I}-1}[\Motr(\tube{D_I^\circ})(-d)]\notag\\
&=\sum_{\underset{I\cap J'\neq \emptyset}{I\subset J}} (-1)^{\abs{I}-1}[\Motr(Q_{N_I}^\Rig(D_I^\circ,u_I)\times \parBb(o,1)^{\abs{I}-1})(-d)].\label{eqn-chirDfinal}
\end{align}

We will relate the cohomological motive of the tube to this formula using the duality relations proven above.  
By Corollary \ref{cor-cutting-Mtubes}, we have
\begin{equation}
\label{eqn-MveeD}
[\Motr^\vee(\tube{D})]=\sum_{I\subseteq J'}(-1)^{\abs{I}-1} [\Motr^\vee(\tube{D_I^\circ})].
\end{equation}

By Corollary \ref{cor-prop-chir-single-tube}, 
\begin{align}
\label{eqn-MveeDI}
[\Motr^\vee(\tube{D_I^\circ})]&= [\Motr^\vee(Q_{N_I}^\Rig(D_I^\circ,u_I)\times \parBb(o,1)^{\abs{I}-1})]\notag\\
&=[\Motr^\vee(Q_{N_I}^\Rig(D_I^\circ,u_I))]\cdot[\Motr^\vee(\parBb(o,1)^{\abs{I}-1})].
\end{align}

Combining Equations \ref{eqn-MveeD} and \ref{eqn-MveeDI}, we get
\begin{equation}
\label{eqn-RMveeD}
[\Motr^\vee(\tube{D})]=\sum_{I\subseteq J'}(-1)^{\abs{I}-1} [\Motr^\vee(Q_{N_I}^\Rig(D_I^\circ,u_I))][\Motr^\vee(\parBb(o,1^{\abs{I}-1})].
\end{equation}

The analytifcation of $\widetilde{D_I^\circ}$ is $Q_{N_I}^\an(D_I^\circ,u_I)$, hence by Theorem \ref{thm-ayoub-equi-Qrig-Qan}, $\Rig^*\Mot_K(\widetilde{D_I^\circ})=\Motr^\vee(Q_{N_I}^\Rig(D_I^\circ,u_I))$ and similarly for cohomological motives. 

As $D_I$ is proper, we can apply Proposition \ref{prop-DI-cohomo-homo} which gives after applying $\Rig^*$
%\[
%[\Motr^\vee(Q_{N_I}^\Rig(D_I^\circ,u_I))]=
%\sum_{I\subset L\subset J} (-1)^{\abs{L}-\abs{I}} [\Motr(Q_{N_L}^\Rig(D_L^\circ,u_L)\times_K \parBb(o,1)^{\abs{L}-\abs{I}})(-d+\abs{I}-1)].
%\]

 \begin{multline}
[\Motr^\vee(Q_{N_I}^\Rig(D_I^\circ,u_I))]=\\\sum_{I\subset L\subset J} (-1)^{\abs{L}-\abs{I}} [\Motr(Q_{N_L}^\Rig(D_L^\circ,u_L)\times_K \parBb(o,1)^{\abs{L}-\abs{I}})(-d+\abs{I}-1)].
\end{multline}

We also know that $[\Motr^\vee(\parBb(o,1))]=-[\Motr(\parBb(o,1))(-1)]$.  With these two remarks, Equation \ref{eqn-RMveeD} yields
\begin{align}
[\Motr^\vee(\tube{D})]&=\sum_{\emptyset\neq I\subset J'} \sum_{I\subset L\subset J} (-1)^{\abs{L}-\abs{I}} [\Motr(Q_{N_L}^\Rig(D_L^\circ,u_L)\times_K \parBb(o,1)^{\abs{L}-1})(-d)]\notag\\
&=\sum_{\underset{L\cap J'\neq \emptyset}{L\subset J}}(-1)^{\abs{L}-1}[\Motr(Q_{N_L}^\Rig(D_L^\circ,u_L)\times_K \parBb(o,1)^{\abs{L}-1})(-d)]\notag\\
&\hspace{2cm}\cdot\sum_{i=1}^{\abs{L\cap J'}} \binom{\abs{L\cap J'}}{i}(-1)^{i-1}\notag\\
&=\sum_{\underset{L\cap J'\neq \emptyset}{L\subset J}}(-1)^{\abs{L}-1}[\Motr(Q_{N_L}^\Rig(D_L^\circ,u_L)\times_K \parBb(o,1)^{\abs{L}-1})(-d)]\label{eqn-RMveefinal}.
\end{align}
Comparing to \ref{eqn-chirDfinal} gives the desired
\[
\chir(\tube{D}^\VF)=[\Motr^\vee(\tube{D})].
\]
\end{proof}

Proposition \ref{prop-chir-proper-cohomo} imply the following theorem, which is Theorem \ref{main-thm2} of the introduction. All we need to do is choosing an semi-stable formal $R$-scheme $\calY$ over $\calX$ such that $\calY\to\calX$ is a composition of admissible blow-ups. Hence the induced morphism at the level of special fibers is  proper and we can apply Proposition  \ref{prop-chir-proper-cohomo}.

\begin{Theorem}
\label{thm-chir-proper-cohomo}
Let $X$ be a quasi-compact smooth rigid variety, $\calX$ an formal $R$-model of $X$, $D$ a locally closed and proper subset of its special fiber $\calX_\sigma$. 
Then
\[
\chir(]D[^\VF)=[\Motr^\vee(]D[)].
\]
In particular, if $X$ is a smooth and proper rigid variety,
\[
\chir([X^\VF])=[\Motr^\vee(X)].
\]
\end{Theorem}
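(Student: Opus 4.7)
The strategy is to reduce the general case to Proposition \ref{prop-chir-proper-cohomo}, which handles the case of a union of proper branches in a semi-stable formal $R$-model.

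First, I would apply resolution of singularities for formal schemes, possibly preceded by a blow-up along the reduced closure of $D$, to produce a morphism $\pi : \calY \to \calX$ that is a composition of admissible formal blow-ups, such that $\calY$ is semi-stable and the reduced preimage $D' = \pi_\sigma^{-1}(D)_{\mathrm{red}}$ is a union of irreducible components (branches) of the special fiber $\calY_\sigma$. Since admissible blow-ups do not change the generic fiber, $\pi_\eta : \calY_\eta \to X$ is an isomorphism of rigid $K$-varieties. Moreover, $\pi_\sigma$ is proper, so $D'$ inherits properness from $D$.

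Next I would verify compatibility of tubes with this modification. From the commutative square relating $\pi_\sigma$, $\pi_\eta$ and the specialization maps of $\calY$ and $\calX$, one obtains
\[
]D'[_\calY \;=\; \spc_\calY^{-1}(\pi_\sigma^{-1}(D)) \;=\; \pi_\eta^{-1}(]D[_\calX).
\]
Since $\pi_\eta$ is an isomorphism, this gives a semi-algebraic bijection $]D'[_\calY \simeq ]D[_\calX$, so their classes agree in $\K{\VF_K}$ and their cohomological rigid motives agree in $\K{\RigSH{K}}$.

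Finally, I would apply Proposition \ref{prop-chir-proper-cohomo} to the semi-stable scheme $\calY$ and the union of proper branches $D' \subset \calY_\sigma$ to obtain $\chir(]D'[^\VF) = [\Motr^\vee(]D'[)]$, which translates via the identifications above into the required equality $\chir(]D[^\VF) = [\Motr^\vee(]D[)]$. The main obstacle is arranging in the first step that $D$ lifts, after admissible blow-ups, to a union of components of a semi-stable special fiber; this combines an initial blow-up along the closure of $D$ to make it divisorial with subsequent semi-stable reduction of formal $R$-schemes. The particular case of a smooth and proper rigid variety $X$ then follows by applying the first statement to a proper formal $R$-model $\calX$ of $X$ with $D = \calX_\sigma$, so that $]D[_\calX = X$.
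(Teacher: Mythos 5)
Your proposal is correct and follows essentially the same route as the paper: pass to a semi-stable model $\calY$ via a composition of admissible blow-ups so that the (proper) preimage of $D$ becomes a union of proper branches of $\calY_\sigma$, identify the tubes through the isomorphism of generic fibers, and invoke Proposition \ref{prop-chir-proper-cohomo}. If anything, you are more explicit than the paper about the two points it leaves implicit — the compatibility $]D'[_\calY=\pi_\eta^{-1}(]D[_\calX)$ and the need to arrange that $\pi_\sigma^{-1}(D)$ is actually a union of components (which the paper only carries out explicitly in the later Milnor-fiber application).
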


\subsection{Analytic Milnor fiber}
\label{subsection-analytic-milnor-fib}
It is suggested by Ayoub, Ivorra and Sebag in \cite[Remark 8.15]{AIS} that one should be able to recover their comparison result betwen the motivic Milnor fiber and the cohomological motive of the analytic Milnor fiber using a morphism similar to $\chir$. We show below that it is indeed the case and moreover generalize their comparison result to an equivariant setting.

Let $X$ a smooth  $k$-variety and $f : X \to \Aa^1_k$ a non constant regular function. Base change to $R$ makes of $X$ an $R$-scheme. Denote $\calX_f$ the formal completion of $X$ with respect to $(t)$. Its special fiber $\calX_{f,\sigma}$ is the zero locus of $f$ in $X$. For any closed point $x\in \calX_{f,\sigma}$, denote by $\calF_{f,x}$ the tube of $\set{x}$ in $\calX_f$. It is the analytic Milnor fiber. It is a rigid subvariety of $\calX_{f,\eta}$, the analytic nearby cycles. 

Consider an embedded resolution of singularities of $\calX_{f,\sigma}$ in $X$. It is a proper birationnal morphism $h : Y \to X$ such that $h^{-1}(\calX_{f,\sigma})$ is a smooth normal crossing divisor. Denote by $(E_i)_{i\in J}$ the reductions of its (smooth) irreducible components and $N_i\in \Nn^*$ the multiplicity of $E_i$ in $h^{-1}(\calX_{f,\sigma})$. 

For any non empty $I\subset J$, denote by $E_I=\cap_{i\in I} E_i$ and $E_I^\circ=E_I\backslash\cup_{j\in J\backslash I} E_j$. 

Define as follows the \'etale cover $\widetilde{E_I^\circ}$ of $E_I^\circ$. Let $N_I$ be the greatest common divisor of the $N_i$, for $i\in I$. Working locally on some open neighborhood $U$ of $E_I^\circ$ in $Y$, we can assume that $E_i$ is defined by equation $t_i=0$, for some $t_i\in \calO(U)$ and that on $U$, $f=u_It_{i_1}^{N_{i_1}}... t_{i_r}^{N_{i_r}}$, with $u_I\in \calO(U)^\times$. Then set $\widetilde{E_I^\circ\cap U}=\set{(v,x)\mid x\in U, v^{N_I}=u}$.

Recall the motivic Milnor fiber, defined by Denef and Loeser, see for example \cite{denef_loeser_lefschetz} or \cite{denef_loser_motivic_igusa, denef_germs_1999, denef_geometry_2001}, \cite{loeser_seattle_2009}. In an equivariant setting, it is defined for any closed point $x\in \calX_{f,\sigma}$ by the formula
\[
\psi_{f,x}=\sum_{\emptyset \neq I \subseteq J} (-1)^{\abs{I}-1}[\Gm_k^{\abs{I}-1}] [\widetilde{E_I^\circ}\cap h^{-1}(x)]\in \K{\Varmu{k}}[\eL^{-1}].
\]
In particular, they show that this formula is independent of the chosen resolution $h$.

\begin{Proposition}
\label{prop-computation VF-Varmu-Milnor-fiber}
For any closed point $x\in \calX_{f,\sigma}$, 
\[
\Theta\circ \calE_c\circ\oint (\calF_{f,x}^{\VF})=\psi_{f,x}\in \K{\Varmu{k}}.
\]
\end{Proposition}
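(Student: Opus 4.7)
The plan is to compute $\Theta\circ\calE_c\circ\oint (\calF_{f,x}^\VF)$ by using the embedded resolution $h:Y\to X$ to stratify the analytic Milnor fiber, and then to apply to each stratum the explicit computation from the proof of Proposition~\ref{prop-chir-single-tube}.

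First, since $h$ is proper and an isomorphism outside a nowhere-dense subset, the $t$-adic completion $\calY_f$ is a formal $R$-model of $\calX_{f,\eta}$ and by Proposition~\ref{prop-berth-iso-tubes-etale} (or directly by properness) the tube of $\set{x}$ in $\calX_f$ is canonically isomorphic to the tube of $h^{-1}(x)$ in $\calY_f$. Writing $J_x=\set{i\in J\mid E_i\cap h^{-1}(x)\neq \emptyset}$, we decompose
\[
\calF_{f,x}^\VF\simeq \tube{h^{-1}(x)}_{\calY_f}^\VF=\bigcupd_{\emptyset\neq I\subseteq J_x} \tube{E_I^\circ\cap h^{-1}(x)}^\VF,
\]
which is a disjoint union of semi-algebraic sets and hence gives a sum of classes in $\K{\VF_K}$.

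Next, for each $I$, the reduction carried out in Lemma~\ref{lem-reduction-chir-single-tube} and Remark~\ref{rem-reduction-chir-single-tube} applies \emph{mutatis mutandis} (since it is local on the semi-stable model around $E_I^\circ$) and yields a definable bijection
\[
\tube{E_I^\circ\cap h^{-1}(x)}^\VF\simeq Q_{N_I}^\VF(E_I^\circ\cap h^{-1}(x),u_I)\times \val^{-1}(\Delta_I),
\]
where $\Delta_I\subset\Gam^{\abs{I}-1}$ is an open simplex defined by $\abs{I}-1$ linearly independent strict affine inequalities (as in the proof of Proposition~\ref{prop-chir-single-tube}). Taking $\oint$, this class corresponds under the isomorphism of Proposition~\ref{prop-iso-RV-RES-Gam} to
\[
[Q_{N_I}^\RES(E_I^\circ\cap h^{-1}(x),u_I)]\otimes [\val_\rv^{-1}(\Delta_I)]\in \K{\RES_K[*]}\otimes_{\K{\Gamnfin{*}}}\K{\Gamn{*}}.
\]

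Now I apply $\calE_c$: on the $\RES$-factor it is the identity (up to the grading, which is killed by $!I$), and on the $\Gam$-factor it multiplies by $\eu_c(\Delta_I)\cdot[\Gm_k]^{\abs{I}-1}$. Since $\Delta_I$ is an open simplex of dimension $\abs{I}-1$, $\eu_c(\Delta_I)=(-1)^{\abs{I}-1}$. Applying $\Theta$ to the $\RES$-factor then produces $[\widetilde{E_I^\circ\cap h^{-1}(x)}]\in \K{\Varmu{k}}$ by the very construction of $\Theta$ (Proposition~\ref{iso-res-varmu}) and the definition of the $\widetilde{E_I^\circ}$ covers. Summing over $I$ yields
\[
\Theta\circ\calE_c\circ\oint(\calF_{f,x}^\VF)=\sum_{\emptyset\neq I\subseteq J_x}(-1)^{\abs{I}-1}[\Gm_k]^{\abs{I}-1}[\widetilde{E_I^\circ}\cap h^{-1}(x)],
\]
which is exactly $\psi_{f,x}$.

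The main obstacle is step three: ensuring that the local reduction of Lemma~\ref{lem-reduction-chir-single-tube} produces an \emph{honest} semi-algebraic bijection between the tube and the product $Q_{N_I}^\VF\times\val^{-1}(\Delta_I)$ in the presence of the extra closed condition cutting out $h^{-1}(x)$, and globalising it through the refinement/admissible cover argument so that the pieces reassemble correctly on overlaps. Independence from the resolution $h$ is then automatic on the left side by well-definedness of $\oint$, $\calE_c$ and $\Theta$ (and on the right by Denef--Loeser).
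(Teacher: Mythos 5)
Your proof follows essentially the same route as the paper: decompose $\calF_{f,x}^{\VF}$ over the strata of the embedded resolution, reduce each piece to the standard semi-stable model so that it becomes $Q_{N_I}^{\VF}\times\val^{-1}(\Delta_I)$, and apply $\Theta\circ\calE_c\circ\oint$ factor by factor using $\Theta(Q_{N_I}^{\RV})=[\widetilde{E_I^\circ}]$ and $\eu_c(\Delta_I)=(-1)^{\abs{I}-1}$. The only difference is that the paper disposes of the obstacle you flag at the end by first modifying $h$ so that $h^{-1}(x)$ is a union of whole components $E_i$, $i\in J'$; then each $E_I^\circ\cap h^{-1}(x)$ is either all of $E_I^\circ$ or empty, and the reduction of Lemma \ref{lem-reduction-chir-single-tube} and Remark \ref{rem-reduction-chir-single-tube} applies verbatim.
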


\begin{proof}
The embedded resolution of $\calX_{f,\sigma}$ induces an admissible morphism 
$h : Y \to X$, hence $\tube{h^{-1}(x)}_Y\simeq \tube{\set{x}}_X$. Up to changing $h$, we can suppose $h^{-1}(x)$ is a divisor $E=\cup_{i\in J'}{E_i}$ in $\calY_\sigma=\cup_{i\in J}D_i$, with $I'\subset I$.
Then we have 
\[\calF_{f,x}^{\VF}=\overset{.}{\bigcup_{\underset{I\cap J'\neq \emptyset}{I\subset J}}} \tube{E_I^\circ}^\VF.\]
We want to show that 
\[
\Theta\circ\calE_c\circ\oint (\tube{E_I^\circ}_X)=(-1)^{\abs{I}-1}[\Gm_k^{\abs{I}-1}\times_k\widetilde{E_I^\circ}].
\]
By Remark \ref{rem-reduction-chir-single-tube} following Lemma \ref{lem-reduction-chir-single-tube}, we can suppose $X=\St^{u_I^{-1}t}_{E_I^\circ\times_k R, \underline{N}}$,
where $\underline{N}=(N_1,...,N_r)\in (\Nn^\times)^r$ (where $r=\abs{I}$), $u_I\in \calO(E_I^\circ\times_k R)^\times$. But now, as in the proof of Proposition \ref{prop-chir-single-tube}, $\tube{E_I^\circ}_X$ is definably isomorphic to  $Q_{N_I}^\VF(E_I^\circ,u)\times \val^{-1}(\Delta)$. Hence $\oint \tube{E_I^\circ}_X= [Q_{N_I}^\RV(E_I^\circ,u_I)\times \valrv^{-1}(\Delta)]_d$. We have $\Theta Q_{N_I}^\RV(E_I^\circ,u_I)=[\widetilde{E_I^\circ}]$ and 
\[
\Theta \circ \calE_c\valrv^{-1}(\Delta)=\eu_c(\Delta)[\Gm_k]^{\abs{I}-1}=(-1)^{\abs{I}-1}[\Gm_k]^{\abs{I}-1}.
\]
Putting pieces together and by linearity of $\Theta \circ \oint$, we get
\[
\cdot\Theta \circ \calE_c\circ \oint \calF_{f,x}^{\VF}={\sum_{\underset{I\cap J'\neq \emptyset}{I\subset J}}} (-1)^{\abs{I}-1}[\Gm_k^{\abs{I}-1}\times_k\widetilde{E_I^\circ}]=\psi_{f,x}\in \K{\Varmu{k}}.
\]
\end{proof}

\begin{Remark}
In the literature, the motivic Milnor is defined in the localization of $\K{\Varmu{k}}$ by $\eL=[\Aa^1_k]$. Such a localization in non injective if $k=\mathbb{C}$, see Borisov \cite{Borisov_eL_zero_divisor}.  However,
 Proposition \ref{prop-computation VF-Varmu-Milnor-fiber} show that it is well defined in  $\K{\Varmu{k}}$. The same fact is proven by Nicaise and Payne in \cite[Corollary 2.6.2]{nicaise_tropical_2017} using a computation similar to ours. 
\end{Remark}

From Theorem \ref{thm-chir-proper-cohomo}, we deduce the following corollary.
\begin{Corollary}
\label{cor-chir-MilnorFib-cohomo}
For any closed point $x\in \calX_{f,\sigma}$, 
\[
\chir(\calF_{f,x}^\VF)=[\Motr^\vee(\calF_{f,x})].
\]
\end{Corollary}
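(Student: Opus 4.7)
The plan is to derive this as an immediate consequence of Theorem \ref{thm-chir-proper-cohomo}, once we verify that the analytic Milnor fiber fits into the setup of that theorem. Since the formation of tubes is local on the formal model (Proposition \ref{prop-tube-admissible-cover}), I would first replace $X$ by an affine open neighborhood $U$ of $x$ and work with $\calU_f$, the $t$-adic completion of $U_R = U\times_{\Aa^1_k,f}\Spec R$. By construction, $\tube{\set{x}}_{\calU_f}=\tube{\set{x}}_{\calX_f}=\calF_{f,x}$, so it suffices to verify the hypotheses of Theorem \ref{thm-chir-proper-cohomo} for the datum $(\calU_{f,\eta},\calU_f,\set{x})$.

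The three conditions to check are: (i) $\calU_{f,\eta}$ is quasi-compact; (ii) $\calU_{f,\eta}$ is smooth over $K$; and (iii) $\set{x}$ is locally closed and proper in $\calU_{f,\sigma}$. For (i), $\calU_f$ is affine topologically of finite type, so its generic fiber is $K$-affinoid, hence quasi-compact. For (ii), $U$ is smooth over $k$, and since we are in characteristic zero, generic smoothness applied to the non-constant $f|_U:U\to\Aa^1_k$ ensures that the base change $U_K=U\times_{\Aa^1_k}\Spec K$ (which factors through the generic point of $\Aa^1_k$) is smooth over $K$; by Proposition \ref{prop-openim-genericfiber}, $\calU_{f,\eta}$ is an open rigid subvariety of $U_K^\an$ and therefore smooth. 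For (iii), the closed point $\set{x}$ is in particular locally closed in $\calU_{f,\sigma}$ and is proper over $k$.

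With these conditions verified, Theorem \ref{thm-chir-proper-cohomo} applied to $\calX=\calU_f$ and $D=\set{x}$ yields directly
\[
\chir(\calF_{f,x}^{\VF})=\chir(\tube{\set{x}}_{\calU_f}^{\VF})=[\Motr^\vee(\tube{\set{x}}_{\calU_f})]=[\Motr^\vee(\calF_{f,x})].
\]
All the substantive work has been carried out in Theorem \ref{thm-chir-proper-cohomo} (and ultimately in the duality computations of Section \ref{subsection-duality-involutions} and Proposition \ref{prop-DI-cohomo-homo}); the only task for the corollary is to match notation and confirm the elementary hypotheses, so I do not anticipate any real obstacle. The one place to be careful is the smoothness of $\calU_{f,\eta}$, but that is settled by generic smoothness in characteristic zero together with the open immersion of Proposition \ref{prop-openim-genericfiber}.
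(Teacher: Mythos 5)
Your proposal is correct and follows exactly the route the paper takes: the corollary is stated as an immediate consequence of Theorem \ref{thm-chir-proper-cohomo} applied to the formal model $\calX_f$ (or a formal affine neighbourhood of $x$) with $D=\set{x}$, and your verification of the hypotheses (quasi-compactness of the generic fiber, smoothness via generic smoothness in characteristic zero together with Proposition \ref{prop-openim-genericfiber}, and properness of the closed point) is the only content the paper leaves implicit. No gap.
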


Combining Corollary \ref{cor-chir-MilnorFib-cohomo} and Proposition \ref{prop-computation VF-Varmu-Milnor-fiber} with Theorem \ref{thm-diagram-chir-int-R-mu} gives the following result. 
\begin{Corollary}
\label{cor-comparison-MveeFx-psifx}
For any closed point $x\in \calX_{f,\sigma}$, we have
\[
[\mathfrak{R}\Motr^\vee(\calF_{f,x})]=\chi_{\hat\mu}(\psi_{f,x})\in \K{\QUSH{k}}.
\]
\end{Corollary}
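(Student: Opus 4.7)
The plan is essentially to apply the commutative diagram of Theorem \ref{thm-diagram-chir-int-R-mu} to the class $[\calF_{f,x}^{\VF}]\in\K{\VF_K}$ and then read off each side of the resulting equality using the two preceding results. Since the corollary is explicitly flagged as a consequence of combining Corollary \ref{cor-chir-MilnorFib-cohomo}, Proposition \ref{prop-computation VF-Varmu-Milnor-fiber} and Theorem \ref{thm-diagram-chir-int-R-mu}, this is a short diagram chase rather than a genuinely new argument.

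Concretely, I would start by invoking Theorem \ref{thm-diagram-chir-int-R-mu}, which gives the identity of ring morphisms
\[
\mathfrak{R}\circ \chir \;=\; \chi_{\hat\mu}\circ \Theta\circ \calE_c\circ \oint \;:\; \K{\VF_K}\longrightarrow \K{\QUSH{k}}.
\]
Evaluating both sides at $[\calF_{f,x}^{\VF}]$ gives
\[
\mathfrak{R}\bigl(\chir(\calF_{f,x}^{\VF})\bigr)\;=\;\chi_{\hat\mu}\bigl(\Theta\circ\calE_c\circ\oint(\calF_{f,x}^{\VF})\bigr).
\]
For the left-hand side, Corollary \ref{cor-chir-MilnorFib-cohomo} identifies $\chir(\calF_{f,x}^{\VF})$ with $[\Motr^\vee(\calF_{f,x})]$, so that the left-hand side becomes $[\mathfrak{R}\,\Motr^\vee(\calF_{f,x})]$. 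For the right-hand side, Proposition \ref{prop-computation VF-Varmu-Milnor-fiber} identifies $\Theta\circ\calE_c\circ\oint(\calF_{f,x}^{\VF})$ with $\psi_{f,x}$, so the right-hand side becomes $\chi_{\hat\mu}(\psi_{f,x})$. Comparing yields the desired equality in $\K{\QUSH{k}}$.

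There is no real obstacle here, since all the substantive work has already been done: the duality computation leading to Corollary \ref{cor-chir-MilnorFib-cohomo}, the explicit resolution-based computation proving Proposition \ref{prop-computation VF-Varmu-Milnor-fiber}, and the commutativity of the fundamental diagram in Theorem \ref{thm-diagram-chir-int-R-mu}. The only thing worth double-checking is that the instance of Corollary \ref{cor-chir-MilnorFib-cohomo} really applies to the analytic Milnor fiber, namely that $\calF_{f,x}=\tube{\set{x}}_{\calX_f}$ is the tube of a locally closed and proper subset of the special fiber (the closed point $\set{x}$ is trivially proper), which is exactly the hypothesis of Theorem \ref{thm-chir-proper-cohomo}.
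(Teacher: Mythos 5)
Your proof is correct and is exactly the argument the paper intends: evaluate the identity $\mathfrak{R}\circ\chir=\chi_{\hat\mu}\circ\Theta\circ\calE_c\circ\oint$ from Theorem \ref{thm-diagram-chir-int-R-mu} at $[\calF_{f,x}^{\VF}]$, then substitute using Corollary \ref{cor-chir-MilnorFib-cohomo} on the left and Proposition \ref{prop-computation VF-Varmu-Milnor-fiber} on the right. Your check that $\set{x}$ is a locally closed proper subset of the special fiber, so that Theorem \ref{thm-chir-proper-cohomo} applies, is the right point to verify and poses no difficulty.
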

Recall that $\psi_{f,x}$ is the Denef-Loeser motivic Milnor fiber. 
It is a generalization of Corollary 8.8 of Ayoub, Ivorra and Sebag \cite{AIS} at an equivariant level. They show the same equality, but in $\K{\SH{k}}$, hence one deduce their result from Corollary \ref{cor-chir-MilnorFib-cohomo} using Lemma \ref{lem-com-diag-varmu-var-QUSH-SH}.

\end{document}